\documentclass[11pt]{article}
\usepackage[a4paper,margin=26mm]{geometry}
\usepackage{amsmath,amssymb,amsthm,mathtools,mathrsfs}
\usepackage{enumitem}
\usepackage{booktabs}
\usepackage{microtype}
\usepackage{tikz}
\usetikzlibrary{arrows.meta,positioning,calc,decorations.pathreplacing}
\usepackage[hidelinks]{hyperref}
\hypersetup{
  pdftitle={Detecting Einstein metrics on products of spheres},
  pdfauthor={Anna Siffert},
  pdfsubject={Cohomogeneity-one positive Einstein metrics},
  pdfkeywords={Einstein metrics, cohomogeneity one, singular perturbation, matching, response}
}

\newtheorem{theorem}{Theorem}[section]
\newtheorem{lemma}[theorem]{Lemma}
\newtheorem{proposition}[theorem]{Proposition}
\newtheorem{corollary}[theorem]{Corollary}
\newtheorem{remark}[theorem]{Remark}

\newcommand{\Ric}{\operatorname{Ric}}
\newcommand{\Rm}{\operatorname{Rm}}

\newcommand{\eps}{\varepsilon}

\title{Detecting Einstein metrics on products of spheres}

\author{
Anna Siffert
}
\date{}
\begin{document}
\maketitle

\begin{abstract}
On
\[
S^5\times S^2,\qquad S^5\times S^3,\qquad
S^5\times S^4,\qquad S^7\times S^2,
\]
we construct infinitely many pairwise non-isometric real-analytic Einstein
metrics.  After discarding finitely many members, none is isometric or
homothetic to a Riemannian product or to a classical two-summand B\"ohm
metric.  The metrics have the form
\[
dt^2+a(t)^2g_{S^p}+b(t)^2g_{S^p}+c(t)^2g_{S^q},
\qquad p,q\ge2,\qquad 2p+q\le8.
\]

The construction starts with solutions for which the two non-collapsing
radii at one singular orbit are very small.  After magnifying this region,
the Einstein equations approach a Ricci-flat problem whose solutions tend
to a cone by a decaying oscillation.  The main point is to determine
exactly which part of this oscillation can still be seen by the closing
conditions at the other end of the manifold.  Rather than following the
full, increasingly oscillatory shooting map, we extract two response
quantities from the Ricci-flat limit.  One determines the common
oscillatory phase; after that phase has been removed, the second closing
condition is reduced to a scalar sign change.  A uniform inner--outer
matching argument transfers this limiting sign change to the exact
Einstein equations.  As the initial scale tends to zero, the phase makes
arbitrarily many turns, and the closing conditions are therefore met
infinitely many times.
\end{abstract}

\section{Introduction}

\subsection{Geometric setting and main result}
\label{sec:main-results}

Cohomogeneity-one Einstein equations constitute a
singular system of ordinary differential equations; standard references
include \cite{Besse,Bohm1998,DancerWang2000,EschenburgWang2000}.
We study metrics
\begin{equation}\label{eq:intro-ansatz}
g
=
dt^2+a(t)^2g_{S^p}+b(t)^2g_{S^p}+c(t)^2g_{S^q},
\end{equation}
where the sphere metrics have sectional curvature \(1\).
The orbit space is a closed interval \([0,L]\), with \(L>0\) determined
by the solution, and \(t\) is arclength along a geodesic orthogonal to the
orbits.  For \(0<t<L\), the principal orbit is
\(S^p\times S^p\times S^q\).  At the left endpoint \(t=0\) of the
interval \([0,L]\), the first \(S^p\)-factor collapses, while the other
\(S^p\)-factor and the \(S^q\)-factor remain positive.  At the right
endpoint \(t=L\), the second \(S^p\)-factor collapses, while the first
\(S^p\)-factor and the \(S^q\)-factor remain positive.
The precise smoothness conditions and group diagram are given in
Section~\ref{sec:singular-orbit-germs}.  We normalize the scale by
\[
\Ric(g)=(2p+q)g.
\]

Our main result is the following.

\begin{quote}
\textbf{Theorem A} (Existence and distinctness).
Let \(p,q\ge2\) and \(D=2p+q\le 8\).  Then
\(S^{2p+1}\times S^q\) carries infinitely many pairwise non-isometric
real-analytic cohomogeneity-one Einstein metrics of the
form~\eqref{eq:intro-ansatz}.  After discarding finitely many members,
none is isometric or homothetic to a Riemannian product or to a classical
two-summand B\"ohm metric.  Among integer pairs \(p,q\ge2\) with \(2p+q\le8\), these are precisely
\[
S^5\times S^2,\qquad S^5\times S^3,\qquad
S^5\times S^4,\qquad S^7\times S^2.
\]
\end{quote}

B\"ohm's classical work already gives infinitely many cohomogeneity-one
Einstein metrics on these underlying products of spheres
\cite{Bohm1998}.  Recent existence results illustrate several different
approaches to the cohomogeneity-one Einstein boundary-value problem,
including Chi's existence criteria for two-summand double disk bundles,
the phase-space analysis of Nienhaus and Wink on the ten-sphere, and
Wang's computer-assisted construction for doubly warped products
\cite{Chi2024Existence,NienhausWink2025,Wang2026}.  The metrics constructed
here have three genuinely varying radius functions.  The existence statement is proved first in the representative
case \((p,q)=(2,3)\), and Section~\ref{sec:uniform-extension} proves the
remaining dimensions together with the intrinsic distinctness and
comparison statements in Theorem~A.

A second feature of the argument is that the final closing step admits a
finite-dimensional formulation independent of the detailed Einstein ODE.
The Ricci-flat inner problem produces two complex responses
\(Q(\rho)\) and \(X(\rho)\), and the outer linearized Einstein problem
transports them to the matching section by fixed nonzero complex
coefficients \(L_0\) and \(L_1\).  For the abstract closing argument we
write
\[
A(\rho):=L_0Q(\rho),\qquad
B(\rho):=L_1X(\rho).
\]
The small scale \(\eps\) contributes a common logarithmic rotation to both
channels.  Writing its phase modulo \(\pi\) as \(\theta\), the two leading
matching errors have the form
\[
\Re(e^{i\theta}A(\rho)),\qquad
\Re(e^{i\theta}B(\rho)).
\]
Thus \(A\) and \(B\) are the two transported complex responses, while
\(\theta\) is their common oscillatory phase.  The first channel fixes this
phase because \(A\neq0\).  On that phase branch, the second channel is
governed by the phase-invariant scalar
\[
\Phi(\rho)=\Im\!\left(A(\rho)\overline{B(\rho)}\right).
\]
Proposition~\ref{prop:response-crossing} shows abstractly that a strict
endpoint sign change of \(\Phi\) forces a zero of the full two-component
matching map for every sufficiently large phase label.  No simple-zero
condition on \(\Phi\) is needed: the required nondegeneracy occurs in the
phase direction.  Corollary~\ref{cor:response-crossing-stability} records
that this mechanism is stable under small \(C^1\) perturbations.  Thus the
proof separates the geometric task of constructing and transporting the
response from the topological task of forcing an exact crossing.

The proof is written in full detail first for \((p,q)=(2,3)\), where the
principal orbit is \(S^2\times S^2\times S^3\).  The dimension-dependent
estimates are then checked uniformly in
Section~\ref{sec:uniform-extension}.

\subsection{The detection--response mechanism}
\label{sec:nutshell}

The proof is modeled on the oscillatory mechanism behind B\"ohm's
construction, but the three-factor problem has two independent matching
conditions.  The useful object is therefore not a scalar shooting
function but a two-channel response.

Let
\[
\eps=b(0),\qquad
\rho=\frac{c(0)}{b(0)}.
\]
For \(\eps\ll1\), magnifying the left singular orbit produces a complete
Ricci-flat inner problem.  Its two anisotropy variables may be taken as
\[
\zeta=\log\frac ab,
\qquad
\xi=\log\frac{c}{\sqrt2\,(ab)^{1/2}}
\]
in the representative case \((p,q)=(2,3)\).  Near the asymptotic
Ricci-flat cone they have oscillatory expansions
\[
\zeta(\tau)
=
\tau^{-3}\Re\!\left(Q(\rho)\tau^{i\sqrt3}\right)+o(\tau^{-3}),
\]
\[
\xi(\tau)
=
\tau^{-3}\Re\!\left(X(\rho)\tau^{i\sqrt3}\right)+o(\tau^{-3}).
\]
The complex numbers \(Q(\rho)\) and \(X(\rho)\) are the two inner
responses.

The outer positive-Einstein problem transports these amplitudes to the
equatorial region.  If \(L_0,L_1\) denote the two relevant values of the
normalized outer Jacobi solution, the normalized exact matching map has
the limit
\[
G(\theta,\rho)
=
\begin{pmatrix}
\Re(e^{i\theta}L_0Q(\rho))\\
\Re(e^{i\theta}L_1X(\rho))
\end{pmatrix}.
\]
The first equation determines the phase because \(L_0Q(\rho)\ne0\).
After eliminating that phase, the second equation has the sign of
\[
\Phi(\rho)
=
\Im\!\left(
L_0Q(\rho)\,
\overline{L_1X(\rho)}
\right).
\]
Thus the infinite-dimensional shooting problem is reduced, at leading
order, to a scalar sign change.

Two geometric limits determine that sign.  The small-ratio heteroclinic
gives
\[
\frac{X(\rho)}{Q(\rho)}\longrightarrow\frac12
\qquad(\rho\downarrow0),
\]
whereas the balanced invariant family gives
\[
X(\sqrt2)=-\frac12Q(\sqrt2).
\]
The outer Wronskian has fixed orientation
\[
\Im(L_0\overline{L_1})=-\sqrt3.
\]
Consequently
\[
\Phi(\rho)<0
\quad\text{for small }\rho>0,
\qquad
\Phi(\sqrt2)>0.
\]
The abstract response-crossing criterion in
Section~\ref{sec:abstract-response-crossing} converts this sign change,
together with the weighted \(C^1\) inner--outer matching theorem, into an
exact zero of the two matching conditions.

This is the conceptual point of the argument.  We do not attempt to
control the entire shooting map globally.  We detect the two asymptotic
responses, use one to remove the common oscillatory phase, and use the
relative orientation of the second to force a crossing.

\subsection{Why the small scale produces infinitely many metrics}

The cone exponents are
\[
-\alpha_D\pm i\omega_D,
\qquad
\alpha_D=\frac{D-1}{2},
\qquad
\omega_D=\frac12\sqrt{(D-1)(9-D)}.
\]
For \(D<9\), the imaginary part is nonzero.  The logarithmic phase
therefore rotates without bound as \(\eps\downarrow0\):
\[
\theta
\equiv
\omega_D\log(1/\eps)
\pmod{\pi}.
\]
Once the response crossing is known, every sufficiently large phase label
produces an exact reflection point.  A sparse arithmetic subsequence of
phase labels gives strictly decreasing scales.  Curvature at the singular
orbit grows like \(\eps^{-2}\), which later separates infinitely many
isometry classes.

This viewpoint also explains the dimensional restriction in Theorem~A:
the construction uses the oscillatory cone regime \(D<9\).  Under
\(p,q\ge2\) and \(D=2p+q\), the possibilities are exactly
\[
(p,q)=(2,2),(2,3),(2,4),(3,2).
\]

\subsection{Why detection rather than classical shooting?}
\label{sec:why-not-shooting}

There is a natural classical shooting formulation.  Starting with the
parameters \((\eps,\rho)\) at the left singular orbit, solve the
positive-Einstein equations until the first \(H=0\) section and consider
the two closing errors
\[
\mathcal S(\eps,\rho)
=
\left.(a-b,c')\right|_{H=0}.
\]
A zero of \(\mathcal S\) gives the reflection conditions and hence a
compact Einstein metric.  Thus the issue is not that shooting is the
wrong formulation.  The issue is that, in the regime relevant to the
construction, the raw shooting map hides the mechanism producing its
zeros.

Indeed, when \(\eps\) is small, the solution spends a long rescaled time
near a Ricci-flat conical model.  Its two independent anisotropies decay
while oscillating with the same logarithmic frequency.  Decreasing
\(\eps\) therefore makes the vector \(\mathcal S(\eps,\rho)\) wind
repeatedly in its two-dimensional target.  A direct shooting proof would
have to control this increasingly rapid rotation while simultaneously
showing that the winding curve passes through the origin.

The detection and response viewpoint developed in
\cite{DetectionPrinciple,ResponseGeometry} reorganizes exactly the same boundary-value
problem.  The Ricci-flat inner analysis extracts two complex responses
\[
Q(\rho),\qquad X(\rho),
\]
which retain the leading amplitude and phase information that survives
into the closing region.  Changing \(\eps\) rotates both responses by the
same phase.  We use the first matching condition to choose this
\emph{common phase}.  Once it has been removed, the information relevant
to the second condition is the relative orientation of the two transported
responses
\[
A(\rho)=L_0Q(\rho),\qquad B(\rho)=L_1X(\rho),
\]
measured by
\[
\Phi(\rho)
=
\Im\!\left(A(\rho)\overline{B(\rho)}\right).
\]
A closely related realization of the same detection--response architecture
appears in the \(SU(3)\)-invariant \(S^7\) problem
\cite{SiffertS7Quantized}.  In both settings one starts from a
two-parameter singular-orbit family, stops at a canonical \(H=0\) section,
passes to a Ricci-flat small-scale limit, and uses an oscillatory cone mode
to turn the closing problem into a finite-dimensional response crossing.
The analytic reductions are nevertheless different.  In the \(S^7\)
problem the Ricci-flat dynamics select a distinguished threshold trajectory,
and the limiting detector has one oscillatory channel that fixes the phase
and a second reduced response whose sign changes.  In the present
product-of-spheres problem the Ricci-flat limit is instead a family indexed
by \(\rho\), both leading matching channels are complex, and after removal
of their common phase the decisive quantity is their relative orientation,
\[
\Phi(\rho)=\Im\!\left(A(\rho)\overline{B(\rho)}\right).
\]
Thus neither result is a specialization of the other: they are two
different geometric realizations of the same finite-dimensional
detection--response principle.

The original two-dimensional shooting problem is therefore reduced, at
leading order, to
\[
\begin{array}{c}
\eps\\[-1mm]
\downarrow\\[-1mm]
\text{choose the common phase}
\end{array}
\qquad+\qquad
\begin{array}{c}
\rho\\[-1mm]
\downarrow\\[-1mm]
\text{detect a sign change of }\Phi.
\end{array}
\]

This is the gain from detection.  We do not try to follow the entire
shooting map as it winds ever more rapidly.  We first identify the much
smaller amount of asymptotic information that the closing conditions can
see, remove the universal oscillation, and then force a zero through a
one-dimensional sign change.  The weighted \(C^1\) matching theorem is
what makes this reduction rigorous: it proves that the responses extracted
from the Ricci-flat model actually control the exact positive-Einstein
matching errors.

The proof is organized accordingly.  We first construct the singular
Einstein family and its Ricci-flat blow-up, then extract and orient the two
responses, transport them through the outer region, and finally compare
the response model with the exact \(H=0\) matching map.  The case
\((p,q)=(2,3)\) is treated in full detail because it displays the
heteroclinic and matching geometry without dimension indices.  We then
prove the uniform extension to all four pairs in Theorem~A, and close the
paper by isolating the abstract response-crossing principle used by the
construction.

\subsection*{Acknowledgements}

The author used OpenAI's ChatGPT as an interactive assistant during the
preparation of this manuscript for exposition, organization, creating the pictures,
proofreading, and suggestions for presenting and checking arguments.
All these contributions were independently verified by the author,
who takes full responsibility for the mathematical content and
conclusions of the paper.

\section{Singular-orbit initial-value theory}\label{sec:singular-orbit-germs}
This section answers a basic geometric question: what data may be prescribed
at the left endpoint, where one orbit direction collapses?  We first identify
the two singular orbits from the cohomogeneity-one group diagram.  We then
translate smooth extension across the left singular orbit into parity
conditions for \(a,b,c\), and finally apply the singular initial-value theory
of Eschenburg and Wang~\cite{EschenburgWang2000}.  The outcome needed later is
simple: the local Einstein metrics form a two-parameter family, parametrized
by the two positive radii \(b_0,c_0\).  We subsequently write
\[
b_0=\eps,\qquad c_0=\eps\rho,
\]
so that \(\eps\) measures the common small scale and \(\rho\) measures the
shape, namely the ratio of the two non-collapsing radii.

\subsection{The group diagram}
To keep the group notation tied directly to the metric ansatz, we use
lower-case subscripts \(a,b,c\) for the three spherical directions.
Thus a subscript \(a\), \(b\), or \(c\) indicates the factor whose radius
is described by \(a(t)\), \(b(t)\), or \(c(t)\), respectively.

\smallskip

Consider the compact Lie group
\[
G=\mathrm{SO}(3)_a\times\mathrm{SO}(3)_b\times\mathrm{SO}(4)_c
\]
and let
\[
K=\mathrm{SO}(2)_a\times\mathrm{SO}(2)_b\times\mathrm{SO}(3)_c\subset G,
\]
where \(\mathrm{SO}(3)_c\subset\mathrm{SO}(4)_c\) is the standard isotropy subgroup of the
round three-sphere.  Then the principal orbit is 
\[
G/K\cong S^2_a\times S^2_b\times S^3_c.
\]
The two
singular isotropy groups are
\[
H_-=\mathrm{SO}(3)_a\times\mathrm{SO}(2)_b\times\mathrm{SO}(3)_c,
\qquad
H_+=\mathrm{SO}(2)_a\times\mathrm{SO}(3)_b\times\mathrm{SO}(3)_c.
\]
Thus the complete group diagram is
\begin{equation}\label{eq:group-diagram}
H_-\ \supset\ K\ \subset\ H_+\qquad\subset\qquad G=\mathrm{SO}(3)_a\times\mathrm{SO}(3)_b\times\mathrm{SO}(4)_c.
\end{equation}
The quotients
\[
H_-/K\cong S^2_a,
\qquad
H_+/K\cong S^2_b
\]
are the normal spheres to the two singular orbits.  Consequently
\[
G/H_-\cong S^2_b\times S^3_c,
\qquad
G/H_+\cong S^2_a\times S^3_c.
\]

Because the action has cohomogeneity one and the metric is compact,
the orbit space is a closed interval.  We therefore choose a radial
coordinate
\[
        t\in[0,L],
\]
where \(t=0\) and \(t=L\) correspond to the two singular orbits, and
\(0<t<L\) parametrizes the principal orbits.  Thus the metric on the
open principal-orbit region has the form
\[
        g=dt^2+g_t,\qquad 0<t<L,
\]
with \(g_t\) a \(G\)-invariant metric on the principal orbit.

In the present ansatz, the metric \(g_t\) is described by three radius
functions \(a(t),b(t),c(t)\).  The endpoint values at \(t=0\) are
therefore not arbitrary initial values at an ordinary point of an ODE:
they encode how the metric extends smoothly across the left singular
orbit.  In particular, the factor corresponding to \(a(t)\) collapses
there, while the \(b\)- and \(c\)-factors remain nonzero.  This is the
singular-orbit initial-value problem to which we now specialize the
theory of Eschenburg and Wang~\cite{EschenburgWang2000}.

Thus, in the metric notation, the left singular orbit is the endpoint
where \(a=0\), while the right singular orbit is the endpoint where
\(b=0\).  The third factor never collapses in the group diagram.  In
particular,
\[
a(0)=0,\qquad b(0)>0,\qquad c(0)>0.
\]

\smallskip

The corresponding tubular neighborhoods are
\[
G\times_{H_-}D^3\cong D^3\times S^2\times S^3,
\qquad
G\times_{H_+}D^3\cong S^2\times D^3\times S^3.
\]
Gluing them along their common boundary
\(G/K\cong S^2\times S^2\times S^3\) gives
\[
\bigl(D^3\times S^2\bigr)\cup_{S^2\times S^2}
\bigl(S^2\times D^3\bigr)\;\times S^3
\cong
(S^2*S^2)\times S^3
\cong S^5\times S^3.
\]

\subsection{Singular-orbit smoothness and the local Einstein family}
The smoothness condition used here is the standard cohomogeneity-one singular-orbit smoothness condition for the specific setting studied here; compare the general singular initial-value framework of Eschenburg--Wang~\cite{EschenburgWang2000}.  We record the elementary specialization explicitly because it will be used throughout the paper.

\begin{lemma}[Smooth polar-product criterion]
\label{lem:smooth-polar-product}
Let \(b_0,c_0>0\).  The metric
\begin{equation*}
g
=
dt^2+a(t)^2g_{S^2}+b(t)^2g_{S^2}+c(t)^2g_{S^3},
\end{equation*}
extends smoothly across the
left singular orbit (with collapsing first \(S^2\)-factor) if and only if,
after choosing the geodesic distance \(t\ge0\) from the singular orbit,
there exist smooth functions \(A,B,C\) of \(r=t^2\) such that
\[
a(t)=tA(t^2),\qquad b(t)=B(t^2),\qquad c(t)=C(t^2),
\]
with
\[
A(0)=1,\qquad B(0)=b_0,\qquad C(0)=c_0.
\]
Equivalently,
\[
a(-t)=-a(t),\qquad b(-t)=b(t),\qquad c(-t)=c(t),
\]
and
\[
a'(0)=1,\qquad b'(0)=c'(0)=0.
\]
\end{lemma}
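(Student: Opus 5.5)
The plan is to reduce to the standard local model near the singular orbit \(G/H_-\cong S^2_b\times S^3_c\). There the tubular neighbourhood is \(D^3\times S^2_b\times S^3_c\), and the metric is a family of \(G\)-invariant metrics in which the three factors stay mutually orthogonal. Because of this orthogonality, the question splits into three independent questions, one per factor. For the collapsing factor, I must decide when \(dt^2+a(t)^2g_{S^2}\) extends smoothly over \(D^3\) at the origin. For each non-collapsing factor, I must decide when \(b(t)^2g_{S^2}\), respectively \(c(t)^2g_{S^3}\), extends smoothly as a function-valued family over \(D^3\). The cross terms vanish identically by the ansatz, so no compatibility conditions between factors arise. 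Smoothness of the product-type metric on \(D^3\times S^2\times S^3\) is then equivalent to smoothness of each block as a tensor on \(D^3\), with the sphere coordinates of the \(b\)- and \(c\)-factors treated as parameters.

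For the non-collapsing factors, the function \(x\mapsto b(|x|)^2\) on \(D^3\) must be smooth. A radial function \(f(|x|)\) on \(\mathbb R^n\) is smooth iff \(f\) extends to a smooth even function, i.e.\ \(f(t)=F(t^2)\) with \(F\) smooth; this is Whitney's theorem on even functions. Since \(b(0)=b_0>0\), the square root is smooth near \(0\), so \(b\) itself is even. Hence \(b(t)=B(t^2)\) with \(B(0)=b_0\), and the same argument gives \(c(t)=C(t^2)\) with \(C(0)=c_0\).

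For the collapsing factor, write the Euclidean metric in polar form, \(|dx|^2=dt^2+t^2g_{S^2}\). Then
\[
dt^2+a^2g_{S^2}=|dx|^2+\Bigl(\tfrac{a^2}{t^2}-1\Bigr)\bigl(|dx|^2-dt^2\bigr),
\]
and \(t^2\bigl(|dx|^2-dt^2\bigr)=|x|^2|dx|^2-(x\cdot dx)^2\) is a smooth tensor. So the metric is smooth iff \(h(t):=(a^2/t^2-1)/t^2\) gives a smooth radial function. A smooth radial function is even in \(t\). Conversely, smoothness forces the tangential part of the metric at the origin to equal the Euclidean one; otherwise the metric has a cone angle, \(a'(0)\neq1\). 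Thus \(a(t)^2/t^2=1+t^2h\) is a smooth even function equal to \(1\) at \(0\). Taking the positive square root gives \(a(t)=tA(t^2)\) with \(A(0)=1\). Conversely, given such \(A,B,C\), each block is manifestly smooth on \(D^3\), which proves sufficiency.

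The main obstacle is the necessity direction of the collapsing factor. Here I must check that smoothness of the full tensor really forces \(h\) to be smooth rather than just continuous, and forces \(a'(0)=1\) rather than just \(a'(0)>0\). I would handle this by restricting the metric to a fixed line through the origin, in the direction orthogonal to \(\partial_t\) at a nearby point. Evaluating on the smooth vector field \(x_1\partial_2-x_2\partial_1\) gives the smooth function \(a(|x|)^2(x_1^2+x_2^2)/|x|^2\). Restricted to the plane \(x_3=0\), this is \(a(|x|)^2\), which is a smooth radial function. By Whitney's theorem, \(a^2\) is then even and smooth in \(t\). Comparing with the value at the origin, where the metric must be positive definite and equal to \(dt^2\) in the radial direction, forces \(a^2=t^2+O(t^4)\) and hence \(a'(0)=1\). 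The equivalent parity formulation follows immediately: \(tA(t^2)\) is odd, \(B(t^2)\) and \(C(t^2)\) are even, and their first derivatives at \(0\) are \(1,0,0\). The converse parity-to-\(A,B,C\) step is again Whitney's theorem, applied to the even functions \(a(t)/t\), \(b\), and \(c\).
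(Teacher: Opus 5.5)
Your proposal is correct and follows the same route as the paper. Both split the metric on the tubular neighbourhood \(D^3\times S^2\times S^3\) into the collapsing polar block \(dt^2+a^2g_{S^2}\) on the normal disk and the non-collapsing coefficients \(b^2,c^2\), viewed as radial functions on \(D^3\); the paper simply asserts the standard criteria (\(a\) odd with \(a=t+O(t^3)\); \(b,c\) smooth and even), which you prove via Whitney's even-function theorem, the identity \(g_1=|dx|^2+h\,T\), and the rotation-field test. The one compressed step, \(a'(0)=1\), is finished by continuity at the origin: \(g_1(\partial_2,\partial_2)\) equals \(a(s)^2/s^2\) at \((s,0,0)\) and equals \(1\) at \((0,s,0)\), which forces \(a(s)^2/s^2\to1\).
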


\begin{proof}
The collapsing part is the standard polar metric on a normal \(3\)-disk:
\[
dt^2+a(t)^2g_{S^2}.
\]
Smoothness at the center of \(D^3\) is equivalent to
\[
a(t)=t+O(t^3)
\]
with \(a\) odd.  The noncollapsing coefficients multiply fixed metrics on
the singular orbit and therefore must be smooth even functions of the normal
coordinate.  This gives the stated form.  Conversely, writing
\[
a(t)=tA(t^2),\qquad A(0)=1
\]
shows that the collapsing block is a smooth \(O(3)\)-invariant metric on
\(D^3\), while \(B(t^2)\) and \(C(t^2)\) are smooth invariant coefficient
functions on the noncollapsing factors.
\end{proof}

We now specialize the Eschenburg--Wang singular-orbit initial-value theory
to the present group diagram and three-factor ansatz.  For the metric
\[
g=dt^2+a(t)^2g_{S^2}+b(t)^2g_{S^2}+c(t)^2g_{S^3}
\]
with the normalization \(\Ric(g)=7g\), the three tangential Einstein
equations are
\begin{subequations}\label{eq:PE}
\begin{align}
a''
&=
\frac{1-(a')^2}{a}
-2\frac{a'b'}{b}
-3\frac{a'c'}{c}
-7a,
\label{eq:PE-a}
\\
b''
&=
\frac{1-(b')^2}{b}
-2\frac{a'b'}{a}
-3\frac{b'c'}{c}
-7b,
\label{eq:PE-b}
\\
c''
&=
\frac{2(1-(c')^2)}{c}
-2\frac{a'c'}{a}
-2\frac{b'c'}{b}
-7c.
\label{eq:PE-c}
\end{align}
\end{subequations}
\begin{theorem}[Regular-singular Einstein singular-orbit theorem]
\label{thm:singular-orbit-germ}
For every
\[
(b_0,c_0)\in(0,\infty)^2
\]
there is a unique real-analytic Einstein germ within the three-factor ansatz,
\[
(a,b,c),
\]
satisfying \(\Ric=7g\) and the left-singular-orbit smoothness conditions of
Lemma~\ref{lem:smooth-polar-product}.  The germ depends real-analytically on
\((b_0,c_0)\).

More precisely,
\begin{align}
a(t)
&=
t+a_3t^3+O(t^5),
\\
b(t)
&=
b_0+b_2t^2+O(t^4),
\\
c(t)
&=
c_0+c_2t^2+O(t^4),
\end{align}
where
\begin{align}
b_2
&=
\frac{1-7b_0^2}{6b_0},
\label{eq:b2-singular-orbit}
\\
c_2
&=
\frac{2-7c_0^2}{6c_0},
\label{eq:c2-singular-orbit}
\\
a_3
&=
-\frac1{12}
\left(
4\frac{b_2}{b_0}
+
6\frac{c_2}{c_0}
+
7
\right)
\nonumber\\
&=
\frac7{18}
-\frac1{18b_0^2}
-\frac1{6c_0^2}.
\label{eq:a3-singular-orbit}
\end{align}
All higher odd coefficients of \(a\) and even coefficients of \(b,c\) are
determined recursively and analytically by \((b_0,c_0)\).
\end{theorem}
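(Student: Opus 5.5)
Throughout I take \(K\), \(H_\pm\) as in~\eqref{eq:group-diagram} and \(\Ric=7g\) as in~\eqref{eq:PE}.

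\textbf{Overall approach.} I would put the system~\eqref{eq:PE} into the form of a first-order singular initial value problem \(y'=\frac1t M_{-1}(y)+M(t,y)\) and apply the Eschenburg--Wang theorem~\cite{EschenburgWang2000}. For convenience I would also verify the recursion directly, using the ansatz of Lemma~\ref{lem:smooth-polar-product}. Write
\[
a=tA(t^2),\qquad b=B(t^2),\qquad c=C(t^2),\qquad A(0)=1,\ B(0)=b_0,\ C(0)=c_0,
\]
and substitute into~\eqref{eq:PE}. A parity check shows the right-hand sides have the correct parity: \(b''\) and \(c''\) are even, and \(a''\) is odd. The apparent singular terms \((1-(a')^2)/a\), \(a'b'/a\) and \(a'c'/a\) are regular once \(a'=1+O(t^2)\) and \(b',c'=O(t)\) are imposed.

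\textbf{Formal recursion.} I would expand \(A=\sum A_k r^k\), \(B=\sum B_k r^k\), \(C=\sum C_k r^k\) in \(r=t^2\) and collect coefficients.

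At the lowest order of~\eqref{eq:PE-b}, \(b''=2b_2\) and \(2a'b'/a\to 4b_2\), so
\[
2b_2 = \frac1{b_0}-4b_2-7b_0,
\]
which gives~\eqref{eq:b2-singular-orbit}. Equation~\eqref{eq:PE-c} gives \(6c_2=\tfrac{2}{c_0}-7c_0\), i.e.~\eqref{eq:c2-singular-orbit}.

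For~\eqref{eq:PE-a}, I would write \(a'=1+3a_3t^2\), so \((1-(a')^2)/a=-6a_3t+O(t^3)\). The remaining terms are \(-2a'b'/b=-4b_2t/b_0\), \(-3a'c'/c=-6c_2t/c_0\) and \(-7t\). Comparing with \(a''=6a_3t\) gives
\[
12a_3=-\Bigl(4\tfrac{b_2}{b_0}+6\tfrac{c_2}{c_0}+7\Bigr).
\]
Inserting \(b_2,c_2\) gives \(a_3=\tfrac7{18}-\tfrac1{18b_0^2}-\tfrac1{6c_0^2}\), after a routine simplification.

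At general order \(k\), the unknown top coefficients \((A_k,B_k,C_k)\) enter linearly through an indicial matrix. For \(B_k\) the coefficient is roughly \(2k(2k-1)+4k=2k(2k+1)\), and similarly for \(C_k\). For \(A_k\) the coefficient is \((2k+1)2k+2(2k+1)\) plus lower-order contributions. These are nonzero for \(k\ge1\). So each order is uniquely solvable, as a polynomial in the earlier coefficients and in \(b_0^{-1},c_0^{-1}\), which proves formal uniqueness and analytic dependence.

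\textbf{Convergence and analyticity (main obstacle).} The step requiring care is turning the formal series into an actual analytic solution depending analytically on \((b_0,c_0)\). I would introduce first-order variables
\[
y=(A-1,\ B-b_0,\ C-c_0,\ a'-1,\ b'/t,\ c'/t).
\]
This rewrites the system as \(t\,y'=M_{-1}y+t\,F(t,y)\), with \(F\) real-analytic near \(y=0\) and depending analytically on \((b_0,c_0)\). The key check is that \(M_{-1}\) has no eigenvalues in the positive integers: this is exactly the invertibility of the indicial coefficients computed above, up to the trivial parameter directions. It also requires that the initial data \((b_0,c_0)\) be the free parameters. Eschenburg--Wang's Theorem (or the classical Briot--Bouquet/majorant argument) then yields a unique analytic solution with analytic parameter dependence. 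Finally, the first-order conservation law (the normal Einstein equation \(\Ric(\partial_t,\partial_t)=7\)) is automatically satisfied because it holds at \(t=0\) and is preserved by~\eqref{eq:PE} via the contracted Bianchi identity. Lemma~\ref{lem:smooth-polar-product} then gives smoothness of the resulting metric.
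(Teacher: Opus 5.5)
Your proposal is correct and follows essentially the same route as the paper. The common steps are the parity ansatz of Lemma~\ref{lem:smooth-polar-product}, the lowest-order coefficients $b_2,c_2,a_3$, a triangular recursion whose diagonal entries $2k(2k+1)$ and $2(k+1)(2k+1)$ never vanish, Eschenburg--Wang for convergence and analytic dependence, and the contracted Bianchi identity for the radial constraint. You should state explicitly that the value of that constraint at $t=0$ is precisely your $a_3$ relation.

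One correction to the first-order setup: $b'/t\to 2b_2$ and $c'/t\to 2c_2$, so $y$ does not vanish at $t=0$. The system should be centred at $y_0=(0,0,0,0,2b_2,2c_2)$, with a nonlinear $M_{-1}$ satisfying $M_{-1}(y_0)=0$. Its linearization there is triangular with eigenvalues $0,0,-1,-2,-3,-3$. This verifies the no-positive-integer-eigenvalue condition directly, which is cleaner than appealing to the indicial coefficients.
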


\begin{proof}
Insert the parity ansatz
\[
a(t)=tA(t^2),\qquad b(t)=B(t^2),\qquad c(t)=C(t^2)
\]
into the Einstein equations \eqref{eq:PE-a}--\eqref{eq:PE-c}.  With
\(r=t^2\),
\[
a'=A+2rA_r,\qquad
a''=t(6A_r+4rA_{rr}),
\]
\[
b'=2tB_r,\qquad
b''=2B_r+4rB_{rr},
\]
and similarly for \(c\).
The apparent singularity in \eqref{eq:PE-a} is regular because
\[
A(0)=1
\]
forces
\[
1-(A+2rA_r)^2=O(r).
\]
Thus the transformed system is a real-analytic regular-singular system in
the variable \(r\), with free data
\[
B(0)=b_0,\qquad C(0)=c_0
\]
and compatibility condition \(A(0)=1\).

To see the first compatibility relations directly, write
\[
A(r)=1+a_3r+O(r^2),\quad
B(r)=b_0+b_2r+O(r^2),\quad
C(r)=c_0+c_2r+O(r^2).
\]
The constant terms of \eqref{eq:PE-b} and \eqref{eq:PE-c} give
\[
6b_2=\frac1{b_0}-7b_0,
\qquad
6c_2=\frac2{c_0}-7c_0,
\]
which are \eqref{eq:b2-singular-orbit}--\eqref{eq:c2-singular-orbit}.  The coefficient of \(t\)
in \eqref{eq:PE-a} gives
\[
12a_3+4\frac{b_2}{b_0}
+6\frac{c_2}{c_0}+7=0,
\]
hence \eqref{eq:a3-singular-orbit}.

It remains to check that the tangential equations produce a genuine
Einstein germ rather than merely a solution of the tangential subsystem.
The radial Einstein equation is
\[
-2\frac{a''}{a}
-2\frac{b''}{b}
-3\frac{c''}{c}
=7.
\]
At the singular orbit its finite limiting value is exactly
\[
-12a_3
-4\frac{b_2}{b_0}
-6\frac{c_2}{c_0}
=7,
\]
which is equivalent to the compatibility relation just obtained from
\eqref{eq:PE-a}.  Hence the radial constraint has the required limiting value at the
singular orbit.  On the regular region \(t>0\), the contracted Bianchi identity gives a
homogeneous first-order propagation equation for the radial Einstein
constraint.  The constraint has limiting value zero at \(t=0\), as the
preceding calculation shows; uniqueness for this propagation equation
therefore gives zero constraint throughout the germ.  Thus the resulting solution of the tangential equations
satisfies the full Einstein system.

The higher-order recursion can be checked explicitly.  Write
\[
A(r)=1+\sum_{j\ge1}A_jr^j,\qquad
B(r)=b_0+\sum_{j\ge1}B_jr^j,\qquad
C(r)=c_0+\sum_{j\ge1}C_jr^j.
\]
After the coefficients through order \(r^n\) has been determined, the
coefficient equations at the next order are linear in
\((A_{n+1},B_{n+1},C_{n+1})\).  Their coefficient matrix is
\[
\begin{pmatrix}
2(n+2)(2n+3) &
\dfrac{4(n+1)}{b_0} &
\dfrac{6(n+1)}{c_0}
\\[1.2ex]
0 & 2(n+1)(2n+3) & 0
\\
0 & 0 & 2(n+1)(2n+3)
\end{pmatrix},
\qquad n\ge0.
\]
Its determinant is
\[
8(n+2)(n+1)^2(2n+3)^3>0.
\]
Thus no additional free Taylor coefficient can occur at any order: the
formal series is uniquely determined by \((b_0,c_0)\).

After setting \(A=1+rU\), the parity-reduced equations form an analytic
regular-singular system in \(r\) for \(U,B,C\) and their first derivatives.
The singular-orbit initial-value theory of
Eschenburg--Wang~\cite{EschenburgWang2000}, specialized to the present
three-factor ansatz, promotes the uniquely determined formal series to a
convergent real-analytic Einstein germ and gives real-analytic dependence on
\((b_0,c_0)\) on compact subsets of \((0,\infty)^2\).  The calculation
above identifies explicitly the two free parameters and the first Taylor
coefficients in this specialization.
\end{proof}

\begin{corollary}[Parameter coordinates]
\label{cor:singular-orbit-parameters}
Within the fixed labelled three-factor ansatz, the real-analytic left-singular-orbit
Einstein germs are parametrised by
\[
(b_0,c_0)\in(0,\infty)^2,
\]
or equivalently by
\[
x=\log b_0,\qquad
y=\log\frac{c_0}{\sqrt2\,b_0}.
\]
\end{corollary}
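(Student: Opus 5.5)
The corollary follows directly from Theorem~\ref{thm:singular-orbit-germ}; the only work is to make ``parametrised'' precise as a bijection (indeed a real-analytic diffeomorphism). I would define the map
\[
\Psi:(0,\infty)^2\to\{\text{left-singular-orbit Einstein germs in the labelled ansatz}\},\qquad (b_0,c_0)\mapsto(a,b,c),
\]
assigning to each pair the unique germ produced by Theorem~\ref{thm:singular-orbit-germ}. Existence and uniqueness in that theorem make \(\Psi\) well defined. It is also surjective: any germ satisfying the smoothness conditions of Lemma~\ref{lem:smooth-polar-product} has \(b(0)=b_0>0\) and \(c(0)=c_0>0\), and by uniqueness it coincides with \(\Psi(b_0,c_0)\). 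For injectivity, evaluate at \(t=0\): the inverse is \((a,b,c)\mapsto(b(0),c(0))\), which is exactly the free data. Real-analytic dependence of the germ on \((b_0,c_0)\), also from the theorem, upgrades the bijection to an analytic parametrisation.

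For the second set of coordinates I would check that
\[
(b_0,c_0)\mapsto(x,y)=\Bigl(\log b_0,\ \log\tfrac{c_0}{\sqrt2\,b_0}\Bigr)
\]
is a real-analytic diffeomorphism \((0,\infty)^2\to\mathbb R^2\). Its explicit inverse is \(b_0=e^x\), \(c_0=\sqrt2\,e^{x+y}\), which is analytic and takes values in \((0,\infty)^2\) for every \((x,y)\in\mathbb R^2\). Composing with \(\Psi\) then gives the second parametrisation. I would add a brief remark that \(y\) is the value at \(t=0\) of the inner anisotropy variable \(\xi\) from Section~\ref{sec:nutshell}, using \(a\) replaced by \(b\) at the orbit, and that \(y=\log(\rho/\sqrt2)\) with \(\rho=c_0/b_0\). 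This ties \(y=0\) to the balanced value \(\rho=\sqrt2\).

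There is no genuine obstacle here. The one point I would take care to state is that the parametrisation is meant within the fixed labelled ansatz. Germs are not identified under isometries that permute or rescale the factors, so distinct \((b_0,c_0)\) really do give distinct germs, as the evaluation map at \(t=0\) shows.
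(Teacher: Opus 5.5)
Your proposal is correct and follows the paper's own argument: existence and uniqueness from Theorem~\ref{thm:singular-orbit-germ} give the bijection with $(b_0,c_0)\in(0,\infty)^2$, and the logarithmic change of coordinates is a diffeomorphism onto $\mathbb R^2$. You also spell out the inverse map and the identification $y=\log(\rho/\sqrt2)$, which the paper leaves implicit.
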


\begin{proof}
Theorem~\ref{thm:singular-orbit-germ} gives existence and uniqueness in the
real-analytic class for each positive pair \((b_0,c_0)\).  The logarithmic
change of coordinates is a smooth diffeomorphism from
\((0,\infty)^2\) to \(\mathbb R^2\).
\end{proof}

\begin{remark}[Small-singular-orbit rescaling and the Ricci-flat inner family]
\label{rem:inner-rescaling}
Write the two non-collapsing radii at the left singular orbit as
\[
b_0=\eps,\qquad c_0=\eps\rho,
\]
where \(\eps>0\) is the radius of the non-collapsing \(S^2\)-factor at the left singular orbit and \(\rho>0\) is the ratio of the two positive radii there.  To examine
the geometry on the scale of this small singular orbit, introduce
\[
t=\eps\tau,\qquad
A_\eps(\tau)=\eps^{-1}a(\eps\tau),\qquad
B_\eps(\tau)=\eps^{-1}b(\eps\tau),\qquad
C_\eps(\tau)=\eps^{-1}c(\eps\tau).
\]
Equivalently, after pulling the metric back by \(t=\eps\tau\) and multiplying
it by \(\eps^{-2}\), we obtain the inner metric
\[
g_{\mathrm{inner},\eps}
=
d\tau^2+A_\eps(\tau)^2g_{S^2}
+B_\eps(\tau)^2g_{S^2}
+C_\eps(\tau)^2g_{S^3}.
\]
Its singular-orbit data are
\[
A_\eps(0)=0,\qquad A_\eps'(0)=1,\qquad
B_\eps(0)=1,\qquad C_\eps(0)=\rho.
\]
Since the original metric satisfies \(\Ric(g)=7g\), the rescaled metric
satisfies
\[
\Ric(g_{\mathrm{inner},\eps})
=
7\eps^2 g_{\mathrm{inner},\eps}.
\]
Thus the Einstein constant disappears in the limit \(\eps\to0\).
The limiting inner metric
\[
g_{\mathrm{inner},\rho}
=
d\tau^2+A_\rho(\tau)^2g_{S^2}
+B_\rho(\tau)^2g_{S^2}
+C_\rho(\tau)^2g_{S^3}
\]
therefore satisfies
\[
\Ric(g_{\mathrm{inner},\rho})=0.
\]
Equivalently, its radius functions satisfy the Ricci-flat system
\begin{align*}
A_\rho''
&=
\frac{1-(A_\rho')^2}{A_\rho}
-2\frac{A_\rho'B_\rho'}{B_\rho}
-3\frac{A_\rho'C_\rho'}{C_\rho},\\
B_\rho''
&=
\frac{1-(B_\rho')^2}{B_\rho}
-2\frac{A_\rho'B_\rho'}{A_\rho}
-3\frac{B_\rho'C_\rho'}{C_\rho},\\
C_\rho''
&=
\frac{2(1-(C_\rho')^2)}{C_\rho}
-2\frac{A_\rho'C_\rho'}{A_\rho}
-2\frac{B_\rho'C_\rho'}{B_\rho}.
\end{align*}
Its singular-orbit data are
\[
A_\rho(0)=0,\qquad A_\rho'(0)=1,\qquad
B_\rho(0)=1,\qquad C_\rho(0)=\rho.
\]

Smoothness at the left singular orbit requires \(A_\rho\) to be odd in
\(\tau\) and \(B_\rho,C_\rho\) to be even.  We therefore write
\[
A_\rho(\tau)=\tau+a_3\tau^3+O(\tau^5),\qquad
B_\rho(\tau)=1+b_2\tau^2+O(\tau^4),\qquad
C_\rho(\tau)=\rho+c_2\tau^2+O(\tau^4).
\]
Substitution into the Ricci-flat equations and comparison of the
lowest-order coefficients gives
\[
b_2=\frac16,\qquad
c_2=\frac1{3\rho},\qquad
a_3=-\frac1{18}-\frac1{6\rho^2}.
\]
Hence the one-parameter Ricci-flat inner family has the expansion
\[
\begin{aligned}
A_\rho(\tau)
&=
\tau-\left(\frac1{18}+\frac1{6\rho^2}\right)\tau^3
+O(\tau^5),\\
B_\rho(\tau)
&=
1+\frac16\tau^2+O(\tau^4),\\
C_\rho(\tau)
&=
\rho+\frac1{3\rho}\tau^2+O(\tau^4).
\end{aligned}
\]
This is the one-parameter Ricci-flat inner family used in the response
analysis.
\end{remark}

\section{Analytic setting}\label{sec:analytic-setting}

This short section prepares two ingredients used throughout the proof.
First, we record the mean-curvature identities that define the transverse
Einstein matching section \(H=0\).  Second, we magnify the small left singular orbit
and show directly that the positive-Einstein equations differ from the
Ricci-flat inner equations only by \(O(\eps^2)\).  This rescaling identifies
the variables and scale used in the inner--outer comparison.

\subsection{Mean curvature and the Einstein matching section}

For general background on Einstein metrics we refer to \cite{Besse}.  For a multiply warped product
\[
g=dt^2+\sum_{j=1}^3 f_j(t)^2g_j,
\]
where \(\Ric(g_j)=(d_j-1)g_j\), the standard warped-product Ricci
formulas give
\[
\Ric(\partial_t,\partial_t)
=-\sum_j d_j\frac{f_j''}{f_j},
\]
and the tangential equation on the \(j\)-th factor is
\[
f_j''
=\frac{d_j-1}{f_j}
-\frac{d_j-1}{f_j}(f_j')^2
-f_j'\sum_{\ell\ne j}d_\ell\frac{f_\ell'}{f_\ell}
-\lambda f_j
\]
when \(\Ric(g)=\lambda g\).  For the present three-factor ansatz, with factor dimensions
\(2,2,3\) and \(\lambda=7\), this gives precisely the three equations
\eqref{eq:PE-a}--\eqref{eq:PE-c} stated in
Section~\ref{sec:singular-orbit-germs}.
Define the positive-Einstein mean curvature and squared shape norm by
\[
H=2\frac{a'}a+2\frac{b'}b+3\frac{c'}c,
\qquad
|L|^2=2\left(\frac{a'}a\right)^2
+2\left(\frac{b'}b\right)^2
+3\left(\frac{c'}c\right)^2.
\]
A basic monotonicity observation going back to
B\"ohm~\cite{Bohm1998} follows from the radial Einstein equation:
\begin{equation}\label{eq:PE-Hprime}
H'=-|L|^2-7<0.
\end{equation}
The Gauss constraint is
\begin{equation}\label{eq:PE-Hamiltonian}
H^2
=|L|^2+\frac2{a^2}+\frac2{b^2}+\frac6{c^2}-42.
\end{equation}
Here \(42=(7-1)\cdot7\) is the Einstein-constant term in the
Hamiltonian constraint for a seven-dimensional principal orbit and the
normalization \(\Ric(g)=7g\).  Geometrically, \(H\) is the mean curvature
of a principal orbit with respect to increasing \(t\).  Since
\[
H'=-|L|^2-7<0,
\]
a trajectory can cross \(H=0\) at most once, and whenever it does the
crossing is transverse.  This is why \(H=0\) is a convenient matching
hypersurface later in the proof.

\subsection{Magnifying the small singular orbit}

Remark~\ref{rem:inner-rescaling} introduced the inner rescaling geometrically.  We now apply it
to the full positive-Einstein equations in order to identify precisely the
error relative to the Ricci-flat inner system.  Recall that the small-singular-orbit
data are
\[
b_0=\eps,\qquad c_0=\eps\rho .
\]
Thus both non-collapsing radii at the left singular orbit are of order
\(\eps\).  To study the geometry on that same
length scale, we measure radial distance in units of \(\eps\), writing
\[
t=\eps\tau.
\]
We rescale the three orbit radii by the same factor:
\[
A_\eps(\tau)=\eps^{-1}a(\eps\tau),\qquad
B_\eps(\tau)=\eps^{-1}b(\eps\tau),\qquad
C_\eps(\tau)=\eps^{-1}c(\eps\tau).
\]
Equivalently,
\[
a(t)=\eps A_\eps(\tau),\qquad
b(t)=\eps B_\eps(\tau),\qquad
c(t)=\eps C_\eps(\tau),\qquad \tau=t/\eps.
\]
This is not an additional ansatz for the solution; it is simply the original
metric written in coordinates and units adapted to the small left singular
orbit.  In particular,
\[
B_\eps(0)=1,\qquad C_\eps(0)=\rho,
\]
so the rescaled singular orbit has order-one size as \(\eps\to0\).

For each fixed \(\varepsilon>0\), the original radial variable
\(t\in[0,L]\) corresponds to
\[
        0\leq \tau\leq \frac{L}{\varepsilon}.
\]
Thus the rescaled interval becomes longer as \(\varepsilon\to0\);
in the limiting Ricci-flat inner problem, the radial variable
\(\tau\) ranges over the half-line \([0,\infty)\).

The chain rule gives
\[
a_t=(A_\eps)_\tau,\qquad a_{tt}=\eps^{-1}(A_\eps)_{\tau\tau},
\]
and the analogous identities for \(b\) and \(c\).  Substituting these
relations into \eqref{eq:PE-a}--\eqref{eq:PE-c} and multiplying each
equation by \(\eps\) gives
\begin{align}
(A_\eps)_{\tau\tau}
&=
\frac{1-(A_\eps)_\tau^2}{A_\eps}
-2\frac{(A_\eps)_\tau (B_\eps)_\tau}{B_\eps}
-3\frac{(A_\eps)_\tau (C_\eps)_\tau}{C_\eps}
-7\eps^2A_\eps,\label{eq:scaled-PE-a}\\
(B_\eps)_{\tau\tau}
&=
\frac{1-(B_\eps)_\tau^2}{B_\eps}
-2\frac{(A_\eps)_\tau (B_\eps)_\tau}{A_\eps}
-3\frac{(B_\eps)_\tau (C_\eps)_\tau}{C_\eps}
-7\eps^2B_\eps,\label{eq:scaled-PE-b}\\
(C_\eps)_{\tau\tau}
&=
\frac{2(1-(C_\eps)_\tau^2)}{C_\eps}
-2\frac{(A_\eps)_\tau (C_\eps)_\tau}{A_\eps}
-2\frac{(B_\eps)_\tau (C_\eps)_\tau}{B_\eps}
-7\eps^2C_\eps.\label{eq:scaled-PE-c}
\end{align}
The first three terms on the right-hand side of each equation are exactly
the corresponding Ricci-flat inner equation displayed in Remark~\ref{rem:inner-rescaling}.
The only terms left from the positive Einstein constant are
\(-7\eps^2A_\eps\), \(-7\eps^2B_\eps\), and \(-7\eps^2C_\eps\), respectively.  Thus the
positive-Einstein inner system is an \(O(\eps^2)\) perturbation of the
Ricci-flat inner system; in particular, no additional power of \(\eps\) is
hidden in this perturbation.  Thus, on the scale of the small singular
orbit, the Ricci-flat family studied next is the leading-order geometry of
the exact Einstein family.

\section{Global Ricci-flat inner family}\label{sec:global-response}

Section~\ref{sec:analytic-setting} showed that, after rescaling around a
small left singular orbit, the positive-Einstein equation approach the
Ricci-flat inner problem.  We now study tthe resulting Ricci-flat solution
globally in the rescaled radial variable.  There are three steps.  We first
prove global existence and monotonicity of the physical radius functions.
We then divide out the overall scale and use a compact normalized phase
space to prove convergence to a single Ricci-flat cone.  Finally, we return
to physical anisotropy variables and extract the two complex response
coefficients \(Q(\rho)\) and \(X(\rho)\).  These coefficients are the only
data from the inner trajectory that will enter the later matching problem.

For each singular-orbit ratio \(\rho>0\), let
\[
g_{\mathrm{inner},\rho}
=
d\tau^2+A_\rho(\tau)^2g_{S^2}
+B_\rho(\tau)^2g_{S^2}
+C_\rho(\tau)^2g_{S^3}
\]
be the local Ricci-flat solution obtained in
Remark~\ref{rem:inner-rescaling}.  Recall that
\[
\tau=\frac{t}{\eps}
\]
is the rescaled radial variable.  Thus \(\tau\to\infty\) means moving
far from the singular orbit on the inner scale; it does not mean that the
original radial variable \(t\) tends to infinity.  In the overlap regime
used later for inner--outer matching,
\[
1\ll\tau\ll\eps^{-1},
\qquad\text{equivalently}\qquad
\eps\ll t=\eps\tau\ll1.
\]
Hence large \(\tau\) describes points that are still close to the singular
orbit in the original \(t\)-variable, but far from it compared with the
small scale \(\eps\).

Our first task is to prove that the Ricci-flat inner solution exists for
every \(\tau\ge0\) and to determine its asymptotic geometry as
\(\tau\to\infty\).  These large-\(\tau\) asymptotics provide the
response data that will later be transferred through the outer problem to
the Einstein matching section.  Throughout this section we keep the
subscript \(\rho\) on the physical radius functions \(A_\rho,B_\rho,C_\rho\)
to make clear which member of the inner family is being considered.

\subsection{The Ricci-flat singular-orbit family}

For \(\rho>0\), consider
\begin{subequations}\label{eq:RF-global}
\begin{align}
A_\rho''
&=\frac{1-(A_\rho')^2}{A_\rho}
 -2\frac{A_\rho'B_\rho'}{B_\rho}-3\frac{A_\rho'C_\rho'}{C_\rho},\\
B_\rho''
&=\frac{1-(B_\rho')^2}{B_\rho}
 -2\frac{A_\rho'B_\rho'}{A_\rho}-3\frac{B_\rho'C_\rho'}{C_\rho},\\
C_\rho''
&=\frac{2(1-(C_\rho')^2)}{C_\rho}
 -2\frac{A_\rho'C_\rho'}{A_\rho}-2\frac{B_\rho'C_\rho'}{B_\rho},
\end{align}
\end{subequations}
with smooth singular-orbit data
\[
A_\rho(0)=0,\quad A_\rho'(0)=1,\qquad
B_\rho(0)=1,\quad B_\rho'(0)=0,\qquad
C_\rho(0)=\rho,\quad C_\rho'(0)=0.
\]
Put
\[
\alpha=\frac{A_\rho'}{A_\rho},\qquad
\beta=\frac{B_\rho'}{B_\rho},\qquad
\gamma=\frac{C_\rho'}{C_\rho},\qquad
\mathcal H=2\alpha+2\beta+3\gamma .
\]
The radial Ricci-flat equation is
\begin{equation}\label{eq:Hprime-global}
\mathcal H'
=
-\bigl(2\alpha^2+2\beta^2+3\gamma^2\bigr).
\end{equation}
The Ricci-flat Hamiltonian constraint is
\begin{equation}\label{eq:Hamiltonian-global}
\mathcal H^2
=
2\alpha^2+2\beta^2+3\gamma^2
+\frac2{A_\rho^2}+\frac2{B_\rho^2}+\frac6{C_\rho^2}.
\end{equation}

\begin{theorem}[Global existence of the Ricci-flat inner family]
\label{thm:RF-global-existence}
For every \(\rho>0\), the Ricci-flat inner solution
\[
(A_\rho,B_\rho,C_\rho)
\]
with the singular-orbit data specified in Remark~\ref{rem:inner-rescaling} extends smoothly to all
\[
\tau\in[0,\infty).
\]
Moreover,
\[
0<A_\rho'(\tau),B_\rho'(\tau),C_\rho'(\tau)<1
\qquad(\tau>0).
\]
Finally, all three orbit radii grow without bound:
\[
A_\rho(\tau)\longrightarrow\infty,\qquad
B_\rho(\tau)\longrightarrow\infty,\qquad
C_\rho(\tau)\longrightarrow\infty
\qquad\text{as }\tau\to\infty.
\]
Thus the solution does not encounter a second singular orbit or a
finite-radius collapse in the inner problem.
\end{theorem}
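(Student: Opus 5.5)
The plan has three steps: two-sided bounds on the derivatives, global continuation, and divergence of the radii. For the first step I would start from the local germ. The Ricci-flat analogue of Theorem~\ref{thm:singular-orbit-germ} (same recursion, with the \(-7\) terms dropped) gives a real-analytic solution on some \([0,\tau_0)\) with the expansions of Remark~\ref{rem:inner-rescaling}. From
\[
b_2=\tfrac16>0,\qquad c_2=\tfrac1{3\rho}>0,\qquad a_3=-\tfrac1{18}-\tfrac1{6\rho^2}<0,
\]
we get \(0<B_\rho',C_\rho'<1\) and \(0<A_\rho'<1\) on some \((0,\delta)\). Let \(\tau_*\) be the first time at which one of the six strict inequalities \(0<A_\rho',B_\rho',C_\rho'<1\) fails, and derive a contradiction case by case. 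On \((0,\tau_*]\) all three radii are positive and non-decreasing, so \(\alpha,\beta,\gamma\ge0\) there.

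\begin{itemize}
\item If \(B_\rho'(\tau_*)=0\), then \(B_\rho''(\tau_*)=1/B_\rho>0\). This contradicts \(B_\rho'\) decreasing to \(0\) from above.
\item If \(B_\rho'(\tau_*)=1\), then \(B_\rho''(\tau_*)=-(2\alpha+3\gamma)<0\), since \(\alpha,\gamma>0\) strictly (their numerators have not yet vanished). This contradicts \(B_\rho'\) increasing to \(1\) from below.
\item The cases for \(A_\rho\) and \(C_\rho\) are identical, with \(1/A_\rho\) and \(2/C_\rho\) as the positive forcing terms.
\end{itemize}

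Simultaneous failures are harmless, because each case only uses the non-strict signs of the other quantities plus strict positivity of the remaining radii. Hence no failure time exists inside the maximal interval of existence.

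For global existence, on any compact \([\tau_1,T]\subset(0,T_{\max})\) the radii satisfy
\[
A_\rho(\tau_1)\le A_\rho\le A_\rho(\tau_1)+T,
\]
and similarly for \(B_\rho,C_\rho\). The derivatives lie in \((0,1)\). So the right-hand side of \eqref{eq:RF-global} is smooth with uniformly bounded coefficients on the compact region visited, with denominators bounded below. The standard continuation criterion then gives \(T_{\max}=\infty\).

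For unboundedness, I would rewrite each tangential equation in divergence form using the volume factor \(V=A_\rho^2B_\rho^2C_\rho^3\). For example,
\[
\bigl(A_\rho^2C_\rho^3B_\rho'\bigr)'=A_\rho^2C_\rho^3\,\frac{1-(B_\rho')^2}{B_\rho},
\]
and analogously for \(A_\rho\) (integrating factor \(B_\rho^2C_\rho^3\)) and \(C_\rho\) (factor \(A_\rho^2B_\rho^2\)). Suppose, say, \(B_\rho\le M\). Then \(\int^\infty B_\rho'<\infty\), so \(B_\rho'\) is small outside a set of finite measure and \(1-(B_\rho')^2\ge\tfrac12\) there. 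The identity then yields
\[
B_\rho'(\tau)\ge \frac{1}{2M\,f(\tau)}\int_{E\cap[0,\tau]} f,\qquad f=A_\rho^2C_\rho^3.
\]
To turn this into a non-integrable lower bound, I need a doubling-type control \(f(\tau)\lesssim f(\tau/2)\). This would follow from \(\alpha,\gamma\lesssim 1/\tau\), i.e.\ from linear lower bounds \(A_\rho,C_\rho\gtrsim\tau\). I would try to get these from the Hamiltonian constraint \eqref{eq:Hamiltonian-global} together with \eqref{eq:Hprime-global}. Since \(\mathcal H'\le-\tfrac17\mathcal H^2\) by Cauchy--Schwarz, we get \(\mathcal H\le 7/\tau\), and \(\mathcal H>0\) throughout. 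Inserted into the constraint, this gives
\[
\frac2{A_\rho^2}+\frac2{B_\rho^2}+\frac6{C_\rho^2}\le\mathcal H^2\le\frac{49}{\tau^2},
\]
so every radius is at least \(c\tau\). That already yields \(A_\rho,B_\rho,C_\rho\to\infty\) directly, without the divergence-form argument.

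I expect the main obstacle to be this last step: justifying \(\mathcal H\le 7/\tau\) near \(\tau=0\), where \(\mathcal H\sim 2/\tau\). I would handle it by comparing \(\mathcal H\) with the solution of the Riccati inequality from the singular orbit, using \(\mathcal H(\tau)\to+\infty\) as \(\tau\downarrow0\). If that comparison fails, I would fall back on the divergence-form argument above.
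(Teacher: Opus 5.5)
Your proof is correct. The first two steps, the first-contact argument for $0<A_\rho',B_\rho',C_\rho'<1$ and continuation from the linear bounds on the radii, are the same as the paper's. The difference lies in the unboundedness step.

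The paper integrates $\mathcal H'\le-\mathcal H^2/7$ from a positive $\tau_0$ only to conclude $\mathcal H\to0$, and then argues by contradiction: if $B_\rho\to B_\infty<\infty$, then $B_\rho'\le B_\infty\mathcal H/2\to0$, and the $B$-equation forces $B_\rho''\to1/B_\infty>0$.

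You instead insert the Riccati bound into the Hamiltonian constraint, $2/A_\rho^2+2/B_\rho^2+6/C_\rho^2\le\mathcal H^2$. This gives explicit linear lower bounds $A_\rho,B_\rho\ge\sqrt2\,\tau/7$ and $C_\rho\ge\sqrt6\,\tau/7$, uniform in $\rho$. Your route is shorter and quantitative. The paper's route is purely qualitative, but it needs only $\mathcal H\to0$ and the tangential equation of the bounded factor.

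The step you flag as the main obstacle is not one. Integrating $(1/\mathcal H)'\ge1/7$ from any $\tau_0>0$ already gives $\mathcal H(\tau)\le 7/(\tau-\tau_0)$, which is enough for divergence. Letting $\tau_0\downarrow0$ with $\mathcal H(\tau_0)\sim2/\tau_0$ gives $\mathcal H\le7/\tau$. Positivity of $\mathcal H$ follows immediately from step one, so the divergence-form fallback can be dropped.

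One small repair. Your justification that simultaneous failures are harmless "because each case only uses non-strict signs" is not accurate for the case $B_\rho'(\tau_*)=1$, which needs $\alpha(\tau_*),\gamma(\tau_*)>0$ strictly. Argue instead that a vanishing derivative gives a contradiction unconditionally (e.g.\ $A_\rho''=1/A_\rho>0$). Hence at a first failure time no derivative vanishes, and in the hitting-$1$ case the second derivative is strictly negative.
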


\begin{proof}
The Taylor expansions in Remark~\ref{rem:inner-rescaling} imply that, for each fixed
\(\rho>0\), there exists \(\delta=\delta(\rho)>0\) such that
\[
0<A_\rho'(\tau),B_\rho'(\tau),C_\rho'(\tau)<1
\qquad\text{for }0<\tau<\delta.
\]
Suppose, for example, that \(A_\rho'\) had a first zero at some
\(\tau_*>0\).  Then at \(\tau_*\)
\[
A_\rho''=\frac1{A_\rho}>0,
\]
contradicting first contact from above.  The same argument gives positivity
of \(B_\rho'\) and \(C_\rho'\).  If instead \(A_\rho'\) first reached \(1\)
at some \(\tau_*>0\), then at \(\tau_*\)
\[
A_\rho''=-2\frac{B_\rho'}{B_\rho}-3\frac{C_\rho'}{C_\rho}<0,
\]
contradicting first contact from below.  The equations for \(B_\rho'\) and \(C_\rho'\)
give the same contradiction at a first value \(1\).

Fix \(\tau_0>0\).  On a finite interval \([\tau_0,T]\), positivity and
\(0<A_\rho',B_\rho',C_\rho'<1\) give
\[
A_\rho(\tau_0)\le A_\rho(\tau)\le A_\rho(\tau_0)+T-\tau_0,
\]
and the analogous bounds for \(B_\rho,C_\rho\).  Their positive lower bounds at
\(\tau_0\), together with the derivative bounds, keep the state in a compact
subset of the regular ODE domain on every finite interval.  The standard
continuation criterion therefore rules out a finite maximal time and gives
global existence.

Weighted Cauchy--Schwarz in \eqref{eq:Hprime-global} gives
\[
\mathcal H'\le-\frac{\mathcal H^2}{7}.
\]
The Ricci-flat Hamiltonian constraint has a strictly positive right-hand
side, so \(\mathcal H\) cannot vanish.  Since the singular-orbit expansion gives
\(\mathcal H>0\) for small positive \(\tau\), it follows that
\(\mathcal H>0\) on the whole interval of existence.  Integrating
\[
\mathcal H'\le-\frac{\mathcal H^2}{7}
\]
from any \(\tau_0>0\) gives
\[
\frac1{\mathcal H(\tau)}
\ge
\frac1{\mathcal H(\tau_0)}+\frac{\tau-\tau_0}{7},
\qquad \tau\ge\tau_0,
\]
and therefore \(\mathcal H(\tau)\to0\) as \(\tau\to\infty\).  If, say, \(B_\rho\to B_\infty<\infty\), then
\[
0<\frac{B_\rho'}{B_\rho}\le\frac{\mathcal H}{2}\to0,
\]
so \(B_\rho'\to0\), while the \(B_\rho\)-equation yields
\[
B_\rho''\to\frac{1}{B_\infty}>0,
\]
a contradiction.  The arguments for \(A_\rho\) and \(C_\rho\) are identical.
\end{proof}

\subsection{Mean-curvature normalized phase space}
\label{subsec:mean-curvature-normalized}

Theorem~\ref{thm:RF-global-existence} tells us that the three radii grow
forever, but by itself it does not say whether their \emph{ratios} settle
down.  That relative shape is what matters for detection.  To see it, we
divide out the overall radial scale and retain only the relative expansion
rates and relative orbit sizes.  This normalization forgets the absolute size of
the metric, but keeps precisely the shape information needed to distinguish
the two anisotropy channels.  It also turns the asymptotic regimes at
infinite physical distance into finite points of a compact phase space.

For the asymptotic geometry we use mean-curvature normalized variables in
the Dancer--Wang/Chi phase-space tradition
\cite{Chi2019,DancerWang2000,DancerWang2009Expanding}.  Related
three-summand Ricci-flat systems, Hamiltonian formulations, and
superpotential structures are developed in
\cite{DancerWang2000,DancerWang2005,DancerWang2011}.  Nienhaus and
Wink~\cite{NienhausWink2025} likewise use normalized variables built from
orbit radii and their logarithmic derivatives in their two-factor
positive-Einstein problem.  We adapt the normalized dynamical framework
here to the present product-sphere family with dimensions \((2,2,3)\).

\begin{remark}[What the normalized variables do]
The variables \(X_i\) record each logarithmic expansion rate relative to
the total mean curvature \(\mathcal H\), while the variables \(Y_i\) record
each orbit radius in the corresponding mean-curvature scale.  Thus they
discard the absolute size of the metric but retain its relative shape.
This has an important practical effect: geometric limits that occur at
\(\tau=0\) or only after very large changes of scale can be represented by
ordinary finite points of the normalized phase space.  We use this
compactified description to follow the small-ratio limit and then return to
the physical anisotropy variables \(\zeta,\xi\) and their response
coefficients \(Q,X\).
\end{remark}

Define
\[
X_A=\frac{\alpha}{\mathcal H},\qquad
X_B=\frac{\beta}{\mathcal H},\qquad
X_C=\frac{\gamma}{\mathcal H},
\]
\[
Y_A=\frac1{A_\rho\mathcal H},\qquad
Y_B=\frac1{B_\rho\mathcal H},\qquad
Y_C=\frac1{C_\rho\mathcal H},
\qquad
d\eta=\mathcal H\,d\tau,
\]
and
\[
G=2X_A^2+2X_B^2+3X_C^2.
\]
A dot denotes \(d/d\eta\).  The variable \(\eta\) is determined only up
to an additive constant; for example, one may fix a reference point
\(\tau_*>0\) and set
\[
\eta(\tau)=\int_{\tau_*}^{\tau}\mathcal H(s)\,ds.
\]
For a fixed \(\rho>0\), we denote by
\[
U_\rho(\eta)
=
\bigl(X_A,X_B,X_C;Y_A,Y_B,Y_C\bigr)(\eta)
\]
the normalized trajectory associated with the Ricci-flat inner solution
\((A_\rho,B_\rho,C_\rho)\).

\begin{lemma}[Exact normalized system]\label{lem:normalized}
The Ricci-flat equations imply
\begin{subequations}\label{eq:normalized}
\begin{align}
\dot X_A&=X_A(G-1)+Y_A^2,\\
\dot X_B&=X_B(G-1)+Y_B^2,\\
\dot X_C&=X_C(G-1)+2Y_C^2,\\
\dot Y_A&=Y_A(G-X_A),\\
\dot Y_B&=Y_B(G-X_B),\\
\dot Y_C&=Y_C(G-X_C).
\end{align}
\end{subequations}
Moreover the flow is constrained by
\begin{equation}\label{eq:linearconstraint}
2X_A+2X_B+3X_C=1
\end{equation}
and
\begin{equation}\label{eq:quadraticconstraint}
G+2Y_A^2+2Y_B^2+6Y_C^2=1.
\end{equation}
Both constraints are invariant under \eqref{eq:normalized}.
\end{lemma}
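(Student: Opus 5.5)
This is a direct computation, which I will organize around the logarithmic derivatives. Write \(' = d/d\tau\) and note that \(\dot{} = \mathcal H^{-1}\,d/d\tau\).

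\textbf{Riccati form of the tangential equations.} Rewrite each Ricci-flat equation in \eqref{eq:RF-global} for \(\alpha=A_\rho'/A_\rho\). Since \(\alpha'=A_\rho''/A_\rho-\alpha^2\), the \(A\)-equation becomes
\[
\alpha'=\frac1{A_\rho^2}-\alpha^2-2\alpha\beta-3\alpha\gamma-\alpha^2+\alpha^2=\frac1{A_\rho^2}-\alpha\mathcal H .
\]
Indeed, \((1-(A')^2)/A^2=1/A^2-\alpha^2\), and the cross terms give \(-\alpha(2\beta+3\gamma)=-\alpha(\mathcal H-2\alpha)\), so the total is \(1/A^2-\alpha^2-\alpha\mathcal H+2\alpha^2-\alpha^2=1/A^2-\alpha\mathcal H\). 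The \(B\)-equation gives \(\beta'=1/B_\rho^2-\beta\mathcal H\) in the same way. For \(C\) the collapsing-sphere term carries the factor \(2\) (from \(\Ric(g_{S^3})=2g_{S^3}\)), and the bookkeeping \(-2\gamma^2-\gamma(2\alpha+2\beta)-\gamma^2+\ldots\) gives \(\gamma'=2/C_\rho^2-\gamma\mathcal H\).

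\textbf{Normalized \(X\)-equations.} Use \eqref{eq:Hprime-global} in the form \(\mathcal H'=-\mathcal H^2G\). Then
\[
\dot X_A=\frac{1}{\mathcal H}\Bigl(\frac{\alpha'}{\mathcal H}-\frac{\alpha\mathcal H'}{\mathcal H^2}\Bigr)
=\frac{1}{A_\rho^2\mathcal H^2}-X_A+X_AG ,
\]
which is the first equation of \eqref{eq:normalized}. The equations for \(X_B\) and \(X_C\) follow identically, with the factor \(2\) appearing for \(X_C\).

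\textbf{Normalized \(Y\)-equations.} Compute \(Y_A'=-Y_A(\alpha+\mathcal H'/\mathcal H)=-Y_A(\alpha-\mathcal H G)\). Dividing by \(\mathcal H\) gives \(\dot Y_A=Y_A(G-X_A)\), and likewise for \(Y_B\) and \(Y_C\).

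\textbf{Constraints.} The linear constraint \eqref{eq:linearconstraint} is the definition of \(\mathcal H\) divided by \(\mathcal H\). The quadratic constraint \eqref{eq:quadraticconstraint} is \eqref{eq:Hamiltonian-global} divided by \(\mathcal H^2\), which is legitimate because \(\mathcal H>0\) for \(\tau>0\) by Theorem~\ref{thm:RF-global-existence}.

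\textbf{Invariance of the constraints.} This must hold for the abstract system \eqref{eq:normalized}, not merely along solutions, so I check it directly. Set \(\ell=2X_A+2X_B+3X_C\) and \(q=2Y_A^2+2Y_B^2+6Y_C^2\). Then
\[
\dot\ell=\ell(G-1)+q .
\]
For \(G\), differentiate \(\sum d_iX_i^2\) to get
\[
\dot G=2G(G-1)+4X_AY_A^2+4X_BY_B^2+12X_CY_C^2 .
\]
Differentiating \(q\) gives
\[
\dot q=2Gq-4X_AY_A^2-4X_BY_B^2-12X_CY_C^2 .
\]
Adding, the mixed terms cancel and
\[
\frac{d}{d\eta}(G+q-1)=2G(G-1)+2Gq=2G(G+q-1).
\]
Thus \(\{G+q=1\}\) is invariant by uniqueness for the linear scalar ODE. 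On this set \(q=1-G\), so \(\dot\ell-1=(\ell-1)(G-1)\), and \(\{\ell=1\}\) is invariant as well.

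\textbf{Main obstacle.} The only delicate point is the \(C\)-factor bookkeeping. The factor \(2\) in front of \(Y_C^2\) in \(\dot X_C\) and the weight \(6\) in \(q\) must combine so that the mixed terms cancel in \(\dot G+\dot q\). They do, because \(3\cdot 2=6\).
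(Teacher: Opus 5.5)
Your proof is correct and follows the paper's own route. You use the Riccati identities \(\alpha'=1/A_\rho^2-\alpha\mathcal H\), \(\beta'=1/B_\rho^2-\beta\mathcal H\), \(\gamma'=2/C_\rho^2-\gamma\mathcal H\) with \(\mathcal H'/\mathcal H^2=-G\); you obtain the constraints by dividing the definition of \(\mathcal H\) by \(\mathcal H\) and the Hamiltonian constraint by \(\mathcal H^2\); and you get invariance from \(\tfrac{d}{d\eta}(G+q-1)=2G(G+q-1)\) and \(\dot\ell=(G-1)(\ell-1)+(G+q-1)\).

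There are two harmless slips. Your first display for \(\alpha'\) carries a stray \(+\alpha^2\), although the verbal bookkeeping after it is right. Also, on \(\{G+q=1\}\) the identity should read \(\dot\ell=(\ell-1)(G-1)\), not \(\dot\ell-1=(\ell-1)(G-1)\).
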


\begin{proof}
From \eqref{eq:RF-global},
\[
\alpha'=\frac1{A_\rho^2}-\mathcal H\alpha,\qquad
\beta'=\frac1{B_\rho^2}-\mathcal H\beta,\qquad
\gamma'=\frac2{C_\rho^2}-\mathcal H\gamma.
\]
Equation \eqref{eq:Hprime-global} gives \(\mathcal H'/\mathcal H^2=-G\).  Differentiating the
definitions with respect to \(\eta\) gives \eqref{eq:normalized}.
Equation \eqref{eq:linearconstraint} is the definition of \(\mathcal H\) divided by
\(\mathcal H\), and \eqref{eq:quadraticconstraint} is
\eqref{eq:Hamiltonian-global} divided by \(\mathcal H^2\).

A direct differentiation gives
\[
\dot F_1=(G-1)F_1+F_2,\qquad \dot F_2=2GF_2,
\]
for the left-hand sides \(F_1,F_2\) of the two constraints; hence their
common zero set is invariant.
\end{proof}

Define
\[
\mathcal P=
\left\{
(X,Y):X_i,Y_i\ge0,\ 
\eqref{eq:linearconstraint},\ \eqref{eq:quadraticconstraint}
\right\}.
\]

\begin{lemma}[Compact positive phase space]\label{lem:compact}
The set \(\mathcal P\) is compact and forward invariant.  Its relative interior
with \(Y_A Y_B Y_C>0\) is forward invariant.
\end{lemma}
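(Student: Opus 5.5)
The plan has two parts: compactness by elementary bounds, and forward invariance by checking that no coordinate face $X_i=0$ or $Y_i=0$ can be crossed under \eqref{eq:normalized} on the constraint set.

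\textbf{Compactness.} $\mathcal P$ is an intersection of closed sets, namely the closed orthant and the zero sets of the continuous functions in \eqref{eq:linearconstraint}--\eqref{eq:quadraticconstraint}, so it is closed. It is bounded because every term on the left of \eqref{eq:quadraticconstraint} is nonnegative, so $2X_A^2+2X_B^2+3X_C^2\le1$ and $2Y_A^2+2Y_B^2+6Y_C^2\le1$. Hence $\mathcal P$ is compact. Two further consequences of the constraints will be used below: $0\le G\le1$, and every $X_i\le 1/2$, which follows from \eqref{eq:linearconstraint} together with $X_j\ge0$.

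\textbf{Forward invariance.} Lemma~\ref{lem:normalized} shows that the two constraints are preserved. The vector field is polynomial, and compactness gives a priori bounds, so any solution starting in $\mathcal P$ exists as long as it stays in $\mathcal P$.

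The $Y$-faces are handled by linearity. Each $Y_i$-equation has the form $\dot Y_i=Y_i\,(G-X_i)$. Integrating gives
\[
Y_i(\eta)=Y_i(0)\exp\int_0^\eta (G-X_i),
\]
so the sign of $Y_i$ is preserved exactly. In particular, positive coordinates stay positive, which already gives invariance of the relative interior with respect to the $Y$-variables, and $Y_i=0$ is itself invariant.

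The $X$-faces use the forcing term. On the face $X_i=0$ we have $\dot X_i=cY_i^2\ge0$ with $c\in\{1,2\}$. To turn this into honest invariance I would write
\[
\dot X_i=(G-1)X_i+cY_i^2
\]
and treat it by variation of constants. With $\mu(\eta)=\exp\int_0^\eta(1-G)$, this gives
\[
X_i(\eta)=\mu(\eta)^{-1}\Bigl(X_i(0)+\int_0^\eta \mu\, cY_i^2\Bigr)\ge0,
\]
as long as the solution is defined. This argument does not even require staying in $\mathcal P$ a priori.

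Combining these, the orthant condition persists along any trajectory starting in $\mathcal P$. The constraints are preserved by Lemma~\ref{lem:normalized}, so the trajectory stays in the compact set $\mathcal P$. It is therefore defined for all $\eta\ge0$, and $\mathcal P$ is forward invariant.

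\textbf{Interior.} For the relative interior, $Y_AY_BY_C>0$ is preserved by the exponential formula above. Moreover, when $Y_i>0$, the integral term in the variation-of-constants formula is strictly positive for $\eta>0$. So even $X_i>0$ becomes strict immediately, although only the $Y$-positivity is required.

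\textbf{Main obstacle.} The only delicate point is making the boundary argument rigorous, rather than relying on a heuristic about the vector field pointing inward. The explicit integrating-factor formulas above handle this directly, so I expect no real difficulty.
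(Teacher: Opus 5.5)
Your proof is correct and follows the same route as the paper: compactness comes from the bounds supplied by the two constraints, invariance of the faces \(Y_i=0\) comes from the factor \(Y_i\) in \(\dot Y_i\), and the faces \(X_i=0\) cannot be crossed because of the nonnegative forcing \(cY_i^2\). Your integrating-factor formulas, and your remark that these sign arguments hold on the whole maximal existence interval so that compactness gives forward completeness, make rigorous the paper's brief boundary-sign argument, where \(\dot X_i\ge0\) on the face is only a weak inequality.
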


\begin{proof}
The two constraints give uniform bounds for every coordinate, so
\(\mathcal P\) is compact.  If \(X_A=0\), then
\(\dot X_A=Y_A^2\ge0\), and analogously for \(X_B,X_C\).
Each hypersurface \(Y_i=0\) is invariant because the corresponding
equation has a factor \(Y_i\).  Thus the positive region is forward
invariant.
\end{proof}

\subsection{Three geometric equilibria}

The normalized variables introduced in the previous subsection turn
self-similar limiting geometries into equilibrium points of the dynamicalg
system.  Three such points are important here.  They represent,
respectively, the geometry near the left singular orbit, an intermediate
limiting geometry that appears in the small-ratio analysis, and the
Ricci-flat cone approached at large distance.  We denote them by
\(P_0,P_5,P_7\) and identify them below.

First consider the normalized trajectory \(U_\rho\) associated with a fixed
Ricci-flat inner solution.  The expansion at the left singular orbit is
\[
A_\rho(\tau)=\tau+O(\tau^3),\qquad
B_\rho(\tau)=1+O(\tau^2),\qquad
C_\rho(\tau)=\rho+O(\tau^2),
\]
so
\[
\mathcal H(\tau)=\frac2{\tau}+O(\tau).
\]
Since \(d\eta=\mathcal H\,d\tau\), integration gives
\[
\eta=2\log\tau+O(1)
\qquad(\tau\to0^+).
\]
Thus \(\tau\to0^+\), which means approaching the left singular orbit,
corresponds to \(\eta\to-\infty\).  Here \(-\infty\) is not an additional
point of the physical radial domain: it describes the limiting end of the
new time coordinate \(\eta\).  Substituting the above expansions into the
definitions of \(X_i,Y_i\) gives
\[
U_\rho(\eta)\longrightarrow
P_0:=
\left(\frac12,0,0;\frac12,0,0\right)
\qquad(\eta\to-\infty).
\]
Thus \(P_0\) is the normalized representation of the left singular orbit.

The other two limiting geometries correspond to the equilibrium points
\[
P_5:=
\left(\frac15,0,\frac15;\frac25,0,\frac{\sqrt2}{5}\right),
\]
\[
P_7:=
\left(\frac17,\frac17,\frac17;
\frac{\sqrt6}{7},\frac{\sqrt6}{7},\frac{\sqrt3}{7}\right).
\]
Direct substitution shows that all three are equilibria of
\eqref{eq:normalized} on \(\mathcal P\).

\begin{remark}[Geometric meaning of the three equilibria]
The three points describe three different stages of the limiting geometry.
The point \(P_0\) represents the left singular-orbit end of every normalized
inner solution.  The point \(P_5\) appears only in the limiting process
\(\rho\to0\): it is not itself a smooth member of the physical singular-orbit family,
because setting \(\rho=0\) would give \(C_\rho(0)=0\).  Finally, \(P_7\)
represents the Ricci-flat cone that will govern the large-\(\tau\)
asymptotics.  The small-ratio analysis below shows that these stages are
connected by the limiting trajectory
\[
P_0\longrightarrow P_5\longrightarrow P_7.
\]
\end{remark}

\begin{proposition}[Constrained spectra]\label{prop:spectra}
On the tangent space of the normalized phase space cut out by the linear
constraint \eqref{eq:linearconstraint} and the quadratic constraint
\eqref{eq:quadraticconstraint},
\[
\operatorname{spec}DF(P_0)
=
\left\{-\frac12,-\frac12,\frac12,\frac12\right\},
\]
\[
\operatorname{spec}DF(P_5)
=
\left\{-\frac45,-\frac25+\frac{2i}{5},
-\frac25-\frac{2i}{5},\frac15\right\},
\]
and
\[
\operatorname{spec}DF(P_7)
=
\left\{-\frac37+\frac{i\sqrt3}{7},
-\frac37-\frac{i\sqrt3}{7}\right\},
\]
the last pair having complex multiplicity two.
At \(P_5\), the unstable eigenspace is one-dimensional and is spanned by
the coordinate vector that varies \(Y_B\) while leaving the other five
normalized coordinates fixed.
\end{proposition}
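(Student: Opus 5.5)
The proof will be a direct linearization computation at each of the three equilibria.

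\emph{Setup.} Write $F$ for the vector field \eqref{eq:normalized} on $\mathbb R^6$. At each point $P\in\{P_0,P_5,P_7\}$ we first compute the full $6\times6$ Jacobian $DF(P)$. The linearization of $X_i(G-1)+k_iY_i^2$ has entries $(G-1)\delta_{ij}+X_i\,\partial_jG$ in the $X$-columns, with $\partial_{X_A}G=4X_A$, $\partial_{X_B}G=4X_B$, $\partial_{X_C}G=6X_C$, and entries $2k_iY_i$ in the $Y$-columns. The linearization of $Y_i(G-X_i)$ has entries $Y_i\,\partial G-Y_i e_i$ in the $X$-columns and $(G-X_i)\delta_{ij}$ in the $Y$-columns.

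We then restrict $DF(P)$ to the tangent space $T_P$ of the constraint set, namely the kernel of the differentials of \eqref{eq:linearconstraint} and \eqref{eq:quadraticconstraint}:
\[
2v_A+2v_B+3v_C=0,\qquad 4X_Av_A+4X_Bv_B+6X_Cv_C+4Y_Aw_A+4Y_Bw_B+12Y_Cw_C=0.
\]
At each of the three points these two covectors are independent, so $T_P$ is four-dimensional. Lemma~\ref{lem:normalized} shows that the constraint functions satisfy the closed linear system $\dot F_1=(G-1)F_1+F_2$, $\dot F_2=2GF_2$. Hence $T_P$ is $DF(P)$-invariant, and the two remaining eigenvalues of the full Jacobian are the transverse ones, $G(P)-1$ and $2G(P)$, read off from the triangular matrix of that system. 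So the practical plan is: compute the characteristic polynomial of the $6\times6$ Jacobian, divide out $(\lambda-(G-1))(\lambda-2G)$, and identify the quotient quartic. At each point, $G$ takes the values $G(P_0)=\tfrac12$, $G(P_5)=\tfrac15$, $G(P_7)=\tfrac17$.

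\emph{$P_0$ and $P_5$.} Many coordinates vanish at these points, so the matrices decouple.

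At $P_0$, the rows for $X_B,X_C,Y_B,Y_C$ are diagonal with entries $G-1=-\tfrac12$ (twice) and $G=\tfrac12$ (twice), because the off-diagonal contributions carry factors $X_B,X_C,Y_B,Y_C$, all of which vanish. The remaining $(X_A,Y_A)$ block is $\begin{pmatrix}-\tfrac12+1&1\\ \tfrac12&0\end{pmatrix}$, with eigenvalues $1$ and $-\tfrac12$. These two are exactly the transverse ones: $2G=1$ and $G-1=-\tfrac12$. Among the six eigenvalues $\{1,-\tfrac12,-\tfrac12,-\tfrac12,\tfrac12,\tfrac12\}$, removing the transverse pair leaves $\{-\tfrac12,-\tfrac12,\tfrac12,\tfrac12\}$, as claimed.

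At $P_5$, we have $X_B=Y_B=0$. The $Y_B$ row is therefore diagonal with entry $G-X_B=\tfrac15$, and $e_{Y_B}$ is an eigenvector, since no other row depends on $Y_B$ at $Y_B=0$. This vector lies in $T_P$ because $Y_B=0$ makes its pairing with the quadratic constraint vanish. The $X_B$ row gives $G-1=-\tfrac45$. The remaining $(X_A,X_C,Y_A,Y_C)$ block, together with the $X_B$ coupling in its columns, is $4\times4$. After removing the transverse eigenvalue $2G=\tfrac25$, it must yield $-\tfrac25\pm\tfrac{2i}5$. We will check this via the trace and the determinant of the resulting $2\times2$ quotient. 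Finally, since $\tfrac15$ is the only eigenvalue with positive real part, and the transverse eigenvalue $\tfrac25$ lies off $T_P$, the unstable eigenspace within $T_P$ is exactly $\mathbb R e_{Y_B}$.

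\emph{$P_7$, the main obstacle.} This is the only genuinely coupled case. Here we will exploit the symmetry exchanging $A$ and $B$. This involution fixes $P_7$ and commutes with $F$, so $T_P$ splits into its antisymmetric and symmetric parts.

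The antisymmetric part is spanned by $v_A=-v_B$, $w_A=-w_B$ (with $v_C=w_C=0$), which is a two-dimensional subspace lying in $T_P$. There the matrix is
\[
\begin{pmatrix}G-1+4X^2&4Y\\ -Y&G-X\end{pmatrix}\quad\text{with }X=\tfrac17,\ Y=\tfrac{\sqrt6}7.
\]
Its trace is $-\tfrac67+\tfrac4{49}$, and we check whether this gives $-\tfrac67$ after correctly including the $X_i\partial G$ terms, which cancel in the antisymmetric direction because $\partial G\cdot v$ vanishes there. The characteristic polynomial should be $\lambda^2+\tfrac67\lambda+\tfrac{12}{49}$, whose roots are $-\tfrac37\pm\tfrac{i\sqrt3}7$.

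The symmetric part is four-dimensional. After imposing the two constraints, it contributes a further two-dimensional piece, which we compute explicitly in the basis $(v_A=v_B,\ v_C=-\tfrac43v_A)$ together with a $w$-vector solving the quadratic constraint. We must verify that this piece has the same characteristic polynomial. The bulk of the arithmetic lies here, and an error would show up as a mismatch with the trace $-\tfrac67$. As an independent check, we will confirm that the full six-dimensional characteristic polynomial equals $(\lambda-(G-1))(\lambda-2G)\bigl(\lambda^2+\tfrac67\lambda+\tfrac{12}{49}\bigr)^2$ at $G=\tfrac17$. This also matches the cone exponents $-\alpha_D\pm i\omega_D$ after rescaling by $\mathcal H\sim 7/\tau$.
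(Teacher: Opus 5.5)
Your approach is correct, and it is the same kind of computation as the paper's: linearize \eqref{eq:normalized} at each equilibrium and restrict to \(\ker dF_1\cap\ker dF_2\). It is organized differently, though.

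The paper eliminates two coordinate variations and records the resulting quartic characteristic polynomials. At \(P_5\) it splits off \(e_{Y_B}\) and a complementary three-dimensional piece. You instead use the identities \(\dot F_1=(G-1)F_1+F_2\), \(\dot F_2=2GF_2\) of Lemma~\ref{lem:normalized}. Differentiated at an equilibrium \(P\) on the constraint set, they give \(dF_1(P)\circ DF(P)=(G-1)\,dF_1(P)+dF_2(P)\) and \(dF_2(P)\circ DF(P)=2G\,dF_2(P)\). So you may read everything off the ambient \(6\times6\) Jacobian, which is transparent at \(P_0,P_5\), after dividing out \(\lambda-(G-1)\) and \(\lambda-2G\). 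You never have to choose coordinates on \(T_P\).

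At \(P_7\), the \(A\leftrightarrow B\) involution explains why the quartic is a perfect square, which the paper only records. The symmetric half is much shorter than you expect. The vectors \(e_1=(1,1,-\tfrac43;0,0,0)\) and \(e_2=(0,0,0;1,1,-\tfrac{2\sqrt2}{3})\) span it, and \(\partial G\cdot(1,1,-\tfrac43)=0\) at \(P_7\). One finds \(DF\,e_1=-\tfrac67e_1-\tfrac{\sqrt6}7e_2\) and \(DF\,e_2=\tfrac{2\sqrt6}7e_1\). This is exactly the matrix \(\begin{pmatrix}-6/7&2\sqrt6/7\\-\sqrt6/7&0\end{pmatrix}\) of the antisymmetric half in the basis \((e_{X_A}-e_{X_B},\,e_{Y_A}-e_{Y_B})\).

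Two slips in your displayed arithmetic must be fixed.

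\emph{At \(P_7\).} The \((1,2)\) entry of the antisymmetric block is \(2Y_A=2\sqrt6/7\), by your own rule \(2k_iY_i\), not \(4Y\). The \((1,1)\) entry is exactly \(G-1=-\tfrac67\), and the \((2,2)\) entry is \(G-X_A=0\). With \(4Y\) the determinant would be \(\tfrac{24}{49}\) and the roots would be \(-\tfrac37\pm\tfrac{i\sqrt{15}}{7}\), not the claimed pair.

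\emph{At \(P_5\).} The \((X_A,X_C,Y_A,Y_C)\) block is completely decoupled from \(X_B,Y_B\), since every coupling entry vanishes at \(X_B=Y_B=0\). It is the Jacobian of the invariant face \(\{X_B=Y_B=0\}\), whose two constraints satisfy the same closed system. Hence this block carries both normal eigenvalues, and its characteristic polynomial is \((\lambda+\tfrac45)(\lambda-\tfrac25)(\lambda^2+\tfrac45\lambda+\tfrac8{25})\). Consequently \(-\tfrac45\) is a double eigenvalue of the full Jacobian. You must remove both \(-\tfrac45\) and \(\tfrac25\) from the block to obtain the \(2\times2\) quotient. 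Removing only \(\tfrac25\) leaves a cubic, not the pair \(-\tfrac25\pm\tfrac{2i}{5}\).

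With these corrections your argument goes through, including the identification of the unstable eigenspace with \(\mathbb R e_{Y_B}\).
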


\begin{proof}
Let
\[
F_1=2X_A+2X_B+3X_C-1,\qquad
F_2=G+2Y_A^2+2Y_B^2+6Y_C^2-1.
\]
At an equilibrium \(P\in\mathcal P\), the constrained linearization means
the Jacobian of \eqref{eq:normalized} restricted to
\[
T_P\mathcal P=\ker dF_1(P)\cap\ker dF_2(P).
\]
We now compute this restriction at the three equilibria.

At \(P_5\) one has \(Y_B=0\).  Consequently the coordinate vector
\[
e_{Y_B}=(0,0,0;0,1,0)
\]
satisfies both linearized constraints and hence belongs to
\(T_{P_5}\mathcal P\).  Moreover,
\[
\dot Y_B=Y_B(G-X_B)
\]
gives
\[
\delta\dot Y_B=\frac15\,\delta Y_B
\]
at \(P_5\).  Thus \(e_{Y_B}\) is an eigenvector with the positive
eigenvalue \(1/5\); we call this the \(Y_B\)-direction.
Restricting the Jacobian to a complementary three-dimensional subspace of
\(T_{P_5}\mathcal P\) gives characteristic polynomial
\[
\left(\lambda+\frac45\right)
\left[\left(\lambda+\frac25\right)^2+\frac4{25}\right].
\]
All three roots have negative real part, so the \(Y_B\)-direction is the
entire unstable eigenspace.

At \(P_7\), impose the two linearized constraint equations
\[
dF_1(P_7)\,\delta U=0,\qquad dF_2(P_7)\,\delta U=0.
\]
Solving these two linear relations for two of the six coordinate variations
and substituting them into the linearised system produces a
four-dimensional system on \(T_{P_7}\mathcal P\).  Its characteristic
polynomial is
\[
\left(49\lambda^2+42\lambda+12\right)^2,
\]
which gives the stated conjugate pair, each with multiplicity two.

Finally, at \(P_0\) we again restrict the six-dimensional Jacobian by
imposing
\[
dF_1(P_0)\,\delta U=0,\qquad dF_2(P_0)\,\delta U=0.
\]
The resulting four-dimensional linear map on \(T_{P_0}\mathcal P\) has
characteristic polynomial
\[
\left(\lambda^2-\frac14\right)^2,
\]
and hence eigenvalues \(-1/2,-1/2,1/2,1/2\).
\end{proof}

\subsection{Global asymptotic conicality}

We now show that every member of the Ricci-flat inner family with
positive radius functions approaches the same cone at large distance.  This provides the common
asymptotic background from which the response coefficients are extracted.

\begin{lemma}[Positive normalized orbits converge to \(P_7\)]
\label{lem:positive-to-P7}
Let \(U(\eta)\) be a forward orbit of \eqref{eq:normalized} in
\(\mathcal P\).  If
\[
Y_A(\eta_0)Y_B(\eta_0)Y_C(\eta_0)>0
\]
at one finite time, then
\[
U(\eta)\longrightarrow P_7
\qquad(\eta\to\infty).
\]
\end{lemma}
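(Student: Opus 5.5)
The plan is a LaSalle-type argument driven by a monotone quantity built from the \(Y\)-variables, weighted by the factor dimensions \(2,2,3\). Set
\[
W=Y_A^{2}\,Y_B^{2}\,Y_C^{3}.
\]
By the last three equations of \eqref{eq:normalized} and the linear constraint \eqref{eq:linearconstraint}, on the positive region we get
\[
\frac{\dot W}{W}=2(G-X_A)+2(G-X_B)+3(G-X_C)=7G-1 .
\]
Cauchy--Schwarz applied to \(2X_A+2X_B+3X_C=1\), with weights \(2,2,3\), gives \(G\ge \tfrac17\). Equality holds exactly when \(X_A=X_B=X_C=\tfrac17\). Hence \(W\) is nondecreasing along every orbit in the positive part of \(\mathcal P\), and it is strictly increasing except where all three \(X_i\) equal \(\tfrac17\).

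The steps are as follows.

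\textbf{Step 1.} By Lemma~\ref{lem:compact}, the positivity \(Y_AY_BY_C>0\) at \(\eta_0\) persists for \(\eta\ge\eta_0\). Monotonicity then gives
\[
W(\eta)\ge W(\eta_0)>0 \qquad (\eta\ge\eta_0).
\]
The quadratic constraint \eqref{eq:quadraticconstraint} bounds each \(Y_i\) from above. Together these give uniform positive lower bounds on \(Y_A,Y_B,Y_C\). So the forward orbit stays in a compact subset \(K\) of \(\mathcal P\) on which \(Y_AY_BY_C\) is bounded away from zero. In particular, the \(\omega\)-limit set \(\Omega\) is nonempty, compact, connected, invariant, and contained in the positive region.

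\textbf{Step 2.} The function \(W\) is continuous on \(K\), bounded, and monotone along the orbit, so it converges to some limit \(W_\infty\). Therefore \(W\equiv W_\infty\) on \(\Omega\). Since \(\Omega\) is invariant, every orbit in \(\Omega\) has \(\dot W=0\), which forces \(G\equiv\tfrac17\) on \(\Omega\), that is, \(X_A=X_B=X_C=\tfrac17\) along each orbit in \(\Omega\).

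\textbf{Step 3.} This is the invariance step, and I expect it to be the only point needing care. Along an orbit in \(\Omega\) the \(X_i\) are constant, so \(\dot X_i=0\). The \(X\)-equations then give
\[
Y_A^2=Y_B^2=\tfrac17\bigl(1-\tfrac17\bigr)=\tfrac{6}{49},\qquad 2Y_C^2=\tfrac{6}{49}.
\]
Using \(Y_i\ge0\), this yields
\[
Y_A=Y_B=\tfrac{\sqrt6}{7},\qquad Y_C=\tfrac{\sqrt3}{7},
\]
which is exactly \(P_7\). One should also check that this point satisfies \eqref{eq:quadraticconstraint}: indeed \(\tfrac17+\tfrac{12}{49}+\tfrac{12}{49}+\tfrac{18}{49}=1\). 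Hence \(\Omega=\{P_7\}\).

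\textbf{Step 4.} Because the \(\omega\)-limit set of the precompact forward orbit is the single point \(P_7\), the whole orbit converges, \(U(\eta)\to P_7\). Exponential rates, if needed later, follow from Proposition~\ref{prop:spectra}, since all eigenvalues at \(P_7\) have real part \(-\tfrac37\). The main obstacle is Step 1: ruling out accumulation on the boundary faces \(Y_i=0\), where other equilibria such as \(P_0\) and \(P_5\) live. The weighted product \(W\) handles this precisely because its logarithmic derivative is the sign-definite quantity \(7G-1\).
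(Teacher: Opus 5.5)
Your proposal is correct and follows the paper's proof essentially step for step. You use the same monotone weight $Y_A^2Y_B^2Y_C^3$ with logarithmic derivative $7G-1\ge 0$, the same compact superlevel set that keeps the orbit away from the faces $Y_i=0$, and the same identification of $P_7$ as the only invariant set with $X_A=X_B=X_C=1/7$; the only difference is that you carry out the LaSalle step by hand via the $\omega$-limit set instead of citing LaSalle's invariance principle.
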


\begin{proof}
The relative interior \(Y_AY_BY_C>0\) is forward invariant by
Lemma~\ref{lem:compact}.  Define
\[
V_7=Y_A^2Y_B^2Y_C^3.
\]
Using \eqref{eq:normalized} and \eqref{eq:linearconstraint},
\[
\frac{\dot V_7}{V_7}
=
2(G-X_A)+2(G-X_B)+3(G-X_C)
=7G-1.
\]
Weighted Cauchy--Schwarz gives
\[
G\ge
\frac{(2X_A+2X_B+3X_C)^2}{7}
=\frac17,
\]
so \(V_7\) is nondecreasing.  Since \(\mathcal P\) is compact,
the forward orbit is precompact and \(V_7\) is bounded.  Moreover
\(V_7(\eta)\ge V_7(\eta_0)>0\).  Hence the forward orbit is contained in
\[
\mathcal K_{\eta_0}
:=
\{U\in\mathcal P:V_7(U)\ge V_7(U(\eta_0))\},
\]
a compact positively invariant subset of the strict interior
\(Y_AY_BY_C>0\).  Thus the quotient \(\dot V_7/V_7\) used above is
legitimate on the entire invariant set to which LaSalle's invariance
principle~\cite{LaSalleLefschetz1961} will be applied, and its \(\omega\)-limit set cannot meet
\(Y_AY_BY_C=0\).

Equality \(\dot V_7=0\) holds precisely when
\[
X_A=X_B=X_C=\frac17.
\]
A complete invariant orbit contained in this equality set must also have
\(\dot X_A=\dot X_B=\dot X_C=0\).  The first three equations of
\eqref{eq:normalized} then force
\[
Y_A=Y_B=\frac{\sqrt6}{7},\qquad
Y_C=\frac{\sqrt3}{7}.
\]
Thus \(P_7\) is the maximal invariant subset of the zero set of
\(\dot V_7\) contained in \(\mathcal K_{\eta_0}\).  LaSalle's invariance
principle on this compact positively invariant set gives
\(U(\eta)\to P_7\).
\end{proof}

\begin{theorem}[Global asymptotic conicality]
\label{thm:global-AC}
For every \(\rho>0\), let \(U_\rho\) be the normalized trajectory defined
in Subsection~4.2 for the Ricci-flat inner solution of
Theorem~\ref{thm:RF-global-existence}.  Then
\[
U_\rho(\eta)\longrightarrow P_7
\qquad(\eta\to\infty).
\]
Consequently
\[
\frac{A_\rho(\tau)}{\tau}\to\frac1{\sqrt6},\qquad
\frac{B_\rho(\tau)}{\tau}\to\frac1{\sqrt6},\qquad
\frac{C_\rho(\tau)}{\tau}\to\frac1{\sqrt3}.
\]
\end{theorem}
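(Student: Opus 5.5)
The plan is to reduce the convergence statement to Lemma~\ref{lem:positive-to-P7}, and then convert normalized convergence into the physical ratios by integrating along \(\eta\).

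\textbf{Step 1: the orbit is positive and defined for all \(\eta\).} Fix \(\rho>0\) and a reference point \(\tau_*>0\). By Theorem~\ref{thm:RF-global-existence}, \(A_\rho,B_\rho,C_\rho>0\) and their derivatives lie in \((0,1)\) for \(\tau>0\). The proof of that theorem also gives \(\mathcal H>0\) and \(\mathcal H\to0\), so all \(X_i,Y_i\) are positive and the constraints hold by Lemma~\ref{lem:normalized}. Hence \(U_\rho(\eta)\in\mathcal P\) with \(Y_AY_BY_C>0\).

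Next I check that \(\eta\to\infty\) as \(\tau\to\infty\), i.e. that \(\int^\infty\mathcal H\,d\tau=\infty\). The integral of \(2\alpha+2\beta+3\gamma\) equals \(\log(A_\rho^2B_\rho^2C_\rho^3)\), which tends to \(\infty\) because all radii diverge. So the normalized trajectory is a forward orbit on \([\eta(\tau_*),\infty)\), and Lemma~\ref{lem:positive-to-P7} applies, giving \(U_\rho(\eta)\to P_7\).

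\textbf{Step 2: the mean curvature behaves like \(7/\tau\).} Since \(\mathcal H'=-G\mathcal H^2\) and \(G\to G(P_7)=7\cdot\frac1{49}=\frac17\), we have \((1/\mathcal H)'=G\to\frac17\). L'Hôpital (or Cesàro averaging) then gives \(\tau\mathcal H(\tau)\to7\).

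\textbf{Step 3: the radius ratios.} Use \(Y_A=1/(A_\rho\mathcal H)\to\sqrt6/7\). Then
\[
\frac{A_\rho}{\tau}=\frac{1}{Y_A\,\tau\mathcal H}\to\frac{1}{(\sqrt6/7)\cdot7}=\frac1{\sqrt6}.
\]
In the same way \(B_\rho/\tau\to1/\sqrt6\), and \(Y_C\to\sqrt3/7\) gives \(C_\rho/\tau\to1/\sqrt3\).

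As a consistency check, the cone radii should satisfy \(\mathcal H=7/\tau\) and the Hamiltonian constraint \(49=7+2\cdot6+2\cdot6+6\cdot3\). This holds, since \(7+12+12+18=49\).

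\textbf{Main obstacle.} The only delicate point is Step 2, where the normalized convergence must be converted into a statement about the physical variable \(\tau\). This requires that \(\eta\) covers a full half-line, so the forward limit in Lemma~\ref{lem:positive-to-P7} is attained, and that \(\mathcal H\) is comparable to \(7/\tau\). Both follow from \(G\to\frac17\) together with the divergence of the radii, so I do not expect a real difficulty. No rate of convergence is needed for this statement; rates only matter later for the response coefficients.
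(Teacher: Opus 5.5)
Your proposal is correct and follows the paper's proof. You apply Lemma~\ref{lem:positive-to-P7} to the positive normalized orbit, then use \(G\to\frac17\) to get \(\tau\mathcal H\to7\), and read off the radii from \(A_\rho\mathcal H=Y_A^{-1}\) and the analogous identities. The differences are cosmetic: you explicitly check that \(\eta\to\infty\) as \(\tau\to\infty\) via \(\int\mathcal H\,d\tau=\log(A_\rho^2B_\rho^2C_\rho^3)+\mathrm{const}\), which the paper leaves implicit, and you run the l'H\^opital step in \(\tau\) rather than in \(\eta\).
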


\begin{proof}
For every \(\rho>0\), the normalized trajectory \(U_\rho\) has
\(Y_A,Y_B,Y_C>0\) at every finite normalized time.  Hence
Lemma~\ref{lem:positive-to-P7} gives \(U_\rho(\eta)\to P_7\).

Reconstruction gives the physical cone ratios directly.
Indeed
\[
\frac{d\log \mathcal H}{d\eta}=-G\to-\frac17,\qquad
\frac{d\tau}{d\eta}=\frac1{\mathcal H},
\]
and the retained physical global continuation implies \(\tau\to\infty\).
To make the reconstruction step explicit, write
\[
\tau\mathcal H=\frac{\tau}{1/\mathcal H}.
\]
Both numerator and denominator tend to \(+\infty\), and
\[
\frac{d\tau}{d\eta}=\frac1{\mathcal H},
\qquad
\frac{d}{d\eta}\left(\frac1{\mathcal H}\right)
=\frac{G}{\mathcal H}.
\]
Therefore l'Hospital's rule gives
\[
\lim_{\eta\to\infty}\tau \mathcal H
=\lim_{\eta\to\infty}\frac1G
=7.
\]
Since \(A_\rho\mathcal H=Y_A^{-1}\), \(B_\rho\mathcal H=Y_B^{-1}\), and \(C_\rho\mathcal H=Y_C^{-1}\),
the asserted ratios follow.
\end{proof}

\begin{corollary}[Uniform cone entry on compact ratio intervals]
\label{cor:chi-uniform-entry}
Let \(J\Subset(0,\infty)\).  There are a neighborhood
\(\mathcal U_7\) of \(P_7\) in \(\mathcal P\), a normalized time
\(\eta_J\), and constants \(C_J,\gamma_J>0\) such that every
\(U_\rho\), \(\rho\in J\), lies in \(\mathcal U_7\) for
\(\eta\ge\eta_J\).

To state the corresponding linear estimate, let \(F\) denote the vector
field in \eqref{eq:normalized}.  Along \(U_\rho\), a tangent perturbation
\(v(\eta)\in T_{U_\rho(\eta)}\mathcal P\) satisfies the constrained
variational equation
\begin{equation}\label{eq:constrained-variational}
\dot v
=
DF(U_\rho(\eta))\,v,
\qquad
v(\eta)\in T_{U_\rho(\eta)}\mathcal P.
\end{equation}
We denote by
\[
\mathcal U_\rho(\eta,\sigma):
T_{U_\rho(\sigma)}\mathcal P
\longrightarrow
T_{U_\rho(\eta)}\mathcal P
\]
its evolution operator, so that
\(v(\eta)=\mathcal U_\rho(\eta,\sigma)v(\sigma)\).
Then
\[
\|\mathcal U_\rho(\eta,\sigma)\|
\le C_J e^{-\gamma_J(\eta-\sigma)}
\qquad(\eta\ge\sigma\ge\eta_J,\ \rho\in J).
\]
\end{corollary}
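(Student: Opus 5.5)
The plan is to combine three ingredients: hyperbolicity of \(P_7\), pointwise convergence \(U_\rho\to P_7\) for each \(\rho\) (Theorem~\ref{thm:global-AC}), and continuous dependence of \(U_\rho\) on \(\rho\) over a compact interval. The only delicate point is how \(\eta\) is normalized.

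\textbf{Step 1: local stability at \(P_7\).} By Proposition~\ref{prop:spectra}, the constrained linearization \(DF(P_7)|_{T_{P_7}\mathcal P}\) has spectrum \(-\tfrac37\pm i\tfrac{\sqrt3}{7}\), so every eigenvalue has real part \(-\tfrac37\). Fix \(0<\gamma<\tfrac37\), say \(\gamma=\tfrac27\). Choose an adapted inner product on \(T_{P_7}\mathcal P\) in which \(\langle DF(P_7)v,v\rangle\le-\gamma'|v|^2\) for some \(\gamma<\gamma'<\tfrac37\). The tangent spaces \(T_U\mathcal P=\ker dF_1(U)\cap\ker dF_2(U)\) vary smoothly near \(P_7\), because the two constraint differentials are independent there. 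So we may transport this inner product, for example by orthogonal projection onto \(T_U\mathcal P\). By continuity of \(DF\), there is a small closed ball \(\mathcal U_7\) around \(P_7\) in \(\mathcal P\) such that for \(U\in\mathcal U_7\) the restricted operator satisfies the same dissipativity inequality with constant \(\gamma\), up to a bounded error from the moving frame. Two consequences follow:
\begin{enumerate}[label=(\alph*)]
\item \(\mathcal U_7\) is forward invariant, since a Lyapunov function \(|U-P_7|^2\) in adapted coordinates decreases there.
\item Any solution of \eqref{eq:constrained-variational} along a trajectory staying in \(\mathcal U_7\) obeys \(\|v(\eta)\|\le C e^{-\gamma(\eta-\sigma)}\|v(\sigma)\|\), where \(C\) comes only from the norm equivalence. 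This is a standard Gr\"onwall argument.
\end{enumerate}

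\textbf{Step 2: uniform entry time.} This is the main obstacle, because \(\eta\) is defined only up to an additive constant depending on a reference point \(\tau_*\). I would fix \(\tau_*=1\) for all \(\rho\), so that \(U_\rho(0)\) is the state at physical time \(\tau=1\). By Theorem~\ref{thm:singular-orbit-germ} applied to the Ricci-flat system (analytic dependence on \(\rho\)) and the global existence in Theorem~\ref{thm:RF-global-existence}, the map \(\rho\mapsto U_\rho(\eta)\) is continuous, uniformly for \(\eta\) in compact sets. For each \(\rho_0\in\bar J\), Theorem~\ref{thm:global-AC} gives a time \(\eta_{\rho_0}\) with \(U_{\rho_0}(\eta_{\rho_0})\) in the interior of a smaller ball \(\tfrac12\mathcal U_7\). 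Continuity then gives an open neighborhood of \(\rho_0\) on which \(U_\rho(\eta_{\rho_0})\in\mathcal U_7\). Compactness of \(\bar J\) yields finitely many such times, and we let \(\eta_J\) be their maximum. Forward invariance from Step~1(a) then gives \(U_\rho(\eta)\in\mathcal U_7\) for all \(\eta\ge\eta_J\) and all \(\rho\in J\).

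\textbf{Step 3: conclusion.} Applying Step~1(b) along each \(U_\rho\) for \(\sigma\ge\eta_J\) gives the operator bound, with \(C_J=C\) and \(\gamma_J=\gamma\). These constants are independent of \(\rho\) because they depend only on \(\mathcal U_7\).

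Two points need care. First, the constrained variational flow must preserve \(T_{U_\rho(\eta)}\mathcal P\). This follows from invariance of the constraints in Lemma~\ref{lem:normalized}: differentiate \(\dot F_1=(G-1)F_1+F_2\) and \(\dot F_2=2GF_2\) along the flow. Second, the Lyapunov estimate must be carried out on these moving subspaces rather than on all of \(\mathbb R^6\). On the full space the extra normal eigenvalues at \(P_7\) come from the constraint equations; they are \(G-1=-\tfrac67\) and \(2G=\tfrac27\), and the latter is positive. So the restriction to \(T_U\mathcal P\) is essential.
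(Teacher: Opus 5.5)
Your proposal is correct. Step~2 is the paper's own compactness argument: pointwise convergence from Theorem~\ref{thm:global-AC}, continuity in \(\rho\) at a finite time, a finite subcover, and a forward-invariant sink neighbourhood. Your choice \(\tau_*=1\) for every \(\rho\) makes explicit the normalization of \(\eta\) that a \(\rho\)-independent \(\eta_J\) requires, which the paper leaves tacit.

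The real difference is in the linear estimate. The paper first obtains uniform exponential convergence \(U_\rho\to P_7\) and transports the moving tangent spaces to \(T_{P_7}\mathcal P\). It then notes that the frame-derivative term and \(DF(U_\rho)-DF(P_7)\) tend to zero uniformly, and appeals to the roughness theorem for exponential dichotomies. You instead use a single adapted inner product in which the constrained linearization at \(P_7\) is strictly dissipative. On a small neighbourhood, this one Lyapunov norm gives both the forward invariance of \(\mathcal U_7\), which the paper only asserts, and the variational bound by Gr\"onwall. It yields any rate below \(3/7\) and needs no cited perturbation theorem. Your route is more elementary and self-contained for a sink; the paper's route is the standard black-box one.

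One phrase needs tightening: the moving-frame error must be \emph{small} on \(\mathcal U_7\), not merely bounded. It is small, being \(O(|F(U)|)=O(|U-P_7|)\). Alternatively, extend the adapted inner product orthogonally to a fixed inner product on \(\mathbb R^6\). For \(U\) near \(P_7\) and \(v\in T_U\mathcal P\), the component of \(v\) off \(T_{P_7}\mathcal P\) is \(O(|U-P_7|\,|v|)\), so \(\langle DF(U)v,v\rangle\le-\gamma|v|^2\) holds with no moving frame at all.

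Your two closing remarks are correct and fill in points the paper uses silently: the variational flow preserves \(T_{U_\rho(\eta)}\mathcal P\), and the positive normal eigenvalue \(2G=2/7\) makes the restriction to the tangent spaces essential.
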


\begin{proof}
The singular-orbit family depends continuously on \(\rho\) on compact finite-time
intervals.  Theorem~\ref{thm:global-AC} gives convergence to the same
hyperbolic sink \(P_7\) for each \(\rho\in J\).  Choose a sufficiently small
forward-invariant neighborhood of \(P_7\).  For each \(\rho_0\in J\),
convergence supplies a finite entry time.  Continuous parameter dependence
on the corresponding finite interval gives a neighborhood of \(\rho_0\)
whose trajectories enter the same sink neighborhood by that time.
Compactness of \(J\) gives a finite subcover, and the maximum of these
finitely many entry times is a common entry time \(\eta_J\).

It remains to justify the uniform estimate for
\eqref{eq:constrained-variational}.  Proposition~\ref{prop:spectra} shows
that the linearization of \(F\) on \(T_{P_7}\mathcal P\) has spectrum
strictly in the left half-plane.  Hence the constant-coefficient
variational equation at \(P_7\) is exponentially stable.  By shrinking
\(\mathcal U_7\), the restricted matrices
\[
DF(U)\big|_{T_U\mathcal P},\qquad U\in\mathcal U_7,
\]
are uniformly close, after a smooth local identification of the tangent
spaces with \(T_{P_7}\mathcal P\), to the stable matrix
\(DF(P_7)|_{T_{P_7}\mathcal P}\).  Choose the local identification smoothly and equal to the identity at
\(P_7\).  There is no circularity here: after the common entry time,
ordinary nonlinear stability of the hyperbolic sink \(P_7\) first gives,
after shrinking \(\mathcal U_7\) if necessary, uniform constants
\(C,\gamma>0\) such that
\[
|U_\rho(\eta)-P_7|
\le
C e^{-\gamma(\eta-\eta_J)}
|U_\rho(\eta_J)-P_7|,
\qquad \rho\in J,\quad \eta\ge\eta_J.
\]
This follows directly from the stable linearization at \(P_7\) and the
quadratic remainder of the smooth vector field, with constants uniform
on the fixed neighborhood.  In the resulting fixed vector space
\(T_{P_7}\mathcal P\), the time dependence of the tangent-space
identification contributes an additional coefficient term.  The preceding
uniform exponential convergence shows that both this term and the
difference from \(DF(P_7)|_{T_{P_7}\mathcal P}\) tend to zero uniformly
for \(\rho\in J\).  The roughness theorem for exponential
dichotomies---that an exponential dichotomy persists, with slightly
weakened constants, under sufficiently small bounded perturbations of the
coefficient matrix---then gives uniform constants \(C_J,\gamma_J>0\);
see, for example, Coppel~\cite[Chapter~4]{Coppel1978}.  This yields the
stated estimate for all \(\rho\in J\) after the common entry time.
\end{proof}

\subsection{Ricci-flat response coefficients}

We now study how the two anisotropies of a Ricci-flat inner solution
behave as \(\tau\to\infty\).  We begin with the limiting linear equation, where the meaning of the
complex response coefficient can be seen explicitly.

\subsubsection{A model calculation for the complex response}
\label{subsec:model-response}

Near the limiting Ricci-flat cone, the leading equation for the first
anisotropy approaches
\begin{equation}\label{eq:model-q}
\zeta_{ss}+6\zeta_s+12\zeta=0,
\end{equation}
where \(s=\log\tau\).  Its characteristic roots are
\[
-3\pm i\sqrt3.
\]
Hence every real solution of \eqref{eq:model-q} has the form
\begin{equation}\label{eq:model-q-solution}
\zeta(s)
=
e^{-3s}
\bigl(
A\cos(\sqrt3\,s)+B\sin(\sqrt3\,s)
\bigr).
\end{equation}
Equivalently, after combining the two real constants into one complex
number \(Q\), we may write
\begin{equation}\label{eq:model-q-complex}
\zeta(s)
=
e^{-3s}\Re\!\left(Qe^{i\sqrt3 s}\right).
\end{equation}
Thus \(Q\) records two pieces of information at once: its modulus gives
the size of the leading anisotropy, while its argument gives its phase.

Since \(\tau=e^s\), formula~\eqref{eq:model-q-complex} becomes
\begin{equation}\label{eq:model-q-tau}
\zeta(\tau)
=
\tau^{-3}
\Re\!\left(
Qe^{i\sqrt3\log\tau}
\right).
\end{equation}
This makes the behavior near the cone transparent.  The factor
\(\tau^{-3}\) tends to zero, so the anisotropy decays, while the factor
\(e^{i\sqrt3\log\tau}\) continues to rotate.  Thus the solution approaches
the cone while its leading anisotropy repeatedly changes sign.

The same calculation explains why the small parameter \(\eps\) produces
a logarithmic phase in the compact Einstein problem.  Recall that the
inner variable is
\[
\tau=\frac{t}{\eps}.
\]
Substituting this into \eqref{eq:model-q-tau} gives
\begin{align}
\zeta(t/\eps)
&=
\left(\frac{\eps}{t}\right)^3
\Re\!\left(
Qe^{i\sqrt3\log(t/\eps)}
\right) \notag\\
&=
\left(\frac{\eps}{t}\right)^3
\Re\!\left(
Qe^{i\sqrt3\log t}
e^{-i\sqrt3\log\eps}
\right).
\label{eq:model-eps-phase}
\end{align}
For fixed \(t>0\), decreasing \(\eps\) therefore does more than decrease
the size of this leading term: it also rotates its phase by
\[
-\sqrt3\log\eps.
\]
For example, replacing \(\eps\) by
\[
e^{-\pi/\sqrt3}\eps
\]
changes this phase by \(\pi\), and hence reverses the sign of the leading
real oscillation.

This model calculation contains the basic mechanism used below.  For the
actual Ricci-flat trajectories \(U_\rho\), the coefficients of the
anisotropy equation are not constant, but they converge to those of
\eqref{eq:model-q}.  The leading large-\(\tau\) behavior is therefore
again encoded by a complex coefficient, now denoted by \(Q(\rho)\).  The
second anisotropy channel similarly gives a complex coefficient
\(X(\rho)\).  The analysis below constructs these coefficients
rigorously and determines how they depend on the Ricci-flat trajectory.

\begin{remark}
The calculation above describes the limiting linearized behavior near the
Ricci-flat cone; equation~\eqref{eq:model-q} is not the full nonlinear
Einstein equation.  It makes the response coefficient and the logarithmic
phase explicit; the corresponding statements for the nonlinear family are
proved below.
\end{remark}

We now return to the nonlinear Ricci-flat family.  Recall that
\[
\zeta_\rho(\tau)=\log\frac{A_\rho(\tau)}{B_\rho(\tau)}
\]
measures the imbalance between the two \(S^2\)-radii, while
\[
\xi_\rho(\tau)
=
\log\frac{C_\rho(\tau)}
{\sqrt2\,\sqrt{A_\rho(\tau)B_\rho(\tau)}}
\]
measures the deviation of the \(S^3\)-radius from the limiting cone
relation
\[
C_\rho=\sqrt2\,\sqrt{A_\rho B_\rho}.
\]
Theorem~\ref{thm:global-AC} implies that both \(\zeta_\rho\) and \(\xi_\rho\)
tend to zero as \(\tau\to\infty\), but their leading decay is oscillatory.

We will show that this leading oscillatory behavior is described by two
complex numbers: \(Q(\rho)\) for \(\zeta_\rho\) and \(X(\rho)\) for
\(\xi_\rho\).  More precisely, they are characterized by
\[
\zeta_\rho(\tau)
=
\tau^{-3}\Re\!\left(Q(\rho)\tau^{i\sqrt3}\right)
+o(\tau^{-3}),
\qquad
\xi_\rho(\tau)
=
\tau^{-3}\Re\!\left(X(\rho)\tau^{i\sqrt3}\right)
+o(\tau^{-3}).
\]
We call \(Q(\rho)\) and \(X(\rho)\) the \emph{response coefficients} of
the Ricci-flat inner solution with singular-orbit ratio \(\rho\), and we collect them
as its \emph{response data}
\[
\boxed{\mathcal R(\rho):=(Q(\rho),X(\rho))\in\mathbb C^2.}
\]

For later use we also set
\[
m_\rho(\tau)=\frac12\log\!\bigl(A_\rho(\tau)B_\rho(\tau)\bigr).
\]

These coefficients retain precisely the finite-dimensional information
from the large-\(\tau\) inner asymptotics that will be transferred to the
outer matching problem; the full Ricci-flat trajectory will no longer be
needed.

We next derive the equations that determine these asymptotic modes.  For
the remainder of this subsection, with \(\rho\) fixed, we suppress the
subscript \(\rho\) only on the derived variables \(\zeta_\rho,m_\rho,\xi_\rho\)
and write
\[
\zeta=\log\frac{A_\rho}{B_\rho},\qquad
m=\frac12\log(A_\rho B_\rho),\qquad
\xi=\log\frac{C_\rho}{\sqrt2 e^m}.
\]
A straightforward computation from the Ricci-flat equations gives the
exact identities
\begin{equation}\label{eq:q-exact-global}
\zeta''+\mathcal H \zeta'+2e^{-2m}\sinh \zeta=0
\end{equation}
and
\begin{equation}\label{eq:xi-exact-global}
\xi''+\mathcal H\xi'
+e^{-2m}\bigl(\cosh \zeta-e^{-2\xi}\bigr)=0.
\end{equation}
We refer to the pair \eqref{eq:q-exact-global}--\eqref{eq:xi-exact-global}
as the \emph{anisotropy system}: it is simply the system satisfied by the
two variables \(\zeta\) and \(\xi\) that measure departure from the limiting
cone shape.

To read off the oscillatory decay, set
\[
s=\log\tau
\]
and let a dot denote \(d/ds\) in this subsection.  Then
\(\tau^2f''=\ddot f-\dot f\), and the \(\zeta\)-equation becomes
\begin{equation}\label{eq:q-linear-global}
\ddot \zeta+(\tau\mathcal H-1)\dot \zeta
+2\tau^2e^{-2m}\frac{\sinh q}{q}\,\zeta=0,
\end{equation}
with the smooth convention \(\sinh \zeta/\zeta=1\) at \(\zeta=0\).  Since
Theorem~\ref{thm:global-AC} gives
\[
\tau\mathcal H\to7,\qquad \tau^2e^{-2m}\to6,
\]
the limiting linear equation is
\[
u_{ss}+6u_s+12u=0,
\]
whose characteristic exponents are
\[
-3\pm i\sqrt3.
\]
This explains both the factor \(\tau^{-3}\) and the oscillation
\(\tau^{i\sqrt3}=e^{i\sqrt3\log\tau}\) in the definition of the response
coefficients.

For \(\xi\), rewrite \eqref{eq:xi-exact-global} as
\begin{align}
\ddot\xi+(\tau\mathcal H-1)\dot\xi
+2\tau^2e^{-2m}\xi
&=f_\rho(s),\label{eq:xi-log-global}\\
f_\rho(s)
&=\tau^2e^{-2m}
\left[e^{-2\xi}-1+2\xi-(\cosh \zeta-1)\right].
\end{align}
We now relate the decay obtained from the normalized phase space to the
logarithmic physical variable \(s=\log\tau\).  Proposition~\ref{prop:spectra}
shows that the constrained eigenvalues at \(P_7\) have real part
\(-3/7\).  Hence hyperbolic-sink estimates give, for every sufficiently
small \(\delta>0\),
\[
U_\rho(\eta)-P_7
=
O\!\left(e^{-(3/7-\delta)\eta}\right).
\]
The reconstruction in Theorem~\ref{thm:global-AC} gives
\(\tau\mathcal H\to7\), and therefore
\[
\frac{d\eta}{ds}=\tau\mathcal H\to7.
\]
After slightly enlarging \(\delta\), these two facts imply
\[
\zeta(s),\xi(s)=O(e^{-(3-\delta)s}).
\]
Since the bracket in the definition of \(f_\rho\) is quadratic in
\(\zeta,\xi\) at the origin and \(\tau^2e^{-2m}\) stays bounded, it follows that
\[
f_\rho(s)=O(e^{-(6-2\delta)s}).
\]
Choose \(\delta<3/2\).  Then
\[
e^{3s}|f_\rho(s)|
=
O(e^{-(3-2\delta)s}),
\]
and hence
\[
\int_{s_0}^{\infty}e^{3s}|f_\rho(s)|\,ds<\infty.
\]
This is the precise meaning of saying that the forcing is integrable after
the response weight \(e^{3s}\) is applied.  The weighted integrability is
what allows the renormalized complex amplitude to converge.

\begin{remark}[Complex-amplitude convention]
For a real solution of the limiting equation
\[
u_{ss}+6u_s+12u=0
\]
we use the convention
\[
u(s)=\Re\!\left(Ze^{(-3+i\sqrt3)s}\right).
\]
The complex number \(Z\) packages the two real constants of this
two-dimensional oscillatory solution: \(|Z|\) is its leading amplitude and
\(\arg Z\) its phase.  It can be recovered directly from \(u,u_s\) by
\[
Z
=
\lim_{s\to\infty}
e^{(3-i\sqrt3)s}
\left[
u(s)-\frac{i}{\sqrt3}\bigl(3u(s)+u_s(s)\bigr)
\right].
\]
We use this same convention for \(Q(\rho)\) and \(X(\rho)\), so their
relative phase is unambiguous.
\end{remark}

\begin{theorem}[Global Ricci-flat response theorem]
\label{thm:global-response}
For every \(\rho>0\) there are unique complex numbers
\(Q(\rho),X(\rho)\) such that
\begin{align}
\zeta_\rho(\tau)
&=
\tau^{-3}\Re\!\left(Q(\rho)\tau^{i\sqrt3}\right)
+o(\tau^{-3}),\\
\xi_\rho(\tau)
&=
\tau^{-3}\Re\!\left(X(\rho)\tau^{i\sqrt3}\right)
+o(\tau^{-3}),
\end{align}
and the corresponding differentiated expansions hold.  The map
\[
\rho\longmapsto(Q(\rho),X(\rho))
\]
is \(C^1\), indeed smooth, on \((0,\infty)\).
\end{theorem}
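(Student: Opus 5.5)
We treat \eqref{eq:q-linear-global} and \eqref{eq:xi-log-global} as perturbations of the constant-coefficient operator \(u_{ss}+6u_s+12u\) and use variation of constants in the complex-amplitude form of the Complex-amplitude convention remark. In the \(\zeta\)-equation we write the coefficients as \(\tau\mathcal H-1=6+p(s)\) and \(2\tau^2e^{-2m}\sinh\zeta/\zeta=12+r(s)\). By the hyperbolic-sink decay at \(P_7\) (Proposition~\ref{prop:spectra}, Corollary~\ref{cor:chi-uniform-entry}) and the reconstruction \(d\eta/ds\to7\), we have \(p,r=O(e^{-\kappa s})\) for some \(\kappa>0\). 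We then set
\[
Z_\zeta(s)=e^{(3-i\sqrt3)s}\Bigl[\zeta-\tfrac{i}{\sqrt3}(3\zeta+\dot\zeta)\Bigr].
\]
A direct computation gives
\[
\dot Z_\zeta=-\tfrac{i}{\sqrt3}\,e^{(3-i\sqrt3)s}\bigl(\ddot\zeta+6\dot\zeta+12\zeta\bigr)=\tfrac{i}{\sqrt3}\,e^{(3-i\sqrt3)s}\bigl(p\dot\zeta+r\zeta\bigr).
\]
The a priori bound \(\zeta,\dot\zeta=O(e^{-(3-\delta)s})\) makes the right side \(O(e^{-(\kappa-\delta)s})\), which is integrable once \(\delta<\kappa\). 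Hence \(Z_\zeta(s)\) converges, and we define \(Q(\rho)=\lim Z_\zeta\). Inverting the relation, \(\zeta=\Re(Z_\zeta e^{(-3+i\sqrt3)s})\) and \(\dot\zeta=\Re((-3+i\sqrt3)Z_\zeta e^{(-3+i\sqrt3)s})\) hold exactly. Since \(Z_\zeta\to Q\), this yields the expansion of \(\zeta_\rho\) and its differentiated form with \(o(\tau^{-3})\) errors. Uniqueness is immediate: if \(\Re(W\tau^{i\sqrt3})=o(1)\) along \(\log\tau\to\infty\), then \(W=0\).

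The same argument applies to \(\xi\), where the extra forcing is \(f_\rho\). Its weighted integrability \(\int e^{3s}|f_\rho|\,ds<\infty\) was already established before the statement. The variable-coefficient corrections are treated as for \(\zeta\). The limit \(X(\rho)\) and its expansion follow.

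For smoothness in \(\rho\), we write
\[
Q(\rho)=Z_\zeta(s_0;\rho)+\int_{s_0}^\infty\dot Z_\zeta(s;\rho)\,ds.
\]
The first term is smooth in \(\rho\) by smooth dependence on data over finite intervals. Smoothness at \(\tau=0\) comes from the analytic singular-orbit germ in Remark~\ref{rem:inner-rescaling}, via Theorem~\ref{thm:singular-orbit-germ}. For the integral, we show that the integrand and all its \(\rho\)-derivatives are bounded by an integrable \(e^{-\kappa' s}\), uniformly for \(\rho\) in a compact \(J\Subset(0,\infty)\). Dominated convergence then gives \(C^k\) for every \(k\).

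This uniform derivative estimate is the main obstacle. For the first \(\rho\)-derivative, \(\partial_\rho U_\rho(\eta)\) solves the constrained variational equation \eqref{eq:constrained-variational}. Corollary~\ref{cor:chi-uniform-entry} therefore gives \(\|\partial_\rho U_\rho(\eta)\|\le C_Je^{-\gamma_J(\eta-\eta_J)}\). To reach the required decay rate, the plan is as follows. A crude rate \(\gamma_J\) only gives decay of the \(\rho\)-derivative of the integrand at a smaller exponent, possibly not beating the \(e^{3s}\) weight. So we differentiate the \emph{linear} \(Z\)-equation itself. The \(\rho\)-derivatives \(\partial_\rho\zeta\) satisfy the same linear equation with forcing \(\partial_\rho p\,\dot\zeta+\partial_\rho r\,\zeta\). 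That forcing is already \(O(e^{-(3-\delta)s})\cdot O(e^{-\gamma s})\), so it is integrable against \(e^{3s}\) for small \(\delta\). Applying the \(Z\)-argument again gives convergence of \(\partial_\rho Z_\zeta\) uniformly on \(J\).

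A similar uniformity is needed for the \(\eta\leftrightarrow s\) reparametrization, that is, for \(\partial_\rho\) of \(s(\eta)\). We get it from \(d s/d\eta=1/(\tau\mathcal H)\) together with the exponential convergence \(\tau\mathcal H\to7\), using the same l'Hospital-type integration as in Theorem~\ref{thm:global-AC}. Higher derivatives follow by induction, with each step losing at most an arbitrarily small \(\delta\) in the exponent.
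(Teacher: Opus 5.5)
Your proposal is correct and follows essentially the paper's route: each complex amplitude is obtained as a variation-of-constants limit (your \(Z_\zeta\) is the complex eigencoordinate of the paper's \(W(s)=e^{-A_\infty(s-s_0)}U(s)\)), and smoothness comes from differentiating under the integral with \(\rho\)-uniform integrable majorants supplied by the variational estimate of Corollary~\ref{cor:chi-uniform-entry}. Your extra step is worth keeping: you differentiate the linear \(\zeta\)-equation in \(\rho\) and rerun the amplitude argument on \(w=\partial_\rho\zeta\), which closes by Gronwall because the exact inversion gives \(|w|+|\dot w|\le(1+2\sqrt3)e^{-3s}|Z_w|\) and \(p,r\in L^1\). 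This makes explicit something the paper leaves implicit, since the term \(e^{3s}E_\rho\,\partial_\rho U\) in the paper's differentiated integrand is integrable only if \(\partial_\rho U\) decays faster than \(e^{-(3-\gamma)s}\), which the unspecified rate \(\gamma_J\) does not by itself guarantee; for the \(\eta\leftrightarrow s\) reparametrization, the needed bound on \(\partial_\rho(\tau\mathcal H)\) at fixed \(s\) comes from differentiating \(\tfrac{d}{ds}(\tau\mathcal H)=\tau\mathcal H\,(1-G\,\tau\mathcal H)\) in \(\rho\), not from the l'Hospital argument, which only yields the limit.
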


\begin{proof}
For either anisotropy channel, write the logarithmic-time first-order
system as
\[
U_s=(A_\infty+E_\rho(s))U+N_\rho(s),
\]
where \(A_\infty\) has eigenvalues \(-3\pm i\sqrt3\).
Theorem~\ref{thm:global-AC} and hyperbolicity of \(P_7\) give
\[
\|E_\rho(s)\|=O(e^{-\gamma s})
\]
for some \(\gamma>0\), uniformly on compact \(\rho\)-intervals.  For the
\(\zeta\)-channel one has \(N_\rho=0\).  For the \(\xi\)-channel,
\eqref{eq:xi-log-global} and the preliminary hyperbolic decay imply
\[
N_\rho(s)=O(e^{-(6-2\delta)s})
\]
for every sufficiently small \(\delta>0\).

Set
\[
W(s)=e^{-A_\infty(s-s_0)}U(s).
\]
Variation of constants gives the exact identity
\[
W(s)
=
U(s_0)
+
\int_{s_0}^{s}
e^{-A_\infty(\sigma-s_0)}
\bigl(E_\rho(\sigma)U(\sigma)+N_\rho(\sigma)\bigr)\,d\sigma.
\]
Since \(e^{-A_\infty\sigma}=O(e^{3\sigma})\), while
\(U(\sigma)=O(e^{-(3-\delta)\sigma})\), the exponential convergence of
\(E_\rho\) makes the first integrand absolutely integrable after choosing
\(\delta>0\) sufficiently small.  The second is integrable by the weighted
estimate for \(N_\rho\) established above.  Hence the integral converges as
\(s\to\infty\), so \(W(s)\) has a finite limit.  Resolving that limiting
real two-vector into the conjugate eigendirections of \(A_\infty\) gives
the unique complex amplitude.  With the convention of the preceding
remark these amplitudes are exactly \(Q(\rho)\) and \(X(\rho)\).
Substitution back into the first-order system gives the differentiated
asymptotic expansions.

It remains to justify the parameter dependence of the response
coefficients.  Fix a compact interval \(J\Subset(0,\infty)\), and use the
common cone-entry time \(\eta_J\) from
Corollary~\ref{cor:chi-uniform-entry}.  Smooth dependence of the finite-time
flow on the singular-orbit parameter implies that
\[
\sup_{\rho\in J}
\left|\partial_\rho U_\rho(\eta_J)\right|<\infty .
\]
Because the normalized vector field itself is independent of \(\rho\), the
parameter derivative
\[
v_\rho(\eta):=\partial_\rho U_\rho(\eta)
\]
satisfies precisely the constrained variational equation
\eqref{eq:constrained-variational}.  The uniform estimate in
Corollary~\ref{cor:chi-uniform-entry} therefore gives
\[
|v_\rho(\eta)|
\le C_J e^{-\gamma_J(\eta-\eta_J)}
|v_\rho(\eta_J)|
\qquad
(\eta\ge\eta_J,\ \rho\in J).
\]
Thus the first \(\rho\)-derivatives of the normalized state converge
exponentially to zero, uniformly for \(\rho\in J\).

The coefficient perturbation \(E_\rho\) and, in the \(\xi\)-channel, the
forcing \(N_\rho\), are smooth functions of the normalized state and of the
anisotropy variables appearing above.  Combining the preceding estimate
with the already established decay of those variables gives, after possibly
decreasing the decay exponent,
\[
\|\partial_\rho E_\rho(s)\|
\le C_J e^{-\gamma'_J s},
\qquad
e^{3s}|\partial_\rho N_\rho(s)|
\le C_J e^{-\gamma'_J s}
\]
for \(s\) sufficiently large and some \(\gamma'_J>0\), uniformly on \(J\).
The same argument applied to the differentiated variational equations gives
the corresponding bounds for every higher \(\rho\)-derivative on compact
parameter intervals.  Consequently the variation-of-constants integrands,
together with all their \(\rho\)-derivatives, admit integrable majorants
independent of \(\rho\in J\).  Differentiation under the integral sign is
therefore justified.  Hence \(Q\) and \(X\) are \(C^1\), and in fact smooth,
on \((0,\infty)\).
\end{proof}

\begin{lemma}[Asymptotic coordinates for the homogeneous channel]
\label{lem:asymptotic-coordinates}
Let
\[
Y_s=(A_\infty+E(s))Y,\qquad s\ge s_0,
\]
where \(E\in L^1([s_0,\infty))\) and \(A_\infty\) is a real
\(2\times2\) matrix with eigenvalues \(-3\pm i\sqrt3\).  Then every real
solution has a unique complex amplitude \(Z\) such that
\[
Y(s)=e^{A_\infty(s-s_0)}\bigl(v_Z+o(1)\bigr),
\]
where \(v_Z\in\mathbb R^2\) is the real vector corresponding to \(Z\) in
the complex eigencoordinate of \(A_\infty\).  Moreover, the map
\[
Y(s_0)\longmapsto (\Re Z,\Im Z)
\]
is a real-linear isomorphism.  In particular, \(Z=0\) if and only if
\(Y\equiv0\).
\end{lemma}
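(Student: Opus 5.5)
The plan is a Levinson-type asymptotic integration in the variable \(W(s)=e^{-A_\infty(s-s_0)}Y(s)\), following the variation-of-constants scheme already used in the proof of Theorem~\ref{thm:global-response}, but now without any a priori decay of \(Y\). The key observation is that \(e^{A_\infty t}\) has both eigenvalues with the \emph{same} real part \(-3\). Consequently \(e^{A_\infty t}=e^{-3t}R(t)\), where \(R(t)\) is conjugate, by a fixed real matrix \(P\) bringing \(A_\infty\) to the form \(-3I+\sqrt3 J\), to a rotation. This gives the uniform bound
\[
\|e^{A_\infty t}\|\,\|e^{-A_\infty t}\|\le \kappa:=\|P\|\,\|P^{-1}\|
\]
for all real \(t\), and it is what makes the \(L^1\) hypothesis on \(E\) sufficient.

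\emph{Step 1: an integral equation for \(W\).} Since \(W_s=e^{-A_\infty(s-s_0)}E(s)e^{A_\infty(s-s_0)}W=:\tilde E(s)W\), we have \(\|\tilde E(s)\|\le\kappa\|E(s)\|\), so \(\tilde E\in L^1\). Gronwall then bounds \(W\) uniformly, and \(W(s)=W(s_0)+\int_{s_0}^s\tilde E W\) converges, by absolute integrability, to a limit \(W_\infty\). We define \(v_Z:=W_\infty\) and let \(Z\) be its complex eigencoordinate, so \(Y(s)=e^{A_\infty(s-s_0)}(v_Z+o(1))\). Uniqueness of \(Z\) is immediate: if \(e^{A_\infty(s-s_0)}(v-v')=e^{A_\infty(s-s_0)}o(1)\), then \(v-v'=o(1)\), so \(v=v'\).

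\emph{Step 2: linearity and injectivity.} The map \(Y(s_0)\mapsto W_\infty\) is linear, because the equation is linear and the limit is linear. Since it acts between two-dimensional real spaces, it is an isomorphism once it is injective. This is the main point to check. I would choose \(s_1\ge s_0\) so large that \(\kappa\int_{s_1}^\infty\|E\|<1/2\). Solving the integral equation backwards from infinity gives
\[
W(s)=W_\infty-\int_s^\infty\tilde E W .
\]
For \(s\ge s_1\) this yields \(\sup_{s\ge s_1}|W|\le|W_\infty|+\tfrac12\sup_{s\ge s_1}|W|\), so \(W_\infty=0\) forces \(W\equiv0\) on \([s_1,\infty)\). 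Uniqueness for the linear ODE then gives \(Y\equiv0\) on \([s_0,\infty)\). This proves injectivity, and hence that \(Z=0\) if and only if \(Y\equiv0\).

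The contraction argument in Step 2 also shows, as a check, that every \(W_\infty\) is realized by some solution. That gives surjectivity directly, although counting dimensions already suffices.
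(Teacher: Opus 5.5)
Your proof is correct. Step 1 is essentially the paper's argument. The paper works with the normalized fundamental matrix $R(s)=e^{-A_\infty(s-s_0)}\Phi(s)$ rather than with individual solutions, but it uses the same ingredients: the conjugation bound coming from $A_\infty\sim-3I+\sqrt3J$, then Gronwall, then convergence of the integral equation to a limit $R_\infty$.

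The genuine difference is the invertibility step. The paper obtains $\det R_\infty\neq0$ in one line from Liouville's formula, $\det R(s)=\exp\bigl(\int_{s_0}^{s}\operatorname{tr}B\bigr)$ with $B\in L^1$. You instead rewrite the equation as a Volterra equation from infinity, $W(s)=W_\infty-\int_s^\infty\tilde EW$. You then use a small tail to get $\sup_{s\ge s_1}|W|\le2|W_\infty|$, and finish with backward uniqueness for the linear ODE.

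Liouville's formula is shorter and needs no choice of $s_1$. Your route is the standard Levinson argument, and it buys three things:
\begin{enumerate}
\item an explicit bound on the inverse of the amplitude map;
\item via the same contraction on $[s_1,\infty)$, a direct construction of solutions with prescribed asymptotic data;
\item independence from determinants, so it carries over to infinite-dimensional or small nonlinear perturbations.
\end{enumerate}

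One harmless slip: from $A_\infty=P(-3I+\sqrt3J)P^{-1}$ you only get $\|e^{A_\infty t}\|\,\|e^{-A_\infty t}\|\le(\|P\|\,\|P^{-1}\|)^2$. For example, $P=\operatorname{diag}(\lambda,1)$ at a quarter turn gives exactly $\lambda^2$. So your $\kappa$ should be squared. Since only some uniform constant is needed, nothing else changes.
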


\begin{proof}
Let \(\Phi(s)\) be the fundamental matrix normalized by
\(\Phi(s_0)=I\), and set
\[
R(s)=e^{-A_\infty(s-s_0)}\Phi(s).
\]
Then
\[
R_s=e^{-A_\infty(s-s_0)}E(s)e^{A_\infty(s-s_0)}R(s).
\]
Because the two eigenvalues of \(A_\infty\) have the same real part and
are distinct, \(A_\infty\) is similar over \(\mathbb R\) to
\(-3I+\sqrt3 J\), where \(J\) generates rotations.  Hence
\[
\bigl\|e^{-A_\infty u}E(s)e^{A_\infty u}\bigr\|
\le C\|E(s)\|
\]
for \(u=s-s_0\ge0\).  The coefficient in the equation for \(R\) is
therefore integrable.  Gronwall's inequality gives a uniform bound for
\(R\), and the integral equation then shows that \(R(s)\) converges to a
finite matrix \(R_\infty\).  To see directly that this limit is invertible,
write
\[
B(s)=e^{-A_\infty(s-s_0)}E(s)e^{A_\infty(s-s_0)}.
\]
Then \(R_s=B(s)R\), \(R(s_0)=I\), and Liouville's formula gives
\[
\det R(s)
=
\exp\!\left(\int_{s_0}^{s}\operatorname{tr}B(\sigma)\,d\sigma\right).
\]
Since \(B\in L^1\), the integral converges and therefore
\[
\det R_\infty
=
\exp\!\left(\int_{s_0}^{\infty}\operatorname{tr}B(\sigma)\,d\sigma\right)
\neq0.
\]
Hence \(R_\infty\) is invertible.  Thus
\[
Y(s)=e^{A_\infty(s-s_0)}
\bigl(R_\infty Y(s_0)+o(1)\bigr).
\]
Resolving the real vector \(R_\infty Y(s_0)\) in the complex
eigencoordinate gives the unique amplitude \(Z\).  Since both
\(R_\infty\) and the real identification with that eigencoordinate are
invertible, the Cauchy-data-to-amplitude map is an isomorphism.
\end{proof}

\begin{theorem}[Nonvanishing of the \(\zeta\)-response amplitude]
\label{thm:Q-nonzero}
For every \(\rho>0\),
\[
Q(\rho)\neq0.
\]
Consequently, for every compact \(J\Subset(0,\infty)\),
\[
\inf_{\rho\in J}|Q(\rho)|>0.
\]
The same conclusion applies to the limiting heteroclinic
\(\Gamma_{57}\): its \(\zeta\)-response coefficient \(Q_0\) is nonzero.
\end{theorem}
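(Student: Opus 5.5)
The plan is to reduce nonvanishing of \(Q(\rho)\) to the statement that the \(\zeta\)-anisotropy is not identically zero, using the fact that the \(\zeta\)-channel is \emph{homogeneous}. In logarithmic time \(s=\log\tau\), equation \eqref{eq:q-linear-global} is a linear second-order equation for \(\zeta\), whose coefficients
\[
\tau\mathcal H-1,\qquad 2\tau^2e^{-2m}\frac{\sinh\zeta}{\zeta}
\]
are regarded as given functions of \(s\) along the fixed trajectory. These coefficients converge to \(6\) and \(12\) exponentially fast, by Theorem~\ref{thm:global-AC} and the hyperbolic-sink estimate at \(P_7\) used in the proof of Theorem~\ref{thm:global-response}.

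Writing the equation as a first-order system \(Y_s=(A_\infty+E(s))Y\) with \(Y=(\zeta,\zeta_s)\), we get \(E\in L^1([s_0,\infty))\). Lemma~\ref{lem:asymptotic-coordinates} then applies. The Cauchy-data-to-amplitude map is a real-linear isomorphism, and \(Q(\rho)\) is obtained from that amplitude \(Z\) by the fixed invertible convention of the complex-amplitude remark. Hence \(Q(\rho)=0\) forces \(Y(s_0)=0\), that is, \(\zeta\equiv0\) for \(s\ge s_0\).

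Next I will exclude \(\zeta\equiv 0\) on a tail. On \((0,\infty)\) in \(\tau\), equation \eqref{eq:q-exact-global} is a regular linear ODE for \(\zeta\) with smooth coefficients, since \(A_\rho,B_\rho>0\) there by Theorem~\ref{thm:RF-global-existence}. Uniqueness for this ODE therefore propagates \(\zeta\equiv0\) from the tail back to all \(\tau>0\). This is impossible, because \(A_\rho(\tau)\to0\) and \(B_\rho(\tau)\to1\) as \(\tau\to0^+\), so \(\zeta=\log(A_\rho/B_\rho)\to-\infty\). Hence \(Q(\rho)\neq0\) for every \(\rho>0\).

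The uniform lower bound on compact \(J\Subset(0,\infty)\) then follows at once from continuity (indeed smoothness) of \(\rho\mapsto Q(\rho)\), which is given by Theorem~\ref{thm:global-response}, together with compactness of \(J\).

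For the limiting heteroclinic \(\Gamma_{57}\) I would argue in the normalized variables, where the physical reconstruction near \(P_5\) is degenerate. First, reconstruct physical variables along \(\Gamma_{57}\) on its forward end, where it enters the sink \(P_7\). The same first-order formulation and Lemma~\ref{lem:asymptotic-coordinates} apply there, since the decay estimates only use convergence to the hyperbolic sink. They show that \(Q_0=0\) would force \(\zeta\equiv0\) on a tail.

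Second, translate this into normalized terms: \(\zeta\equiv0\) means \(A=B\), hence \(X_A=X_B\) and \(Y_A=Y_B\) on that tail. The linear subspace \(\{X_A=X_B,\ Y_A=Y_B\}\) is invariant under \eqref{eq:normalized} in both time directions, because the \(A\)- and \(B\)-equations are identical in form. So the whole orbit \(\Gamma_{57}\) would lie in it, and so would its \(\alpha\)-limit \(P_5\). But \(P_5\) has \(X_A=\tfrac15\neq0=X_B\), a contradiction, so \(Q_0\neq0\).

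The step I expect to need the most care is this heteroclinic case. One must check that the physical variables \(\zeta,\xi\) and the amplitude \(Q_0\) are well defined along \(\Gamma_{57}\), that is, that \(\tau\mathcal H\) and \(\tau^2e^{-2m}\) converge exponentially on the forward end, so that \(E\in L^1\). One must also check that \(\zeta\equiv0\) really translates into the invariant subspace condition, which requires \(\zeta_s=0\Leftrightarrow X_A=X_B\) and \(\zeta=0\Leftrightarrow Y_A=Y_B\). As a fallback for the \(\rho>0\) case, the same invariant-subspace argument works directly with \(P_0\), which has \(X_A=\tfrac12\neq0=X_B\).
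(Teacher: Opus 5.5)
Your proposal is correct and follows the paper's proof essentially step for step. The paper uses the same ingredients: the homogeneous \(\zeta\)-channel, the Cauchy-data-to-amplitude isomorphism of Lemma~\ref{lem:asymptotic-coordinates}, uniqueness propagating \(\zeta\equiv0\) back toward the singular orbit where \(\zeta\to-\infty\), and continuity plus compactness for the uniform bound. For \(\Gamma_{57}\), your argument via the invariant fixed set \(\{X_A=X_B,\ Y_A=Y_B\}\) of the \(A\leftrightarrow B\) swap, combined with \(X_A(P_5)\neq X_B(P_5)\), is simply a more explicit version of the paper's remark that \(A\equiv B\) is incompatible with the orbit leaving \(P_5\), where \(Y_B=0\neq Y_A\).
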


\begin{proof}
For every singular-orbit trajectory, \(\zeta\) is not identically zero: near the singular orbit,
\(A_\rho(\tau)\sim\tau\) while \(B_\rho(\tau)\to1\), so
\(\zeta(\tau)\to-\infty\).  Fix any positive logarithmic time \(s_0\).  Along
the given solution, \eqref{eq:q-linear-global} is a homogeneous
first-order \(2\times2\) system for \(Y=(\zeta,\zeta_s)\) whose coefficient matrix
converges to the matrix with characteristic roots \(-3\pm i\sqrt3\).
The exponential convergence to \(P_7\) implies that the coefficient
perturbation belongs to \(L^1([s_0,\infty))\).  Lemma~\ref{lem:asymptotic-coordinates}
therefore applies.  In the complex eigencoordinate normalized by
\[
\zeta(s)=\Re\!\left(Ze^{(-3+i\sqrt3)s}\right)+o(e^{-3s}),
\]
the amplitude \(Z\) furnished by that lemma is exactly the response
coefficient \(Q(\rho)\) defined above.  Indeed, both are uniquely
characterized by the same asymptotic convention.  Hence, if
\(Q(\rho)=0\), the asymptotic-coordinate lemma forces
\[
\zeta(s_0)=\zeta_s(s_0)=0.
\]
Along the fixed geometric trajectory, \eqref{eq:q-linear-global} is a
homogeneous linear equation for \(\zeta\), with the coefficient
\[
2\tau^2e^{-2m}\frac{\sinh q}{q}
\]
regarded as the already determined continuous coefficient function of
\(s\).  Uniqueness for this linear Cauchy problem therefore gives
\(\zeta\equiv0\), contradicting the singular-orbit behavior.  Hence
\(Q(\rho)\neq0\).

Continuity of \(Q\) and compactness give the uniform positive lower bound.
The limiting heteroclinic \(\Gamma_{57}\) has a nontrivial \(\zeta\)-component:
if \(\zeta\equiv0\), then \(A\equiv B\), which is incompatible with the fact
that the orbit leaves \(P_5\) in the \(Y_B\)-direction while \(Y_B(P_5)=0\).
Its \(\zeta\)-equation has the same \(L^1\) convergence at \(P_7\).  The same
asymptotic-coordinate lemma, with the same amplitude convention, therefore
gives \(Q_0\neq0\).
\end{proof}

\section{Exact outer transfer theorem}\label{sec:outer-transfer}

The inner problem has produced the two response coefficients
\(Q(\rho)\) and \(X(\rho)\), which describe the leading oscillatory
anisotropies in the large-\(\tau\) inner asymptotics.  We now determine how one
such leading oscillatory mode propagates through the outer region to the
Einstein matching section.

At leading order, the propagation of both anisotropies is obtained by
linearizing their equations about the positive-Einstein sine-cone
reference solution
\[
a_0(t)=b_0(t)=\frac{1}{\sqrt6}\sin t,
\qquad
c_0(t)=\frac{1}{\sqrt3}\sin t.
\]
Indeed, the link metric
\[
\frac16 g_{S^2}+\frac16 g_{S^2}+\frac13 g_{S^3}
\]
is Einstein with Einstein constant \(6\), so this warped sine metric
satisfies \(\Ric=7g\).  It is not a round metric; ``sine-cone'' refers to
the fact that the Ricci-flat cone proportions are multiplied by
\(\sin t\).
Both linearizations give the same scalar Jacobi equation.  We select its
complex solution \(\Psi\) by the normalization
\[
\Psi(t)=t^{-3+i\sqrt3}(1+O(t^2))
\qquad(t\downarrow0).
\]
This normalization fixes the multiplicative ambiguity in the complex
solution and is the one compatible with the normalization used to define
the inner response coefficients.
For the sine-cone reference solution, the mean curvature vanishes at the
midpoint \(t=\pi/2\).  The exact solutions considered later have their
\(H=0\) event at a nearby, parameter-dependent point; Section~\ref{sec:weighted-matching}
controls this displacement.  The present linear transfer calculation is
therefore made at the sine-cone reference point \(t=\pi/2\).  The two numbers
\[
L_0=\Psi(\pi/2),\qquad L_1=\Psi'(\pi/2)
\]
measure, respectively, how a unit inner amplitude contributes to the value
and derivative of the leading linearized mode at this reference point.
We call \(L_0,L_1\) the \emph{outer transfer coefficients}.  Once they are
known, the inner amplitudes \(Q\) and \(X\) can be converted into the two
leading matching responses without solving the full nonlinear outer
problem.

\begin{figure}[t]
\centering
\begin{tikzpicture}[>=Latex,
  mainbox/.style={draw,rounded corners=2pt,align=center,text width=2.8cm,
                 minimum height=1.35cm,inner sep=5pt,font=\small},
  resultbox/.style={draw,rounded corners=2pt,align=center,text width=7.9cm,
                   minimum height=1.25cm,inner sep=6pt,font=\small},
  arr/.style={->,thick}]
\node[mainbox] (inner) at (0,0)
 {\textbf{Ricci-flat inner problem}\\
  two anisotropies \(\zeta,\xi\)};
\node[mainbox] (data) at (4.6,0)
 {\textbf{response coefficients}\\
  \(Q(\rho),X(\rho)\)};
\node[mainbox] (sinecone) at (9.2,0)
 {\textbf{linearized propagation}\\
  about the sine-cone solution};

\draw[arr] (inner) -- node[above,font=\scriptsize] {extract} (data);
\draw[arr] (data) -- node[above,font=\scriptsize] {transfer} (sinecone);

\node[resultbox] (matching) at (4.6,-2.25)
 {\textbf{leading matching data at the sine-cone reference point \(t=\pi/2\)}\\[2pt]
  \(L_0Q(\rho)\): value contribution \(\qquad\)
  \(L_1X(\rho)\): derivative contribution};

\draw[arr] (sinecone.south) |- (matching.east);
\end{tikzpicture}
\caption{The transfer step.  The Ricci-flat inner analysis replaces the
two full anisotropy functions by the two complex coefficients \(Q\) and
\(X\).  The linearized Einstein equation about the sine-cone solution then
transfers these coefficients to the reference point \(t=\pi/2\):
\(L_0Q\) gives the leading value contribution and \(L_1X\) the leading
derivative contribution.  The common logarithmic phase and the displacement
from \(t=\pi/2\) to the exact \(H=0\) event are handled later.}
\label{fig:response-transfer}
\end{figure}

Thus the Jacobi equation governing the leading linearized propagation of
either anisotropy about the sine-cone reference solution is
\begin{equation}\label{eq:outer-Jacobi}
Y''+7\cot t\,Y'+12\csc^2t\,Y=0,
\qquad 0<t<\pi.
\end{equation}
Put
\[
\omega=\sqrt3.
\]

\begin{lemma}[Compatibility of the inner and outer amplitude conventions]
\label{lem:inner-outer-amplitude-compatibility}
Let
\[
\vartheta(\eps)=\sqrt3\log(1/\eps).
\]
Suppose an inner anisotropy has response coefficient \(Z\) in the convention
\[
Y_{\rm in}(\tau)
=
\tau^{-3}\Re\!\left(Z\tau^{i\sqrt3}\right)
+o(\tau^{-3}).
\]
With \(t=\eps\tau\), its leading expression on an overlap region is
\[
Y_{\rm in}(t/\eps)
=
\eps^3
\Re\!\left(
e^{i\vartheta(\eps)}Z\,t^{-3+i\sqrt3}
\right)
+o(\eps^3t^{-3}).
\]
Consequently, if
\[
\Psi(t)=t^{-3+i\sqrt3}(1+O(t^2))
\]
is the normalized outer Frobenius solution, then the coefficient of
\(\Psi\) is exactly
\[
e^{i\vartheta(\eps)}Z.
\]
There is no additional conjugation, sign, phase shift, or real scale
factor.  Thus the two inner responses are transferred as
\[
Q\longmapsto e^{i\vartheta(\eps)}L_0Q,
\qquad
X\longmapsto e^{i\vartheta(\eps)}L_1X.
\]
\end{lemma}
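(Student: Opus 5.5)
The plan is a direct substitution followed by a uniqueness argument for the complex coefficient. I will track the convention at each step so that no conjugation can enter.

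First, the substitution. Put \(\tau=t/\eps\) into \(\tau^{-3}\tau^{i\sqrt3}=\tau^{-3+i\sqrt3}\), using the principal real logarithm \(\log\tau=\log t-\log\eps\). This gives
\[
\tau^{-3+i\sqrt3}
=\eps^{3}\,e^{-i\sqrt3\log\eps}\,t^{-3+i\sqrt3}
=\eps^3 e^{i\vartheta(\eps)}\,t^{-3+i\sqrt3}.
\]
The factor \(\eps^3\) is real and positive, so it passes through \(\Re\). The remainder \(o(\tau^{-3})\) becomes \(o(\eps^3t^{-3})\) uniformly as \(\tau=t/\eps\to\infty\), which holds on an overlap region \(\eps\ll t\ll1\). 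This is the displayed formula, and it is the same computation as \eqref{eq:model-eps-phase}.

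Second, the identification with the outer Frobenius basis. The indicial equation of \eqref{eq:outer-Jacobi} at \(t=0\) is \(r(r-1)+7r+12=r^2+6r+12=0\), with roots \(-3\pm i\sqrt3\). These roots are distinct and do not differ by an even integer, so \(\Psi\) and \(\overline\Psi\) span the complex solutions. Since the equation has real coefficients, every real solution is \(\Re(c\Psi)\) for a unique \(c\in\mathbb C\). To see uniqueness, write \(\Re(c\,t^{-3+i\sqrt3})=t^{-3}|c|\cos(\sqrt3\log t+\arg c)\). If \(\Re(c\Psi)\equiv0\), this leading term must vanish along a sequence \(t\downarrow0\) on which the cosine equals \(1\), so \(c=0\). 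Hence \(c\mapsto\Re(c\Psi)\) is a real-linear isomorphism onto the real solution space. Moreover \(\Re(c\Psi)=\Re(c\,t^{-3+i\sqrt3})+O(|c|t^{-1})\).

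Comparing this with the inner expression, after dividing by the common real amplitude \(\eps^3\), the leading oscillatory term \(t^{-3}\Re(\,\cdot\,t^{i\sqrt3})\) determines its complex coefficient uniquely. This is the same uniqueness argument, now applied on the logarithmic scale \(\log t\) ranging over an interval of length \(\to\infty\) in the overlap. So the coefficient of \(\Psi\) is \(e^{i\vartheta(\eps)}Z\). The exponent branch \(+i\sqrt3\) is the same on both sides, and \(\Re(w)=\Re(\bar w)\) is never used to switch branches, so no conjugation enters. There is also no sign or phase shift, since \(\eps^{3}>0\) and \(\Psi\) carries no prefactor. Finally, evaluating \(\Re(e^{i\vartheta}Z\Psi)\) and its \(t\)-derivative at \(t=\pi/2\) gives \(\Re(e^{i\vartheta}ZL_0)\) and \(\Re(e^{i\vartheta}ZL_1)\). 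Taking \(Z=Q\) in the value channel and \(Z=X\) in the derivative channel yields the stated transfer rule.

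The main obstacle is justifying that the leading coefficient is read off consistently despite two competing error sources. One is the inner error \(o(\eps^3t^{-3})\), which is small only for \(t/\eps\) large. The other is the outer Frobenius correction \(O(t^{-1})\) relative to the same scale, which is small only for \(t\) small. I would choose a window \(t\in[\eps^{1/2}e^{-\pi/\sqrt3},\eps^{1/2}]\), which contains a full oscillation period in \(\log t\), and on which both errors are \(o(\eps^3t^{-3})\). The real-linear isomorphism above, being quantitative on a full period, then pins down the coefficient up to \(o(1)\) relative error. Upgrading this \(o(1)\) to exact identification is a statement about the limiting leading term only; the nonlinear control of the matching itself is deferred to Section~\ref{sec:weighted-matching}.
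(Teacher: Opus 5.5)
Your proposal is correct and follows the same route as the paper: the substitution $\tau=t/\eps$ gives $\eps^3e^{i\vartheta(\eps)}t^{-3+i\sqrt3}$, and the coefficient of $\Psi$ is then read off by matching leading singular terms in the same complex convention. Your uniqueness-of-$c$ argument and overlap window only make explicit what the paper compresses into the word ``matching'' and defers to Section~\ref{sec:weighted-matching}, and one harmless slip remains: the window $[\eps^{1/2}e^{-\pi/\sqrt3},\eps^{1/2}]$ covers half, not all, of the period $2\pi/\sqrt3$ in $\log t$, which still suffices because any phase interval of length $\pi$ contains a point where $|\cos|=1$.
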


\begin{proof}
Using \(\tau=t/\eps\),
\begin{align*}
\tau^{-3}Z\tau^{i\sqrt3}
&=
\left(\frac{\eps}{t}\right)^3
Z\exp\!\left(i\sqrt3\log\frac{t}{\eps}\right)\\
&=
\eps^3
e^{i\sqrt3\log(1/\eps)}
Z\,t^{-3+i\sqrt3}.
\end{align*}
Taking real parts gives the first assertion.  The outer solution
\(\Psi\) has leading coefficient \(1\) in precisely the same complex
Frobenius convention, so matching the leading singular term identifies
its complex coefficient with \(e^{i\vartheta(\eps)}Z\).  Since this
calculation is an identity in the chosen complex convention, neither
complex conjugation nor any additional multiplicative constant is
available.  Evaluation at \(t=\pi/2\) gives the factors \(L_0\) and
\(L_1\).
\end{proof}

\begin{theorem}[Outer Wronskian transfer]
\label{thm:outer-transfer}
Let \(\Psi\) be the complex solution of \eqref{eq:outer-Jacobi} normalized by
\[
\Psi(t)=t^{-3+i\sqrt3}(1+O(t^2))
\qquad(t\downarrow0),
\]
and set
\[
L_0=\Psi(\pi/2),\qquad L_1=\Psi'(\pi/2).
\]
Then
\begin{equation}\label{eq:outer-Wronskian}
W[\Psi,\overline\Psi]
=
\Psi\overline{\Psi'}-\Psi'\overline\Psi
=-2i\sqrt3\,\sin^{-7}t.
\end{equation}
In particular,
\begin{equation}\label{eq:outer-transfer-sign}
\Im(L_0\overline{L_1})=-\sqrt3<0.
\end{equation}
\end{theorem}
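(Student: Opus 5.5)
The plan is to use Abel's identity for the Jacobi equation \eqref{eq:outer-Jacobi} and to fix the constant by the Frobenius asymptotics at \(t=0\).

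\textbf{Step 1 (Abel).} Write the equation in self-adjoint form \((\sin^7t\,Y')'+12\sin^5t\,Y=0\). The coefficients are real, so \(\overline\Psi\) is also a solution. For any two solutions \(Y_1,Y_2\), the quantity \(\sin^7t\,(Y_1\overline{\phantom{|}}\!\!\;Y_2'-Y_1'Y_2)\) has derivative zero. Concretely, for \(W=\Psi\overline{\Psi'}-\Psi'\overline\Psi\) one computes
\[
W'=\Psi\overline{\Psi''}-\Psi''\overline\Psi=-7\cot t\,W,
\]
since the \(\csc^2 t\) terms cancel. Hence \(W(t)=K\sin^{-7}t\) for a constant \(K\in\mathbb C\).

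\textbf{Step 2 (fixing \(K\)).} Evaluate \(K\) as \(t\downarrow0\). From \(\Psi=t^{\mu}(1+O(t^2))\) with \(\mu=-3+i\sqrt3\), and assuming the expansion may be differentiated termwise, we have \(\Psi'=\mu t^{\mu-1}(1+O(t^2))\). Then
\[
W=|t^{\mu}|^2t^{-1}(\bar\mu-\mu)(1+O(t^2))=t^{-7}(-2i\sqrt3)(1+O(t^2)).
\]
Since \(\sin^{-7}t=t^{-7}(1+O(t^2))\), this gives \(K=-2i\sqrt3\), which is \eqref{eq:outer-Wronskian}.

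The one point requiring care is the termwise differentiation. I would justify it from the Frobenius theory: \(t=0\) is a regular singular point with indicial equation \(\mu(\mu-1)+7\mu+12=\mu^2+6\mu+12=0\). Its roots \(-3\pm i\sqrt3\) differ by \(2i\sqrt3\notin\mathbb Z\), so \(\Psi=t^{\mu}h(t)\) with \(h\) analytic and even in \(t\), and \(h(0)=1\). The series then differentiates termwise. (Evenness holds because \(\cot t\) and \(\csc^2 t\) times \(t^2\) are even.)

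\textbf{Step 3 (the sign).} At \(t=\pi/2\) we have \(\sin t=1\), so \(W(\pi/2)=L_0\overline{L_1}-L_1\overline{L_0}=2i\,\Im(L_0\overline{L_1})\). Setting this equal to \(-2i\sqrt3\) gives \(\Im(L_0\overline{L_1})=-\sqrt3\), which is \eqref{eq:outer-transfer-sign}.

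The main obstacle is only the bookkeeping in Step 2: getting the conjugation order right, and confirming that \(|t^\mu|^2=t^{-6}\) and \(\bar\mu-\mu=-2i\sqrt3\). Everything else is routine.
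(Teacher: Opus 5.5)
Your proposal is correct and is essentially the paper's proof: Abel's identity gives $W=C\sin^{-7}t$, the Frobenius normalization as $t\downarrow0$ fixes $C=-2i\sqrt3$, and evaluating at $t=\pi/2$ gives $\Im(L_0\overline{L_1})=-\sqrt3$. Your Frobenius justification of termwise differentiation fills in a step the paper leaves implicit. It tacitly uses that the normalization singles out the solution $t^{\mu}h(t)$, which holds because $t^{\bar\mu}/t^{\mu}$ has modulus one and so cannot be absorbed into $t^{\mu}O(t^2)$.
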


\begin{proof}
For \eqref{eq:outer-Jacobi}, Abel's identity gives
\[
W'=-7\cot t\,W,
\]
so
\[
W=C\sin^{-7}t
\]
for some constant \(C\).  The Frobenius normalization at the singular end
fixes this constant.  Indeed,
\[
\Psi(t)=t^r(1+O(t^2)),
\qquad
r=-3+i\sqrt3,
\]
and hence, for real \(t\),
\begin{align*}
W
&=\Psi\overline{\Psi'}-\Psi'\overline\Psi\\
&=(\overline r-r)t^{r+\overline r-1}(1+O(t^2))\\
&=-2i\sqrt3\,t^{-7}(1+O(t^2))
\qquad(t\downarrow0).
\end{align*}
Since \(\sin^{-7}t\sim t^{-7}\), this gives
\eqref{eq:outer-Wronskian}.  Evaluating at \(t=\pi/2\) yields
\[
L_0\overline{L_1}-L_1\overline{L_0}=-2i\sqrt3,
\]
which is exactly \eqref{eq:outer-transfer-sign}.
\end{proof}

The relative response used in the matching argument is
\begin{equation}\label{eq:Phi-definition}
\Phi(\rho)
=
\Im\!\left(L_0Q(\rho)\overline{L_1X(\rho)}\right).
\end{equation}
For the symmetric member \(\rho=\sqrt2\), the invariant relation
\(C=\sqrt2 B\) gives
\[
X(\sqrt2)=-\frac12Q(\sqrt2).
\]
Therefore \eqref{eq:outer-transfer-sign} gives
\begin{equation}\label{eq:Phi-anchor}
\Phi(\sqrt2)=\frac{\sqrt3}{2}|Q(\sqrt2)|^2>0.
\end{equation}

\section{The small-ratio heteroclinic passage}\label{sec:small-ratio}
\label{sec:small-ratio-heteroclinic}

We now study what happens to the normalized Ricci-flat trajectories
\(U_\rho\) when the singular-orbit ratio \(\rho\) tends to zero.  For every fixed
\(\rho>0\), the trajectory starts near \(P_0\) and eventually converges to
the final cone point \(P_7\).  When \(\rho\) is very small, however, it
first passes close to the intermediate equilibrium \(P_5\) and remains
near it for a long normalized time.

The limiting motion therefore occurs in two stages.  First, there is a
trajectory
\[
\Gamma_{05}:P_0\longrightarrow P_5.
\]
It lies in the invariant set \(Y_B=0\).  Second, a trajectory leaves
\(P_5\) in the \(Y_B\)-direction introduced in
Proposition~\ref{prop:spectra} and converges to \(P_7\):
\[
\Gamma_{57}:P_5\longrightarrow P_7.
\]
In dynamical-systems terminology, two connecting trajectories of this form
are called a \emph{heteroclinic chain}.  Thus the chain relevant here is
simply
\[
P_0\xrightarrow{\ \Gamma_{05}\ }P_5
   \xrightarrow{\ \Gamma_{57}\ }P_7.
\]
For small positive \(\rho\), the actual trajectory \(U_\rho\) follows the
first connection, passes close to \(P_5\), and then follows the second.

\begin{figure}[ht]
\centering
\begin{tikzpicture}[>=Latex,
  every node/.style={font=\small},
  traj/.style={->,thick},
  nearby/.style={->,thick,dashed}]
  \fill (0,0) circle (2pt);
  \fill (3.7,0) circle (2pt);
  \fill (8.0,2.2) circle (2pt);

  \node[below=5pt] at (0,0) {\(P_0\)};
  \node[below=5pt] at (3.7,0) {\(P_5\)};
  \node[right=6pt] at (8.0,2.2) {\(P_7\)};

  \draw[traj] (0.20,0) -- (3.48,0);
  \draw[traj] (3.90,0.10)
      .. controls (5.1,0.35) and (6.6,1.45) .. (7.80,2.10);

  \node at (1.85,-0.55) {\(\Gamma_{05}\)};
  \node at (6.10,0.75) {\(\Gamma_{57}\)};

  \draw[nearby] (0.18,1.15)
      .. controls (1.7,1.18) and (2.9,1.18) .. (3.95,1.20)
      .. controls (5.1,1.28) and (6.55,1.78) .. (7.68,2.12);

  \node at (2.00,1.70) {small-\(\rho\) trajectory \(U_\rho\)};

  \draw[->] (3.7,0.18) -- (3.7,0.78);
  \node[left=4pt] at (3.7,0.62) {\(+Y_B\)};
\end{tikzpicture}
\caption{Schematic small-ratio passage in normalized phase space.
The limiting trajectory \(\Gamma_{05}\) runs from \(P_0\) to \(P_5\)
inside the invariant set \(Y_B=0\).  The second limiting trajectory
\(\Gamma_{57}\) leaves \(P_5\) in the positive \(Y_B\)-direction and
converges to \(P_7\).  For small \(\rho>0\), the trajectory \(U_\rho\)
first stays close to \(\Gamma_{05}\) and then to \(\Gamma_{57}\).
The invariant set \(Y_B=0\) itself is not drawn, in order to keep the
schematic uncluttered.}
\label{fig:heteroclinic-chain}
\end{figure}

The purpose of this section is to make this limiting picture precise and
then determine its consequence for the relative response
\[
\Phi(\rho)
=
\Im\!\left(L_0Q(\rho)\overline{L_1X(\rho)}\right)
\]
introduced in Section~\ref{sec:outer-transfer}.  In particular, we will
prove the sign of \(\Phi(\rho)\) for all sufficiently small positive
\(\rho\).  This is the sign information needed later in the matching
argument.

\subsection{The heteroclinic chain}

\begin{lemma}[The singular-orbit ratio as a coordinate near \(P_0\)]
\label{lem:rho-projective}
For every normalized inner trajectory,
\[
\frac{Y_B}{Y_C}=\frac{C}{B},
\]
and therefore
\[
\rho=\lim_{\eta\to-\infty}\frac{Y_B}{Y_C}.
\]
The trajectories that converge to \(P_0\) as \(\eta\to-\infty\) form a
two-dimensional local family, the unstable manifold \(W^u(P_0)\).
Changing the origin of normalized time multiplies the two leading
coefficients by the same positive factor.  Their ratio is therefore
unchanged, and for the physical inner family this ratio is precisely
\(\rho\).  Thus \(\rho\) distinguishes the trajectories after the
irrelevant choice of time origin has been removed.
\end{lemma}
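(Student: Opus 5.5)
The identity \(Y_B/Y_C=C/B\) is immediate from the definitions: \(Y_B=1/(B_\rho\mathcal H)\) and \(Y_C=1/(C_\rho\mathcal H)\), so the factor \(\mathcal H\) cancels and \(Y_B/Y_C=C_\rho/B_\rho\). As \(\eta\to-\infty\), equivalently \(\tau\to0^+\), the singular-orbit data \(B_\rho(0)=1\), \(C_\rho(0)=\rho\) of Remark~\ref{rem:inner-rescaling} give \(C_\rho/B_\rho\to\rho\). This already proves the displayed limit formula, even though both \(Y_B\) and \(Y_C\) tend to \(0\) at \(P_0\).

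For the unstable-manifold statement I would invoke Proposition~\ref{prop:spectra}. The constrained spectrum at \(P_0\) is \(\{-\tfrac12,-\tfrac12,\tfrac12,\tfrac12\}\), so \(P_0\) is hyperbolic on \(T_{P_0}\mathcal P\). Its unstable eigenvalue \(\tfrac12\) has a two-dimensional eigenspace, and the unstable manifold theorem gives a two-dimensional local manifold \(W^u(P_0)\subset\mathcal P\).

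Next I would identify this eigenspace. At \(P_0\) one has \(X_A=\tfrac12\), \(X_B=X_C=0\) and \(G=\tfrac12\), so
\[
\delta\dot Y_B=(G-X_B)\,\delta Y_B=\tfrac12\,\delta Y_B,\qquad
\delta\dot Y_C=\tfrac12\,\delta Y_C .
\]
Since \(Y_B=Y_C=0\) at \(P_0\), the linearized quadratic constraint does not involve \(\delta Y_B,\delta Y_C\). I would check that the unstable eigenvectors are determined by their \((\delta Y_B,\delta Y_C)\)-components, with the \(X\)-components coupled through the terms \(Y_B^2\) and \(2Y_C^2\), which are quadratic. Granting this, an orbit in \(W^u(P_0)\) has the form
\[
(Y_B,Y_C)=e^{\eta/2}(k_B,k_C)+o(e^{\eta/2}).
\]
I expect this to be the main obstacle: I would confirm by direct computation that the \(\tfrac12\)-eigenspace projects isomorphically onto the \((Y_B,Y_C)\)-plane, using the invariance of \(\{Y_B=0\}\) and \(\{Y_C=0\}\). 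Because the nonlinearity is quadratic and the unstable eigenvalues are equal with no resonance obstruction at this order, the leading coefficients \((k_B,k_C)\) are well defined.

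Replacing \(\eta\) by \(\eta+\eta_0\) multiplies \((k_B,k_C)\) by \(e^{\eta_0/2}>0\), so the ratio \(k_B/k_C\) is invariant under shifts of time origin. It equals \(\lim_{\eta\to-\infty} Y_B/Y_C=\rho\) by the first step. Positivity of \(Y_B,Y_C\) on the physical branch means only the open quadrant of directions is realized. Modulo time translation, these orbits are therefore parametrized by the ray \(k_B/k_C=\rho\in(0,\infty)\), which is the claimed statement that \(\rho\) distinguishes the trajectories.
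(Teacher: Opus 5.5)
Your proposal is correct and follows essentially the same route as the paper. The paper's proof runs through the same chain: the definitions give \(Y_B/Y_C=C/B\), the singular-orbit data give the limit \(\rho\), Proposition~\ref{prop:spectra} gives a two-dimensional \(W^u(P_0)\) with common exponent \(1/2\), and a time translation rescales both leading coefficients by the same positive factor. Your explicit identification of the \(\tfrac12\)-eigenspace with the \((\delta Y_B,\delta Y_C)\)-coordinate plane is a detail the paper leaves implicit. It does hold, because at \(P_0\) one has \(Y_B=Y_C=0\), so these two directions decouple from the rest of the linearization with eigenvalue exactly \(\tfrac12\).
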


\begin{proof}
The first identity follows directly from the definitions.
The singular-orbit data give \(C/B\to\rho\).  Proposition~\ref{prop:spectra}
gives \(\dim W^u(P_0)=2\).  The analytic unstable-manifold theorem gives
local unstable coordinates \((u_B,u_C)\) in which the two leading modes have
the common exponent \(1/2\).  Translation
\(\eta\mapsto\eta+\eta_*\) multiplies both leading coefficients by
\(e^{\eta_*/2}\), so their ratio is a projective coordinate.  The singular-orbit
expansion and \(Y_B/Y_C=C/B\) identify this ratio with \(\rho\).
Consequently the physical singular-orbit family is a smooth projective curve in
\(W^u(P_0)\), and \(\rho=0\) is its limiting boundary direction rather than
an additional physical singular orbit.
\end{proof}

\begin{lemma}[Projective transversality at the boundary direction]
\label{lem:projective-transversality}
Choose a smooth local section \(\Sigma_0\subset W^u(P_0)\) transverse to
the normalized flow and meeting the positive branch corresponding to
\(\rho=0\).  Then the physical orbit family extends to a \(C^1\) curve
\[
z:[0,\rho_0)\longrightarrow \Sigma_0,
\qquad
z(0)=\Gamma_{05}\cap\Sigma_0,
\]
and this curve is transverse at \(\rho=0\) to
\(\Sigma_0\cap\{Y_B=0\}\).  Equivalently, \(Y_B(z(\rho))\) has a
nonzero first-order term:
\[
Y_B(z(\rho))=c_0\rho+o(\rho),
\qquad c_0>0.
\]
No positive-Einstein or singular-orbit germ is differentiated at
\(\rho=0\).
\end{lemma}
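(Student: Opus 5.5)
The proof will take place entirely inside the unstable manifold \(W^u(P_0)\), using the local analytic unstable coordinates \((u_B,u_C)\) from the proof of Lemma~\ref{lem:rho-projective}. Both unstable eigenvalues at \(P_0\) equal \(1/2\) by Proposition~\ref{prop:spectra}. The first step is therefore to check that the linearization restricted to \(W^u(P_0)\) is \(\tfrac12\,\mathrm{Id}\), with no Jordan block. This holds because the \(1/2\)-eigenspace is two-dimensional: it is spanned by the \(Y_B\)- and \(Y_C\)-variations, corrected by the constraints. Next I would check that there are no resonances among the unstable exponents. The only candidate is \(1/2=k\cdot\tfrac12\) with \(k\ge2\), which never holds. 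By Poincaré's theorem the flow on \(W^u(P_0)\) is then analytically linearizable, and the coordinates \((u_B,u_C)\) can be chosen with \(\dot u_B=\tfrac12 u_B\), \(\dot u_C=\tfrac12 u_C\). Moreover, the invariant hypersurfaces \(\{Y_B=0\}\) and \(\{Y_C=0\}\) meet \(W^u(P_0)\) in invariant curves tangent to the corresponding eigendirections. I would therefore normalize the coordinates so that \(\{Y_B=0\}\cap W^u(P_0)=\{u_B=0\}\) and \(\{Y_C=0\}\cap W^u(P_0)=\{u_C=0\}\). In these coordinates, trajectories are the rays \(e^{\eta/2}(u_B^0,u_C^0)\).

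The second step identifies \(\rho\) with the ray slope. Because \(Y_B\) vanishes on \(\{u_B=0\}\), analyticity gives \(Y_B=u_B\,h_B(u)\) with \(h_B(0)\neq0\), since \(\partial Y_B/\partial u_B\) is the nonzero eigenvector component. Similarly \(Y_C=u_C\,h_C(u)\). Hence
\[
\frac{Y_B}{Y_C}=\frac{u_B}{u_C}\cdot\frac{h_B(u)}{h_C(u)}\longrightarrow \kappa\,\frac{u_B}{u_C}\qquad(\eta\to-\infty),\qquad \kappa=\frac{h_B(0)}{h_C(0)}>0,
\]
after orienting \(u_B,u_C\) positively. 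Lemma~\ref{lem:rho-projective} then shows that the physical trajectory with ratio \(\rho\) is the ray of slope \(u_B/u_C=\rho/\kappa\). The ray with \(u_B=0\), \(u_C>0\) lies in \(\{Y_B=0\}\) and is \(\Gamma_{05}\). I would justify this identification by uniqueness: \(\{Y_B=0\}\cap W^u(P_0)\) is one-dimensional, so it carries a single positive orbit.

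The third step is the section. Take \(\Sigma_0\) to be the analytic curve \(\{u_C=\delta\}\) for small \(\delta>0\). It is transverse to the radial flow and meets \(\Gamma_{05}\) at \((0,\delta)\). The ray of slope \(\rho/\kappa\) hits \(\Sigma_0\) at \(z(\rho)=(\delta\rho/\kappa,\delta)\). This formula is analytic in \(\rho\), extends to \(\rho=0\) with \(z(0)=\Gamma_{05}\cap\Sigma_0\), and is transverse to \(\Sigma_0\cap\{Y_B=0\}=\{(0,\delta)\}\) inside the curve \(\Sigma_0\). Finally,
\[
Y_B(z(\rho))=\frac{\delta\rho}{\kappa}\,h_B(z(\rho))=c_0\rho+o(\rho),\qquad c_0=\frac{\delta\,h_B(0,\delta)}{\kappa}>0 .
\]
Positivity of \(c_0\) follows for \(\delta\) small by continuity of \(h_B\). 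Only the normalized flow on \(W^u(P_0)\) is differentiated in \(\rho\), so no germ is differentiated at \(\rho=0\). A general section can be reduced to this one by flowing along orbits, which is a \(C^1\) holonomy.

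I expect the main obstacle to be the first step. The eigenvalue \(1/2\) is double, so I must verify semisimplicity on \(T_{P_0}\mathcal P\) and that the two invariant coordinate hypersurfaces cut \(W^u(P_0)\) transversally along the two independent eigendirections. If the restricted Jacobian had a Jordan block, the rays would carry logarithmic corrections and the identification of \(\rho\) with a slope would fail. I would check this directly from the linearization computed in Proposition~\ref{prop:spectra}, using the constraint-corrected vectors \(e_{Y_B}+\dots\) and \(e_{Y_C}+\dots\).
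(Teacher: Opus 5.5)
Your proposal is correct and is essentially the paper's argument: the paper's asymptotic coordinates \((a_B,a_C)\), defined by \(Y_B\sim a_Be^{\eta/2}\) and \(Y_C\sim a_Ce^{\eta/2}\), are your linearizing coordinates rescaled by \(h_B(0),h_C(0)\), so its section \(a_C=a_*\) and curve \((\rho a_*,a_*)\) correspond to your \(u_C=\delta\) and \((\delta\rho/\kappa,\delta)\), and your Poincar\'e-linearization step supplies the justification that the paper only asserts. The Jordan-block worry does not arise: on \(T_{P_0}\mathcal P\) the constrained linearization is \(\operatorname{diag}(-\tfrac12,-\tfrac12,\tfrac12,\tfrac12)\) in the variations \((\delta X_B,\delta X_C,\delta Y_B,\delta Y_C)\), and \(e_{Y_B},e_{Y_C}\) are already tangent to the constraints with no correction needed.
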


\begin{proof}
Near \(P_0\), the unstable manifold is two-dimensional and both unstable
eigenvalues are \(1/2\).  The unstable-manifold parametrization may
therefore be chosen in asymptotic coordinates \((a_B,a_C)\), where
\[
Y_B(\eta)=a_Be^{\eta/2}+o(e^{\eta/2}),
\qquad
Y_C(\eta)=a_Ce^{\eta/2}+o(e^{\eta/2})
\qquad(\eta\to-\infty).
\]
Translation of \(\eta\) multiplies \((a_B,a_C)\) by the same positive
factor.  Hence the quotient \(a_B/a_C\) is the projective coordinate on
the orbit space of \(W^u(P_0)\setminus\{P_0\}\).

For the physical singular-orbit family,
Lemma~\ref{lem:rho-projective} identifies this quotient exactly with the
singular-orbit ratio:
\[
\frac{a_B}{a_C}=\rho.
\]
Choose \(\Sigma_0\) by fixing any smooth positive homogeneous normalization
of \((a_B,a_C)\), for example \(a_C=a_*>0\) with \(a_*\) fixed sufficiently
small in the unstable chart.  In this section the orbit family is represented by
\[
(a_B,a_C)=(\rho a_*,a_*),
\]
followed by the smooth unstable-manifold parametrization and the smooth
finite flow to \(\Sigma_0\).  It therefore extends \(C^1\) to
\(\rho=0\), and its derivative there has a nonzero \(a_B\)-component.

Since \(Y_B=0\) is the invariant face containing the limiting branch
\(\Gamma_{05}\), its intersection with \(\Sigma_0\) has tangent space
given by vanishing \(a_B\)-component.  Thus the extended curve is
transverse to that face.  Positivity of \(Y_B\) for \(\rho>0\) fixes the
sign of the transverse coefficient and gives \(c_0>0\).
\end{proof}

\begin{lemma}[Trajectories approaching \(P_5\) inside \(Y_B=0\)]
\label{lem:P5-stable-face}
Let \(\mathcal C\) denote the smooth four-dimensional constraint manifold
defined near \(P_5\) by \eqref{eq:linearconstraint} and
\eqref{eq:quadraticconstraint}, without imposing the sign inequalities that
define \(\mathcal P\).  Near \(P_5\), the points whose forward trajectories
converge to \(P_5\) are exactly the points of
\[
\mathcal C\cap\{Y_B=0\}.
\]
In standard dynamical-systems terminology, this set is the local stable
manifold \(W^s_{\rm loc}(P_5)\).  Its positive part is
\(W^s_{\rm loc}(P_5)\cap\mathcal P\).
\end{lemma}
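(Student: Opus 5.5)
The plan is to prove the two inclusions separately. Both rest on the exact factorization \(\dot Y_B=Y_B(G-X_B)\) from Lemma~\ref{lem:normalized} and on the spectral data at \(P_5\) from Proposition~\ref{prop:spectra}.

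First I check that \(\mathcal C\cap\{Y_B=0\}\) is a smooth three-dimensional submanifold near \(P_5\). At \(P_5\) the differential of \(F_2\) has \(Y_B\)-component \(4Y_B=0\), and \(dF_1\) has \(Y_B\)-component \(0\). The covectors \(dF_1(P_5)\) and \(dF_2(P_5)\) are independent because \(\mathcal C\) is four-dimensional. Adding \(dY_B\) keeps the three covectors independent, since \(dY_B\) is supported purely in the \(Y_B\)-slot while the other two vanish there. By the implicit function theorem the face is therefore a smooth \(3\)-manifold through \(P_5\), and its tangent space is \(T_{P_5}\mathcal P\cap\{\delta Y_B=0\}\).

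\emph{Face \(\subset\) stable set.} The face is invariant, because the \(Y_B\)-equation carries the factor \(Y_B\) and both constraints are invariant by Lemma~\ref{lem:normalized}. Its tangent space at \(P_5\) is therefore a three-dimensional \(DF(P_5)\)-invariant subspace of \(T_{P_5}\mathcal P\). It does not contain the unstable eigenvector \(e_{Y_B}\), so it is the complementary invariant subspace. By Proposition~\ref{prop:spectra} the restriction there has eigenvalues \(-\tfrac45\) and \(-\tfrac25\pm\tfrac{2i}{5}\). Hence \(P_5\) is a hyperbolic sink of the flow restricted to the face. Standard linearized stability with a quadratic remainder then gives a neighborhood \(N\) of \(P_5\) in which every face point converges exponentially to \(P_5\) without leaving a slightly larger neighborhood.

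\emph{Stable set \(\subset\) face.} Suppose a point \(U(\eta_0)\in\mathcal C\) near \(P_5\) has \(Y_B(\eta_0)\neq0\) and \(U(\eta)\to P_5\). At \(P_5\) one has \(G-X_B=\tfrac15\), using \(G=2\cdot\tfrac1{25}+3\cdot\tfrac1{25}=\tfrac15\) and \(X_B=0\). So for \(\eta\ge\eta_1\) we have \(G-X_B\ge\tfrac1{10}\), which gives
\[
|Y_B(\eta)|\ge|Y_B(\eta_1)|e^{(\eta-\eta_1)/10}.
\]
Uniqueness for the scalar linear equation \(\dot Y_B=(G-X_B)Y_B\) along the trajectory shows \(Y_B(\eta_1)\neq0\). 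Thus \(|Y_B|\) grows exponentially, contradicting \(Y_B\to0\). This argument needs no smallness beyond the eventual approach to \(P_5\), so the inclusion holds for any trajectory converging to \(P_5\).

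It remains to identify this set with \(W^s_{\rm loc}(P_5)\) and its positive part. The stable manifold theorem gives a unique local stable manifold, three-dimensional and tangent to the stable eigenspace. The two inclusions show it coincides with the face near \(P_5\). Intersecting with the sign conditions gives the positive part \(W^s_{\rm loc}(P_5)\cap\mathcal P\).

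The only step requiring real care is the transversality computation in the first paragraph, namely that adding \(dY_B\) keeps the covectors independent. The remaining steps are routine once the factor \(Y_B\) and the eigenvalue \(\tfrac15\) are available.
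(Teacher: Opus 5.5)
Your proof is correct. It is organized somewhat differently from the paper's.

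The paper argues in one step: the face \(\mathcal C\cap\{Y_B=0\}\) is an invariant hypersurface whose tangent space at \(P_5\) is the stable eigenspace. The stable-manifold theorem ``and uniqueness'' then identify it with \(W^s_{\rm loc}(P_5)\). That uniqueness step implicitly contains two things:
\begin{enumerate}
\item your forward inclusion, namely that the flow restricted to the face has \(P_5\) as a hyperbolic sink;
\item a dimension and characterization argument for the reverse inclusion.
\end{enumerate}

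You instead prove the reverse inclusion directly from the factored equation \(\dot Y_B=Y_B(G-X_B)\) and the value \(G-X_B=\tfrac15\) at \(P_5\). Once a trajectory with \(Y_B\neq0\) is near \(P_5\), \(|Y_B|\) grows exponentially, so it cannot converge to \(P_5\).

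What your route buys:
\begin{itemize}
\item It is more elementary, and the reverse inclusion is global: no trajectory with \(Y_B\neq0\) anywhere converges to \(P_5\).
\item The characterization of the convergent set needs only linearized sink stability on the face plus this escape estimate. The stable-manifold theorem serves only to attach the name \(W^s_{\rm loc}(P_5)\).
\item You check explicitly, via the independence of \(dF_1(P_5),dF_2(P_5),dY_B\), that the face is a smooth three-manifold. The paper merely asserts this.
\end{itemize}
The paper's route is shorter but relies entirely on standard invariant-manifold theory.
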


\begin{proof}
The two constraint gradients are independent at \(P_5\), so \(\mathcal C\)
is a smooth four-manifold there.  The hypersurface
\(\mathcal C\cap\{Y_B=0\}\) is invariant because
\(\dot Y_B=Y_B(G-X_B)\).  It is three-dimensional near \(P_5\), and
Proposition~\ref{prop:spectra} shows that its tangent space at \(P_5\) is
exactly the three-dimensional stable eigenspace: the unique complementary
direction is the \(Y_B\)-eigendirection with eigenvalue \(1/5\).  The
standard stable-manifold theorem and uniqueness therefore identify this
invariant hypersurface locally with \(W^s_{\rm loc}(P_5)\).
\end{proof}

\begin{lemma}[The limiting connection \(P_0\to P_5\)]
\label{lem:05}
Among the trajectories that converge to \(P_0\) as
\(\eta\to-\infty\), the limiting direction corresponding to
\(\rho=0\) has a positive trajectory that converges to \(P_5\) as
\(\eta\to+\infty\).  We denote this connecting trajectory by
\[
\Gamma_{05}:P_0\longrightarrow P_5.
\]
Equivalently, \(\Gamma_{05}\) is the relevant positive branch of
\(W^u(P_0)\).
\end{lemma}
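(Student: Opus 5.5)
The plan is to work entirely inside the invariant face \(\{Y_B=0\}\) and show that the \(\rho=0\) branch of \(W^u(P_0)\) lies in that face and stays positive. A LaSalle argument with a face analogue of \(V_7\) will then force convergence to \(P_5\).

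\textbf{Step 1: identify the branch.} In the asymptotic unstable coordinates \((a_B,a_C)\) of Lemma~\ref{lem:projective-transversality}, the direction \(\rho=0\) is \(a_B=0\), \(a_C>0\). Since \(\dot Y_B=Y_B(G-X_B)\), the face \(\{Y_B=0\}\) is invariant. The subfamily of \(W^u(P_0)\) with \(a_B=0\) is a one-dimensional invariant curve, and it coincides with \(W^u(P_0)\cap\{Y_B=0\}\). So the candidate \(\Gamma_{05}\) is the unique orbit (up to time translation) leaving \(P_0\) inside the face with \(Y_C>0\). Along it, \(Y_A>0\) and \(Y_C>0\), since both are positive near \(P_0\) to leading order. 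By Lemma~\ref{lem:compact} these stay positive, and \(X_B\ge0\) is preserved.

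\textbf{Step 2: reduce and find a Lyapunov function.} On the face the constraints read \(2X_A+2X_B+3X_C=1\) and \(G+2Y_A^2+6Y_C^2=1\). With \(Y_B=0\) we get \(\dot X_B=X_B(G-1)\), and \(G<1\) on the interior of the face. Hence \(X_B\) decays, and the \(\omega\)-limit set lies in \(\{X_B=0\}\), which is the phase space of the two-factor \((2,3)\) Ricci-flat problem. The analogue of \(V_7\) will be
\[
V_5=Y_A^2Y_C^3,\qquad \frac{\dot V_5}{V_5}=5G-(2X_A+3X_C)=5G-1+2X_B .
\]
On \(X_B=0\), weighted Cauchy--Schwarz gives \(G\ge 2X_A^2+3X_C^2\ge(2X_A+3X_C)^2/5=1/5\), so \(V_5\) is nondecreasing. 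In general I would instead use \(V_5\) together with the decay of \(X_B\): \(X_B(\eta)\le Ce^{-c\eta}\) once \(G\le1-c\). Near \(P_0\), \(G\to 2\cdot\tfrac14=\tfrac12\), and \(G\) stays bounded below \(1\) as long as \(Y_A,Y_C\) are not both small. Because \(V_5\) is nondecreasing up to an integrable error, it is bounded below by a positive constant for late times. This keeps the orbit in a compact subset of \(\{Y_A Y_C>0\}\) and in particular keeps it away from the Kasner-type circle \(G=1\).

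\textbf{Step 3: apply LaSalle.} The equality set \(\dot V_5=0\), \(X_B=0\), forces \(X_A=X_C=1/5\). Invariance together with the \(X_A,X_C\) equations then gives \(Y_A^2=\tfrac15(1-\tfrac15)=\tfrac4{25}\) and \(2Y_C^2=\tfrac4{25}\), that is \(Y_A=\tfrac25\) and \(Y_C=\tfrac{\sqrt2}{5}\). This is exactly \(P_5\). LaSalle's principle, used as in Lemma~\ref{lem:positive-to-P7} but handling the asymptotically autonomous perturbation coming from \(X_B\), then gives \(\Gamma_{05}(\eta)\to P_5\).

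\textbf{Main obstacle.} The difficulty is the \(X_B\) coupling. On the face, \(X_B\) need not vanish initially: at \(P_0\), \(X_B=0\), but along the branch \(X_B\) becomes positive. I would first try to show \(X_B\equiv0\) on \(\Gamma_{05}\). The unstable-manifold expansion should give \(X_B=O(e^{\eta})\) at leading order, with the \(Y_B^2\) source absent, so \(X_B\) may not be exactly zero; I expect it is only exponentially small. In that case the clean fallback is to absorb the term \(2X_B\) using the exponential decay of \(X_B\), and then invoke the limit-set theory for asymptotically autonomous systems. By that theory the \(\omega\)-limit set is invariant for the limiting two-factor flow on \(\{X_B=0\}\), where \(V_5\) is a strict Lyapunov function. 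The resulting limit is \(P_5\) because the orbit stays away from the boundary \(Y_AY_C=0\).
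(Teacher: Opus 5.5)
Your Steps 1 and 3 are fine. The proof, however, hinges on the \(X_B\)-coupling that you leave open, and your expectation there is mistaken: in fact \(X_B\equiv0\) along \(\Gamma_{05}\).

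On the invariant face \(\{Y_B=0\}\) the \(X_B\)-equation is linear and homogeneous, \(\dot X_B=(G-1)X_B\), so \(X_B\) either vanishes identically or nowhere. Along your branch \(G\to G(P_0)=\tfrac12\) as \(\eta\to-\infty\). Choose \(\eta_1\) with \(G\le\tfrac34\) on \((-\infty,\eta_1]\). Then a nonzero \(X_B\) would satisfy \(|X_B(\eta)|\ge|X_B(\eta_1)|e^{(\eta_1-\eta)/4}\to\infty\), contradicting \(X_B\to0\) at \(P_0\).

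Equivalently, on \(W^u(P_0)\) one has \(X_B(\eta)=\int_{-\infty}^{\eta}\exp\bigl(\int_s^{\eta}(G-1)\,d\sigma\bigr)Y_B(s)^2\,ds\). So \(X_B\) is generated entirely by the \(Y_B^2\) source; its leading term is a multiple of \(a_B^2e^{\eta}\), and it vanishes on the branch \(a_B=0\). The \(e^{\eta}\) term you anticipated is exactly this source term, which is absent there.

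This is how the paper proceeds. The face \(\mathcal F_5=\mathcal P\cap\{X_B=Y_B=0\}\) is invariant, and its tangent space at \(P_0\) contains exactly one unstable eigendirection (the \(Y_C\)-direction). The unstable-manifold theorem therefore places the branch inside \(\mathcal F_5\). There \(2X_A+3X_C=1\) holds exactly and \(\dot V_5/V_5=5G-1\ge0\) with no error term. LaSalle on a superlevel set of \(V_5\) then gives convergence to \(P_5\), exactly as in your Step 3.

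Without this observation, your fallback does not close as written. Off \(\{X_B=0\}\), weighted Cauchy--Schwarz only yields \(5G-1+2X_B\ge2X_B(7X_B-1)\), which is negative for \(0<X_B<\tfrac17\), so \(V_5\) is not monotone. The remaining argument is circular:
\begin{enumerate}[label=(\roman*)]
\item you bound \(V_5\) below using integrability of \(X_B\);
\item you obtain that integrability from \(G\le1-c\);
\item you obtain \(G\le1-c\) from \(Y_A,Y_C\) staying away from zero, which is the lower bound on \(V_5\) you set out to prove.
\end{enumerate}
Closing this loop, and justifying the appeal to limit sets of asymptotically autonomous flows, would need a genuine bootstrap that the proposal does not supply. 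Once \(X_B\equiv0\) is established, none of it is needed and your argument reduces to the paper's.
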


\begin{proof}
The face
\[
\mathcal F_5=\mathcal P\cap\{X_B=Y_B=0\}
\]
is invariant.  At \(P_0\), its tangent space contains exactly one of the
two unstable eigendirections from Proposition~\ref{prop:spectra}.
The unstable-manifold theorem therefore gives a one-dimensional local
unstable branch of \(P_0\) contained in \(\mathcal F_5\); we choose the
branch entering the positive part of \(\mathcal F_5\).  This is the
trajectory corresponding to the limiting projective direction
\(\rho=0\).

On \(\mathcal F_5\),
\[
2X_A+3X_C=1.
\]
For
\[
V_5=Y_A^2Y_C^3
\]
one obtains
\[
\frac{\dot V_5}{V_5}=5G-1.
\]
Weighted Cauchy--Schwarz gives
\[
G=2X_A^2+3X_C^2
\ge\frac{(2X_A+3X_C)^2}{5}
=\frac15.
\]
Thus \(V_5\) is nondecreasing.  Choose a finite point
\(\eta_0\) on the positive unstable branch.  Since
\(V_5(\eta_0)>0\), the compact superlevel set
\[
\mathcal K^5_{\eta_0}
=
\{U\in\mathcal F_5:V_5(U)\ge V_5(U(\eta_0))\}
\]
is positively invariant and lies entirely in \(Y_A Y_C>0\).  On this set
the identity for \(\dot V_5/V_5\) is legitimate.  Equality occurs only at
\(X_A=X_C=1/5\); invariance then forces
\(Y_A=2/5\), \(Y_C=\sqrt2/5\).  Hence \(P_5\) is the maximal invariant
subset of the zero set of \(\dot V_5\) contained in
\(\mathcal K^5_{\eta_0}\).  LaSalle's invariance principle on this compact
positively invariant set gives the claim.
\end{proof}

\begin{lemma}[The heteroclinic \(P_5\to P_7\)]
\label{lem:57}
There is a positive heteroclinic
\[
\Gamma_{57}:P_5\longrightarrow P_7.
\]
It lies in the invariant submanifold
\[
\mathcal M=
\left\{X_C=X_A,\quad Y_C=\frac{Y_A}{\sqrt2}\right\},
\]
which in physical variables is exactly \(C=\sqrt2 A\).
\end{lemma}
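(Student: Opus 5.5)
The plan is to produce $\Gamma_{57}$ as the positive branch of the one-dimensional unstable manifold $W^u(P_5)$. I will show that this branch lies in $\mathcal M$ and in the interior of $\mathcal P$, and then obtain convergence to $P_7$ from Lemma~\ref{lem:positive-to-P7}.

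\textbf{Step 1: invariance of $\mathcal M$.} Put $u=X_C-X_A$ and $w=Y_C-Y_A/\sqrt2$. From \eqref{eq:normalized},
\[
\dot u=(G-1)u+2Y_C^2-Y_A^2,
\]
and $2Y_C^2-Y_A^2=2w\,(Y_C+Y_A/\sqrt2)$. Similarly,
\[
\dot w=Y_C(G-X_C)-\tfrac{1}{\sqrt2}Y_A(G-X_A)=(G-X_A)\,w-Y_C\,u .
\]
So $(u,w)$ satisfies a linear homogeneous system whose coefficients are smooth along any trajectory. Uniqueness then shows that $\{u=w=0\}$ is invariant.

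The physical meaning follows from $Y_C/Y_A=A/C$ and $X_C-X_A=(\gamma-\alpha)/\mathcal H$: the condition $w=0$ is $C=\sqrt2A$, and this already forces $\gamma=\alpha$. Direct substitution shows that $P_5$ and $P_7$ both lie in $\mathcal M$: at $P_5$ we have $\sqrt2/5=(2/5)/\sqrt2$, and at $P_7$ we have $\sqrt3/7=(\sqrt6/7)/\sqrt2$.

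\textbf{Step 2: the unstable branch lies in $\mathcal M$.} Near $P_5$ the set $\mathcal N=\mathcal C\cap\mathcal M$ is a smooth invariant two-manifold; I will check that the four defining gradients are independent at $P_5$. Its tangent space contains $e_{Y_B}$. Indeed, $\mathcal M$ does not involve $Y_B$, and both linearized constraints annihilate $e_{Y_B}$ at $Y_B=0$, as already used in Proposition~\ref{prop:spectra}.

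The restriction of $DF(P_5)$ to $T_{P_5}\mathcal N$ therefore has the eigenvalue $1/5$. Its other eigenvalue is one of the stable eigenvalues, and I expect it to be $-4/5$. Hence the flow on $\mathcal N$ has a one-dimensional unstable manifold at $P_5$. This curve is an invariant curve of the full flow on $\mathcal C$ tangent to the unstable eigenspace, so by uniqueness of the unstable manifold it coincides with $W^u(P_5)$. I choose the branch with $Y_B>0$.

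\textbf{Step 3: positivity.} This is the step I expect to be the main obstacle. The eigenvector $e_{Y_B}$ has zero $X_B$-component, so to first order the branch sits on the boundary face $X_B=0$.

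To handle this, I will expand the branch as $Y_B=\kappa e^{\eta/5}$ and $X_B=O(e^{2\eta/5})$ as $\eta\to-\infty$. Substituting into $\dot X_B=X_B(G-1)+Y_B^2$ with $G-1\to-4/5$ fixes the leading term:
\[
X_B=\frac{\kappa^2}{2/5+4/5}\,e^{2\eta/5}+\dots>0 .
\]
The remaining coordinates stay near $P_5$, where $X_A,X_C,Y_A,Y_C>0$. Consequently the branch enters $\mathcal P$ with $Y_AY_BY_C>0$ at some finite time, and Lemma~\ref{lem:compact} keeps it in the relative interior for all later times.

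\textbf{Step 4: convergence.} Lemma~\ref{lem:positive-to-P7} applies at that finite time and gives convergence of the branch to $P_7$ as $\eta\to\infty$. This branch is $\Gamma_{57}$, and by Step 2 it lies in $\mathcal M$, which completes the proof.
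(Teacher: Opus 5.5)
Your proposal is correct and follows the paper's proof. Both arguments establish invariance of $\mathcal M$, take the one-dimensional unstable branch of $P_5$ tangent to $e_{Y_B}$ inside $\mathcal M$, check that it enters the positive region, and apply Lemma~\ref{lem:positive-to-P7} to get convergence to $P_7$. Your additional checks make explicit what the paper's terse ``its positive branch enters $Y_AY_BY_C>0$'' leaves implicit: the four gradients are independent, so $\mathcal C\cap\mathcal M$ is a surface whose second eigenvalue at $P_5$ is forced to be $-4/5$ (it is real and $1/5$ is simple), and the second-order computation gives $X_B>0$, so the branch genuinely lies in $\mathcal P$.
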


\begin{proof}
A direct substitution into \eqref{eq:normalized} shows that
\(\mathcal M\) is invariant.  The unstable eigenspace of \(P_5\) is
the \(Y_B\)-direction and is tangent to \(\mathcal M\).
By the standard unstable-manifold theorem and uniqueness, the
one-dimensional local unstable manifold satisfies
\(W^u_{\rm loc}(P_5)\subset\mathcal M\).
Its positive branch enters \(Y_A Y_B Y_C>0\).  Since \(\mathcal P\) is
compact and forward invariant, this branch is forward complete, and
Lemma~\ref{lem:positive-to-P7} forces its forward limit to be \(P_7\).
Finally,
\[
\frac{Y_C}{Y_A}=\frac{A}{C},
\]
so \(Y_C=Y_A/\sqrt2\) is equivalent to \(C=\sqrt2 A\).
\end{proof}

\subsection{Saddle passage at \texorpdfstring{\(P_5\)}{P5}}

We now make precise the following simple geometric picture.  When
\(\rho>0\) is small, the trajectory \(U_\rho\) arrives very close to
\(P_5\) and very close to the incoming limiting trajectory
\(\Gamma_{05}\).  It does not converge to \(P_5\), however: its small
positive \(Y_B\)-component eventually grows and carries the trajectory away
from \(P_5\) along the outgoing connection \(\Gamma_{57}\).  We need to
show that, as \(\rho\downarrow0\), the point at which \(U_\rho\) leaves a
fixed small neighborhood of \(P_5\) converges to the corresponding point
of \(\Gamma_{57}\).

Two standard pieces of dynamical-systems terminology occur in this
argument.  First, Lemma~\ref{lem:rho-projective} says that near \(P_0\) the
two leading unstable coefficients are changed by a common factor when the
origin of normalized time is shifted.  Their ratio is therefore the
quantity that distinguishes the trajectories, and for our family this
ratio is \(\rho\).  This is all that is meant below by saying that
\(\rho\) is a \emph{projective unstable coordinate}.

Second, we use the classical \emph{inclination lemma}, also called the
\emph{\(\lambda\)-lemma}.  In the form needed here, it says the following.
Suppose a family of trajectories approaches the stable manifold of a
hyperbolic saddle transversely.  The trajectories then spend longer and
longer near the saddle, and when they leave a fixed small neighborhood,
their outgoing direction approaches the unstable manifold of the saddle.
Thus, in our situation, once we verify that the family \(U_\rho\) approaches
the stable manifold \(Y_B=0\) with a nonzero \(Y_B\)-component, the lemma
implies that its outgoing intersections approach the positive unstable
trajectory \(\Gamma_{57}\).  This standard result is often called the
inclination or \(\lambda\)-lemma; see, for example,
Palis and de Melo~\cite[Chapter~2]{PalisDeMelo1982}.

The next proposition is precisely this statement for the present family.

\begin{proposition}[Small-ratio saddle passage]\label{prop:saddle}
Let \(\Sigma_{\rm in}\) be a smooth section transverse to \(\Gamma_{05}\)
sufficiently close to \(P_5\), and let \(\Sigma_{\rm out}\) be a smooth section transverse to the
outgoing trajectory \(\Gamma_{57}\), also sufficiently close to \(P_5\).
For all sufficiently small \(\rho>0\), the normalized trajectory
\(U_\rho\) meets both sections and
\[
U_\rho\cap\Sigma_{\rm out}
\longrightarrow
\Gamma_{57}\cap\Sigma_{\rm out}
\qquad(\rho\downarrow0).
\]
\end{proposition}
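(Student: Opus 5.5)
We will prove the proposition in three steps. First we show that for small \(\rho>0\) the trajectory \(U_\rho\) enters a thin tube around \(\Gamma_{05}\) and crosses \(\Sigma_{\rm in}\). Then we analyse the passage near the saddle \(P_5\) in local coordinates. Finally we identify the exit point on \(\Sigma_{\rm out}\).

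\emph{Entry.} Lemma~\ref{lem:projective-transversality} gives a \(C^1\) curve \(z:[0,\rho_0)\to\Sigma_0\) with \(z(0)=\Gamma_{05}\cap\Sigma_0\). By Lemma~\ref{lem:05}, \(\Gamma_{05}\) crosses \(\Sigma_{\rm in}\) transversally after finite time. The flow from a neighbourhood of \(z(0)\) in \(\Sigma_0\) to \(\Sigma_{\rm in}\) is therefore a well-defined \(C^1\) Poincaré map \(\Pi_0\), by the implicit function theorem and smooth dependence on initial data. Hence \(w(\rho):=\Pi_0(z(\rho))\) is a \(C^1\) curve in \(\Sigma_{\rm in}\) with \(w(0)=\Gamma_{05}\cap\Sigma_{\rm in}\in W^s_{\rm loc}(P_5)\).

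The face \(\{Y_B=0\}\) is invariant, and by Lemma~\ref{lem:P5-stable-face} it coincides locally with \(W^s_{\rm loc}(P_5)\). Transversality of \(z\) to \(\{Y_B=0\}\), given by \(Y_B(z(\rho))=c_0\rho+o(\rho)\) with \(c_0>0\), is preserved under \(\Pi_0\), since \(\Pi_0\) is a local diffeomorphism mapping the invariant face to itself. Thus
\[
Y_B(w(\rho))=c_1\rho+o(\rho),\qquad c_1>0 .
\]
In particular, \(w(\rho)\notin W^s_{\rm loc}(P_5)\) for \(\rho>0\), but \(w(\rho)\to W^s_{\rm loc}(P_5)\) transversally.

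\emph{Saddle passage.} Near \(P_5\) we choose \(C^1\) coordinates \((x,y)\in\mathbb R^3\times\mathbb R\) on the constraint manifold \(\mathcal C\), straightening the stable and unstable manifolds. Thus \(\{y=0\}=W^s_{\rm loc}=\mathcal C\cap\{Y_B=0\}\) and \(\{x=0\}=W^u_{\rm loc}\subset\mathcal M\). By Proposition~\ref{prop:spectra}, \(P_5\) is hyperbolic, with eigenvalue \(1/5\) in the \(y\)-direction and real parts \(\le -2/5\) in \(x\).

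One option is a direct estimate. Since \(\dot Y_B=Y_B(G-X_B)\) and \(G-X_B\to1/5\) near \(P_5\), the component \(y\) grows like \(e^{(1/5-\delta)\eta}\), while \(|x|\) decays like \(e^{-(2/5-\delta)\eta}\). The exit time from a box \(\{|x|,|y|\le r\}\) is \(T_\rho\approx 5\log(r/y(w(\rho)))\to\infty\), and at exit \(|x|\lesssim y^{2-\delta'}\to0\) and \(y=r>0\). Hence the exit point tends to the point \((0,r)\) of the local unstable branch. The sign of \(y\) is fixed because \(Y_B>0\) for \(\rho>0\), so the trajectory leaves along the positive branch, which is \(\Gamma_{57}\).

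Alternatively, we may quote the \(\lambda\)-lemma of Palis--de Melo applied to the transverse curve \(w\). Either way, the exit point on \(\partial\{|y|=r\}\) converges to \(\Gamma_{57}\cap\{y=r\}\).

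\emph{Exit.} The flow from \(\{y=r\}\) to \(\Sigma_{\rm out}\), near the transverse crossing of \(\Gamma_{57}\), is a continuous Poincaré map. Composing it with the exit-point convergence gives \(U_\rho\cap\Sigma_{\rm out}\to\Gamma_{57}\cap\Sigma_{\rm out}\).

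\emph{Main obstacle.} We expect the main obstacle to be the saddle estimate, specifically a uniform (in \(\rho\)) bound on \(x\) during the long passage, which needs the nonlinear coupling terms to be controlled. We would first try to use only \(C^1\) linearization of the stable and unstable foliations in order to avoid resonance issues. If that fails, we would fall back on a cone-field and Gronwall argument in the box. That argument uses only the exponential dichotomy \(1/5\) versus \(-2/5\), and the invariance of \(\{y=0\}\), which makes \(\dot y=y\,g(x,y)\) with \(g\ge 1/5-\delta\).
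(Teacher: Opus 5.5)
Your proposal is correct and follows the paper's route. You transport the transverse component $Y_B=c_0\rho+o(\rho)$ from the section near $P_0$ to $\Sigma_{\rm in}$ while preserving the invariant face $\{Y_B=0\}=W^s_{\rm loc}(P_5)$, apply the inclination lemma, and identify the positive outgoing branch with $\Gamma_{57}$ via Lemma~\ref{lem:57}. Your direct box estimate, which uses only the invariance of $\{x=0\}$ and $\{y=0\}$ and the dichotomy $1/5$ versus $-2/5$, is a valid self-contained substitute for quoting the $\lambda$-lemma.

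One small point to fix: $\Sigma_0\subset W^u(P_0)$ is one-dimensional, so the flow map $\Sigma_0\to\Sigma_{\rm in}$ is not a local diffeomorphism of sections. Either thicken $\Sigma_0$ to a full section, or do as the paper does and use that the transverse $Y_B$-derivative along $\Gamma_{05}$ solves $\dot w=(G_0-X_{B,0})w$ and therefore never vanishes.
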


\begin{proof}
By Lemma~\ref{lem:P5-stable-face}, the limiting orbit \(\Gamma_{05}\) lies
in the local stable manifold \(Y_B=0\).  It remains to verify that the singular orbit
family approaches this stable manifold in the transverse unstable direction.
Lemma~\ref{lem:projective-transversality} gives a \(C^1\) curve on a
fixed local section near \(P_0\), extending to the limiting boundary
direction \(\rho=0\), whose transverse \(Y_B\)-component is
\[
Y_B=c_0\rho+o(\rho),
\qquad c_0>0.
\]
Flow this section forward to \(\Sigma_{\rm in}\).  Along the limiting
orbit \(\Gamma_{05}\subset\{Y_B=0\}\), the derivative of the scalar
\(Y_B\)-equation in a transverse direction satisfies
\[
\dot w=(G_0-X_{B,0})w.
\]
Its fundamental solution is
\[
w(\eta)=w(\eta_0)
\exp\!\left(\int_{\eta_0}^{\eta}
(G_0-X_{B,0})\,d\zeta\right),
\]
and is therefore never zero.  The finite-flow map carries the nonzero
transverse direction at \(\Sigma_0\) to a nonzero transverse direction
at \(\Sigma_{\rm in}\).  Hence
\[
Y_{B,\rm in}(\rho)=c_*\rho+o(\rho),
\qquad c_*>0.
\]
Lemma~\ref{lem:P5-stable-face} identifies the local stable manifold
exactly with \(Y_B=0\), so \(Y_B\) is a defining coordinate for the
stable manifold on \(\Sigma_{\rm in}\).
Consequently the family \(U_\rho\) crosses the local stable manifold
\(Y_B=0\) transversely, with \(Y_B>0\) for \(\rho>0\).  We may therefore
apply the inclination lemma in the form described above: the intersections
of these trajectories with the fixed outgoing section converge to the
intersection of that section with the positive local unstable trajectory of
\(P_5\).  Lemma~\ref{lem:57} identifies this outgoing trajectory with
\(\Gamma_{57}\), which proves the claim.
\end{proof}

\begin{remark}[Logarithmic residence time near $P_5$]\label{rem:residence}
The normalized coordinates also give a simple interpretation of the logarithmic residence time near $P_5$.  Near \(P_5\), a smooth unstable coordinate
satisfies
\[
\dot u=\left(\frac15+o(1)\right)u,
\qquad
u_{\rm in}(\rho)=c_*\rho+o(\rho).
\]
To grow from size \(O(\rho)\) to a fixed outgoing size therefore requires
\[
T_\rho=5\log\frac1\rho+O(1).
\]
Thus the logarithmically long interval in the physical proof is precisely
the residence time near the saddle \(P_5\).  No differentiability of the
passage map at \(\rho=0\) is required for the argument below.
\end{remark}

\subsection{Passage to response coordinates}

Let \(\Sigma\) be any fixed transverse section in the basin of \(P_7\)
met by \(\Gamma_{57}\).  The finite flow from \(\Sigma_{\rm out}\) to
\(\Sigma\) is smooth.  Proposition~\ref{prop:saddle} therefore gives
\[
U_\rho\cap\Sigma\longrightarrow\Gamma_{57}\cap\Sigma
\qquad(\rho\downarrow0).
\]
Only this \(C^0\) convergence is used below; no differentiability at the
projective boundary value \(\rho=0\) is required.

We now return to the physical anisotropy variables introduced above,
\[
\zeta=\log\frac AB,\qquad
\xi=\log\frac{C}{\sqrt2(AB)^{1/2}},
\]
and to their stable response coefficients \(Q(\rho),X(\rho)\).
The global response construction itself is not replaced.

\begin{lemma}[Response coordinates near the limiting orbit]
\label{lem:local-response-map}
Let \(\Sigma\) be as above and let
\(z_0=\Gamma_{57}\cap\Sigma\).  After shrinking to a neighborhood
\(\mathcal N\subset\Sigma\) of \(z_0\), every forward orbit from
\(\mathcal N\) remains in the basin of the hyperbolic sink \(P_7\), and the
stable response coefficients define a smooth map
\[
\mathcal S:\mathcal N\longrightarrow\mathbb C^2,
\qquad
z\longmapsto(Q(z),X(z)).
\]
For the physical singular-orbit family this map agrees with
\((Q(\rho),X(\rho))\), and at \(z_0\) it gives the coefficients
\((Q_0,X_0)\) of \(\Gamma_{57}\).  No extension of the parameter map
\(\rho\mapsto(Q(\rho),X(\rho))\) to \(\rho=0\) is assumed: the limiting
coefficients are defined directly by this phase-space response map.
\end{lemma}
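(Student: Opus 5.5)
The plan is to construct \(\mathcal S\) by re-running the proof of Theorem~\ref{thm:global-response} with the initial point \(z\in\mathcal N\) in place of the parameter \(\rho\). The normalized vector field \eqref{eq:normalized} does not depend on \(\rho\), so nothing is lost.

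\emph{Step 1: uniform cone entry on \(\mathcal N\).} The orbit through \(z_0\) lies on \(\Gamma_{57}\). It sits in the open interior \(Y_AY_BY_C>0\) and converges to \(P_7\) by Lemma~\ref{lem:57}. Fix a forward-invariant neighborhood \(\mathcal U_7\) of \(P_7\) on which the constrained linearization is uniformly stable; Proposition~\ref{prop:spectra} provides this. The orbit of \(z_0\) enters \(\mathcal U_7\) at a finite time \(\eta_*\). Smooth dependence of the finite-time flow on initial data then gives a neighborhood \(\mathcal N\) of \(z_0\) in \(\Sigma\) whose orbits all enter \(\mathcal U_7\) by time \(\eta_*+1\), with positive coordinates preserved by Lemma~\ref{lem:compact}. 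After that time the same argument as in Corollary~\ref{cor:chi-uniform-entry} (nonlinear sink estimate plus roughness of dichotomies) gives uniform exponential decay. This applies both to \(U_z(\eta)-P_7\) and to the evolution operator of \eqref{eq:constrained-variational}, and hence to all \(z\)-derivatives of the trajectory, uniformly on \(\mathcal N\).

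\emph{Step 2: reconstruct the physical anisotropies from the normalized orbit.} This is the step I expect to need the most care, because a point \(z\) carries no absolute scale. Along a normalized orbit, \(\zeta=\log(Y_B/Y_A)\) and \(\xi=\log\bigl(\sqrt{Y_AY_B}/(\sqrt2\,Y_C)\bigr)\) are scale-invariant functions of the state, which is good. However, \(s=\log\tau\) is defined only up to an additive constant and a physical scale. I will fix the convention by reconstructing \(\mathcal H\) and \(\tau\) from \(d\log\mathcal H/d\eta=-G\) and \(d\tau/d\eta=1/\mathcal H\). The normalization is chosen so that for physical points \(z=U_\rho\cap\Sigma\) it reproduces the inner scale with \(B_\rho(0)=1\). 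Concretely, I will carry the scale \(\log\mathcal H\) at \(\Sigma\) as a smooth function on \(\mathcal N\); on the physical curve it is determined by the trajectory. The integration constant for \(\tau\) only changes \(s\) by \(o(1)\) terms through \(\tau\mathcal H\to7\). The physical scale enters \(Q\) and \(X\) as a common factor \(\lambda^{3-i\sqrt3}\), and this factor depends smoothly on the chosen normalization. For physical points the normalization agrees with the inner family, so \(\mathcal S(U_\rho\cap\Sigma)=(Q(\rho),X(\rho))\) by uniqueness of the amplitudes in Theorem~\ref{thm:global-response}.

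\emph{Step 3: define the amplitudes and prove smoothness.} For each \(z\) I write the \(\zeta\)- and \(\xi\)-channels in the form \(U_s=(A_\infty+E_z)U+N_z\) and apply the variation-of-constants limit, or Lemma~\ref{lem:asymptotic-coordinates} for the homogeneous channel. This defines \(Q(z)\) and \(X(z)\). The decay from Step 1 gives integrable majorants for \(E_z\), for \(e^{3s}N_z\), and for all their \(z\)-derivatives. Differentiation under the integral sign is then justified exactly as in the proof of Theorem~\ref{thm:global-response}, so \(\mathcal S\) is smooth. At \(z_0\) the construction returns the coefficients \((Q_0,X_0)\) of \(\Gamma_{57}\), consistent with Theorem~\ref{thm:Q-nonzero}. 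No extension in \(\rho\) is used anywhere: the map is defined on phase-space points alone.
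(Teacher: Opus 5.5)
Your Steps 1 and 3 are the paper's proof. The paper uses a smooth finite hitting map from \(\mathcal N\) into a fixed sink neighbourhood of \(P_7\), followed by the variation-of-constants construction of Lemma~\ref{lem:asymptotic-coordinates} with initial-data derivatives, made legitimate by the exponential decay at the sink. Step 2 addresses something the paper's proof does not discuss; the paper simply treats the late Cauchy data \(Y(s_1)\) as an input alongside \(z\). You are right on all three points there: a normalized point carries no physical scale, a rescaling by \(\lambda\) multiplies both amplitudes by \(\lambda^{3-i\sqrt3}\), and the \(\tau\)-origin is irrelevant.

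The gap is in how you then fix the scale. You take \(\log\mathcal H|_\Sigma\) to be a smooth function on \(\mathcal N\) that coincides, at the physical points \(z_\rho=U_\rho\cap\Sigma\), with the value fixed by \(B_\rho(0)=1\). Such a function exists only if \(\rho\mapsto\log\mathcal H_\rho|_\Sigma\) extends smoothly, and in particular continuously, to \(z_0\). You give no argument for this, and it does not follow from \(z_\rho\to z_0\).

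Indeed, \(\log\mathcal H_\rho|_\Sigma=-\log Y_B(z_\rho)-\int_{-\infty}^{\eta_\Sigma}X_B\,d\eta\). The integral runs along \(U_\rho\) through a residence near \(P_5\) of length \(5\log(1/\rho)+O(1)\). Its convergence therefore needs a genuine passage estimate, for instance that near \(P_5\) the \(X_B\)-equation is damped at rate \(4/5\) and forced only by \(Y_B^2\). Even then, smooth extension from the points \(z_\rho\) is not automatic: they may approach \(z_0\) along a spiral, since the stable eigenvalues at \(P_5\) are complex.

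Without this, you have only \(\mathcal S(z_\rho)=c_\rho\,(Q(\rho),X(\rho))\) for some unknown common factor \(c_\rho\neq0\). That is neither the asserted agreement nor the convergence \((Q(\rho),X(\rho))\to(Q_0,X_0)\) used afterwards.

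The simplest repair is to state what is actually needed. The ratio \(z\mapsto X(z)/Q(z)\) is scale-independent. It is therefore a well-defined smooth function near \(z_0\), where \(Q\neq0\), equal to \(1/2\) at \(z_0\) and to \(X(\rho)/Q(\rho)\) at \(z_\rho\). This is all Theorem~\ref{thm:smallrho} uses for the sign of \(\Phi\). If you want the unnormalized amplitudes themselves, define \(\mathcal S\) on \(\mathcal N\times\mathbb R\) (state and log-scale) with the explicit equivariance, and prove the convergence of the physical scale separately.
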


\begin{proof}
Choose a later section \(\Sigma_1\) contained in a fixed hyperbolic sink
neighborhood of \(P_7\).  The finite hitting map
\[
\mathcal T:\mathcal N\subset\Sigma\longrightarrow\Sigma_1
\]
is smooth after shrinking \(\mathcal N\), because the reference orbit
meets both sections transversely and the flow depends smoothly on its
initial point.

In a fixed trivialization over the sink neighborhood, each anisotropy
channel has the form
\[
Y_s=(A_\infty+E(s,z))Y,
\]
where \(z\in\Sigma_1\), the matrix \(A_\infty\) has eigenvalues
\(-3\pm i\sqrt3\), and
\[
\|E(\,\cdot\,,z)\|_{L^1([s_1,\infty))}
\]
is uniformly bounded for \(z\) near \(\mathcal T(z_0)\).  Moreover
\(E(s,z)\) and its first \(z\)-derivatives decay exponentially and hence
belong uniformly to \(L^1\).  Applying the variation-of-constants
construction of Lemma~\ref{lem:asymptotic-coordinates} together with one
\(z\)-derivative shows that the Cauchy-data-to-amplitude map
\[
(z,Y(s_1))\longmapsto Z
\]
is smooth.  Applying this to the two channels and composing with the
smooth hitting map \(\mathcal T\) gives the asserted smooth map
\(\mathcal S=(Q,X)\) on \(\mathcal N\).
\end{proof}

\begin{theorem}[Small-ratio response theorem]
\label{thm:smallrho}
For the Ricci-flat singular-orbit family,
\[
\frac{X(\rho)}{Q(\rho)}\longrightarrow\frac12
\qquad(\rho\downarrow0).
\]
Consequently
\[
\Phi(\rho)<0
\]
for all sufficiently small positive \(\rho\).
\end{theorem}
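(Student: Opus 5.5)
The plan is to combine the continuity of the response map of Lemma~\ref{lem:local-response-map} with the saddle passage of Proposition~\ref{prop:saddle}. We first compute the limiting ratio \(X_0/Q_0\) exactly on the heteroclinic \(\Gamma_{57}\). Then we transfer it to small \(\rho\). Finally we read off the sign of \(\Phi\) from the outer Wronskian of Theorem~\ref{thm:outer-transfer}.

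\emph{Step 1: the limiting ratio.} By Lemma~\ref{lem:57}, \(\Gamma_{57}\) lies in the invariant submanifold \(\mathcal M\), on which \(C=\sqrt2A\) identically. Hence along \(\Gamma_{57}\) we have \(\log C=\tfrac12\log2+\log A\), and so
\[
\xi=\log\frac{\sqrt2A}{\sqrt2(AB)^{1/2}}=\frac12\log\frac AB=\frac12\zeta
\]
exactly, for all times. The map from asymptotic expansions to amplitudes is real-linear and uses the same convention in both channels, as fixed in the complex-amplitude remark. Applying it to \(\xi=\tfrac12\zeta\) gives \(X_0=\tfrac12Q_0\). Theorem~\ref{thm:Q-nonzero} gives \(Q_0\neq0\). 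This is the same mechanism as the anchor \(X(\sqrt2)=-\tfrac12Q(\sqrt2)\), where the relation is instead \(C=\sqrt2B\).

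\emph{Step 2: convergence of the responses.} Fix a section \(\Sigma\) and a neighborhood \(\mathcal N\) as in Lemma~\ref{lem:local-response-map}. Proposition~\ref{prop:saddle}, together with the smooth finite flow to \(\Sigma\), gives \(z_\rho:=U_\rho\cap\Sigma\to z_0\). Thus \(z_\rho\in\mathcal N\) for small \(\rho\). Since \(\mathcal S\) is smooth (in particular continuous) on \(\mathcal N\) and agrees with \((Q(\rho),X(\rho))\) on the physical family, we obtain \((Q(\rho),X(\rho))\to(Q_0,X_0)\).

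One point needs care here. The physical response coefficients are defined using the physical \(s=\log\tau\) normalization, whereas \(\mathcal S\) is built from the orbit in phase space. The response is invariant under shifts of normalized time only up to the physical scale, so I would check that the amplitude \(Q(z)\) is computed in a scale-fixed convention. A rescaling \(\tau\mapsto\lambda\tau\) multiplies \(Q\) and \(X\) by the \emph{same} complex factor \(\lambda^{3-i\sqrt3}\). The ratio \(X/Q\) is therefore independent of this choice, and it suffices to work with \(X/Q\), which depends only on the orbit. Convergence of \(Q\) in the chosen normalization, together with \(Q_0\neq0\), then gives \(X(\rho)/Q(\rho)\to\tfrac12\). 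I expect this normalization bookkeeping to be the main subtle point.

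\emph{Step 3: the sign.} Write \(X(\rho)=r(\rho)Q(\rho)\) with \(r(\rho)\to\tfrac12\). Then
\[
\Phi(\rho)=\Im\bigl(L_0\overline{L_1}\bigr)\,\overline{r(\rho)}\,|Q(\rho)|^2
+\text{(terms involving }\Im r(\rho)\text{)}.
\]
More precisely,
\[
\Phi=|Q|^2\,\Im\bigl(L_0\overline{L_1}\,\overline r\bigr)
=|Q|^2\bigl(\Re r\cdot\Im(L_0\overline{L_1})-\Im r\cdot\Re(L_0\overline{L_1})\bigr).
\]
As \(\rho\downarrow0\), the bracket tends to \(\tfrac12(-\sqrt3)=-\tfrac{\sqrt3}{2}\). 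Moreover \(|Q(\rho)|^2\to|Q_0|^2>0\) in the fixed normalization; even without fixing it, \(|Q(\rho)|>0\) by Theorem~\ref{thm:Q-nonzero}. Hence \(\Phi(\rho)<0\) for all sufficiently small \(\rho>0\).
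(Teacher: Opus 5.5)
Your proposal is correct and follows the paper's proof essentially step for step. You use the identity $\xi=\tfrac12\zeta$ on $\Gamma_{57}\subset\mathcal M$ to get $X_0=\tfrac12Q_0$ with $Q_0\neq0$. You then use continuity of the phase-space response map $\mathcal S$ at $z_0$ under the saddle passage, and finally the factorization $\Phi=|Q|^2\,\Im\bigl(L_0\overline{L_1}\,\overline{X/Q}\bigr)$ with $\Im(L_0\overline{L_1})=-\sqrt3$.

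Your extra observation is a worthwhile sharpening of a point that Lemma~\ref{lem:local-response-map} leaves implicit. A rescaling multiplies $Q$ and $X$ by the common factor $\lambda^{3-i\sqrt3}$, and time translations do not change the leading amplitudes at all. Hence $X/Q$ depends only on the normalized orbit, and the sign conclusion needs only $Q(\rho)\neq0$ rather than convergence of $Q(\rho)$ itself.
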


\begin{proof}
By Lemma~\ref{lem:57}, the limiting trajectory satisfies
\(C=\sqrt2A\).  Hence
\[
\xi
=
\log\frac{C}{\sqrt2(AB)^{1/2}}
=
\frac12\log\frac AB
=
\frac12\zeta
\]
identically along \(\Gamma_{57}\).  Both channels use the same logarithmic
time and the same complex-amplitude convention, so linearity of the
amplitude functional gives
\[
X_0=\frac12Q_0.
\]
The limiting trajectory is nontrivial, so the limiting-state part of
Theorem~\ref{thm:Q-nonzero} gives \(Q_0\ne0\).  For all sufficiently small \(\rho\), the section point
\(z_\rho=U_\rho\cap\Sigma\) lies in the fixed neighborhood
\(\mathcal N\) of Lemma~\ref{lem:local-response-map}, and
\[
z_\rho\longrightarrow z_0.
\]
Since the phase-space response map
\(\mathcal S:\mathcal N\to\mathbb C^2\) is continuous,
\[
(Q(\rho),X(\rho))
=\mathcal S(z_\rho)
\longrightarrow
\mathcal S(z_0)
=(Q_0,X_0).
\]
Consequently
\[
\frac{X(\rho)}{Q(\rho)}\to\frac12.
\]
Since \(Q_0\neq0\), the phase-space response convergence also gives
\(|Q(\rho)|\) bounded away from zero for sufficiently small \(\rho\).
By \eqref{eq:Phi-definition},
\[
\Phi(\rho)
=
|Q(\rho)|^2
\Im\!\left(
L_0\overline{L_1}\,
\overline{\frac{X(\rho)}{Q(\rho)}}
\right).
\]
The ratio on the right tends to \(1/2\), while
\(\Im(L_0\overline{L_1})=-\sqrt3\) by
\eqref{eq:outer-transfer-sign}.  Hence
\(\Phi(\rho)<0\) for all sufficiently small positive \(\rho\).
\end{proof}

\begin{remark}[Logical content of the small-ratio theorem]
The argument uses only: transverse approach to \(W^s(P_5)\), the standard
inclination lemma, continuity of the finite flow to a fixed section in the
basin of \(P_7\), and continuity of the phase-space response map there.
In particular, it does not require a differentiable extension of
\(\rho\mapsto(Q(\rho),X(\rho))\) to \(\rho=0\), nor a quantitative rate for
\(X/Q\to1/2\).
\end{remark}

\section{Weighted \texorpdfstring{\(C^1\)}{C1} inner--outer matching}\label{sec:weighted-matching}

So far the inner and outer problems have been studied separately.  We now
have to prove that they describe the same exact positive-Einstein solution
on an intermediate range of scales.  This is the bridge that turns the
limiting response calculation into an existence proof.

There are two problems to keep distinct.  The \emph{limiting response
problem} is obtained by taking the small-singular-orbit limit: the inner geometry is
then exactly Ricci-flat and its amplitudes \(Q,X\) are transported by the
linear outer equation.  The \emph{exact matching problem} concerns the
actual metrics with \(\eps>0\), which remain positive Einstein throughout.
The purpose of this section is to prove uniform \(C^1\) closeness between
these two problems on a common overlap region.  The estimates use standard
variation-of-constants and weighted perturbation arguments; what is
model-specific is the choice of weights and their coupling to the two
response channels and the moving \(H=0\) event.

A small but essential point is that the Ricci-flat inner family approaches
the common cone with a \(\rho\)-dependent radial translation.  Mean-curvature
normalization removes this translation, so it must be reconstructed before
the inner and outer problems are compared.  We do this explicitly below.

Fix a compact ratio interval
\[
I=[\rho_-,\rho_+]\Subset(0,\infty)
\]
on which
\[
\Phi(\rho_-)<0,\qquad \Phi(\rho_+)>0.
\]
For a compact interval \(I\Subset(0,\infty)\), we write
\(O_I(f(\tau))\) for a quantity bounded by \(C_I f(\tau)\) uniformly in
\(\rho\in I\); the same convention is used for the stated
\(\rho\)-derivative estimates.
The physical logarithmic phase associated with a scale \(\eps\) is the
real number
\[
\vartheta(\eps)=\sqrt3\log(1/\eps).
\]
Only its value modulo \(\pi\) is relevant before the parity factor is
separated, but keeping the real-valued lift avoids any ambiguity in the
complex response formulas.  It is not an independent parameter when
\(\eps\) is fixed.  Independent phase variables are introduced only after
restricting to the exact strips
\(\eps_k(\theta)=\exp(-(k\pi+\theta)/\sqrt3)\), for which
\[
e^{i\vartheta(\eps_k(\theta))}
=
(-1)^k e^{i\theta}.
\]

Choose an overlap scale
\[
\delta_\eps=\eps^\beta,\qquad 0<\beta<1,
\]
and write
\[
s_\eps=\log(\delta_\eps/\eps).
\]

\subsection{The reconstructed radial translation}

For a Ricci-flat inner solution put
\[
\mathcal H_\rho
=
2\frac{A_\rho'}{A_\rho}
+2\frac{B_\rho'}{B_\rho}
+3\frac{C_\rho'}{C_\rho}.
\]
Recall that
\[
\mathcal H_\rho'
=
-\mathcal H_\rho^2G_\rho,
\qquad
G_\rho
=
2X_A^2+2X_B^2+3X_C^2,
\]
and the constraint \(2X_A+2X_B+3X_C=1\) implies \(G_\rho\ge1/7\).

\begin{lemma}[Radial translation of the Ricci-flat cone]
\label{lem:radial-shift}
For every \(\rho>0\), the limit
\begin{equation}\label{eq:sigma-def}
\sigma(\rho)
:=
\lim_{\tau\to\infty}
\left(\frac7{\mathcal H_\rho(\tau)}-\tau\right)
\end{equation}
exists and is positive.  On compact intervals \(I\Subset(0,\infty)\),
\(\sigma\) is \(C^1\), and the convergence in \eqref{eq:sigma-def} is
uniform together with one \(\rho\)-derivative.  Moreover
\begin{equation}\label{eq:H-shifted-cone}
\mathcal H_\rho(\tau)
=
\frac7{\tau+\sigma(\rho)}
+O_I(\tau^{-7}),
\end{equation}
and, after fixing the cone normalization used in
Section~\ref{sec:global-response},
\begin{align}
A_\rho(\tau)
&=
\frac{\tau+\sigma(\rho)}{\sqrt6}
+O_I(\tau^{-2}),\label{eq:A-shifted-cone}\\
B_\rho(\tau)
&=
\frac{\tau+\sigma(\rho)}{\sqrt6}
+O_I(\tau^{-2}),\label{eq:B-shifted-cone}\\
C_\rho(\tau)
&=
\frac{\tau+\sigma(\rho)}{\sqrt3}
+O_I(\tau^{-2}),\label{eq:C-shifted-cone}
\end{align}
where the \(O_I(\tau^{-2})\) terms contain the oscillatory anisotropy and
the same bounds hold after one \(\rho\)-derivative.
\end{lemma}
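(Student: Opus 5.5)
The starting point is the identity \((1/\mathcal H_\rho)'=G_\rho\), which follows from \(\mathcal H_\rho'=-\mathcal H_\rho^2G_\rho\). Put
\[
w_\rho(\tau):=\frac7{\mathcal H_\rho(\tau)}-\tau .
\]
Then \(w_\rho'=7G_\rho-1\ge0\) by the weighted Cauchy--Schwarz bound \(G_\rho\ge1/7\), so \(w_\rho\) is nondecreasing. Near the singular orbit \(\mathcal H_\rho=2/\tau+O(\tau)\), hence \(w_\rho(\tau)=\tfrac52\tau+O(\tau^3)\to0\) as \(\tau\downarrow0\). On the other hand \(G_\rho\to G(P_0)=1/2>1/7\) as \(\tau\downarrow0\), so \(w_\rho'>0\) on some initial interval. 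Therefore the limit \(\sigma(\rho)\) in \eqref{eq:sigma-def} exists in \((0,\infty]\) and is strictly positive, once finiteness is established. Finiteness and the rate both come from the next step.

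\textbf{Step 1: quantitative tail of \(7G_\rho-1\).} Since \(2X_A+2X_B+3X_C=1\), the defect \(7G-1\) is \(7\) times a weighted variance of \((X_A,X_B,X_C)\) about \(1/7\). It therefore depends only on the differences of the logarithmic derivatives. These differences are exactly
\[
\alpha-\beta=\zeta',\qquad \gamma-\tfrac12(\alpha+\beta)=\xi'.
\]
Hence \(7G_\rho-1=O\bigl(\mathcal H^{-2}(\zeta'^2+\xi'^2)\bigr)=O\bigl(\tau^2(\zeta'^2+\xi'^2)\bigr)\). The differentiated expansions of Theorem~\ref{thm:global-response} give \(\zeta',\xi'=O(\tau^{-4})\), so \(7G_\rho-1=O(\tau^{-6})\). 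Integrating the tail gives \(w_\rho(\tau)=\sigma(\rho)-O(\tau^{-5})\). Inverting,
\[
\mathcal H_\rho=\frac{7}{\tau+\sigma(\rho)+O(\tau^{-5})}=\frac{7}{\tau+\sigma(\rho)}+O(\tau^{-7}),
\]
which is \eqref{eq:H-shifted-cone}.

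\textbf{Step 2: reconstruct the radii.} Write \(m=\tfrac12\log(AB)\). Then \(\mathcal H=4m'+3(m'+\xi')=7m'+3\xi'\), so
\[
m'=\frac1{\tau+\sigma}-\frac37\xi'+O(\tau^{-7}).
\]
Integrating from \(\tau\) to \(\infty\) gives
\[
m=\log(\tau+\sigma)+\kappa-\tfrac37\xi+O(\tau^{-6}).
\]
The cone ratios of Theorem~\ref{thm:global-AC} force \(e^\kappa=1/\sqrt6\). Now
\[
\log A=m+\tfrac12\zeta,\qquad \log B=m-\tfrac12\zeta,\qquad \log C=\log\sqrt2+m+\xi,
\]
and \(\zeta,\xi=O(\tau^{-3})\). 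Exponentiating gives, for example,
\[
A_\rho=\frac{\tau+\sigma}{\sqrt6}\bigl(1+O(\tau^{-3})\bigr)=\frac{\tau+\sigma}{\sqrt6}+O(\tau^{-2}),
\]
and the \(O(\tau^{-2})\) remainder is visibly the oscillatory anisotropy. This yields \eqref{eq:A-shifted-cone}--\eqref{eq:C-shifted-cone}.

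\textbf{Step 3: uniformity and one \(\rho\)-derivative.} Uniformity on \(I\) uses the common cone-entry time and the uniform exponential estimates of Corollary~\ref{cor:chi-uniform-entry}. For the derivative, I would differentiate the representation
\[
\sigma(\rho)=w_\rho(\tau_0)+\int_{\tau_0}^\infty\bigl(7G_\rho-1\bigr)\,d\tau
\]
under the integral sign. The variational estimate \(|\partial_\rho U_\rho(\eta)|\le C_Ie^{-\gamma_I(\eta-\eta_I)}\) used in the proof of Theorem~\ref{thm:global-response} should provide an integrable majorant for \(\partial_\rho(7G_\rho-1)\), since \(7G-1\) is quadratic in the deviation from \(P_7\).

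I expect the main obstacle to lie in Step 3. The decay rates above are in the normalized time \(\eta\), while the lemma is stated in \(\tau\), and the change of variables \(\tau(\eta)\) itself depends on \(\rho\). Differentiating in \(\rho\) therefore also hits the reparametrization, and exactly this produces the \(\rho\)-dependent translation \(\sigma\). My plan is to work throughout in \(\eta\) with the exact relation \(d\tau/d\eta=1/\mathcal H\), to express \(\tau+\sigma\) as \(7/\mathcal H\) plus an \(\eta\)-integral of \(G-1/7\), and only at the end convert the uniform \(\eta\)-bounds into \(\tau\)-bounds via \(\tau\mathcal H=7+O(\tau^{-6})\). If the sharp power \(\tau^{-6}\) is lost under \(\rho\)-differentiation, I would settle first for a weaker power \(O_I(\tau^{-6+\delta})\) and then recover the sharp rate from the explicit quadratic structure of \(7G-1\) in \((\zeta',\xi')\), together with the \(\rho\)-smooth expansions of Theorem~\ref{thm:global-response}.
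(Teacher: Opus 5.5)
Your proposal is correct and follows essentially the paper's argument. The shared steps are the monotone quantity $7/\mathcal H_\rho-\tau$ with derivative $7G_\rho-1\ge0$, and the quadratic vanishing of $7G_\rho-1$ on the linear constraint; your variance observation is exactly $7G-1=(7\zeta'^2+12\xi'^2)/\mathcal H^2$, which is the paper's $7(2x_A^2+2x_B^2+3x_C^2)$ in anisotropy variables. The remaining steps also match: integration of the $O_I(\tau^{-6})$ tail, reconstruction of the radii by integrating logarithmic derivatives from infinity, and $C^1$-dependence by differentiating $\sigma(\rho)=w_\rho(\tau_0)+\int_{\tau_0}^\infty(7G_\rho-1)\,d\tau$ under the integral sign.

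One small slip in your Step~3 plan: $\tau\mathcal H_\rho=7+O(\tau^{-6})$ fails whenever $\sigma\ne0$, since what holds is $(\tau+\sigma)\mathcal H_\rho=7+O_I(\tau^{-6})$, i.e.\ $\tau\mathcal H_\rho=7-7\sigma/\tau+O(\tau^{-2})$. This does not damage the plan, because converting $\eta$-decay rates and $\rho$-derivatives into $\tau$-bounds only needs $\eta=7\log\tau+O_I(1)$ with bounded $\rho$-derivative.
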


\begin{proof}
Set
\[
S_\rho(\tau)=\frac7{\mathcal H_\rho(\tau)}-\tau.
\]
Then
\[
S_\rho'
=
-\frac{7\mathcal H_\rho'}{\mathcal H_\rho^2}-1
=
7G_\rho-1
\ge0.
\]
Near the singular orbit,
\(\mathcal H_\rho=2/\tau+O(\tau)\), so
\[
S_\rho(\tau)=\frac52\tau+O(\tau^3)>0
\]
for small positive \(\tau\).

By Theorem~\ref{thm:global-AC} and
Corollary~\ref{cor:chi-uniform-entry}, the normalized orbit converges
uniformly exponentially to \(P_7\) on compact \(\rho\)-intervals.  The
normalized time \(\eta\) and the physical logarithmic time
\(s=\log\tau\) satisfy
\[
\frac{ds}{d\eta}=\frac1{\tau\mathcal H}\longrightarrow\frac17.
\]
The constrained eigenvalues at \(P_7\) are
\[
-\frac37\pm\frac{i\sqrt3}{7},
\]
so, after increasing the common entry time if necessary, the two
anisotropy blocks and their first \(\rho\)-derivatives are
\(O_I(e^{-3s})=O_I(\tau^{-3})\).  Write
\[
X_j=\frac17+x_j,\qquad 2x_A+2x_B+3x_C=0.
\]
Then the linear term cancels exactly:
\[
7G-1
=
7\bigl(2x_A^2+2x_B^2+3x_C^2\bigr).
\]
Consequently
\[
7G_\rho-1=O_I(\tau^{-6}),
\qquad
\partial_\rho(7G_\rho-1)=O_I(\tau^{-6}).
\]
Both estimates are integrable in \(d\tau\).  Hence \(S_\rho\) has a finite
limit \(\sigma(\rho)\), positivity follows from monotonicity and the
strictly positive singular-orbit expansion, and
\[
\sigma(\rho)
=
S_\rho(\tau_0)
+\int_{\tau_0}^{\infty}(7G_\rho-1)\,d\tau
\]
may be differentiated under the integral sign.  This proves the asserted
\(C^1\)-dependence and uniform convergence on \(I\).

Since
\[
S_\rho(\tau)-\sigma(\rho)=O_I(\tau^{-5}),
\]
we obtain
\[
\frac7{\mathcal H_\rho}
=
\tau+\sigma(\rho)+O_I(\tau^{-5}),
\]
which gives \eqref{eq:H-shifted-cone}.  Finally,
\[
\frac{A_\rho'}{A_\rho}
=
X_A\mathcal H_\rho,\qquad
\frac{B_\rho'}{B_\rho}
=
X_B\mathcal H_\rho,\qquad
\frac{C_\rho'}{C_\rho}
=
X_C\mathcal H_\rho.
\]
Using
\[
X_A=\frac17+O_I(\tau^{-3}),\quad
X_B=\frac17+O_I(\tau^{-3}),\quad
X_C=\frac17+O_I(\tau^{-3}),
\]
and \eqref{eq:H-shifted-cone}, we obtain, for example,
\[
\frac{d}{d\tau}
\log\frac{A_\rho(\tau)}{\tau+\sigma(\rho)}
=
O_I(\tau^{-4}),
\]
and analogously for \(B_\rho\) and \(C_\rho\).  Integration from infinity,
using the cone ratios of Theorem~\ref{thm:global-AC}, gives
\[
\frac{A_\rho(\tau)}{\tau+\sigma(\rho)}
=
\frac1{\sqrt6}\bigl(1+O_I(\tau^{-3})\bigr),
\]
with the analogous formulas for \(B_\rho,C_\rho\).  This is precisely
\eqref{eq:A-shifted-cone}--\eqref{eq:C-shifted-cone}.  The differentiated
estimates follow from the differentiated \(O_I(\tau^{-3})\) normalized
bounds and the already proved \(C^1\)-control of \(\sigma\).
\end{proof}

For the positive-Einstein problem define the physical shift
\[
\alpha_{\eps,\rho}:=\eps\sigma(\rho)
\]
and the shifted outer coordinate
\begin{equation}\label{eq:shifted-u}
u=t+\alpha_{\eps,\rho}.
\end{equation}
The corresponding translated sine-cone background is
\begin{equation}\label{eq:shifted-sine-background}
r_{\eps,\rho}(t)=\sin u,
\qquad
H_{\sin,\alpha}=7\cot u,
\qquad
K_{\sin,\alpha}=6\csc^2u.
\end{equation}
Notice that
\[
D_\eps\alpha_{\eps,\rho}=\alpha_{\eps,\rho},
\qquad
\partial_\rho\alpha_{\eps,\rho}
=\eps\sigma'(\rho),
\]
so the translation and its first parameter derivatives are uniformly
\(O_I(\eps)\).

\subsection{Inner weighted norm}

Use a smooth gauge-fixed logarithmic-time chart near the Ricci-flat cone,
obtained from the scale-free variables of
Section~\ref{subsec:mean-curvature-normalized} together with the isotropic
reconstruction variable.  After eliminating the Hamiltonian constraint, let
\(U(s)\) denote the resulting first-order state.  In these coordinates the
inner positive-Einstein equations have the form
\begin{equation}\label{eq:innerpert}
\dot U=F_0(U)+\eps^2e^{2s}R(U),
\end{equation}
where \(F_0\) is the Ricci-flat autonomous vector field and \(R\) is
smooth on a common cone neighborhood.  The factor
\(\eps^2e^{2s}\) is the logarithmic-time form of the explicit
\(-7\eps^2(A,B,C)\) perturbation obtained from the inner scaling in
Section~\ref{sec:analytic-setting}.

Let \(P_a\) denote the spectral projection onto the two real anisotropy
planes corresponding to the repeated pair
\[
-3\pm i\sqrt3.
\]
The normalized constrained system has no additional radial eigenmode: radial
translation is lost under mean-curvature normalization and is reconstructed
separately by Lemma~\ref{lem:radial-shift}.  We therefore perform the
inner perturbation estimate on the four-dimensional constrained normalized
state, and only afterwards reconstruct the isotropic radial variable.  For
a perturbation \(V\), define
\[
\|V\|_{a,[s_0,S]}
=
\sup_{s_0\le s\le S} e^{3(s-s_0)}|P_aV(s)|
+
\sup_{s_0\le s\le S}|(I-P_a)V(s)|.
\]

Uniform asymptotic-conical convergence on the compact ratio interval gives a
common cone-entry time \(s_0\) and a common hyperbolic neighborhood.  We fix
the hyperbolic coordinates so that the spectral projections and their first
\(\rho\)-derivatives are uniformly bounded on \(I\).  The nonlinear
remainder is \(C^2\) there, uniformly on thee compact family.

\begin{lemma}[Uniform inner response estimate]\label{lem:innerweighted}
There is \(C_I>0\) such that, for all sufficiently small \(\eps\),
the difference between the positive-Einstein and Ricci-flat anisotropy
response coordinates at \(s=s_\eps\) satisfies
\[
|\Delta Q_\eps(\rho)|+|\Delta X_\eps(\rho)|
\le C_I\delta_\eps^2,
\]
uniformly for \(\rho\in I\).  The same estimate holds after one
\(\rho\)-derivative and after one logarithmic scale derivative
\(D_\eps:=\eps\partial_\eps\), with the latter taken at fixed normalized
singular-orbit ratio \(\rho\).
\end{lemma}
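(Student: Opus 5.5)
The plan is to compare the two trajectories in the renormalized amplitude variables of Lemma~\ref{lem:asymptotic-coordinates} rather than in the raw state. For each anisotropy channel and each \(s\in[s_0,s_\eps]\), define the running response coordinate
\[
Z(s)=e^{(3-i\sqrt3)s}\Bigl[u(s)-\tfrac{i}{\sqrt3}\bigl(3u(s)+u_s(s)\bigr)\Bigr],
\]
with \(u=\zeta\) or \(u=\xi\). Evaluate it along the positive-Einstein solution \(U_\eps\) of \eqref{eq:innerpert} and along the Ricci-flat solution \(U_0\) with the same \(\rho\). Then \(\Delta Q_\eps(\rho)\) and \(\Delta X_\eps(\rho)\) are the differences of these two quantities at \(s=s_\eps\). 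The aim is to show that each running amplitude obeys
\[
Z_s=(\text{integrable linear coupling})\,Z+e^{3s}\times(\text{forcing}),
\]
and that the weighted difference of forcings integrates to \(O_I(\eps^2e^{2s_\eps})=O_I(\delta_\eps^2)\).

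The key structural observation is that the positive-Einstein perturbation does not force the anisotropies directly. In \eqref{eq:scaled-PE-a}--\eqref{eq:scaled-PE-c}, the extra terms are \(-7\eps^2\) times the radius itself. In the logarithmic variables \(\zeta=\log(A/B)\) and \(\xi=\log\bigl(C/(\sqrt2\,e^m)\bigr)\), these terms cancel identically, because \(A''/A-B''/B\) and the corresponding combination for \(\xi\) lose the \(7\eps^2\) term. Hence \(\eps^2\) enters the anisotropy equations only through the mean curvature, which now satisfies \(\mathcal H'=-|L|^2-7\eps^2\), and through the scale \(e^{-2m}\). Both are isotropic quantities, and they multiply \(\zeta'\), \(\xi'\) or anisotropy-linear terms. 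Consequently \(P_aR(U)=O(|P_aU|)\) near the cone, which is the statement that the isotropic subspace is invariant for both systems.

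First, I will control the isotropic part \((I-P_a)V\) of \(V=U_\eps-U_0\) in the unweighted norm, with \(V(s_0)=O_I(\eps^2)\) by smooth dependence on finite intervals. The isotropic block is stable, and it is forced by \(\eps^2e^{2s}R\). A Gronwall and variation-of-constants estimate therefore gives \(|(I-P_a)V(s)|\le C_I\eps^2e^{2s}\) on \([s_0,s_\eps]\). Here the smallness \(\eps e^{s}\le\delta_\eps\ll1\) keeps both states in the common hyperbolic neighborhood.

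Second, I will insert this bound into the anisotropy equation. The forcing on \(P_aV\) is then a sum of two kinds of term. Terms of the form \(\eps^2e^{2s}\cdot O(|P_aU_0|)=O(\eps^2e^{2s}e^{-3s})\) arise from the \(\eps^2\) coefficients. Terms of the form \(O(|(I-P_a)V|)\cdot O(e^{-3s})\) arise from the coefficient difference. After applying the weight \(e^{3s}\), both are \(O_I(\eps^2e^{2s})\). The remaining piece, \(E\cdot P_aV\), is absorbed by Gronwall, since \(E\in L^1\) uniformly on \(I\) by Lemma~\ref{lem:asymptotic-coordinates}. Integrating from \(s_0\) to \(s_\eps\) gives
\[
|\Delta Z(s_\eps)|\le C_I\int_{s_0}^{s_\eps}\eps^2e^{2\sigma}\,d\sigma\le C_I\delta_\eps^2,
\]
which is the claim. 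This is precisely the weighted norm \(\|\cdot\|_{a,[s_0,s_\eps]}\), run with a bootstrap argument that keeps \(e^{3(s-s_0)}|P_aV|\) bounded.

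For the \(\rho\)-derivative, I will differentiate the whole scheme. By the setup, the projections, \(E\), \(R\) and their \(\rho\)-derivatives are uniformly bounded on \(I\), and \(\partial_\rho U_0\) has the same \(e^{-3s}\) anisotropic decay by Corollary~\ref{cor:chi-uniform-entry}. The same integral bounds therefore apply verbatim. For \(D_\eps=\eps\partial_\eps\) at fixed \(\rho\), I note that \(D_\eps(\eps^2e^{2s})=2\eps^2e^{2s}\) has the same size, and that \(D_\eps U_\eps\) solves the linearized equation with this forcing. Differentiating at the evaluation point \(s_\eps\), which has \(D_\eps s_\eps=\beta-1\), only adds \(Z_s(s_\eps)\) terms, and these are again \(O(\delta_\eps^2)\) for the difference.

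The main obstacle I expect is the first step: verifying rigorously, in the gauge-fixed normalized chart, that \(P_aR\) vanishes on the isotropic subspace, i.e.\ that the chart and the projection respect the exact cancellation seen in the \(\zeta,\xi\) equations. If the chart mixes radial and anisotropic directions at first order, a direct \(\eps^2e^{2s}\) forcing would weight to \(\eps^2e^{5s}\) and destroy the bound. My fallback is to work directly with the exact identities \eqref{eq:q-exact-global}--\eqref{eq:xi-exact-global}, modified by the \(\eps\)-terms, in which the cancellation is explicit.
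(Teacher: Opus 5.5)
Your proposal is correct and follows essentially the same route as the paper's proof. Both arguments use a stable Duhamel bound for the isotropic part of \(V=U_\eps-U_0\), then the renormalized variation-of-constants formula for the anisotropy amplitudes, where the Einstein forcing carries an extra anisotropy factor so that its weighted contribution is \(\int_{s_0}^{s_\eps}\eps^2e^{2s}\,ds\le C_I\delta_\eps^2\), and finally the same scheme differentiated in \(\rho\) and in \(D_\eps\), with the moving-endpoint term \((\beta-1)\partial_s\mathcal A_\eps(s_\eps)=O_I(\delta_\eps^2)\). Your explicit check that the \(-7\eps^2\) terms cancel in the \(\zeta\)- and \(\xi\)-equations, the pointwise bound \(C_I\eps^2e^{2s}\) on the isotropic difference, and the weighted Gronwall step for \(E\cdot P_aV\) justify what the paper only asserts (that the isotropic forcing creates no anisotropy mode and that replacing \(U_0\) by \(U_\eps\) costs no more than \(\delta_\eps^2\)), so your ``fallback'' is the cleanest way to carry out the first step.
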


\begin{proof}
Let \(U_0(s;\rho)\) be the Ricci-flat orbit and
\(U_\eps(s;\rho)\) the positive-Einstein orbit with the same normalized singular-orbit
data, and set \(V=U_\eps-U_0\).  On the common cone neighborhood,
\[
 \dot V=A_\rho(s)V+\mathcal N_\rho(V,s)
       +\eps^2e^{2s}R(U_\eps),
\]
where
\[
 |\mathcal N_\rho(V,s)|\le C|V|^2.
\]
In the logarithmic physical time \(s\), both constrained complex blocks at
the cone have exponents \(-3\pm i\sqrt3\).  Thus the entire constrained
normalized state is uniformly exponentially stable once the common cone
neighborhood has been entered.  Corollary~\ref{cor:chi-uniform-entry} and
roughness of exponential stability give constants \(C,\gamma>0\),
independent of \(\rho\in I\), for the corresponding evolution operators.
No estimate here is being assigned to the radial translation; that scalar
is treated separately by Lemma~\ref{lem:radial-shift}.

The isotropic Einstein forcing does not directly create an anisotropy mode
on the cone.  Its anisotropy projection is consequently Lipschitz in the
anisotropy:
\[
 |P_aR(U_\eps)|
 \le C|P_a(U_\eps-U_c)|.
\]
The Ricci-flat asymptotic-conical estimate gives
\[
 |P_a(U_0-U_c)|\le Ce^{-3(s-s_0)}.
\]
Write \(M(S)=\sup_{s_0\le s\le S}|V(s)|\).  Roughness of the
exponential dichotomy and Duhamel's formula give, uniformly for
\(S\le s_\eps\),
\[
M(S)
\le
C\eps^2
+
C\int_{s_0}^{S}e^{-\gamma(S-\sigma)}
\left(M(\sigma)^2+\eps^2e^{2\sigma}\right)d\sigma .
\]
Since
\[
\sup_{S\le s_\eps}
\int_{s_0}^{S}e^{-\gamma(S-\sigma)}\eps^2e^{2\sigma}\,d\sigma
\le C\eps^2e^{2s_\eps}
=C\delta_\eps^2,
\]
a continuity bootstrap closes to
\[
M(s_\eps)\le C\delta_\eps^2.
\]
For the renormalized anisotropy amplitudes, the Einstein forcing carries
the additional anisotropy factor displayed above.  Thus, after multiplication
by the inverse decay \(e^{3s}\), its contribution is bounded by
\[
C\int_{s_0}^{s_\eps}\eps^2e^{2s}\,ds
\le C\delta_\eps^2.
\]
Terms caused by replacing \(U_0\) by \(U_\eps\) are controlled by the
preceding \(M=O(\delta_\eps^2)\) bootstrap and the uniform \(C^2\) bounds in
the cone chart, and give the same or smaller order.  Hence the renormalized
variation-of-constants formula gives
\[
 |\Delta Q_\eps|+|\Delta X_\eps|
 \le C\delta_\eps^2.
\]
The finite pre-entry interval contributes \(O(\eps^2)\), which is absorbed
by \(\delta_\eps^2\).

For the differentiated estimate, let \(p=\rho\) and set
\[
W_p=\partial_pV.
\]
Differentiating the difference equation gives
\[
\dot W_p
=
A_\rho(s)W_p
+
(\partial_\rho A_\rho(s))V
+
D_V\mathcal N_\rho(V,s)W_p
+
\partial_\rho\mathcal N_\rho(V,s)
+
\eps^2e^{2s}D R(U_\eps)\,\partial_\rho U_\eps .
\]
On the common cone neighborhood,
\[
|\partial_\rho A_\rho(s)|+
|\partial_\rho U_0(s;\rho)|+
|\partial_\rho U_\eps(s;\rho)|
\le C_I,
\]
while
\[
|D_V\mathcal N_\rho(V,s)|\le C|V|,
\qquad
|\partial_\rho\mathcal N_\rho(V,s)|\le C|V|^2.
\]
The finite pre-entry difference satisfies
\(W_p(s_0)=O_I(\eps^2)\).  Duhamel's formula and
\(V=O_I(\delta_\eps^2)\) therefore give
\[
\sup_{s_0\le s\le s_\eps}|W_p(s)|
\le
C_I\left(
\eps^2+\delta_\eps^2
+\int_{s_0}^{s_\eps}
e^{-\gamma(s_\eps-\sigma)}
\eps^2e^{2\sigma}\,d\sigma
\right)
\le C_I\delta_\eps^2.
\]
Projecting to the anisotropy block and applying the same renormalized
variation-of-constants formula gives
\[
|\partial_\rho\Delta Q_\eps|
+
|\partial_\rho\Delta X_\eps|
\le C_I\delta_\eps^2.
\]

For the logarithmic scale derivative put
\[
W_\eps=D_\eps V,\qquad D_\eps=\eps\partial_\eps .
\]
The Ricci-flat reference orbit is independent of \(\eps\), and
differentiating the explicit forcing yields
\[
D_\eps\!\left(\eps^2e^{2s}R(U_\eps)\right)
=
2\eps^2e^{2s}R(U_\eps)
+
\eps^2e^{2s}DR(U_\eps)\,D_\eps U_\eps .
\]
Thus \(W_\eps\) satisfies the same stable inequality and
\[
\sup_{s_0\le s\le s_\eps}|W_\eps(s)|
\le C_I\delta_\eps^2.
\]
Finally,
\[
D_\eps s_\eps=\beta-1.
\]
If \(\mathcal A_\eps(s)\) denotes either renormalized response-coordinate
difference, then the already proved equations give
\[
\mathcal A_\eps(s_\eps)=O_I(\delta_\eps^2),
\qquad
\partial_s\mathcal A_\eps(s_\eps)=O_I(\delta_\eps^2).
\]
Hence differentiation of the moving endpoint contributes
\[
(D_\eps s_\eps)\,\partial_s\mathcal A_\eps(s_\eps)
=
O_I(\delta_\eps^2).
\]
Therefore
\[
|D_\eps\Delta Q_\eps|
+
|D_\eps\Delta X_\eps|
\le C_I\delta_\eps^2.
\]
This proves all three stated bounds.
\end{proof}

\subsection{Outer weighted norm}

The outer argument also needs control of the isotropic coefficients.  The
correct background is the translated sine solution
\eqref{eq:shifted-sine-background}.

\begin{lemma}[Uniform translated isotropic background control]
\label{lem:isotropic-background}
Take \(\delta_\eps=\eps^\beta\) with \(0<\beta\le3/5\), and put
\[
m=\frac12\log(ab),\qquad K=e^{-2m}.
\]
On every compact ratio interval \(I\Subset(0,\infty)\), the
positive-Einstein solution issued from the corresponding small singular orbit exists
on \([\delta_\eps,t_0]\) for all sufficiently small \(\eps\), and
\begin{equation}\label{eq:isotropic-coef-norm}
\sup_{\delta_\eps\le t\le t_0}
\left(
 u|H-H_{\sin,\alpha}|+u^2|K-K_{\sin,\alpha}|
\right)
\le C_I\delta_\eps^2,
\qquad
u=t+\alpha_{\eps,\rho}.
\end{equation}
The same estimate holds after one \(\rho\)-derivative or one logarithmic
scale derivative \(D_\eps=\eps\partial_\eps\), where the derivative of the
translated background is included.  These are the weighted coefficient
bounds used in the \(\zeta,\xi\) equations below.
\end{lemma}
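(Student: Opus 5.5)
The plan is to prove \eqref{eq:isotropic-coef-norm} in two stages: first at the left end \(t=\delta_\eps\) of the outer interval, using the inner information, and then propagate it to \(t_0\) with a weighted bootstrap for the isotropic subsystem around the translated sine background.

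\emph{Initial data at \(t=\delta_\eps\).} In physical variables \(H(t)=\eps^{-1}\mathcal H_\eps(t/\eps)\), and \(K\) is obtained from \(m\) in the same way.
\begin{itemize}[nosep]
\item By Lemma~\ref{lem:innerweighted}, the Einstein orbit differs from the Ricci-flat orbit by \(O_I(\delta_\eps^2)\) in the normalized state at \(s_\eps\). In the physical weights this should become \(u|\Delta H|+u^2|\Delta K|=O_I(\delta_\eps^2)\), since \(u\mathcal H\) and \(u^2K\) are scale-free.
\item By Lemma~\ref{lem:radial-shift}, \(\mathcal H_\rho(\tau)=7/(\tau+\sigma)+O_I(\tau^{-7})\). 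Hence the Ricci-flat part gives \(H=7/u+O_I(\eps^6t^{-7})\), and likewise \(K=6/u^2+O_I(\eps^3t^{-5})\) from \eqref{eq:A-shifted-cone}--\eqref{eq:B-shifted-cone}.
\item The comparison with the background is \(7\cot u=7/u-\tfrac73u+O(u^3)\) and \(6\csc^2u=6/u^2+2+O(u^2)\). So the background mismatch contributes \(O(u^2)=O(\delta_\eps^2)\) to the weighted norm.
\item The cone remainders contribute \((\eps/\delta_\eps)^6\) and \((\eps/\delta_\eps)^3\). The latter is \(\le\delta_\eps^2\) exactly when \(3(1-\beta)\ge2\beta\), i.e.\ \(\beta\le3/5\); this is where the hypothesis on \(\beta\) enters.
\item The \(\rho\)-derivative and the \(D_\eps\)-derivative are handled the same way. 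I would use the differentiated parts of Lemmas~\ref{lem:radial-shift} and~\ref{lem:innerweighted}, remembering that \(\partial_\rho\alpha_{\eps,\rho}=\eps\sigma'\) and \(D_\eps\alpha_{\eps,\rho}=\alpha_{\eps,\rho}\), and including the derivative of \(u\) inside the background.
\end{itemize}

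\emph{Outer propagation.} On \([\delta_\eps,t_0]\) the isotropic quantities satisfy exact equations:
\begin{itemize}[nosep]
\item \(H'=-|L|^2-7\), with \(|L|^2=H^2/7+\mathcal Q\), where \(\mathcal Q\) is a positive quadratic form in \(\zeta',\xi'\);
\item \(m'=\tfrac17H+\text{(anisotropy terms)}\) and \(K'=-2m'K\);
\item the constraint \eqref{eq:PE-Hamiltonian}, which I would use to check consistency of \(K\) with \(H\).
\end{itemize}
The translated sine background solves these with \(\mathcal Q=0\). Writing \(h=H-H_{\sin,\alpha}\), the linearization reads \(h'=-2\cot u\,h-h^2/7-\mathcal Q\). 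The homogeneous solution is \(\propto\sin^{-2}u\), so data \(h(\delta_\eps)=O(\delta_\eps)\) propagate to \(|h(t)|\le C\delta_\eps^3/\sin^2u\). Then \(u|h|\le C\delta_\eps^3/u\le C\delta_\eps^2\) on the whole interval, because the weight was chosen to absorb exactly the \(u^{-2}\) growth. For the \(K\)-equation I would integrate \(\log(K/K_{\sin})'=-\tfrac27h+\ldots\), which is the same singular integral and gives the \(u^2\)-weighted bound.

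The anisotropy forcing comes from a priori bounds \(|\zeta'|,|\xi'|\lesssim\eps^3u^{-4}+\delta_\eps^2u^{-1}\), which I would take from the inner decay at \(\delta_\eps\) together with the outer Jacobi equation \eqref{eq:outer-Jacobi}. Its contribution to \(h\), integrated against \(\sin^2\), is then \(O(\delta_\eps^{-1}(\eps/\delta_\eps)^6+\delta_\eps^4u^{-1})\). After the \(u\)-weight this is \(\le C\delta_\eps^2\), again using \(\beta\le3/5\). Existence up to \(t_0<\pi/2\) and the closing of the quadratic term \(h^2/7\) follow by a continuity argument on the weighted norm. The \(\rho\)- and \(D_\eps\)-derivatives satisfy the same linear equation with forcings already bounded by \(C\delta_\eps^2\), so they obey the same estimate.

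\emph{Main obstacle.} The step I expect to be hardest is making the anisotropy forcing bound rigorous before the anisotropy estimates of the later section are available. The \(\zeta,\xi\) equations themselves have \(H,K\) as coefficients, so I would run a joint bootstrap: assume both the isotropic bound and the anisotropy bound on \([\delta_\eps,T]\), and improve each using the other.
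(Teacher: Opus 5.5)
Your proposal is correct and follows the paper's proof in all essentials. The shared ingredients are the overlap data at $u_\delta$ from Lemmas~\ref{lem:innerweighted} and~\ref{lem:radial-shift}, the exact Riccati equation $h_u+\frac{H+H_0}{7}h=-\zeta_u^2-\frac{12}{7}\xi_u^2$ propagated by an integrating factor, the exact identity $(\log(K/K_0))_u=-\frac27h+\frac67\xi_u$ with its differentiated versions, and the hypothesis $\beta\le3/5$ entering through the $(\eps/\delta_\eps)^3$ contribution of $\xi$ to $\log(K/K_0)$.

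A few small points. Your explicit propagator $\sin^2v/\sin^2u$ is sharper than the paper's damping bootstrap: it gives $|h|\lesssim\delta_\eps^3u^{-2}$, hence $\int|h|\,du\lesssim\delta_\eps^2$ with no logarithm. It also does not need $\cot u>0$, so you can and should take $t_0$ beyond $\pi/2$, as the later event lemma requires. Integrate $\frac67\xi_u$ exactly, as $\frac67(\xi(u)-\xi(u_\delta))$ with $|\xi|\lesssim\eps^3u^{-3}$. Absolute integration of your extra $\delta_\eps^2u^{-1}$ term in $\xi_u$ would cost a factor $|\log\delta_\eps|$; that term is not actually present, because Lemma~\ref{lem:innerweighted} controls the renormalized amplitudes. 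Finally, the joint anisotropy bootstrap you flag is genuinely needed. The paper simply uses $|\zeta|+|\xi|+u(|\zeta_u|+|\xi_u|)\le C\eps^3u^{-3}$ on the whole outer interval without proving it there.
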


\begin{proof}
We work throughout in the shifted coordinate
\[
u=t+\alpha_{\eps,\rho},
\]
so that \(d/du=d/dt\) and the translated sine-ccone background is
\[
H_0(u)=7\cot u,\qquad K_0(u)=6\csc^2u.
\]
Write
\[
p=m_u=\frac12\left(\frac{a_u}{a}+\frac{b_u}{b}\right).
\]
From
\[
\zeta_u=\frac{a_u}{a}-\frac{b_u}{b},
\qquad
\xi_u=\frac{c_u}{c}-p,
\]
we have
\[
\frac{a_u}{a}=p+\frac12\zeta_u,\qquad
\frac{b_u}{b}=p-\frac12\zeta_u,\qquad
\frac{c_u}{c}=p+\xi_u,
\]
and therefore
\begin{equation}\label{eq:H-p-xi}
H=7p+3\xi_u,
\qquad
p=\frac{H-3\xi_u}{7}.
\end{equation}
A direct substitution gives the exact decomposition
\begin{equation}\label{eq:L2-isotropic-split}
|L|^2
=
\frac{H^2}{7}
+\zeta_u^2
+\frac{12}{7}\xi_u^2.
\end{equation}
Using \(H'=-|L|^2-7\) and the sine-cone identity
\[
H_0'=-\frac{H_0^2}{7}-7,
\]
the mean-curvature eror
\[
h:=H-H_0
\]
satisfies the exact constrained scalar equation
\begin{equation}\label{eq:h-exact-transfer}
h_u+\frac{H+H_0}{7}\,h
=
-\zeta_u^2-\frac{12}{7}\xi_u^2.
\end{equation}
This constrained isotropic transfer equation shows, in particular, that there
is no singular independent isotropic mode once the Hamiltonian constraint and
the radial translation have been fixed.

At the overlap \(u_\delta=\delta_\eps+\alpha_{\eps,\rho}\asymp\delta_\eps\),
Lemma~\ref{lem:innerweighted} and Lemma~\ref{lem:radial-shift} give
\begin{equation}\label{eq:h-overlap-data}
u_\delta |h(u_\delta)|
\le C_I\delta_\eps^2,
\qquad
|\zeta|+|\xi|+u(|\zeta_u|+|\xi_u|)
\le C_I\eps^3u^{-3},
\end{equation}
together with the same bounds after one \(\rho\)- or
\(D_\eps\)-derivative.  Shrinking the common outer interval if necessary,
the bootstrap assumption
\[
|h|\le \frac{1}{2}H_0
\]
holds initially and implies
\[
\frac{H+H_0}{7}\ge c\,u^{-1}
\]
there.  The integrating factor in \eqref{eq:h-exact-transfer} is therefore
forward damping.  Variation of constants gives
\[
|h(u)|
\le
C\left(\frac{u_\delta}{u}\right)^c |h(u_\delta)|
+
C\int_{u_\delta}^{u}
\left(\frac{v}{u}\right)^c
\left(\zeta_v^2+\xi_v^2\right)\,dv.
\]
Using \eqref{eq:h-overlap-data},
\[
\zeta_v^2+\xi_v^2
\le C_I\eps^6v^{-8}.
\]
Consequently
\[
\sup_{u_\delta\le u\le u_0}u|h(u)|
\le
C_I\left(
\delta_\eps^2
+
\eps^6\delta_\eps^{-6}
\right).
\]
For \(\beta\le3/4\),
\[
\eps^6\delta_\eps^{-6}\le\delta_\eps^2,
\]
so
\begin{equation}\label{eq:h-uniform-transfer}
\sup_{u_\delta\le u\le u_0}u|H-H_0|
\le C_I\delta_\eps^2.
\end{equation}
This improves the bootstrap and hence closes it.

It remains to control \(K=e^{-2m}\).  From
\eqref{eq:H-p-xi},
\[
\frac{K_u}{K}
=
-2p
=
-\frac{2}{7}H+\frac{6}{7}\xi_u,
\]
whereas
\[
\frac{(K_0)_u}{K_0}=-\frac{2}{7}H_0.
\]
Thus, with
\[
z:=\log\frac{K}{K_0},
\]
we have the second exact constrained identity
\begin{equation}\label{eq:z-exact-transfer}
z_u
=
-\frac{2}{7}h+\frac{6}{7}\xi_u.
\end{equation}
The translated-cone reconstruction at the overlap gives
\[
|z(u_\delta)|\le C_I\delta_\eps^2.
\]
Integrating \eqref{eq:z-exact-transfer}, using
\eqref{eq:h-uniform-transfer} and
\[
|\xi(u)-\xi(u_\delta)|
\le C_I\eps^3\delta_\eps^{-3},
\]
gives
\[
\sup_{u_\delta\le u\le u_0}|z(u)|
\le
C_I\left(
\delta_\eps^2(1+|\log\delta_\eps|)
+\eps^3\delta_\eps^{-3}
\right).
\]
For the value used below,
\(\delta_\eps=\eps^{1/3}\), the second term is \(O(\eps^2)\) and is
absorbed by \(\delta_\eps^2\).  More generally it is harmless for every
fixed \(\beta\le3/5\).  Since the logarithm in the \(h\)-integral is an
artifact of this crude absolute integration, we now use
\eqref{eq:h-exact-transfer} once more: its damping coefficient is
\(2u^{-1}+O(u)\) at the sine-cone end, so
\[
\int_{u_\delta}^{u_0}|h(u)|\,du\le C_I\delta_\eps^2.
\]
Hence
\begin{equation}\label{eq:z-uniform-transfer}
\sup_{u_\delta\le u\le u_0}|z(u)|
\le C_I\delta_\eps^2
\end{equation}
for \(\beta=1/3\), and therefore, because \(u^2K_0\) is uniformly bounded
and \(z=o(1)\),
\[
\sup_{u_\delta\le u\le u_0}
u^2|K-K_0|
\le C_I\delta_\eps^2.
\]
Together with \eqref{eq:h-uniform-transfer} this proves
\eqref{eq:isotropic-coef-norm} on the interval from the overlap to the fixed
equatorial neighborhood.

The corresponding parameter equations are as follows.  Let \(D\) denote
either \(\partial_\rho\) or \(D_\eps\), always taken at fixed shifted
coordinate \(u\).  Differentiating \eqref{eq:h-exact-transfer} gives
\begin{align}
(Dh)_u+\frac{H+H_0}{7}Dh
&=
-\frac{h}{7}DH
-2\zeta_uDq_u
-\frac{24}{7}\xi_uD\xi_u. \label{eq:Dh-transfer}
\end{align}
Since \(DH=Dh\) at fixed \(u\), the first term on the right is
\(O(|h||Dh|)\) and is absorbed into the left-hand coefficient for
sufficiently small \(\eps\).  The differentiated inner and anisotropy
bounds give
\[
|Dq|+|D\xi|+u(|Dq_u|+|D\xi_u|)
\le C_I\eps^3u^{-3},
\]
while the overlap data satisfy
\[
u_\delta|Dh(u_\delta)|\le C_I\delta_\eps^2.
\]
The same integrating-factor estimate therefore yields
\[
\sup_{u_\delta\le u\le u_0}u|Dh(u)|
\le C_I\delta_\eps^2.
\]
Differentiating \eqref{eq:z-exact-transfer} gives
\[
(Dz)_u
=
-\frac27Dh+\frac67D\xi_u,
\]
with \(Dz(u_\delta)=O_I(\delta_\eps^2)\), and hence
\[
\sup_{u_\delta\le u\le u_0}|Dz(u)|
\le C_I\delta_\eps^2.
\]
Thus \eqref{eq:isotropic-coef-norm} holds with one
\(\rho\)- or \(D_\eps\)-derivative as claimed.

Finally, the estimates imply that \(a,b,c\) remain uniformly close, in
relative scale, to the positive translated sine background on every compact
subinterval ending at the fixed equatorial neighborhood.  Hence no warping
factor vanishes there and the positive-Einstein solution continues on
\([\delta_\eps,t_0]\).
\end{proof}

In the shifted coordinate \(u\), the outer anisotropy equation has the same
sine-cone linear part as before:
\[
Y_{uu}+7\cot u\,Y_u+12\csc^2u\,Y=0.
\]
Thus the normalized Frobenius pair
\(\Psi,\overline\Psi\) and the exact transfer constants \(L_0,L_1\) from
Section~\ref{sec:outer-transfer} are unchanged.  Its Wronskian is
\[
W=-2i\sqrt3\,\sin^{-7}u.
\]
For a forcing \(f\), variation of constants from the singular end to fixed
equatorial Cauchy data is bounded by
\[
C\int_{\delta_\eps}^{u_0} u^4|f(u)|\,du,
\]
where replacing the lower limit by
\(\delta_\eps+\alpha_{\eps,\rho}\) changes only the uniform constant.

At the overlap,
\[
|\zeta|+|\xi|+u(|\zeta_u|+|\xi_u|)
\le C\eps^3u^{-3}.
\]
There is also a finite-overlap error coming from expressing the response
expansion, originally written in \(\tau\), in the translated variable
\[
u=\eps(\tau+\sigma(\rho)).
\]
Indeed, with \(\lambda=-3+i\sqrt3\),
\[
\tau^\lambda
=
(\tau+\sigma)^\lambda
\left(1-\frac{\sigma}{\tau+\sigma}\right)^\lambda
=
(\tau+\sigma)^\lambda
\left(1+O_I(\tau^{-1})\right).
\]
Since
\[
\tau+\sigma=\frac{u}{\eps},
\]
the relative error at \(u\asymp\delta_\eps\) is
\begin{equation}\label{eq:shift-conversion-error}
O_I\!\left(\frac{\eps}{\delta_\eps}\right).
\end{equation}
A \(\rho\)-derivative differentiates only the bounded functions
\(Q,X,\sigma\), while \(D_\eps\) differentiates
\(\eps/u\) at fixed shifted outer coordinate; both preserve this order.
Thus the same estimate holds for the differentiated incoming data.
The nonlinear anisotropy forcing is bounded by
\(C\eps^6u^{-8}\), while the translated isotropic background error
contributes
\(C\delta_\eps^2\eps^3u^{-5}\).

\begin{lemma}[Weighted forcing integrability]\label{lem:forcing-integrability}
Assume on the shifted outer interval that
\[
 |\zeta|+|\xi|+u(|\zeta_u|+|\xi_u|)\le C\eps^3u^{-3}
\]
and
\[
u|H-H_{\sin,\alpha}|+u^2|K-K_{\sin,\alpha}|
=O(\delta_\eps^2).
\]
Then the normalized equatorial Cauchy-data error generated by the finite
translation of the response coordinate, the quadratic anisotropy terms,
and the translated-background error is bounded by
\[
 C\left(
\frac{\eps}{\delta_\eps}
+\eps^3\delta_\eps^{-3}
+\delta_\eps^2\bigl(1+|\log\delta_\eps|\bigr)
\right).
\]
The same estimate holds for one \(\rho\)-derivative or one logarithmic
scale derivative \(D_\eps=\eps\partial_\eps\), provided the differentiated
fields satisfy the corresponding weighted bounds.
\end{lemma}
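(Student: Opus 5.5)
The plan is to write each anisotropy channel on the shifted outer interval \([u_\delta,u_0]\), with \(u_\delta=\delta_\eps+\alpha_{\eps,\rho}\asymp\delta_\eps\), as the sine-cone Jacobi operator plus a forcing:
\[
Y_{uu}+7\cot u\,Y_u+12\csc^2u\,Y=f,
\]
for \(Y=\zeta\) or \(Y=\xi\). We then estimate the equatorial Cauchy data of \(Y\) minus the Frobenius transport of the incoming response term by variation of constants in the basis \(\Psi,\overline\Psi\). Because \(W=-2i\sqrt3\sin^{-7}u\), the Green kernel from a source point \(v\) to equatorial data at \(u_0\) is bounded by \(|\Psi(u_0)||\overline\Psi(v)|\sin^7v/\sqrt3\), which is \(\le C v^{-3}v^7=Cv^4\). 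This is the bound \(C\int u^4|f|\,du\) quoted above, and we use it throughout. The forcing \(f\) splits into three pieces: (i) the difference between the exact coefficients \(H,\ 2K\sinh\zeta/\zeta\) (respectively \(K\cosh\zeta\), \(Ke^{-2\xi}\)) and the background coefficients \(7\cot u,\ 12\csc^2u\); (ii) the genuinely quadratic terms in \((\zeta,\xi)\); (iii) the initial-data mismatch at \(u_\delta\) coming from rewriting \(\tau^\lambda\) as \((\tau+\sigma)^\lambda\).

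Term (iii) is not a forcing but an error in the incoming Cauchy data at \(u_\delta\). Its relative size is \(O_I(\eps/\delta_\eps)\) by \eqref{eq:shift-conversion-error}. The homogeneous evolution is normalized so that the leading amplitude \(\eps^3e^{i\vartheta}Z\) maps to the \(O(1)\) normalized equatorial data, so this contributes \(C\eps/\delta_\eps\). The discarded \(o(\tau^{-3})\) tail of the inner expansion also enters here; through Lemma~\ref{lem:innerweighted} it is absorbed into the same data error. For (ii), the weighted bounds give \(|f|\le C\eps^6u^{-8}\). After normalizing by the \(\eps^3\) amplitude, the contribution is \(\eps^{-3}\int_{u_\delta}u^4\eps^6u^{-8}\,du\le C\eps^3\delta_\eps^{-3}\). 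For (i), write the error as \((H-H_0)Y_u+(\text{coefficient error})Y\). Using \(u|H-H_0|+u^2|K-K_0|=O(\delta_\eps^2)\) together with \(|Y|+u|Y_u|\le C\eps^3u^{-3}\), we get \(|f|\le C\delta_\eps^2\eps^3u^{-5}\). The normalized contribution is then \(\delta_\eps^2\int_{u_\delta}^{u_0}u^{-1}\,du\le C\delta_\eps^2(1+|\log\delta_\eps|)\). Adding (i)–(iii) gives the stated bound.

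For the derivatives, apply \(D\in\{\partial_\rho,D_\eps\}\) at fixed \(u\). The linear operator is unchanged, so \(DY\) satisfies the same Jacobi equation with forcing \(Df\) plus new data \(DY(u_\delta)\). We will also have to account for the moving lower endpoint \(u_\delta\), whose \(D\)-derivative is \(O(\delta_\eps)\). This produces a boundary term \(u_\delta^4|f(u_\delta)|\,Du_\delta\), which is of the same order as the integrands above. The terms in \(Df\) have the same structure: a product of a differentiated coefficient error with \(Y\), or of a coefficient error with \(DY\), or \(Y\cdot DY\). By hypothesis each factor obeys the same weighted bound, so each integral is estimated exactly as before. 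For \(D_\eps\) we also need to differentiate the \(\eps^3\) normalization and the phase \(e^{i\vartheta(\eps)}\). Since \(D_\eps\vartheta=-\sqrt3\) is bounded, these produce only bounded multiples of the undifferentiated terms.

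The main obstacle is step (i) at the singular end. The weight \(u^4\) has to exactly compensate the \(u^{-5}\) decay, so the bound only just fails to be integrable, and that is the origin of the logarithm. We will have to check carefully that the coefficient error in the zeroth-order term genuinely carries \(u^{-2}\delta_\eps^2\) and not a worse \(\delta_\eps^2u^{-3}\). This relies on the \(K\) estimate of Lemma~\ref{lem:isotropic-background} rather than on \(H\) alone. We also need the nonlinear factors \(\sinh\zeta/\zeta-1\) and \(\cosh\zeta-1\), which are quadratic, to be placed into (ii) rather than (i).
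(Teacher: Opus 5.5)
Your proposal is correct and follows essentially the same route as the paper: the $Cu^4$ Green-kernel bound from the Wronskian, and the split into the $O(\eps/\delta_\eps)$ shift-conversion data error, the quadratic forcing $O(\eps^6u^{-8})$ giving $\eps^3\delta_\eps^{-3}$, and the coefficient error $O(\delta_\eps^2\eps^3u^{-5})$ giving the logarithm, with derivatives treated term by term. Two small remarks: the $u^{-2}\delta_\eps^2$ bound on the $K$-error that you flag as the main obstacle is simply part of the hypothesis, and the Ricci-flat $o(\tau^{-3})$ tail is controlled not by Lemma~\ref{lem:innerweighted} (which compares $\eps>0$ with $\eps=0$) but by the $O(e^{-s})$ convergence of the unshifted cone coefficients, which again makes it a relative $O(\eps/\delta_\eps)$ error.
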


\begin{proof}
The change from the \(\tau\)-based response expansion to the translated
coordinate contributes the term
\(C\eps/\delta_\eps\) by
\eqref{eq:shift-conversion-error}.  The Green kernel is bounded by \(Cu^4\).
The quadratic anisotropy terms therefore contribute, after division by the
natural amplitude \(\eps^3\),
\[
C\eps^{-3}\int_{\delta_\eps}^{u_0}
u^4\eps^6u^{-8}\,du
\le C\eps^3\delta_\eps^{-3}.
\]
The translated-background coefficient error times a linear anisotropy field
has size
\[
O(\delta_\eps^2\eps^3u^{-5}),
\]
and hence contributes
\[
C\delta_\eps^2
\int_{\delta_\eps}^{u_0}u^{-1}\,du
\le
C\delta_\eps^2(1+|\log\delta_\eps|)
\]
after normalization.  Differentiation produces terms of the same weights.  The derivatives of
the shift are \(O_I(\eps)\); when they hit a leading singular-end mode they
produce precisely the already displayed relative error
\(O_I(\eps/\delta_\eps)\), and no larger term.
\end{proof}

\begin{lemma}[Uniform outer transfer estimate]\label{lem:outerweighted}
At a fixed shifted equatorial neighborhood \(u\approx\pi/2\),
\[
\eps^{-3}
\left[
\begin{pmatrix}\zeta\\ \xi_u\end{pmatrix}
-
\begin{pmatrix}
\Re(e^{i\vartheta(\eps)}L_0Q(\rho))\\
\Re(e^{i\vartheta(\eps)}L_1X(\rho))
\end{pmatrix}\eps^3
\right]
=
O_I\!\left(
\frac{\eps}{\delta_\eps}
+\delta_\eps^2\bigl(1+|\log\delta_\eps|\bigr)
+\eps^3\delta_\eps^{-3}
\right),
\]
uniformly in \(\rho\in I\).  The same estimate holds after one
\(\rho\)-derivative or one logarithmic scale derivative
\(D_\eps=\eps\partial_\eps\).
\end{lemma}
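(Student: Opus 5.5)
The estimate splits the exact solution on the shifted outer interval \([u_\delta,u_0]\), \(u_\delta=\delta_\eps+\alpha_{\eps,\rho}\), into a homogeneous sine-cone Jacobi part and a forced remainder. I write each anisotropy equation in \(u\) as
\[
Y_{uu}+7\cot u\,Y_u+12\csc^2u\,Y=f_Y,
\]
where \(Y=\zeta\) or \(Y=\xi\). The forcing \(f_Y\) collects three things: the coefficient errors \((H-H_{\sin,\alpha})Y_u\) and \((K-K_{\sin,\alpha})\) times the linear part, and the quadratic remainders from \(\sinh\zeta-\zeta\) and \(e^{-2\xi}-1+2\xi-(\cosh\zeta-1)\). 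These come from the exact anisotropy identities \eqref{eq:q-exact-global}--\eqref{eq:xi-exact-global}, rewritten in the original variables with the \(-7\)-terms absorbed into the sine background. Lemma~\ref{lem:isotropic-background} supplies the weighted coefficient bounds, so Lemma~\ref{lem:forcing-integrability} applies directly to this forcing.

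The homogeneous part is fixed by the Cauchy data at \(u_\delta\). By Lemma~\ref{lem:innerweighted}, the exact positive-Einstein anisotropies at \(s=s_\eps\) have response coordinates \(Q(\rho)+O_I(\delta_\eps^2)\) and \(X(\rho)+O_I(\delta_\eps^2)\). Lemma~\ref{lem:inner-outer-amplitude-compatibility} converts these to the outer Frobenius basis: on the overlap, the data coincide with those of \(\eps^3\Re(e^{i\vartheta(\eps)}Z\Psi(u))\), up to three errors. The first is the translation error \eqref{eq:shift-conversion-error}, of relative size \(O_I(\eps/\delta_\eps)\). The second is the \(O(\delta_\eps^2)\) inner response error. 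The third is the decaying \(o(\tau^{-3})\) tail, which the renormalized amplitude formula of Theorem~\ref{thm:global-response} controls at \(s_\eps\). Because \(\Psi,\overline\Psi\) form a fundamental system with Wronskian \(-2i\sqrt3\sin^{-7}u\) (Theorem~\ref{thm:outer-transfer}), a data error \(e\) at \(u_\delta\), measured in the natural weights \((\eps^3u^{-3},\eps^3u^{-4})\), propagates to an error of relative size \(|e|\) at \(u\approx\pi/2\). The data-to-coefficients map is inverted with the Wronskian, and since \(|\Psi|\sim u^{-3}\) the weights cancel. Evaluating \(\eps^3\Re(e^{i\vartheta}Q\Psi)\) and \(\eps^3\Re(e^{i\vartheta}X\Psi')\) at \(\pi/2\) gives the leading terms, with \(L_0=\Psi(\pi/2)\) and \(L_1=\Psi'(\pi/2)\).

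The forced part comes from variation of constants with the Green kernel built from \(\Psi,\overline\Psi\), which is bounded by \(Cu^4\). Lemma~\ref{lem:forcing-integrability} bounds its contribution at the equator by \(\eps^3\) times \(\eps^3\delta_\eps^{-3}+\delta_\eps^2(1+|\log\delta_\eps|)\). The a priori weighted bound \(|\zeta|+|\xi|+u(|\zeta_u|+|\xi_u|)\le C\eps^3u^{-3}\) needed in that lemma will be obtained by a continuity bootstrap on \([u_\delta,u_0]\). The homogeneous solution satisfies the bound with constant \(C_I\) depending on \(\sup_I|Q|,|X|\). The forcing terms add a smaller relative amount, since \(\eps^3\delta_\eps^{-3}\) and \(\delta_\eps^2\) are small, so the constant improves and the bootstrap closes. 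Dividing by \(\eps^3\) then gives the stated estimate. The shift of the evaluation point from \(t\) to \(u=t+\alpha_{\eps,\rho}\) costs only \(O(\eps)\), which is absorbed.

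For the \(\rho\)- and \(D_\eps\)-derivatives I differentiate the whole representation at fixed \(u\). The derivative of the homogeneous coefficient is \(D(e^{i\vartheta}Z)\). For \(D_\eps\), the phase derivative \(-i\sqrt3\) reproduces exactly the differentiated leading term, so the remainder is again governed by the differentiated data errors. These are controlled by the derivative parts of Lemmas~\ref{lem:innerweighted}, \ref{lem:isotropic-background}, and~\ref{lem:forcing-integrability}, together with \(\partial_\rho\alpha=O_I(\eps)\). \textbf{The main obstacle I expect is this derivative step.} Differentiating the lower limit \(u_\delta\), which moves with \(\eps\) and with \(\rho\) through \(\alpha\), and differentiating the factor \(\eps^3\) must both be shown to produce only terms of the already displayed orders. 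I would handle this by noting that the boundary term at \(u_\delta\) equals the integrand of the Duhamel formula times \(Du_\delta=O(\delta_\eps+\eps)\). With the weight \(u^4\cdot\eps^6u^{-8}\) evaluated at \(u_\delta\), this is bounded by \(\eps^3\delta_\eps^{-3}\), so after normalization by \(\eps^3\) it fits the claimed bound.
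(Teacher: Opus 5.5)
Your proposal is correct and follows the paper's route. The paper's proof uses the same ingredients: Lemma~\ref{lem:inner-outer-amplitude-compatibility} for the leading coefficient $e^{i\vartheta(\eps)}Z$, Lemma~\ref{lem:innerweighted} for the $O(\delta_\eps^2)$ incoming error, Lemma~\ref{lem:forcing-integrability} for the forcing and the $\tau\to u$ conversion, and the $O_I(\eps)$ shift derivatives, which cost only $O_I(\eps/\delta_\eps)$. You additionally write out the Wronskian inversion of the overlap data, the anisotropy bootstrap, and the moving-lower-limit boundary term, which the paper spells out only in Section~\ref{sec:uniform-extension}.

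One point should be stated more precisely: the $o(\tau^{-3})$ tail. Theorem~\ref{thm:global-response} by itself gives no rate. The expansions of Lemma~\ref{lem:radial-shift} make the coefficient perturbation $E_\rho(s)=O_I(e^{-s})$, so the renormalized Ricci-flat amplitudes at $s_\eps$ differ from $Q(\rho),X(\rho)$ by $O_I(\eps/\delta_\eps)$, which fits the stated bound.
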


\begin{proof}
Lemma~\ref{lem:inner-outer-amplitude-compatibility} identifies the
unshifted leading coefficient with
\(e^{i\vartheta(\eps)}Q\) or \(e^{i\vartheta(\eps)}X\), respectively.
Lemma~\ref{lem:forcing-integrability} controls the outer forcing and the
finite-overlap conversion from \(\tau\) to \(u\).
Lemma~\ref{lem:innerweighted} supplies an additional
\(O(\delta_\eps^2)\) error in the incoming response coefficients, which
is absorbed by the logarithmic term.  The shifted-coordinate derivatives
introduce factors \(\partial_\rho\alpha_{\eps,\rho}\) or
\(D_\eps\alpha_{\eps,\rho}\), both \(O_I(\eps)\); when these differentiate
the leading singular-end profile they give the explicit
\(O_I(\eps/\delta_\eps)\) term and no worse loss.  This proves the stated
\(C^1\) bound.
\end{proof}

Since \(0<\beta<1\), all three error terms
\[
\frac{\eps}{\delta_\eps},
\qquad
\delta_\eps^2\bigl(1+|\log\delta_\eps|\bigr),
\qquad
\eps^3\delta_\eps^{-3}
\]
tend to zero.  The third is the cube of the first and is therefore smaller
once \(\eps/\delta_\eps<1\).  Balancing the first two algebraic powers gives
the convenient choice
\[
 \beta=\frac13,
 \qquad
 \delta_\eps=\eps^{1/3},
\]
for which the total \(C^1\) error is
\[
O\!\left(\eps^{2/3}(1+|\log\eps|)\right).
\]
Only convergence to zero, not this particular rate, is used below.

\subsection{Quantitative moving-\texorpdfstring{\(H=0\)}{H=0} event}

For every positive-Einstein solution,
\begin{equation}\label{eq:Hprime-positive}
H'=-|L|^2-7\le-7.
\end{equation}
Thus \(H\) is strictly decreasing and any zero is automatically unique and
transverse.

The translated sine-cone background has its zero at
\[
t_{\rm eq}(\eps,\rho)
=
\frac\pi2-\alpha_{\eps,\rho}.
\]
Since \(\alpha_{\eps,\rho}=O_I(\eps)\), this point still converges uniformly
to \(\pi/2\), and its first \(\rho\)- and logarithmic-scale derivatives are
\(O_I(\eps)\).

\begin{lemma}[Quantitative translated event lemma]
\label{lem:quant-event}
Let
\[
H_{\alpha}(t,p)=7\cot(t+\alpha_\eps(p))
\]
with
\[
\alpha_\eps=o(1),\qquad
\partial_p\alpha_\eps=o(1)
\]
uniformly.  Fix \(d>0\) sufficiently small and suppose
\[
\|H_\eps-H_\alpha\|_{C^1(
[t_{\rm eq}-d,t_{\rm eq}+d]\times P)}
\le\eta_\eps,
\qquad
\eta_\eps\to0,
\]
and \(H_\eps'\le-7\).  Then for all sufficientlyi small \(\eps\) there is a
unique zero \(t_*(\eps,p)\) in this neighborhood, it is the first zero of
\(H_\eps\), and
\[
|t_*(\eps,p)-t_{\rm eq}(\eps,p)|
\le \frac{\eta_\eps}{7}.
\]
Moreover
\[
\partial_p t_*
=
-\frac{\partial_pH_\eps(t_*,p)}{H_\eps'(t_*,p)},
\]
and
\[
\partial_p\bigl(t_*-t_{\rm eq}\bigr)\to0
\]
uniformly whenever
\[
\partial_p(H_\eps-H_\alpha)\to0
\]
uniformly.
\end{lemma}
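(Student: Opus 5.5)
The plan is a direct intermediate-value and implicit-function argument on the fixed window \(I_d=[t_{\rm eq}-d,t_{\rm eq}+d]\), where \(t_{\rm eq}(\eps,p)=\pi/2-\alpha_\eps(p)\) is the zero of \(H_\alpha\).

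\textbf{Step 1: existence and the location bound.} First I will record the background facts: \(H_\alpha(t_{\rm eq},p)=0\), \(\partial_tH_\alpha=-7\csc^2(t+\alpha_\eps)\le-7\), and \(|H_\alpha(t_{\rm eq}\pm d)|=7|\cot(\pi/2\pm d)|\ge c_d>0\). Since \(\|H_\eps-H_\alpha\|_{C^0}\le\eta_\eps\to0\), for small \(\eps\) we get \(H_\eps(t_{\rm eq}-d)>0>H_\eps(t_{\rm eq}+d)\). Strict monotonicity \(H_\eps'\le-7\) then gives exactly one zero \(t_*\) in \(I_d\). Monotonicity also yields the quantitative bound:
\[
7|t_*-t_{\rm eq}|\le|H_\eps(t_*)-H_\eps(t_{\rm eq})|=|H_\eps(t_{\rm eq})|=|H_\eps(t_{\rm eq})-H_\alpha(t_{\rm eq})|\le\eta_\eps .
\]

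\textbf{Step 2: first zero.} The hypothesis only controls \(H_\eps\) on \(I_d\), so the claim that \(t_*\) is the first zero needs a global ingredient. I will use \(H_\eps'\le-7\) on the whole existence interval, which is \eqref{eq:Hprime-positive} for exact positive-Einstein solutions. Then \(H_\eps\) is strictly decreasing everywhere and has at most one zero. Hence the zero found in \(I_d\) is the only zero, and in particular the first one. This is where the statement relies on the global monotonicity rather than on the local \(C^1\) closeness. Without it, one would have to rule out earlier zeros separately, for instance by using \(H_\eps>0\) for \(t<t_{\rm eq}-d\), which follows from monotonicity anyway.

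\textbf{Step 3: derivative formula.} Because \(H_\eps\) is \(C^1\) in \((t,p)\) and \(H_\eps'(t_*)\le-7\neq0\), the implicit function theorem makes \(t_*\) a \(C^1\) function of \(p\). Differentiating \(H_\eps(t_*(p),p)=0\) gives the stated formula for \(\partial_pt_*\).

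\textbf{Step 4: convergence of \(\partial_p(t_*-t_{\rm eq})\).} For the background, \(\partial_pt_{\rm eq}=-\partial_p\alpha_\eps=-\partial_pH_\alpha(t_{\rm eq})/\partial_tH_\alpha(t_{\rm eq})\). I will compare this with the formula for \(\partial_pt_*\). The numerators differ by
\[
|\partial_pH_\eps(t_*)-\partial_pH_\alpha(t_*)|+|\partial_pH_\alpha(t_*)-\partial_pH_\alpha(t_{\rm eq})|.
\]
The first term tends to zero by the hypothesis \(\partial_p(H_\eps-H_\alpha)\to0\). For the second, \(\partial_pH_\alpha=-7\csc^2(t+\alpha_\eps)\,\partial_p\alpha_\eps\), which is \(o(1)\) uniformly because \(\partial_p\alpha_\eps=o(1)\). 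The denominators \(H_\eps'(t_*)\) and \(\partial_tH_\alpha(t_{\rm eq})\) are both bounded away from zero: the first is at most \(-7\), and the second equals \(-7\) since \(\csc^2(\pi/2)=1\). The numerators are \(o(1)\), so both quotients are \(o(1)\). Therefore \(\partial_pt_*\) and \(\partial_pt_{\rm eq}\) each tend to zero, and so does their difference.

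The main obstacle is only the bookkeeping in Step 4. It goes through because \(\partial_p\alpha_\eps=o(1)\), so no difference-quotient continuity estimate for \(H_\eps'\) is needed.
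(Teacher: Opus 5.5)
Your proposal is correct and follows essentially the same route as the paper: monotonicity from \(H_\eps'\le-7\) together with \(|H_\eps(t_{\rm eq})|\le\eta_\eps\) gives the unique zero, which is therefore the first zero, within \(\eta_\eps/7\); the implicit function theorem gives the derivative formula; and comparison with \(\partial_pt_{\rm eq}=-\partial_p\alpha_\eps\) gives the last claim. The only variation is that you show \(\partial_pt_*\) and \(\partial_pt_{\rm eq}\) are separately \(o(1)\), whereas the paper expands \(\partial_pH_\eps(t_*)\) and \(H_\eps'(t_*)=-7+o(1)\) to leading order, an argument that would also work for merely bounded \(\partial_p\alpha_\eps\); your sign \(\partial_pH_\alpha(t_{\rm eq})=-7\partial_p\alpha_\eps\) is the correct one, and the paper's \(+7\) is a harmless slip.
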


\begin{proof}
At \(t=t_{\rm eq}\), \(H_\alpha=0\), so
\[
|H_\eps(t_{\rm eq},p)|\le\eta_\eps.
\]
The inequality \(H_\eps'\le-7\) gives existence and uniqueness of a zero
within distance \(\eta_\eps/7\), exactly as in the unshifted argument.
It is the first zero because \(H_\eps\) starts at \(+\infty\) and is
strictly decreasing.

The implicit-function theorem gives the displayed derivative formula.  The
translated background itself satisfies
\[
\partial_pH_\alpha(t_{\rm eq},p)
=
7\,\partial_p\alpha_\eps(p),
\qquad
H_\alpha'(t_{\rm eq},p)=-7,
\]
and hence
\[
\partial_p t_{\rm eq}=-\partial_p\alpha_\eps.
\]
Because \(H_\eps'(t_*,p)\le-7\), the denominator in the exact implicit
derivative is uniformly separated from zero.  The \(C^1\)-closeness,
together with \(t_*-t_{\rm eq}\to0\), gives
\[
\partial_pH_\eps(t_*,p)
=
7\,\partial_p\alpha_\eps(p)+o(1),
\qquad
H_\eps'(t_*,p)=-7+o(1).
\]
Therefore
\[
\partial_pt_*=-\partial_p\alpha_\eps+o(1)
=\partial_pt_{\rm eq}+o(1),
\]
which proves
\(\partial_p(t_*-t_{\rm eq})\to0\).
\end{proof}

\begin{corollary}[Event replacement in the matching map]
\label{cor:event-replacement}
Suppose, on the same shifted equatorial neighborhood,
\[
(\zeta,\xi_u,\zeta_u,\xi_{uu})=O(\eps^3)
\]
together with the corresponding first parameter derivatives.  Then
replacing \(u=\pi/2\) by the exact event
\[
u_*(\eps,p)
=
t_*(\eps,p)+\alpha_\eps(p)
\]
changes the \(\eps^{-3}\)-normalized matching vector by \(o(1)\) in
\(C^1\).
\end{corollary}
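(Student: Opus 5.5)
The plan is a mean-value argument in the shifted coordinate \(u\), followed by differentiation of the resulting formula with respect to the parameters. Write the \(\eps^{-3}\)-normalized matching vector at a point \(u\) as
\[
M(u;\eps,p)=\eps^{-3}\bigl(\zeta(u),\xi_u(u)\bigr),
\]
where \(p\) stands for \(\rho\) or for the logarithmic scale variable. By hypothesis \(M\) and its \(u\)-derivative \(\eps^{-3}(\zeta_u,\xi_{uu})\) are \(O(1)\) on the fixed equatorial neighborhood, uniformly in \(p\). The same holds for their first parameter derivatives.

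\textbf{Step 1: \(C^0\) part.} Lemma~\ref{lem:quant-event}, applied with the \(C^1\) closeness of \(H_\eps\) to \(H_{\sin,\alpha}\) from Lemma~\ref{lem:isotropic-background}, gives
\[
|u_*-\pi/2|=|t_*-t_{\rm eq}|\le\eta_\eps/7\to0 .
\]
The mean value theorem in \(u\) then gives
\[
|M(u_*)-M(\pi/2)|\le\sup|\partial_uM|\,|u_*-\pi/2|=O(\eta_\eps)=o(1).
\]

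\textbf{Step 2: \(C^1\) part.} For the parameter derivative, use the chain rule
\[
\partial_p\bigl[M(u_*(p);p)\bigr]=(\partial_pM)(u_*)+(\partial_uM)(u_*)\,\partial_pu_* .
\]
Subtracting \((\partial_pM)(\pi/2)\), the difference \((\partial_pM)(u_*)-(\partial_pM)(\pi/2)\) is \(o(1)\) by the mean value theorem again. This uses a bound on \(\partial_u\partial_pM\), i.e.\ on the parameter derivatives of \((\zeta_u,\xi_{uu})\), which the hypothesis supplies. For the remaining term we need \(\partial_pu_*\to0\). Since \(u_*=t_*+\alpha_\eps\) and \(\partial_pt_{\rm eq}=-\partial_p\alpha_\eps\), we have
\[
\partial_pu_*=\partial_p(t_*-t_{\rm eq}).
\]
Lemma~\ref{lem:quant-event} shows this tends to zero uniformly, provided \(\partial_p(H_\eps-H_{\sin,\alpha})\to0\) uniformly near the equator. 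That in turn follows from the differentiated bounds in Lemma~\ref{lem:isotropic-background}, with \(\eps\) replaced by \(D_\eps\) when \(p\) is the logarithmic scale variable. Hence \((\partial_uM)\,\partial_pu_*=O(1)\cdot o(1)\).

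\textbf{Main obstacle.} The delicate point is ensuring the derivative bounds are taken consistently at fixed shifted coordinate \(u\), as in Lemma~\ref{lem:isotropic-background}, rather than at fixed \(t\). In the \(t\)-frame, \(\partial_pu_*\) would contain \(\partial_p\alpha_\eps=O(\eps)\), which is harmless, but mixing frames could hide a term in which a derivative of the \(O(\eps^3)\) fields is not normalized. I would work entirely in \(u\), converting \(t\)-derivatives via \(d/du=d/dt\) and \(\partial_p|_t=\partial_p|_u-\partial_p\alpha_\eps\,\partial_u\). The extra terms are \(O(\eps)\cdot O(1)\) after \(\eps^{-3}\)-normalization, so they do not affect the \(o(1)\) conclusion.
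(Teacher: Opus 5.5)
Your proposal is correct and follows essentially the same route as the paper. Lemma~\ref{lem:quant-event} gives $u_*-\pi/2=t_*-t_{\rm eq}=o(1)$ in $C^1$; a mean-value estimate and the chain rule $\partial_p[f(u_*)]=(\partial_pf)(u_*)+f_u(u_*)\,\partial_pu_*$, together with the assumed $O(\eps^3)$ bounds, then give an $o(1)$ change after normalization (normalizing $M=\eps^{-3}(\zeta,\xi_u)$ before differentiating also absorbs the $-3$ term coming from $D_\eps\eps^{-3}$), with one harmless slip in your side remark: the frame conversion is $\partial_p|_t=\partial_p|_u+\partial_p\alpha_\eps\,\partial_u$, with a plus sign.
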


\begin{proof}
Lemma~\ref{lem:quant-event} gives
\[
u_*-\frac\pi2
=
t_*-t_{\rm eq}
=o(1)
\]
in \(C^1\).  Taylor's theorem therefore yields
\[
\zeta(u_*)-\zeta(\pi/2)=o(\eps^3),
\qquad
\xi_u(u_*)-\xi_u(\pi/2)=o(\eps^3).
\]
For completeness, let \(p\) denote either \(\rho\) or the logarithmic
scale parameter.  Differentiating the first difference gives
\[
\partial_p\!\left(\zeta(u_*)-\zeta(\pi/2)\right)
=
\partial_pq(u_*)-\partial_pq(\pi/2)
+\zeta_u(u_*)\,\partial_pu_* .
\]
In the translated coordinate the reference point \(\pi/2\) is fixed.
Since \(u_*-\pi/2=o(1)\) in \(C^1\), while
\(\zeta_u,\partial_pq_u=O(\eps^3)\) on the fixed equatorial neighborhood,
the right-hand side is \(o(\eps^3)\).  Similarly,
\[
\partial_p\!\left(\xi_u(u_*)-\xi_u(\pi/2)\right)
=
\partial_p\xi_u(u_*)-\partial_p\xi_u(\pi/2)
+\xi_{uu}(u_*)\,\partial_pu_* ,
\]
and the assumed bounds for \(\xi_{uu}\) and its first parameter
derivatives again give \(o(\eps^3)\).  Division by \(\eps^3\) proves
the asserted \(C^1\) event replacement.
\end{proof}

Lemma~\ref{lem:isotropic-background}, together with
Lemma~\ref{lem:outerweighted}, gives \(C^1\) convergence to the translated
sine-cone background on the fixed shifted equatorial neighborhood.  Hence the
hypotheses of Lemma~\ref{lem:quant-event} and
Corollary~\ref{cor:event-replacement} hold uniformly on compact
ratio--phase strips.

\begin{lemma}[Normalized differentiated matching estimate]
\label{lem:normalized-C1-matching}
Fix \(I\Subset(0,\infty)\) and put \(\delta_\eps=\eps^{1/3}\).
At the translated reference section \(u=\pi/2\), define
\[
\mathcal R_\eps(\rho)
=
\eps^{-3}
\begin{pmatrix}\zeta\\ \xi_u\end{pmatrix}_{u=\pi/2}
-
\begin{pmatrix}
\Re(e^{i\vartheta(\eps)}L_0Q(\rho))\\
\Re(e^{i\vartheta(\eps)}L_1X(\rho))
\end{pmatrix},
\qquad
\vartheta(\eps)=\sqrt3\log(1/\eps).
\]
Then
\[
\|\mathcal R_\eps\|_{C^1_{\rho,\log\eps}}
\le
C_I\left[
\frac{\eps}{\delta_\eps}
+\delta_\eps^2(1+|\log\delta_\eps|)
+\eps^3\delta_\eps^{-3}
\right]
=
O_I\!\left(\eps^{2/3}(1+|\log\eps|)\right),
\]
where \(C^1_{\rho,\log\eps}\) means that the derivatives
\(\partial_\rho\) and \(D_\eps=\eps\partial_\eps\) are applied to the
\emph{entire normalized expression}.  In particular,
\[
D_\eps(\eps^{-3}\zeta)
=
\eps^{-3}(D_\eps \zeta-3\zeta),
\qquad
D_\eps(\eps^{-3}\xi_u)
=
\eps^{-3}(D_\eps\xi_u-3\xi_u).
\]
The same estimate, with an additional \(o(1)\) term, holds after
replacing \(u=\pi/2\) by the exact \(H=0\) event.
\end{lemma}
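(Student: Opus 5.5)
This estimate is an assembly of results already proved, with attention to how the derivatives act on the normalized expression. First, fix \(\delta_\eps=\eps^{1/3}\). Lemma~\ref{lem:outerweighted} gives the \(C^0\) bound for \(\mathcal R_\eps\) at the translated reference point \(u=\pi/2\). For the undifferentiated statement I only need to check the arithmetic for \(\beta=1/3\). We have \(\eps/\delta_\eps=\eps^{2/3}\), \(\delta_\eps^2(1+|\log\delta_\eps|)\lesssim\eps^{2/3}(1+|\log\eps|)\), and \(\eps^3\delta_\eps^{-3}=\eps^2\). Also \(\beta=1/3\le 3/5\), so Lemma~\ref{lem:isotropic-background} applies, and this supplies the weighted coefficient hypotheses of Lemma~\ref{lem:forcing-integrability}.

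For the \(\rho\)-derivative, no subtlety arises. The phase \(\vartheta(\eps)\) does not depend on \(\rho\), so \(\partial_\rho\mathcal R_\eps\) is \(\eps^{-3}\partial_\rho(\zeta,\xi_u)\) minus \(\Re(e^{i\vartheta}L_0Q'(\rho))\) and \(\Re(e^{i\vartheta}L_1X'(\rho))\). This is exactly the differentiated statement of Lemma~\ref{lem:outerweighted}. Theorem~\ref{thm:global-response} provides that \(Q,X\) are \(C^1\). The differentiated inner bounds of Lemma~\ref{lem:innerweighted} and the differentiated shift bounds \(\partial_\rho\alpha_{\eps,\rho}=O_I(\eps)\) feed the hypotheses.

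The main obstacle is the scale derivative \(D_\eps\). It must be applied to the whole normalized expression, and two large terms must cancel. Applying \(D_\eps\) gives \(\eps^{-3}(D_\eps\zeta-3\zeta)\), and \(D_\eps e^{i\vartheta(\eps)}=-i\sqrt3\,e^{i\vartheta(\eps)}\). My plan is to write the leading outer profile exactly as \(\eps^3\Re(e^{i\vartheta(\eps)}Q\,\Psi(u))\), which by Lemma~\ref{lem:inner-outer-amplitude-compatibility} equals \(\Re(Q(t/\eps)^{-3+i\sqrt3})\) to leading order. Applying \(D_\eps\) at fixed \(u\) to this leading profile reproduces exactly \(3\) times the profile plus the \(-i\sqrt3\) phase rotation. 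Hence \(D_\eps\) of the normalized leading term equals the \(D_\eps\)-derivative of the model term \(\Re(e^{i\vartheta}L_0Q)\), which is \(\Re(-i\sqrt3 e^{i\vartheta}L_0Q)\). What remains is \(D_\eps\) applied to the normalized remainder. That remainder consists of three parts. The first is the inner response error, controlled by the \(D_\eps\) part of Lemma~\ref{lem:innerweighted}, including the moving endpoint \(s_\eps\). The second is the shift-conversion error \eqref{eq:shift-conversion-error}, where \(D_\eps\) of \(\eps/u\) keeps its order. The third is the forcing terms of Lemma~\ref{lem:forcing-integrability} in their differentiated form. I would first verify that the differentiated fields satisfy the weighted bounds \(|D\zeta|+|D\xi|+u(|D\zeta_u|+|D\xi_u|)\le C\eps^3u^{-3}\). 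Here \(D\) includes the \(3\)-times term, which is what makes the weight \(\eps^3\) consistent. This is where an error would most likely hide.

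Finally, for the version at the exact \(H=0\) event, I would invoke Lemma~\ref{lem:quant-event}. Its hypotheses come from the \(C^1\) convergence of \(H\) to \(7\cot u\) given by Lemma~\ref{lem:isotropic-background}. I would then apply Corollary~\ref{cor:event-replacement}, which needs \(O(\eps^3)\) bounds on \(\zeta_u,\xi_{uu}\) and their derivatives on the equatorial neighborhood. These follow from the same outer variation-of-constants representation, since the equation is regular there. Together these add the extra \(o(1)\) term in \(C^1_{\rho,\log\eps}\).
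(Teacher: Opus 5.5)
Your proposal is correct and follows essentially the same route as the paper. At \(u=\pi/2\) the estimate is the differentiated form of Lemma~\ref{lem:outerweighted} with \(\delta_\eps=\eps^{1/3}\), using the differentiated inner, background and shift bounds, and Corollary~\ref{cor:event-replacement} (via Lemma~\ref{lem:quant-event}) then moves it to the exact \(H=0\) event; your explicit check that \(D_\eps\) of the normalized leading profile equals \(\Re(-i\sqrt3\,e^{i\vartheta}L_0Q)\) is precisely the paper's ``normalization derivative written out explicitly.''
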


\begin{proof}
The assertion at \(u=\pi/2\) is precisely the differentiated form of
Lemma~\ref{lem:outerweighted}, with the normalization derivative written
out explicitly.  The inner response estimate controls both parameter
derivatives of the incoming amplitudes; the translated-background
estimate controls the derivatives of the coefficients; and
\[
\partial_\rho(\eps\sigma)=O_I(\eps),
\qquad
D_\eps(\eps\sigma)=O_I(\eps)
\]
produces only the already recorded relative loss
\(O_I(\eps/\delta_\eps)\).  Corollary~\ref{cor:event-replacement},
with its differentiated Taylor estimate, transfers the statement to
the exact \(H=0\) event.
\end{proof}

\begin{theorem}[\(C^1\) matching on logarithmic phase strips]\label{thm:C1}
Fix a bounded interval \(\Theta\subset\mathbb R\) and define
\[
 \eps_k(\theta)=\exp\!\left(-\frac{k\pi+\theta}{\sqrt3}\right).
\]
Let
\[
\mathcal F_k(\theta,\rho)
=
(-1)^k\eps_k(\theta)^{-3}
\begin{pmatrix}
\zeta(t_*(\eps_k(\theta),\rho))\\
\xi_t(t_*(\eps_k(\theta),\rho))
\end{pmatrix}.
\]
Then
\[
\mathcal F_k\longrightarrow
G(\theta,\rho)=
\begin{pmatrix}
\Re(e^{i\theta}L_0Q(\rho))\\
\Re(e^{i\theta}L_1X(\rho))
\end{pmatrix}
\]
in \(C^1(\Theta\times I)\) as \(k\to\infty\).
\end{theorem}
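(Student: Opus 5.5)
The plan is to obtain Theorem~\ref{thm:C1} as a reparametrization of Lemma~\ref{lem:normalized-C1-matching}. We restrict that lemma to the exact phase strips and check that the phase substitution turns the \(\eps\)-dependent model into the \(\eps\)-independent limit \(G\). There are three steps.

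First, along \(\eps=\eps_k(\theta)\) one has \(\vartheta(\eps_k(\theta))=k\pi+\theta\), hence \(e^{i\vartheta}=(-1)^ke^{i\theta}\). Multiplying the model vector of Lemma~\ref{lem:normalized-C1-matching} by \((-1)^k\) therefore gives exactly \(G(\theta,\rho)\), independently of \(k\). Consequently
\[
\mathcal F_k(\theta,\rho)-G(\theta,\rho)=(-1)^k\mathcal R^{*}_{\eps_k(\theta)}(\rho),
\]
where \(\mathcal R^{*}_\eps\) is the normalized remainder evaluated at the exact \(H=0\) event.

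To make this identity precise, two conversions are needed. The event \(t_*\) of Lemma~\ref{lem:quant-event} corresponds to \(u_*=t_*+\alpha_{\eps,\rho}\), which is the form used in Corollary~\ref{cor:event-replacement}. The derivative \(\xi_t\) equals \(\xi_u\), since \(u\) is a pure translation of \(t\). The last statement of Lemma~\ref{lem:normalized-C1-matching} then bounds \(\mathcal R^*_\eps\) in \(C^1_{\rho,\log\eps}\) by \(O_I(\eps^{2/3}(1+|\log\eps|))+o(1)\).

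Second, we convert the derivatives. The variable \(\rho\) enters unchanged. For \(\theta\) we have
\[
\partial_\theta=\frac{d\log\eps_k}{d\theta}\,D_\eps=-\frac1{\sqrt3}D_\eps .
\]
Here \(D_\eps\) is applied to the whole normalized expression, which is precisely the convention of \(C^1_{\rho,\log\eps}\) fixed in Lemma~\ref{lem:normalized-C1-matching}; the \(-3\) terms produced by differentiating \(\eps^{-3}\) are already included there. Hence
\[
\|\mathcal F_k-G\|_{C^1(\Theta\times I)}\le C\sup_{\theta\in\Theta}\|\mathcal R^*_{\eps_k(\theta)}\|_{C^1_{\rho,\log\eps}} .
\]
Because \(\Theta\) is bounded, \(\sup_{\theta\in\Theta}\eps_k(\theta)\to0\) as \(k\to\infty\), and so the right-hand side tends to zero.

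Third, we check that the constants are uniform. All the constants \(C_I\) in Lemmas~\ref{lem:innerweighted}--\ref{lem:outerweighted} are uniform in \(\rho\in I\) and apply for all sufficiently small \(\eps\). The \(o(1)\) produced by the event replacement also needs to be uniform over the strip \(\Theta\times I\). This follows from Lemma~\ref{lem:quant-event}, whose hypotheses, as noted just before Lemma~\ref{lem:normalized-C1-matching}, hold uniformly on compact ratio--phase strips.

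I expect the main obstacle to be the \(\theta\)-derivative at the exact event. Differentiating \(\eps^{-3}\zeta(t_*(\eps,\rho))\) in \(D_\eps\) produces three contributions: the \(-3\eps^{-3}\zeta\) term, the explicit dependence on \(\eps\), and the motion of \(t_*\). Two cancellations must be checked. The \(-3\) term has to be matched against the \(D_\eps\) of the model, and this works because the model's only \(\eps\)-dependence is the phase \(e^{i\vartheta}\), whose \(D_\eps\) equals \(-i\sqrt3e^{i\vartheta}\), so the model's \(D_\eps\) is exactly accounted for. The motion of the event, \(\zeta_u\,\partial u_*\), has to be \(o(\eps^3)\), which is supplied by Corollary~\ref{cor:event-replacement}. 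I would first write out this chain rule explicitly and then verify that each term is covered by one of the cited estimates.
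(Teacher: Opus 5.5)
Your proposal is correct and follows the paper's proof. You restrict Lemma~\ref{lem:normalized-C1-matching}, including its exact-event clause, to the strips \(\eps=\eps_k(\theta)\), and use \(e^{i\vartheta(\eps_k(\theta))}=(-1)^ke^{i\theta}\), \(\xi_t=\xi_u\), and \(\partial_\theta=-\tfrac1{\sqrt3}D_\eps\), with \(D_\eps\) acting on the whole \(\eps^{-3}\)-normalized expression; this gives a bound of order \(O(\eps_k^{2/3}(1+|\log\eps_k|))+o(1)\), uniform on \(\Theta\times I\).

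The cancellation you flag in your last paragraph is already built into that lemma, because it bounds \(D_\eps\) of the full normalized remainder. There the \(-3\eps^{-3}\zeta\) term offsets the amplitude-scaling part \(3\zeta\) of \(D_\eps\zeta\), and what remains matches \(D_\eps\) of the model, so no separate verification is needed.
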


\begin{remark}[Uniformity actually used]
The interval \(I\Subset(0,\infty)\) is fixed before \(\eps\downarrow0\).
Thus this theorem requires \(C^1\) control only for positive singular-orbit ratios in
a compact interval.  The small-ratio theorem uses merely \(C^0\) convergence
as \(\rho\downarrow0\).  No derivative of the saddle-passage map at the
projective boundary \(\rho=0\) enters the weighted matching theorem.
\end{remark}

\begin{proof}
Work in the shifted coordinate
\[
u=t+\eps_k(\theta)\sigma(\rho).
\]
Since \(du/dt=1\), one has \(\xi_t=\xi_u\).  The inner weighted estimate,
the translated outer transfer estimate, and the translated event
replacement give the \(C^0\) convergence.

For the \(\rho\)-derivative, the dependence of the shift is
\[
\partial_\rho u
=
\eps_k(\theta)\sigma'(\rho)=o(1),
\]
and this contribution has already been included in the differentiated
translated-background and event estimates.  Hence the same lemmas give the
required uniform \(\rho\)-derivative convergence.

For the phase derivative,
\[
 \partial_\theta\eps_k=-\frac1{\sqrt3}\eps_k,\qquad
 \partial_\theta=-\frac1{\sqrt3}D_\eps
\]
along the exact phase strip.  Here \(D_\eps\) acts on the entire
\(\eps^{-3}\)-normalized matching expression; in particular the terms
\(-3\zeta\) and \(-3\xi_u\) arising from differentiating the normalization
are included by Lemma~\ref{lem:normalized-C1-matching}.  The shifted
coordinate contributes
\[
D_\eps(\eps\sigma)=\eps\sigma=o(1),
\]
again covered by the differentiated estimates.  Therefore the logarithmic
scale derivative gives the required \(\theta\)-derivative control.

The normalized error is
\[
O\!\left(
\eps_k^{1-\beta}
+\eps_k^{2\beta}(1+|\log\eps_k|)
+\eps_k^{3(1-\beta)}
\right)
\]
in \(C^1\).  With \(\beta=1/3\) this is
\[
O\!\left(\eps_k^{2/3}(1+|\log\eps_k|)\right)=o(1).
\]
\end{proof}

\begin{remark}[What the corrected estimate actually uses]
The proof does not require the radial translation to vanish, nor does it
require an error smaller than \(o(1)\) after normalization.  The three
independent losses are
\[
\eps/\delta_\eps,\qquad
\delta_\eps^2(1+|\log\delta_\eps|),\qquad
(\eps/\delta_\eps)^3.
\]
For \(\delta_\eps=\eps^{1/3}\) they all tend to zero, with the first two of
order at most
\(O(\eps^{2/3}(1+|\log\eps|))\).  This is the only quantitative input from
the overlap analysis used in the subsequent implicit-function argument.
\end{remark}

\begin{remark}[Why the radial translation does not change the response]
The translation \(\sigma(\rho)\) changes the origin of the outer radial
coordinate but not the leading response amplitudes.  Indeed,
\[
(\tau+\sigma)^{-3+i\sqrt3}
=
\tau^{-3+i\sqrt3}\bigl(1+O(\tau^{-1})\bigr),
\]
so the coefficient of the leading mode is unchanged.  In the shifted
coordinate \(u=t+\eps\sigma(\rho)\), the sine-cone outer equation is exactly the
same equation used in Section~\ref{sec:outer-transfer}; hence the transfer
constants \(L_0,L_1\) and the limiting map \(G\) are unchanged.
\end{remark}

\begin{remark}[No hidden loss at the moving matching event]
The \(C^1\) statement above includes both the reconstructed radial
translation and the dependence of the exact event time.  The matching map
is compared with the translated sine-cone section \(u=\pi/2\), and
Corollary~\ref{cor:event-replacement} transfers this comparison to the exact
\(H=0\) event with \(o(1)\) error after the \(\eps^{-3}\) normalization,
together with its first \(\rho\)- and logarithmic-scale derivatives.
\end{remark}

\section{Reflection, right-singular-orbit smoothness, and topology}\label{sec:reflection}
\label{sec:reflection-topology}

The analytic matching problem ends when
\[
a=b,\qquad c'=0
\]
holds on the Einstein matching section.  These are exactly the conditions that allow
the two \(S^2\)-directions to be exchanged across that section.  We now
verify that this reflection is smooth and that the resulting compact
manifold is \(S^5\times S^3\).

\begin{lemma}[Uniqueness and transversality of the Einstein matching section]
\label{lem:H-section-unique}
Along every positive-Einstein solution in the principal region,
\[
H'=-|L|^2-7<0.
\]
Hence an \(H=0\) point, if it exists, is unique and transverse.  For the
solutions produced by the matching theorem it lies near \(t=\pi/2\), where
all three warping functions remain positive, and is therefore a principal
point.
\end{lemma}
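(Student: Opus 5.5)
The statement combines the Riccati-type identity \eqref{eq:PE-Hprime} with the localization already supplied by the matching analysis. I would first derive \(H'=-|L|^2-7\) directly from the radial Einstein equation. Write \(\Ric(\partial_t,\partial_t)=-2a''/a-2b''/b-3c''/c=7\). The identity \(f''/f=(f'/f)'+(f'/f)^2\) turns this into \(-H'-|L|^2=7\). This holds wherever \(a,b,c>0\), that is, throughout the principal region. Since \(|L|^2\ge0\), we get \(H'\le-7<0\) there. Hence \(H\) is strictly decreasing on any interval of principal points. It therefore has at most one zero, and at such a zero \(H'\neq0\), which is exactly transversality of the section \(\{H=0\}\) to the flow. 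Near \(t=0\) one has \(H\sim 2/t\to+\infty\), and \(H\) keeps decreasing, so the zero, if it exists, is the first zero.

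For the localization claim, I would invoke Lemma~\ref{lem:isotropic-background}. It gives existence of the solution on \([\delta_\eps,t_0]\), with \(t_0\) beyond a fixed neighbourhood of \(\pi/2\), together with the \(C^0\) bound \(u|H-H_{\sin,\alpha}|\le C_I\delta_\eps^2\). The \(C^1\) control near \(u=\pi/2\) comes from Lemmas~\ref{lem:outerweighted} and \ref{lem:normalized-C1-matching}. With these hypotheses, Lemma~\ref{lem:quant-event} locates the unique zero \(t_*\) within \(\eta_\eps/7\) of \(t_{\rm eq}=\pi/2-\eps\sigma(\rho)\). So \(t_*\to\pi/2\) uniformly on the strips of Theorem~\ref{thm:C1}.

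Positivity of the warping functions at \(t_*\) is the step needing some care. Lemma~\ref{lem:isotropic-background} controls \(K=e^{-2m}\) in the weighted sense \(u^2|K-K_0|\le C_I\delta_\eps^2\), which gives \(ab\) close to \(\tfrac16\sin^2u\) on the equatorial neighbourhood. The anisotropies satisfy \(\zeta,\xi=O(\eps^3)\) there. Hence \(a=e^{m+\zeta/2}\), \(b=e^{m-\zeta/2}\), and \(c=\sqrt2e^{m+\xi}\) are all close to the positive sine-cone values \(\sin u/\sqrt6\) and \(\sin u/\sqrt3\) near \(u=\pi/2\). In particular they are bounded below by a positive constant, so \(t_*\) is a principal point.

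The main obstacle is making sure no earlier point of degeneration occurs on \((0,t_*)\), which would invalidate the monotonicity argument. I would handle this by noting that the interval \((0,\delta_\eps]\) is covered by the inner analysis, where Theorem~\ref{thm:RF-global-existence} gives positive radii and Lemma~\ref{lem:innerweighted} gives closeness to the Ricci-flat solution. The interval \([\delta_\eps,t_0]\) is covered by the continuation statement of Lemma~\ref{lem:isotropic-background}.
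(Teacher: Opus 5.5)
Your proposal is correct and follows the paper's proof. The radial Einstein equation gives $H'=-|L|^2-7\le -7$, which yields uniqueness and transversality, and the matching estimates (Lemma~\ref{lem:isotropic-background} with Lemma~\ref{lem:quant-event}) place the event near $u=\pi/2$, where uniform closeness to the positive sine-cone profiles gives $a,b,c>0$. You supply more detail than the paper does, namely the reconstruction $a=e^{m+\zeta/2}$, $b=e^{m-\zeta/2}$, $c=\sqrt2e^{m+\xi}$ and the check that no degeneration occurs before $t_*$. One small point: locating the zero needs only the $C^0$ bound on $H-H_{\sin,\alpha}$ together with $H'\le-7$, since the parameter-$C^1$ control matters only for the derivative statements.
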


\begin{proof}
The Einstein equations give
\(H'\le-7\), hence strict monotonicity and transversality.  The weighted
matching theorem places the event at \(t=\pi/2+o(1)\), while the limiting
sine-cone profiles are strictly positive there; uniform convergence gives
positivity of \(a,b,c\) at the event for all sufficiently small
\(\varepsilon\).
\end{proof}

\begin{theorem}[Reflection compactification and topology]
\label{thm:reflection-compactification}
Let
\[
g=dt^2+a(t)^2g_{S^2}+b(t)^2g_{S^2}+c(t)^2g_{S^3}
\]
be a smooth left-singular-orbit Einstein solution on \(0\le t\le t_*\), with
\[
a(t)=tA(t^2),\qquad
b(t)=B(t^2),\qquad
c(t)=C(t^2),
\]
where
\[
A(0)=1,\qquad B(0)=b_0>0,\qquad C(0)=c_0>0.
\]
Assume \(t_*>0\) is the \(H=0\) principal point and satisfies
\[
a(t_*)=b(t_*),\qquad
c'(t_*)=0.
\]
Then:

\begin{enumerate}[label=(\roman*)]
\item
\[
a'(t_*)+b'(t_*)=0;
\]

\item the reflection
\begin{equation}\label{eq:reflection-profiles}
a(t_*+s)=b(t_*-s),\qquad
b(t_*+s)=a(t_*-s),\qquad
c(t_*+s)=c(t_*-s)
\end{equation}
defines the unique Einstein continuation through the regular principal
point \(t=t_*\), and this continuation is smooth;

\item with the right-hand geodesic coordinate
\[
u=2t_*-t,
\]
the profiles satisfy
\[
b(2t_*-u)=u\,A(u^2),
\qquad
a(2t_*-u)=B(u^2),
\qquad
c(2t_*-u)=C(u^2);
\]
hence the second \(S^2\)-factor collapses smoothly at \(u=0\);

\item with the chosen labelled principal-orbit trivialization, the resulting
compact cohomogeneity-one manifold is naturally diffeomorphic to
\[
S^5\times S^3.
\]
\end{enumerate}
\end{theorem}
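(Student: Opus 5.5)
Item (i) comes straight from the definition of the matching section. At \(t_*\) we have \(H=0\), \(a=b\) and \(c'=0\), so
\[
0=H(t_*)=2\frac{a'}{a}+2\frac{b'}{b}+3\frac{c'}{c}=\frac{2}{a(t_*)}\bigl(a'(t_*)+b'(t_*)\bigr).
\]
Since \(a(t_*)>0\) (the point is principal, Lemma~\ref{lem:H-section-unique}), this gives \(a'(t_*)+b'(t_*)=0\).

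For (ii), I will use that the system \eqref{eq:PE-a}--\eqref{eq:PE-c} is autonomous, invariant under \(t\mapsto 2t_*-t\), and symmetric under exchanging \(a\) and \(b\). The exchange is a symmetry because the \(a\)- and \(b\)-equations are obtained from each other by swapping \(a\leftrightarrow b\), and the \(c\)-equation is symmetric in \((a,b)\). So define \(\tilde a(t)=b(2t_*-t)\), \(\tilde b(t)=a(2t_*-t)\), \(\tilde c(t)=c(2t_*-t)\). This is again a solution of the tangential system, and it also satisfies the radial constraint, since the constraint is symmetric and even under reversal.

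Next I compare Cauchy data at the regular point \(t_*\). The values agree: \(\tilde a=b=a\), \(\tilde b=a=b\), \(\tilde c=c\). For derivatives, \(\tilde a'(t_*)=-b'(t_*)=a'(t_*)\) by (i), similarly \(\tilde b'=b'\), and \(\tilde c'=-c'=0=c'\). All warping functions are positive at \(t_*\), so the system is a regular analytic ODE there, and uniqueness of its Cauchy problem gives \((\tilde a,\tilde b,\tilde c)=(a,b,c)\) near \(t_*\). Hence the reflected profiles \eqref{eq:reflection-profiles} are exactly the unique analytic continuation, and the continuation is smooth (indeed real-analytic) across \(t_*\). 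This continuation exists on \([t_*,2t_*]\) because its values are the mirror image of the given solution on \([0,t_*]\).

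For (iii), set \(u=2t_*-t\) and apply the identity from (ii) with \(s=t_*-u\). This gives \(b(2t_*-u)=a(u)=uA(u^2)\), \(a(2t_*-u)=b(u)=B(u^2)\), \(c(2t_*-u)=C(u^2)\). Since \(u\) is geodesic distance to the endpoint \(t=2t_*\), Lemma~\ref{lem:smooth-polar-product} (with the roles of the two \(S^2\) factors exchanged) shows that the second \(S^2\) collapses smoothly there, and this is the \(H_+\) singular orbit. Consequently \(L=2t_*\).

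For (iv), the group diagram \eqref{eq:group-diagram} is realized, with \(H_-\) at \(t=0\) and \(H_+\) at \(t=L\). The manifold is therefore the union of the two disk bundles computed in Section~\ref{sec:singular-orbit-germs}, glued along \(G/K\) by the identity in the chosen trivialization. That union was identified there as \((S^2*S^2)\times S^3\cong S^5\times S^3\), using \(S^2*S^2\cong S^5\).

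The only point needing care, which I expect to be the main (mild) obstacle, is checking that the gluing map is the standard one. Here the identification is canonical because both tubes are built from the same principal orbit via \(G\)-equivariant normal exponential maps, so no twisting can occur.
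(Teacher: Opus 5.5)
Your proposal is correct and follows essentially the same route as the paper. You get (i) from $H=0$ with $a=b$ and $c'=0$; (ii) from the invariance of the Einstein system under $(t,a,b,c)\mapsto(2t_*-t,b,a,c)$, matching Cauchy data at $t_*$, and ODE uniqueness at that regular point; (iii) by reading off the parity form in $u=2t_*-t$; and (iv) by gluing $D^3\times S^2\times S^3$ and $S^2\times D^3\times S^3$ by the identity to get $(S^2*S^2)\times S^3\cong S^5\times S^3$. One small point: $a(t_*)>0$ is already part of the hypothesis that $t_*$ is a principal point, so you do not need Lemma~\ref{lem:H-section-unique}, which concerns only the solutions produced by the matching theorem.
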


\begin{proof}
At \(t=t_*\),
\[
0=H
=
2\frac{a'}a+2\frac{b'}b+3\frac{c'}c.
\]
Since \(a=b>0\) and \(c'=0\),
\[
a'+b'=0.
\]
This proves (i).

The Einstein equations are invariant under the involution
\[
(t,a,b,c)\longmapsto
(2t_*-t,b,a,c).
\]
The right-hand profiles in \eqref{eq:reflection-profiles} therefore solve the
same regular Einstein ODE for \(s>0\).  At \(s=0\), the values agree because
\(a=b\), and
\[
\partial_s a(t_*+s)|_{s=0}
=
-b'(t_*)=a'(t_*),
\]
\[
\partial_s b(t_*+s)|_{s=0}
=
-a'(t_*)=b'(t_*),
\]
\[
\partial_s c(t_*+s)|_{s=0}
=
-c'(t_*)=0.
\]
Thus the reflected solution has the same Cauchy data at the regular
principal point \(t_*\).  Ordinary uniqueness for the nonsingular Einstein
ODE proves (ii).  Since the Einstein system is real analytic at a principal
point, the common Cauchy-data solution is real analytic across \(t=t_*\).
At the two singular endpoints the parity forms displayed in the statement
are convergent analytic germs, so the compactified Einstein metric is
real analytic in the standard singular-orbit charts.

For (iii), set
\[
u=2t_*-t.
\]
Using the left-singular-orbit parity representation,
\[
a(u)=uA(u^2),\qquad
b(u)=B(u^2),\qquad
c(u)=C(u^2),
\]
the reflection formula gives
\[
b(2t_*-u)=a(u)=uA(u^2),
\]
\[
a(2t_*-u)=b(u)=B(u^2),
\]
\[
c(2t_*-u)=c(u)=C(u^2).
\]
Therefore the right endpoint satisfies the exact smooth polar-product
criterion of Lemma~\ref{lem:smooth-polar-product}, with the second
\(S^2\)-factor now collapsing.  In particular,
\[
b(2t_*)=0,\qquad
\frac{db}{du}(2t_*-u)\bigg|_{u=0}=1,
\]
or equivalently \(b'(2t_*)=-1\) in the original \(t\)-coordinate, while
\[
a'(2t_*)=c'(2t_*)=0.
\]
No higher-order smoothness condition is missing: all parity jets are inherited
exactly from the left-singular-orbit analytic germ.

It remains to identify the topology.  A tubular neighborhood of the left
singular orbit is
\[
M_-
=
D^3\times S^2\times S^3,
\]
because the first \(S^2\) is the unit sphere in the normal \(3\)-disk and the
second \(S^2\) and \(S^3\) survive on the singular orbit.  A tubular
neighborhood of the right singular orbit is
\[
M_+
=
S^2\times D^3\times S^3.
\]
Their common boundary is
\[
S^2\times S^2\times S^3.
\]
Although the reflected \emph{coefficients} \(a\) and \(b\) are exchanged,
the principal-orbit coordinates themselves are not relabelled across the
regular hypersurface \(t=t_*\).  Thus the gluing map of the two disk-bundle
halves is the identity on the ordered
\(S^2\times S^2\times S^3\) boundary factors.  Hence
\[
M
=
\left(
D^3\times S^2
\;\cup_{S^2\times S^2}\;
S^2\times D^3
\right)
\times S^3.
\]
The first parenthesis is the standard decomposition of the join
\[
S^2*S^2.
\]
Indeed, writing
\[
S^5
=
\{(x,y)\in\mathbb R^3\oplus\mathbb R^3:
|x|^2+|y|^2=1\},
\]
the regions \(|x|\ge|y|\) and \(|y|\ge|x|\) are respectively
diffeomorphic to
\[
D^3\times S^2
\quad\text{and}\quad
S^2\times D^3,
\]
and their common boundary is \(S^2\times S^2\), with the standard identity
identification of the two direction variables.  Therefore
\[
D^3\times S^2
\cup
S^2\times D^3
\cong
S^2*S^2
\cong
S^5.
\]
The \(S^3\)-factor is globally untouched by the collapse and the gluing.
Thus
\[
M\cong S^5\times S^3.
\]
This proves (iv).
\end{proof}

\begin{corollary}[No twisted compactification in the reflection construction]
\label{cor:no-twist}
The reflection compactification above produces the product
\(S^5\times S^3\), not a nontrivial \(S^3\)-bundle over \(S^5\) or a twisted
double disk bundle.
\end{corollary}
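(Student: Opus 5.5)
The corollary should follow from part (iv) of Theorem~\ref{thm:reflection-compactification} once we check that no twisting can enter the gluing. The manifold is the union of the two disk bundles $M_-=G\times_{H_-}D^3$ and $M_+=G\times_{H_+}D^3$ over the two singular orbits. Since $H_\pm$ are products of factors acting on separate coordinate blocks, each bundle is globally trivial:
\[
M_-\cong D^3\times S^2\times S^3,\qquad M_+\cong S^2\times D^3\times S^3 .
\]
The plan is therefore to show that the attaching map is the identity of $G/K$. With that in hand, the diffeomorphism type is the untwisted double $(S^2*S^2)\times S^3\cong S^5\times S^3$.

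First, I would recall that a cohomogeneity-one manifold is determined up to equivariant diffeomorphism by its group diagram $H_-\supset K\subset H_+$ in $G$. The reflection construction produces the diagram \eqref{eq:group-diagram} exactly. The point to verify is that the reflection \eqref{eq:reflection-profiles} acts only on the coefficient functions $a,b$. It does not act on the orbit $G/K$, because the continuation through $t_*$ is an ODE continuation on the fixed principal-orbit bundle $(0,2t_*)\times G/K$. Hence no element of $N(K)/K$ (for instance the swap of the two $S^2$-factors) is inserted in the gluing.

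Second, I would observe that the $S^3_c$-factor is acted on only by $\mathrm{SO}(4)_c$, with isotropy $\mathrm{SO}(3)_c$ in all of $K,H_\pm$. The whole diagram is therefore the product of the $(\mathrm{SO}(3)_a\times\mathrm{SO}(3)_b)$-diagram with the homogeneous space $\mathrm{SO}(4)_c/\mathrm{SO}(3)_c=S^3$. This rules out a nontrivial $S^3$-bundle over $S^5$: the manifold is globally $N\times S^3$, where $N$ is the cohomogeneity-one manifold of the first diagram. That $N$ is the standard join decomposition of $S^5\subset\mathbb R^3\oplus\mathbb R^3$, as in the proof of (iv).

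The main obstacle is making precise the phrase ``twisted double disk bundle''. Even with the same group diagram one could in principle glue by an equivariant diffeomorphism of $G/K$ that is not isotopic to the identity; such maps are given by elements of $N(K)/K$. I would handle this by noting that the metric $dt^2+g_t$ on $(0,2t_*)$ is a single smooth metric on a single product chart. Consequently the identification between the boundary of $M_-$ and the boundary of $M_+$ is induced by the identity on the $G/K$-factor, so the gluing element is trivial.
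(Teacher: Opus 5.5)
Your proposal is correct and is essentially the paper's argument: the reflection changes only the coefficient functions, so the boundary identification is the identity on the ordered $S^2\times S^2\times S^3$, the $S^3$-factor splits off, and what remains is the standard join decomposition $S^2*S^2\cong S^5$; your group-diagram language only formalizes this. One small correction: the swap of the two $S^2$-factors is not an element of $N(K)/K$, since no element of $G=\mathrm{SO}(3)_a\times\mathrm{SO}(3)_b\times\mathrm{SO}(4)_c$ exchanges them; in fact $N(K)/K\cong(\mathbb Z_2)^3$ acts by antipodal maps on single factors, each normalizing $H_\pm$, so the twisting concern in your last paragraph cannot arise for this diagram.
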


\begin{proof}
The \(S^3\)-coordinate is unchanged by the reflection and by the boundary
identification.  Hence the gluing map is the product of the standard
\(S^2*S^2\) gluing with the identity on \(S^3\).
\end{proof}

\begin{remark}[Orientation convention at the right singular orbit]
In the original coordinate \(t\), the collapsing right-hand profile has
derivative \(-1\) because \(t\) increases toward the endpoint.  In the
outward normal distance \(u=2t_*-t\), its derivative is \(+1\), exactly as
required by the smooth polar-coordinate criterion.
\end{remark}

\section{Proof of the sequence of real-analytic Einstein metrics theorem}\label{sec:sequence-proof}

All ingredients are now in place.  The inner problem supplies a sign
change, the outer problem transports the response, and the weighted matching
theorem transfers this information to the exact Einstein equations.  We now
eliminate the common phase and obtain one compact Einstein metric for each
sufficiently large phase label.

The two matching equations are controlled by two effective choices.  The
logarithmic dependence on the small parameter \(\eps\) determines a common
oscillatory phase, while the ratio \(\rho\) determines the relative position
of the two response channels.  We use the first matching equation to choose the phase.
After that choice has been made, the second matching equation is a single
scalar equation in \(\rho\).  This is the phase elimination that converts
the original two-component shooting problem into the sign-change argument
below.

\begin{figure}[t]
\centering
\begin{tikzpicture}[>=Latex,
  phasebox/.style={draw,rounded corners=2pt,align=center,text width=9.8cm,
                  minimum height=1.15cm,inner sep=6pt,font=\small},
  signbox/.style={draw,rounded corners=2pt,align=center,text width=2.5cm,
                 minimum height=1.05cm,inner sep=5pt,font=\small},
  arr/.style={->,thick}]
\node[phasebox] (phase) at (0,0)
 {\textbf{Step 1: choose the common phase.}\\
  For every \(\rho\in[\rho_-,\rho_+]\), choose
  \(\theta=\theta_k(\rho)\) so that the first exact matching error is zero.};

\node[align=center,font=\small] (vary) at (0,-1.55)
 {\textbf{Step 2: on this phase branch, vary only \(\rho\).}};

\node[signbox] (left) at (-3.6,-2.65)
 {\(\rho=\rho_-\)\\[2pt]\(h_k(\rho_-)<0\)};
\node[signbox] (middle) at (0,-2.65)
 {\(\rho=\rho_k\)\\[2pt]\(h_k(\rho_k)=0\)};
\node[signbox] (right) at (3.6,-2.65)
 {\(\rho=\rho_+\)\\[2pt]\(h_k(\rho_+)>0\)};

\draw[arr] (phase) -- (vary);
\draw[arr] (left) -- (middle);
\draw[arr] (middle) -- (right);

\node[align=center,text width=11.0cm,font=\small] (concl) at (0,-4.15)
 {At \(\rho_k\), the first error is zero by the phase choice and the second
 error is zero by the intermediate value theorem.\\
 Hence both matching conditions hold.};

\draw[arr] (middle) -- (concl);
\end{tikzpicture}
\caption{The final matching argument.  The phase is first chosen so that
one exact matching equation vanishes along a one-parameter branch.
Restricted to that branch, the second exact matching error is the scalar
function \(h_k(\rho)\).  Its opposite signs at \(\rho_-\) and \(\rho_+\)
force an intermediate zero \(\rho_k\).}
\label{fig:final-sign-change}
\end{figure}

\begin{remark}[Why the scalar sign change solves the full matching problem]
No matching equation is discarded in the phase reduction.  The first exact
component is solved by the implicit-function branch
\(\theta=\theta_k(\rho)\), so it vanishes identically on that branch.
The scalar function \(h_k\) is the second exact matching component
restricted to the same branch.  Therefore
\[
h_k(\rho_k)=0
\]
is equivalent to the simultaneous vanishing of both components of
\(\mathcal F_k\).  This is precisely the payoff of the phase-reduced response
phase invariant: a two-component matching problem becomes one scalar sign
problem after the observable common phase has been removed.
\end{remark}

\begin{theorem}[Existence of the sequence of real-analytic Einstein metrics]
\label{thm:sequence}
For every sufficiently large logarithmic phase index \(k\), there are
parameters
\[
\eps^{(k)}>0,\qquad \rho^{(k)}\in(0,\sqrt2)
\]
such that the corresponding positive-Einstein solution reaches its first
\(H=0\) point with
\[
a=b,\qquad c'=0.
\]
It therefore reflects to a smooth positive Einstein metric \(g^{(k)}\) on
\(S^5\times S^3\) satisfying \(\Ric(g^{(k)})=7g^{(k)}\), and
\[
\log\eps^{(k)}
=
-\frac{k\pi}{\sqrt3}+O(1).
\]
In particular \(\eps^{(k)}\to0\) as \(k\to\infty\).

Moreover, there are an integer \(N\ge1\), an index \(k_0\), and the
arithmetic subsequence
\[
k_j=k_0+jN
\]
such that the selected scales may be chosen strictly decreasing:
\[
\eps^{(k_j)}\downarrow0.
\]
Hence the construction contains an infinite sequence of compact
real-analytic Einstein metrics with scales tending monotonically to zero.
\end{theorem}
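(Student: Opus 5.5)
We follow the two-step scheme of Figure~\ref{fig:final-sign-change}, using Theorem~\ref{thm:C1} as the only input from the overlap analysis.

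\textbf{Choice of the ratio interval.} By Theorem~\ref{thm:smallrho} there is a small \(\rho_->0\) with \(\Phi(\rho_-)<0\). By \eqref{eq:Phi-anchor} we have \(\Phi(\sqrt2)>0\), and \(\Phi\) is continuous by Theorem~\ref{thm:global-response}, so we can pick \(\rho_+<\sqrt2\) close to \(\sqrt2\) with \(\Phi(\rho_+)>0\). Set \(I=[\rho_-,\rho_+]\Subset(0,\sqrt2)\). This is what will place \(\rho^{(k)}\) in \((0,\sqrt2)\).

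\textbf{Phase choice for the limiting map.} Write \(A(\rho)=L_0Q(\rho)\) and \(B(\rho)=L_1X(\rho)\). By Theorem~\ref{thm:Q-nonzero}, \(|A|\ge c_I>0\) on \(I\). We choose the \(C^1\) branch
\[
\theta_0(\rho)=\frac{\pi}{2}-\arg A(\rho),
\]
which satisfies \(e^{i\theta_0}A=i|A|\). Hence \(G_1(\theta_0(\rho),\rho)=0\), and \(\partial_\theta G_1=-\Im(e^{i\theta}A)=-|A|\neq0\) along the branch. On this branch the second component is
\[
G_2=\Re\bigl(e^{i\theta_0}B\bigr)=\Re\!\left(\frac{i|A|B}{A}\right)=-\frac{\Im(B\overline A)}{|A|}=\frac{\Phi(\rho)}{|A(\rho)|}.
\]
So \(G_2\) has the sign of \(\Phi\): it is negative at \(\rho_-\) and positive at \(\rho_+\). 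We take \(\Theta\) to be a bounded interval containing a neighbourhood of \(\theta_0(I)\), using a continuous lift of the argument.

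\textbf{Transfer to the exact problem.} By Theorem~\ref{thm:C1}, \(\mathcal F_k\to G\) in \(C^1(\Theta\times I)\). A quantitative implicit function theorem then applies uniformly in \(\rho\in I\), since \(|\partial_\theta G_1|\ge c_I\) and the \(C^1\) error is \(o(1)\). It yields, for all large \(k\), a continuous branch \(\theta_k(\rho)\in\Theta\) with \(\mathcal F_{k,1}(\theta_k(\rho),\rho)=0\) and \(\theta_k\to\theta_0\) uniformly.

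Now set \(h_k(\rho)=\mathcal F_{k,2}(\theta_k(\rho),\rho)\). Then \(h_k\to\Phi/|A|\) uniformly on \(I\), so \(h_k(\rho_-)<0<h_k(\rho_+)\) for large \(k\). The intermediate value theorem gives \(\rho^{(k)}\) with \(h_k(\rho^{(k)})=0\). Put \(\eps^{(k)}=\eps_k(\theta_k(\rho^{(k)}))\).

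At this point both components of \((\zeta,\xi_t)\) vanish at the first \(H=0\) event \(t_*\). Since \(\zeta=\log(a/b)\), this means \(a=b\). Then \(\xi_t=c'/c-\tfrac12(a'/a+b'/b)\) and \(H=0\) give \(7\,c'/c=0\), so \(c'=0\). Lemma~\ref{lem:H-section-unique} shows \(t_*\) is a principal point, and Theorem~\ref{thm:reflection-compactification} then produces a real-analytic Einstein metric on \(S^5\times S^3\). Because \(\theta_k\in\Theta\) is bounded,
\[
\log\eps^{(k)}=-\frac{k\pi+\theta_k}{\sqrt3}=-\frac{k\pi}{\sqrt3}+O(1).
\]

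\textbf{Monotone subsequence.} Let \(M\) bound \(|\theta|\) on \(\Theta\). Choose \(N\) with \(N\pi>2M\). Then
\[
\log\eps^{(k+N)}-\log\eps^{(k)}\le -\frac{N\pi-2M}{\sqrt3}<0,
\]
so the scales along \(k_j=k_0+jN\) strictly decrease to \(0\).

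\textbf{Main obstacle.} The only delicate point is uniformity of the implicit-function branch over all of \(I\) rather than just locally in \(\rho\). We handle it by working in the strip around the graph of \(\theta_0\), of fixed width. There, \(\mathcal F_{k,1}\) is strictly monotone in \(\theta\) and changes sign at the strip edges for large \(k\), so its zero is unique and continuous in \(\rho\). Only continuity of \(h_k\) is needed, so no further nondegeneracy is required.
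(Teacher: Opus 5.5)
Your proposal is correct and follows essentially the same route as the paper. It uses the phase branch normalized by $e^{i\theta_0}A=i|A|$, a uniform implicit-function argument in a fixed-width strip around the graph of $\theta_0$ (as in Proposition~\ref{prop:response-crossing}), and the uniform limit $h_k\to\Phi/|A|$ followed by the intermediate value theorem. It then closes via Theorem~\ref{thm:reflection-compactification} and takes a sparse progression with $N\pi$ exceeding the width of $\Theta$. Your explicit use of continuity of $\Phi$ to get $\rho_+<\sqrt2$, and hence $\rho^{(k)}\in(0,\sqrt2)$, spells out a step the paper leaves implicit.
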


\begin{proof}
Theorem~\ref{thm:smallrho} and \eqref{eq:Phi-anchor} give two values
\[
0<\rho_-<\rho_+<\sqrt2
\]
for which
\[
\Phi(\rho_-)<0<\Phi(\rho_+).
\]
Since \(Q\) never vanishes, the continuous phase branch normalized by
\[
e^{i\theta_0(\rho)}L_0Q(\rho)=i|L_0Q(\rho)|
\]
is well defined there.  By compactness and the nonvanishing of \(Q\), there is a uniform bound
\[
|Q(\rho)|\ge \zeta_0>0
\qquad
(\rho\in[\rho_-,\rho_+]).
\]
Choose any closed interval \(\Theta\) whose interior contains the
compact set \(\theta_0([\rho_-,\rho_+])\).  No restriction on the length
of \(\Theta\) is needed for the response-crossing argument.  To extract
manifestly disjoint logarithmic phase strips, choose an integer \(N\ge1\)
such that
\[
N\pi>|\Theta|.
\]
After fixing one sufficiently large \(k_0\), set
\[
k_j=k_0+jN,\qquad j=0,1,2,\ldots .
\]
Then the translated intervals \(k_j\pi+\Theta\) are pairwise disjoint.
For an arbitrary phase index \(k\ge1\) define
\begin{equation}\label{eq:epsktheta-final}
 \eps_k(\theta)
 =
 \exp\!\left(-\frac{k\pi+\theta}{\sqrt3}\right),
 \qquad \theta\in\Theta,
\end{equation}
and define the parity-normalized exact matching map
\[
 \mathcal F_k(\theta,\rho)
 =
 (-1)^k\eps_k(\theta)^{-3}
 \begin{pmatrix}
   \zeta(t_*(\eps_k(\theta),\rho))\\
   \xi_t(t_*(\eps_k(\theta),\rho))
 \end{pmatrix}.
\]
On the Einstein matching section \(H=0\), these two components are exactly equivalent
to the geometric reflection conditions.  Indeed,
\[
\zeta=0\quad\Longleftrightarrow\quad a=b,
\]
while, writing
\(\alpha=a'/a\), \(\beta=b'/b\), \(\gamma=c'/c\),
\[
H=2\alpha+2\beta+3\gamma=0
\]
gives
\[
\xi_t
=
\gamma-\frac{\alpha+\beta}{2}
=
\frac74\,\gamma.
\]
Since \(c>0\) at the principal matching point,
\[
\xi_t=0\quad\Longleftrightarrow\quad c'=0.
\]
Thus \(\mathcal F_k=0\) is precisely the pair
\(a=b,\ c'=0\) on the \(H=0\) section.
The factor \((-1)^k\) compensates exactly for
\[
 e^{i\sqrt3\log(1/\eps_k(\theta))}
 =(-1)^k e^{i\theta}
\]
and does not change the zero set.

The \(C^1\) matching theorem gives
\[
 \mathcal F_k\longrightarrow
 G(\theta,\rho)
 =
 \begin{pmatrix}
 \Re(e^{i\theta}L_0Q(\rho))\\
 \Re(e^{i\theta}L_1X(\rho))
 \end{pmatrix}
\]
in \(C^1(\Theta\times[\rho_-,\rho_+])\).

By construction,
\[
G_1(\theta_0(\rho),\rho)=0,
\qquad
\partial_\theta G_1(\theta_0(\rho),\rho)
=-|L_0Q(\rho)|\le-c_0<0.
\]
The uniform implicit-function theorem therefore produces, for all sufficiently
large \(k\), a unique nearby exact phase branch
\[
\theta_k(\rho)\to\theta_0(\rho)
\quad\text{in }C^1([\rho_-,\rho_+]).
\]
Because the limiting branch lies a positive distance from
\(\partial\Theta\), the exact branch also lies in
\(\operatorname{int}\Theta\) for all sufficiently large \(k\).  Thus it
defines a solution associated with the phase label \(k\).  The intervals
\(k\pi+\Theta\) need not be disjoint; disjointness is used only later on
the sparse subsequence \(k_j\).

Define
\[
 h_k(\rho)
 =
 (\mathcal F_k)_2(\theta_k(\rho),\rho).
\]
Then uniformly on \([\rho_-,\rho_+]\),
\[
h_k(\rho)\longrightarrow
h_0(\rho)
=
\frac{\Phi(\rho)}{|L_0Q(\rho)|}.
\]
The endpoint signs are therefore opposite:
\[
h_k(\rho_-)<0<h_k(\rho_+)
\]
for every sufficiently large \(k\).  By the intermediate value theorem
choose
\[
\rho_k\in(\rho_-,\rho_+)
\]
with
\[
h_k(\rho_k)=0.
\]
By construction,
\[
 (\mathcal F_k)_1(\theta_k(\rho_k),\rho_k)=0
\]
as well.  Put
\[
 \theta_k^*=\theta_k(\rho_k),\qquad
 \eps^{(k)}=\eps_k(\theta_k^*),\qquad
 \rho^{(k)}=\rho_k.
\]
Then \(\mathcal F_k(\theta_k^*,\rho^{(k)})=0\).  By the matching-coordinate
calculation above, at the unique transverse \(H=0\) event this is exactly
\[
a=b,\qquad c'=0.
\]
Lemma~\ref{lem:H-section-unique} shows that the event is the unique Einstein matching section for the solution, so the hypotheses of
Theorem~\ref{thm:reflection-compactification} are satisfied.

Since \(\theta_k^*\in\Theta\) and \(\Theta\) is a fixed bounded interval,
\eqref{eq:epsktheta-final} gives
\[
\log\eps^{(k)}
=
-\frac{k\pi}{\sqrt3}+O(1),
\]
uniformly for the selected solutions, and in particular
\(\eps^{(k)}\to0\).

For the monotone sequence use the indices fixed above,
\[
k_j=k_0+jN,\qquad N\pi>|\Theta|.
\]
The intervals \(k_j\pi+\Theta\) are pairwise disjoint and strictly ordered.
If \(j<\ell\), then every phase value in \(k_\ell\pi+\Theta\) is larger
than every phase value in \(k_j\pi+\Theta\); because
\(\eps=\exp(-\text{phase}/\sqrt3)\), it follows that
\[
\eps^{(k_\ell)}<\eps^{(k_j)}.
\]
Thus
\[
\eps^{(k_j)}\downarrow0.
\]
This proves both the phase-by-phase existence statement and the monotone
subsequence assertion.

The final compactification is supplied by
Theorem~\ref{thm:reflection-compactification}.
\end{proof}

\section{Uniform extension to all oscillatory three-factor dimensions}
\label{sec:uniform-extension}

The preceding sections were written in full detail for
\((p,q)=(2,3)\).  We now verify that no estimate used in the proof depends
essentially on those two numbers.  We record where the dimensions enter and
show that the three remaining possibilities satisfy the same inequalities.
The logical order is: uniform singular germs and Ricci-flat global dynamics,
uniform response nonvanishing and the two sign anchors, uniform outer and
\(C^1\) transfer, phase crossing, and finally intrinsic distinctness and
comparison with products and Böhm metrics.

Throughout this section let
\[
p,q\ge2,\qquad D=2p+q\le 8,
\]
and consider
\begin{equation}\label{eq:uniform-ansatz}
g=dt^2+a(t)^2g_{S^p}+b(t)^2g_{S^p}+c(t)^2g_{S^q},
\qquad \Ric(g)=Dg.
\end{equation}
Since \(p,q\ge2\) and \(D\le8\), the only possibilities are
\[
(p,q)=(2,2),(2,3),(2,4),(3,2).
\]
Set
\begin{equation}\label{eq:uniform-alpha-omega}
\alpha_D=\frac{D-1}{2},
\qquad
\omega_D=\frac12\sqrt{(D-1)(9-D)}.
\end{equation}
The three cases not treated explicitly above are
\[
\begin{array}{c|c|c|c}
(p,q)&D&\alpha_D&\omega_D\\ \hline
(2,2)&6&5/2&\sqrt{15}/2\\
(2,4)&8&7/2&\sqrt7/2\\
(3,2)&8&7/2&\sqrt7/2 .
\end{array}
\]

\subsection{Singular-orbit and Ricci-flat estimates}

We first record the positive-Einstein singular-orbit statement needed in
the uniform argument.  Let
\[
G=SO(p+1)\times SO(p+1)\times SO(q+1),\qquad
K=SO(p)\times SO(p)\times SO(q).
\]
At the left singular orbit the first \(SO(p)\)-factor in \(K\) enlarges
to \(SO(p+1)\), and at the right singular orbit the second one does.
Thus the two tubular neighborhoods are
\[
D^{p+1}\times S^p\times S^q,
\qquad
S^p\times D^{p+1}\times S^q,
\]
and their union is
\[
(S^p*S^p)\times S^q\cong S^{2p+1}\times S^q.
\]

\begin{proposition}[Uniform positive-Einstein singular-orbit germs]
\label{prop:uniform-positive-germs}
For every \(p,q\ge2\) and every \(b_0,c_0>0\), there is a unique
real-analytic Einstein germ within the labelled three-factor ansatz
\[
g=dt^2+a(t)^2g_{S^p}+b(t)^2g_{S^p}+c(t)^2g_{S^q},
\qquad \Ric=Dg,\qquad D=2p+q,
\]
satisfying
\[
a(0)=0,\quad a'(0)=1,\qquad
b(0)=b_0,\quad b'(0)=0,\qquad
c(0)=c_0,\quad c'(0)=0.
\]
The germ depends real-analytically on \((b_0,c_0)\).  Its first
nontrivial coefficients are
\[
b(t)=b_0+
\frac{p-1-Db_0^2}{2(p+1)b_0}\,t^2+O(t^4),
\qquad
c(t)=c_0+
\frac{q-1-Dc_0^2}{2(p+1)c_0}\,t^2+O(t^4).
\]
\end{proposition}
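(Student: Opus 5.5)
The plan is to repeat the argument of Theorem~\ref{thm:singular-orbit-germ} with the dimensions left as parameters. First I will write the general tangential equations, using the warped-product formula from Section~\ref{sec:analytic-setting} with factor dimensions \((p,p,q)\), \(\lambda=D\), and \(\Ric(g_{S^k})=(k-1)g_{S^k}\):
\[
a''=\frac{(p-1)(1-(a')^2)}{a}-p\frac{a'b'}{b}-q\frac{a'c'}{c}-Da,
\]
\[
c''=\frac{(q-1)(1-(c')^2)}{c}-p\frac{a'c'}{a}-p\frac{b'c'}{b}-Dc.
\]
The \(b\)-equation is analogous to the \(c\)-equation, with \((p-1)\) in place of \((q-1)\) and the roles of \(b\) and \(c\) exchanged in the cross terms. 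The smoothness criterion of Lemma~\ref{lem:smooth-polar-product} carries over verbatim with \(D^3\) replaced by \(D^{p+1}\): the collapsing block \(dt^2+a^2g_{S^p}\) is the polar form of a smooth \(O(p+1)\)-invariant metric exactly when \(a\) is odd with \(a'(0)=1\), while \(b\) and \(c\) must be even. I will therefore insert \(a=tA(t^2)\), \(b=B(t^2)\), \(c=C(t^2)\) with \(A(0)=1\).

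Next I will read off the leading coefficients. As \(t\to0\) one has \(a'/a\sim t^{-1}\) and \(c'=2c_2t\), so \(p\,a'c'/a\to2pc_2\), and the terms \(b'c'/b\) and \(c'^2\) vanish at \(t=0\). The constant term of the \(c\)-equation is then
\[
2c_2=\frac{q-1}{c_0}-2pc_2-Dc_0,
\]
which gives \(c_2=(q-1-Dc_0^2)/(2(p+1)c_0)\). The same computation for \(b\) gives the stated \(b_2\). The coefficient of \(t\) in the \(a\)-equation yields the compatibility relation fixing \(a_3\). This relation coincides with the finite limit at \(t=0\) of the radial equation \(-p\,a''/a-p\,b''/b-q\,c''/c=D\). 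The contracted Bianchi identity then propagates the vanishing of the radial constraint, exactly as before, so a solution of the tangential system is a genuine Einstein germ.

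For the formal series I will set up the order-\((n+1)\) linear system for \((A_{n+1},B_{n+1},C_{n+1})\). In the \(a\)-equation, the factor \((p-1)(1-a'^2)/a\) together with \(a''\) contributes a diagonal coefficient of the form \(2(n+1)(2n+2p+1)+2(p-1)(2n+3)\), and the cross terms only couple \(A_{n+1}\) to \(B_{n+1},C_{n+1}\). In the \(b\)- and \(c\)-equations the new unknown appears through \(b''+p\,a'b'/a\), giving a diagonal entry \(2(n+1)(2n+1+2p)\); the cross terms enter only at lower order. The matrix is therefore upper triangular with strictly positive diagonal for all \(n\ge0\) and all \(p,q\ge2\), so no resonance occurs and the series is unique given \((b_0,c_0)\).

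The main obstacle is this last indicial check: I must verify the diagonal entries exactly in general dimension, in particular the \(A\)-entry, where the \((p-1)\) factor modifies the \((2,3)\) value. I will do this by linearizing the \(a\)-equation about \(a=t\) and finding the indicial roots; I expect them to be \(1\) and \(-p\), so no positive odd resonance appears. Once that is settled, writing \(A=1+rU\) gives an analytic regular-singular system in \(r\). The Eschenburg--Wang theorem \cite{EschenburgWang2000} then gives convergence of the series and real-analytic dependence on \((b_0,c_0)\), just as in the \((2,3)\) case.
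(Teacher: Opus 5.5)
Your proposal is correct and follows essentially the same route as the paper's proof, just with the dimensions kept as parameters: the polar parity ansatz, the constant terms of the \(b\)- and \(c\)-equations, the radial constraint with Bianchi propagation, and Eschenburg--Wang for convergence and analytic dependence. For the step you flagged, your diagonal entries and expected indicial roots are miscomputed, although this does not affect the conclusion. The correct entries are \(2(2n+3)(n+p)\) for \(A_{n+1}\) and \(2(n+1)(2n+p+1)\) for \(B_{n+1},C_{n+1}\); perturbing \(a=t\) by \(t^k\) gives \(k(k+2p-3)=0\), so the roots are \(0\) and \(3-2p\), not \(1\) and \(-p\). At \(p=2\) these reduce to the paper's \(2(n+2)(2n+3)\) and \(2(n+1)(2n+3)\), and all are positive, so your triangular argument goes through unchanged.
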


\begin{proof}
The smoothness representation is the dimension-independent polar one,
\[
a(t)=tA(t^2),\qquad b(t)=B(t^2),\qquad c(t)=C(t^2),
\qquad A(0)=1.
\]
Substitution into the tangential Einstein equations gives the same
analytic regular-singular system as in
Theorem~\ref{thm:singular-orbit-germ}, with the integers \(2,2,3\)
replaced by \(p,p,q\).  The constant terms of the \(b\)- and \(c\)-equations
give the displayed coefficients.  The radial constraint has the required
limiting value and then propagates by the contracted Bianchi identity.
The singular initial-value theorem of
Eschenburg--Wang~\cite{EschenburgWang2000} gives convergence, uniqueness,
and real-analytic dependence on the two positive non-collapsing radii.
\end{proof}

Writing
\[
b_0=\eps,\qquad c_0=\eps\rho
\]
therefore gives the exact two-parameter positive-Einstein family used in
the matching argument for every admissible pair.

After magnifying by \(\eps^{-1}\), normalize the Ricci-flat limit by
\[
A(0)=0,\quad A'(0)=1,\quad B(0)=1,\quad C(0)=\rho.
\]
The Ricci-flat equations are
\begin{align}
A''&=\frac{(p-1)(1-A'^2)}A
-p\frac{A'B'}B-q\frac{A'C'}C,\label{eq:uniform-RF-A}\\
B''&=\frac{(p-1)(1-B'^2)}B
-p\frac{A'B'}A-q\frac{B'C'}C,\label{eq:uniform-RF-B}\\
C''&=\frac{(q-1)(1-C'^2)}C
-p\frac{A'C'}A-p\frac{B'C'}B.\label{eq:uniform-RF-C}
\end{align}
The singular-orbit expansion gives
\begin{equation}\label{eq:uniform-bc-expansion}
B(\tau)
=
1+\frac{p-1}{2(p+1)}\tau^2+O(\tau^4),
\qquad
C(\tau)
=
\rho+\frac{q-1}{2(p+1)\rho}\tau^2+O(\tau^4).
\end{equation}
Both quadratic coefficients are positive for all four admissible pairs.
The first-contact proof of \(0<A',B',C'<1\) uses only the positivity of
\(p-1,q-1,p,q\), so the global continuation argument is unchanged.

Under the positive-Einstein rescaling \(t=\eps\tau\), the only new terms
in \eqref{eq:uniform-RF-A}--\eqref{eq:uniform-RF-C} are
\[
-D\eps^2A,\qquad-D\eps^2B,\qquad-D\eps^2C.
\]
Thus in logarithmic inner time \(s=\log\tau\) the perturbation has the same
order \(O(\eps^2e^{2s})\) as in the representative case.

The Ricci-flat cone is
\[
A_{\rm c}=B_{\rm c}
=\sqrt{\frac{p-1}{D-1}}\,\tau,
\qquad
C_{\rm c}
=\sqrt{\frac{q-1}{D-1}}\,\tau.
\]
Every relative shape variable satisfies at the linearized level
\[
w_{ss}+(D-1)w_s+2(D-1)w=0,
\]
with roots
\[
-\alpha_D\pm i\omega_D.
\]
Consequently the two independent anisotropies have leading form
\[
\tau^{-\alpha_D}
\Re\!\left(Z\tau^{i\omega_D}\right).
\]
Introduce the mean-curvature normalized variables exactly as in the
representative case:
\[
X_A=\frac{A'}{\mathcal H},\qquad
X_B=\frac{B'}{\mathcal H},\qquad
X_C=\frac{C'}{\mathcal H},
\]
\[
Y_A=\frac1{A\mathcal H},\qquad
Y_B=\frac1{B\mathcal H},\qquad
Y_C=\frac1{C\mathcal H},
\qquad
G=pX_A^2+pX_B^2+qX_C^2.
\]
They satisfy
\begin{align*}
\dot X_A&=X_A(G-1)+(p-1)Y_A^2,&
\dot Y_A&=Y_A(G-X_A),\\
\dot X_B&=X_B(G-1)+(p-1)Y_B^2,&
\dot Y_B&=Y_B(G-X_B),\\
\dot X_C&=X_C(G-1)+(q-1)Y_C^2,&
\dot Y_C&=Y_C(G-X_C),
\end{align*}
together with
\[
pX_A+pX_B+qX_C=1
\]
and
\[
G+p(p-1)Y_A^2+p(p-1)Y_B^2+q(q-1)Y_C^2=1.
\]
The corresponding nonnegative constrained phase space is compact and
forward invariant by the same boundary-vector-field argument as in
Lemma~\ref{lem:compact}.

\begin{lemma}[Uniform convergence to the full Ricci-flat cone]
\label{lem:uniform-positive-to-cone}
Let \(p,q\ge2\), \(D=2p+q\), and let \(U(\eta)\) be a forward normalized
Ricci-flat orbit in the nonnegative constrained phase space.  If
\[
Y_A(\eta_0)Y_B(\eta_0)Y_C(\eta_0)>0
\]
at one finite time, then
\[
U(\eta)\longrightarrow P_D
\qquad(\eta\to\infty),
\]
where
\[
P_D=
\left(
\frac1D,\frac1D,\frac1D;\
\frac1D\sqrt{\frac{D-1}{p-1}},
\frac1D\sqrt{\frac{D-1}{p-1}},
\frac1D\sqrt{\frac{D-1}{q-1}}
\right).
\]
\end{lemma}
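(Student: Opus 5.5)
The plan is to copy the Lyapunov/LaSalle argument of Lemma~\ref{lem:positive-to-P7}, with the weights \(2,2,3\) replaced by \(p,p,q\). First, the relative interior \(Y_AY_BY_C>0\) is forward invariant, because each \(Y_i\)-equation carries the factor \(Y_i\). The nonnegative constrained phase space is compact, so the forward orbit is precompact. As Lyapunov function we take
\[
V_D=Y_A^{p}\,Y_B^{p}\,Y_C^{q}.
\]
Using the \(Y\)-equations together with the linear constraint \(pX_A+pX_B+qX_C=1\), we get
\[
\frac{\dot V_D}{V_D}=p(G-X_A)+p(G-X_B)+q(G-X_C)=DG-1 .
\]
Weighted Cauchy--Schwarz gives \(G=pX_A^2+pX_B^2+qX_C^2\ge (pX_A+pX_B+qX_C)^2/D=1/D\). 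Hence \(V_D\) is nondecreasing. Since \(V_D\) is bounded on the compact phase space, the orbit stays in the compact superlevel set
\[
\mathcal K=\{V_D\ge V_D(U(\eta_0))>0\}.
\]
This set is positively invariant and lies strictly inside \(Y_AY_BY_C>0\), so the quotient \(\dot V_D/V_D\) is legitimate on it and the \(\omega\)-limit set cannot reach a face \(Y_i=0\).

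Next we identify the largest invariant subset of \(\{\dot V_D=0\}\cap\mathcal K\). Equality in Cauchy--Schwarz forces \(X_A=X_B=X_C\), and the linear constraint then gives the common value \(1/D\). An invariant orbit inside this set has \(\dot X_i\equiv0\). The \(X\)-equations with \(G=1/D\) give
\[
(p-1)Y_A^2=(p-1)Y_B^2=\frac1D\Bigl(1-\frac1D\Bigr)=\frac{D-1}{D^2},
\qquad (q-1)Y_C^2=\frac{D-1}{D^2}.
\]
Because \(p,q\ge2\), the factors \(p-1\) and \(q-1\) are nonzero, and positivity then gives exactly the \(Y\)-coordinates of \(P_D\). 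We will check consistency with the quadratic constraint:
\[
\frac1D+\frac{p(D-1)}{D^2}+\frac{p(D-1)}{D^2}+\frac{q(D-1)}{D^2}=\frac1D+\frac{D-1}{D}=1,
\]
which confirms that \(P_D\) lies in the phase space and is an equilibrium (then \(\dot Y_i=Y_i(G-X_i)=0\) automatically). LaSalle's invariance principle on \(\mathcal K\) then gives \(\omega(U)=\{P_D\}\), so \(U(\eta)\to P_D\).

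The only delicate point, as in the representative case, is making sure LaSalle is applied on a set where \(V_D\) is a genuine \(C^1\) function with the stated derivative and where the \(\omega\)-limit set avoids the boundary faces. The superlevel set \(\mathcal K\) handles both, since \(V_D>0\) bounds every \(Y_i\) away from zero there. The rest is algebra depending on \(p,q\) only through \(p-1,q-1>0\). No spectral information at \(P_D\) is needed for this qualitative convergence; hyperbolicity of \(P_D\) is only needed afterwards, for the uniform exponential rates.
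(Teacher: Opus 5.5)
Your proposal is correct and follows the paper's proof essentially step by step. It uses the same Lyapunov function \(V_D=Y_A^pY_B^pY_C^q\) with \(\dot V_D/V_D=DG-1\ge0\) from weighted Cauchy--Schwarz, applies LaSalle on the compact superlevel set \(\{V_D\ge V_D(U(\eta_0))\}\), and identifies \(P_D\) as the maximal invariant subset of the equality set via \(\dot X_i\equiv0\); your extra check of the quadratic constraint at \(P_D\) is a harmless addition the paper leaves implicit.
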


\begin{proof}
The strict region \(Y_AY_BY_C>0\) is forward invariant.  Put
\[
V_D=Y_A^pY_B^pY_C^q.
\]
The normalized equations and the linear constraint give
\[
\frac{\dot V_D}{V_D}
=
p(G-X_A)+p(G-X_B)+q(G-X_C)
=
DG-1.
\]
Weighted Cauchy--Schwarz yields
\[
G
\ge
\frac{(pX_A+pX_B+qX_C)^2}{2p+q}
=
\frac1D.
\]
Hence \(V_D\) is nondecreasing.  Since the constrained phase space is
compact and
\[
V_D(\eta)\ge V_D(\eta_0)>0,
\]
the forward orbit remains in the compact positively invariant set
\[
\mathcal K_{\eta_0}
=
\{U:V_D(U)\ge V_D(U(\eta_0))\},
\]
which lies in the strict \(Y\)-interior.

Equality \(\dot V_D=0\) holds precisely when equality holds in weighted
Cauchy--Schwarz, namely
\[
X_A=X_B=X_C=\frac1D.
\]
A complete invariant orbit contained in this equality set must also have
\(\dot X_A=\dot X_B=\dot X_C=0\).  The normalized equations then force
\[
Y_A^2=Y_B^2=\frac{D-1}{D^2(p-1)},
\qquad
Y_C^2=\frac{D-1}{D^2(q-1)}.
\]
Positivity chooses the displayed coordinates of \(P_D\).  Thus \(P_D\)
is the maximal invariant subset of
\(\{\dot V_D=0\}\cap\mathcal K_{\eta_0}\).  LaSalle's invariance principle
gives \(U(\eta)\to P_D\).
\end{proof}

\begin{lemma}[Constrained spectrum at the full cone]
\label{lem:uniform-cone-spectrum}
At \(P_D\), the linearization of the normalized Ricci-flat system on the
four-dimensional tangent space cut out by the two constraints has
characteristic polynomial
\[
\left(
D^2\lambda^2+D(D-1)\lambda+2(D-1)
\right)^2.
\]
Hence
\[
\operatorname{spec}DF(P_D)
=
\left\{
\frac{-\alpha_D+i\omega_D}{D},
\frac{-\alpha_D-i\omega_D}{D}
\right\},
\]
each eigenvalue having complex multiplicity two.  In particular \(P_D\)
is a hyperbolic sink for every admissible pair.
\end{lemma}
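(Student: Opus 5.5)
The plan is to linearize the six-dimensional normalized system at \(P_D\) in shifted coordinates. The linearization decouples into three identical \(2\times2\) blocks. The constrained characteristic polynomial is then obtained by restricting to the invariant four-dimensional tangent space.

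\textbf{Linearization in shifted coordinates.} Write
\[
X_i=\tfrac1D+x_i,\qquad Y_i=Y_i^*+y_i\qquad(i=A,B,C),
\]
where \(Y_i^*\) are the coordinates of \(P_D\) from Lemma~\ref{lem:uniform-positive-to-cone}. Set \(k_A=k_B=p-1\), \(k_C=q-1\) and \(d_A=d_B=p\), \(d_C=q\). At \(P_D\) we have \(G=1/D\), and the differential of \(G\) is
\[
dG=\tfrac2D\,(p\,x_A+p\,x_B+q\,x_C).
\]
This is exactly \(\tfrac2D\) times the linearized linear constraint \(dF_1\), so \(dG\) vanishes on the constrained tangent space. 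I would use this to drop \(dG\) and obtain the block equations
\[
\dot x_i=-\tfrac{D-1}{D}\,x_i+2k_iY_i^*\,y_i,\qquad \dot y_i=-Y_i^*\,x_i .
\]

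\textbf{The three blocks.} Each \((x_i,y_i)\)-block has characteristic polynomial
\[
\lambda^2+\tfrac{D-1}{D}\lambda+2k_i(Y_i^*)^2 .
\]
By the equilibrium relations \(k_i(Y_i^*)^2=(D-1)/D^2\) from Lemma~\ref{lem:uniform-positive-to-cone}, this equals \(D^{-2}\bigl(D^2\lambda^2+D(D-1)\lambda+2(D-1)\bigr)\) for every \(i\). So the three blocks are identical. The discriminant is \((D-1)(D-9)<0\) for \(D<9\), which gives the roots
\[
\lambda=\frac{-(D-1)\pm i\sqrt{(D-1)(9-D)}}{2D}=\frac{-\alpha_D\pm i\omega_D}{D}.
\]

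\textbf{Restriction to the constrained tangent space.} The tangent space is
\[
T=\ker dF_1\cap\ker dF_2,
\]
with \(dF_1\) as above and \(dF_2=2\sum_i d_ik_iY_i^*y_i\) (using again \(dG=0\) on \(\ker dF_1\)). These two functionals are independent, since one involves only the \(x\)'s and the other only the \(y\)'s, so \(\dim T=4\). Invariance of \(T\) under the linearization follows from invariance of the constraints, as in Lemma~\ref{lem:normalized}. I would also check it directly: \(\frac{d}{d\eta}\sum_i d_ix_i\) and \(\frac{d}{d\eta}\sum_i d_ik_iY_i^*y_i\) are combinations of the two functionals.

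Here I must be careful. On the full six-dimensional space the \(dG\) terms do not vanish, so the operator there is not block diagonal. However, on \(T\) it agrees with the block-diagonal operator \(M\) defined by the displayed block equations, and \(T\) is \(M\)-invariant. The characteristic polynomial of \(M\) is the cube of \(D^{-2}\bigl(D^2\lambda^2+D(D-1)\lambda+2(D-1)\bigr)\). Hence the characteristic polynomial of \(M|_T\) is a degree-four divisor of this cube with real coefficients. It must therefore be the square of the irreducible real quadratic, up to normalization.

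\textbf{Conclusion and main obstacle.} All eigenvalues of \(M|_T\) then have real part \(-\alpha_D/D<0\), so \(P_D\) is a hyperbolic sink. The main obstacle is the restriction step: justifying that \(dG\) may be dropped requires working on \(\ker dF_1\) rather than on the full space. The two-sided invariance check of \(T\) under \(M\) is what makes the divisor argument legitimate.
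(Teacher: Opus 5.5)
Your proposal is correct, and it takes a more structural route than the paper. The paper imposes the two linearized constraints, eliminates two coordinate variations, and asserts without further detail that the resulting $4\times4$ determinant simplifies to $\bigl(D^2\lambda^2+D(D-1)\lambda+2(D-1)\bigr)^2$.

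You never choose coordinates on $T$. Instead:
\begin{itemize}
\item You observe that $dG=\tfrac2D\,dF_1$ at $P_D$, so on $\ker dF_1\supset T$ the true linearization coincides with a block-diagonal operator $M$.
\item The three $2\times2$ blocks of $M$ are identical, because $k_i(Y_i^*)^2=(D-1)/D^2$ for every $i$.
\item You check directly that $T$ is $M$-invariant, and then unique factorization in $\mathbb R[\lambda]$ forces $\chi_{M|_T}$ to be the square of the block polynomial.
\end{itemize}
The block entries, $\dim T=4$, and both invariance identities all check out. What your route buys is an explanation rather than a verification: it shows why the answer is a perfect square, and why it depends on $(p,q)$ only through $D$.

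You can also drop the irreducibility appeal altogether, using your own invariance computation. Put $f_1=\sum d_ix_i$ and $f_2=\sum d_ik_iY_i^*y_i$. These annihilating functionals evolve by $\dot f_1=-\tfrac{D-1}{D}f_1+2f_2$ and $\dot f_2=-\tfrac{D-1}{D^2}f_1$. This $2\times2$ system has the same trace and determinant as each block. Hence the induced map on $\mathbb R^6/T$ contributes exactly one factor of the block polynomial, and $\chi_{M|_T}$ is its square directly.
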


\begin{proof}
Linearize the six normalized equations at \(P_D\), and impose the
linearizations of
\[
pX_A+pX_B+qX_C=1
\]
and
\[
G+p(p-1)Y_A^2+p(p-1)Y_B^2+q(q-1)Y_C^2=1.
\]
Eliminating any two coordinate variations gives a four-dimensional
linear system.  A direct determinant computation simplifies, using only
\(D=2p+q\), to
\[
\chi_{P_D}(\lambda)
=
\left(
D^2\lambda^2+D(D-1)\lambda+2(D-1)
\right)^2.
\]
The roots are
\[
-\frac{D-1}{2D}
\pm
\frac{i}{2D}\sqrt{(D-1)(9-D)}
=
\frac{-\alpha_D\pm i\omega_D}{D}.
\]
For \(D<9\) their real parts are strictly negative and their imaginary
parts are nonzero.
\end{proof}

Lemma~\ref{lem:uniform-positive-to-cone} together with
Lemma~\ref{lem:uniform-cone-spectrum} is the dimension-uniform
global asymptotic-conicality and hyperbolicity statement used below.

\begin{lemma}[Uniform asymptotic coordinates in the oscillatory dimensions]
\label{lem:uniform-asymptotic-coordinates}
Let \(D<9\), set
\[
\alpha_D=\frac{D-1}{2},
\qquad
\omega_D=\frac12\sqrt{(D-1)(9-D)},
\]
and consider
\[
Y_s=(A_D+E(s))Y
\]
on \([s_0,\infty)\), where \(E\in L^1\) and \(A_D\) has eigenvalues
\(-\alpha_D\pm i\omega_D\).  Then every real solution has a unique complex
amplitude \(Z\) in the convention
\[
Y(s)
=
e^{-\alpha_D(s-s_0)}
\bigl(\text{rotation by }\omega_D(s-s_0)\bigr)
(v_Z+o(1)),
\]
and the map from Cauchy data \(Y(s_0)\) to \((\Re Z,\Im Z)\) is a real-linear
isomorphism.
\end{lemma}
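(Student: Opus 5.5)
The plan is to repeat the proof of Lemma~\ref{lem:asymptotic-coordinates} almost verbatim, keeping track only of where the specific exponents \(-3\pm i\sqrt3\) were used. Their only role there was through two structural facts: the eigenvalues are distinct, and they share a common real part. Both facts hold for \(-\alpha_D\pm i\omega_D\) whenever \(D<9\), since then \(\omega_D>0\).

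\textbf{Step 1: normal form for \(A_D\).} Because \(A_D\) is a real \(2\times2\) matrix with non-real conjugate eigenvalues, there is a real invertible \(P\) with
\[
P^{-1}A_DP=-\alpha_DI+\omega_DJ,\qquad J=\begin{pmatrix}0&-1\\1&0\end{pmatrix}.
\]
Consequently
\[
e^{A_Du}=e^{-\alpha_Du}\,P\,\mathrm{Rot}(\omega_Du)\,P^{-1}.
\]
The scalar factors cancel in conjugation, so
\[
\bigl\|e^{-A_Du}E(s)e^{A_Du}\bigr\|\le\|P\|\,\|P^{-1}\|\,\|E(s)\|
\]
uniformly in \(u\ge0\). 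This uniform conjugation bound is the one estimate where equal real parts are essential. I expect it to be the only point needing care, and it is immediate from this normal form.

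\textbf{Step 2: convergence of the renormalised fundamental matrix.} Let \(\Phi\) be the fundamental matrix with \(\Phi(s_0)=I\), and set
\[
R(s)=e^{-A_D(s-s_0)}\Phi(s).
\]
Then \(R_s=B(s)R\), where
\[
B(s)=e^{-A_D(s-s_0)}E(s)e^{A_D(s-s_0)},
\]
and \(\|B(s)\|\le C\|E(s)\|\in L^1\) by Step 1.
\begin{itemize}
\item Gronwall's inequality bounds \(R\) on \([s_0,\infty)\).
\item The integral equation \(R(s)=I+\int_{s_0}^sB(\sigma)R(\sigma)\,d\sigma\) is then Cauchy as \(s\to\infty\), so \(R(s)\to R_\infty\).
\item Liouville's formula gives \(\det R_\infty=\exp\int_{s_0}^\infty\operatorname{tr}B\neq0\), so \(R_\infty\) is invertible.
\end{itemize}
Hence
\[
Y(s)=e^{A_D(s-s_0)}\bigl(R_\infty Y(s_0)+o(1)\bigr).
\]

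\textbf{Step 3: extracting the complex amplitude.} Substituting the formula for \(e^{A_Du}\) from Step 1 gives
\[
Y(s)=e^{-\alpha_D(s-s_0)}\,P\,\mathrm{Rot}(\omega_D(s-s_0))\,P^{-1}\bigl(R_\infty Y(s_0)+o(1)\bigr).
\]
This is exactly the stated convention once we read \(v_Z\) through the real identification \(\mathbb C\cong\mathbb R^2\) given by \(P\) (the complex eigencoordinate). Here \(Z\) corresponds to \(P^{-1}R_\infty Y(s_0)\), and \(P\,\mathrm{Rot}\,P^{-1}\) is the rotation in those coordinates.

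Uniqueness of \(Z\) follows because \(\mathrm{Rot}\) is an isometry: two admissible amplitudes would differ by a vector that is itself \(o(1)\), hence zero. The map \(Y(s_0)\mapsto(\Re Z,\Im Z)\) is the composition of the invertible linear maps \(R_\infty\) and \(P^{-1}\), so it is a real-linear isomorphism.
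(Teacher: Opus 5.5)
Your proposal is correct and follows the paper's proof essentially step for step: real similarity of \(A_D\) to \(-\alpha_DI+\omega_DJ\), the resulting uniform bound on \(e^{-A_Du}E(s)e^{A_Du}\), convergence of the renormalised fundamental matrix \(R\), invertibility of \(R_\infty\) by Liouville's formula, and reading off \(Z\) in the complex eigencoordinate. The only slips are cosmetic and do not affect the argument: the conjugation constant is \((\|P\|\,\|P^{-1}\|)^2\) rather than \(\|P\|\,\|P^{-1}\|\), and \(\omega_D>0\) actually requires \(1<D<9\) (not just \(D<9\)), which holds for every admissible \(D\).
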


\begin{proof}
The proof is the same conjugation argument as in
Lemma~\ref{lem:asymptotic-coordinates}.  Since both eigenvalues have the
same real part, \(A_D\) is real-similar to
\[
-\alpha_DI+\omega_DJ.
\]
Thus
\[
e^{-A_D(s-s_0)}E(s)e^{A_D(s-s_0)}
\]
has \(L^1\)-norm bounded by a constant multiple of \(\|E(s)\|\).  The
conjugated fundamental matrix converges.  Liouville's formula shows that
its determinant converges to a nonzero limit, hence the limiting matrix is
invertible.  Resolving the limit in the complex eigencoordinate gives the
claimed amplitude isomorphism.
\end{proof}

\begin{proposition}[Uniform response map and nonvanishing]
\label{prop:uniform-response-nonzero}
For every admissible pair \((p,q)\), the normalized Ricci-flat
singular-orbit family defines smooth response functions
\[
Q,X:(0,\infty)\longrightarrow\mathbb C,
\]
and
\[
Q(\rho)\ne0\qquad\text{for every }\rho>0.
\]
Consequently \(Q\) is bounded away from zero on every compact interval
\(J\Subset(0,\infty)\).
\end{proposition}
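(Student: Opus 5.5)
The plan is to repeat, for general $(p,q)$, the construction used in Theorem~\ref{thm:global-response} and Theorem~\ref{thm:Q-nonzero}. The dimension only enters through the cone exponents $-\alpha_D\pm i\omega_D$ and the weights $p,p,q$, so the argument carries over once these are substituted.

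\textbf{Step 1: the anisotropy equations.} Set
\[
\zeta=\log(A/B),\qquad m=\tfrac12\log(AB),\qquad \xi=\log\bigl(C/(\kappa e^{m})\bigr),
\]
where $\kappa=\sqrt{(q-1)/(p-1)}$ is the cone ratio from Lemma~\ref{lem:uniform-positive-to-cone}. Subtracting the $A$- and $B$-equations \eqref{eq:uniform-RF-A}--\eqref{eq:uniform-RF-B} gives an exact homogeneous equation
\[
\zeta''+\mathcal H\zeta'+(p-1)\,e^{-2m}\,\frac{2\sinh\zeta}{\zeta}\,\zeta=0,
\]
with $\sinh\zeta/\zeta=1$ at $\zeta=0$, exactly as in \eqref{eq:q-exact-global}. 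The $C$-equation gives an equation for $\xi$ whose linear part agrees with the model operator $w_{ss}+(D-1)w_s+2(D-1)w$. Its remaining forcing is quadratic in $(\zeta,\xi)$.

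\textbf{Step 2: the response map.} In logarithmic time $s=\log\tau$, Lemma~\ref{lem:uniform-positive-to-cone} gives $\tau\mathcal H\to D$, and the hyperbolic sink estimate of Lemma~\ref{lem:uniform-cone-spectrum} makes the coefficient perturbations $E(s)$ exponentially decaying, hence in $L^1$. The same estimate gives
\[
\zeta,\ \xi=O\bigl(e^{-(\alpha_D-\delta)s}\bigr),
\]
so the $\xi$-forcing is $O(e^{-(2\alpha_D-2\delta)s})$. This forcing is integrable against the weight $e^{\alpha_D s}$. Lemma~\ref{lem:uniform-asymptotic-coordinates}, applied with variation of constants in the $\xi$-channel, then produces the amplitudes $Q(\rho)$ and $X(\rho)$.

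\textbf{Step 3: smoothness in $\rho$.} I will copy the corresponding part of Theorem~\ref{thm:global-response}. First I need the analogue of Corollary~\ref{cor:chi-uniform-entry}: a uniform cone-entry time on compact $J$ together with uniform exponential decay of the variational flow. This follows from continuity of the finite-time flow, compactness of $J$, the hyperbolicity in Lemma~\ref{lem:uniform-cone-spectrum}, and Coppel's roughness theorem. With these bounds, $\partial_\rho E$ and $e^{\alpha_D s}\partial_\rho N$ have integrable majorants that are uniform on $J$, so differentiation under the integral is justified. Smoothness of the finite-time flow in $\rho$ comes from Proposition~\ref{prop:uniform-positive-germs}, applied to the Ricci-flat limit.

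\textbf{Step 4: nonvanishing.} The Cauchy-data-to-amplitude map is an isomorphism by Lemma~\ref{lem:uniform-asymptotic-coordinates}. Hence $Q(\rho)=0$ would force $\zeta(s_0)=\zeta_s(s_0)=0$. Since the $\zeta$-equation is linear homogeneous along the fixed trajectory, uniqueness would then give $\zeta\equiv0$. This is impossible, because $A\sim\tau$ and $B\to1$ as $\tau\to0$, so $\zeta\to-\infty$. Continuity of $Q$ then gives the lower bound on compact intervals.

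\textbf{Main obstacle.} The step I expect to be hardest is the exact structure of the $\zeta$-equation for general $p$. I need to check that the difference of the $A$- and $B$-equations still closes as a homogeneous equation in $\zeta$ alone. It should, because the $A\leftrightarrow B$ symmetric terms cancel and the curvature term becomes
\[
(p-1)\bigl(A^{-2}-B^{-2}\bigr)=-2(p-1)e^{-2m}\sinh\zeta.
\]
I also need to confirm that its limiting coefficient $2(p-1)\tau^2e^{-2m}\to2(D-1)$ matches the cone. This is consistent with $A_{\rm c}^2=\frac{p-1}{D-1}\tau^2$.
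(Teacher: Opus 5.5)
Your proposal is correct and takes essentially the same route as the paper. Smoothness of $(Q,X)$ comes from smooth dependence of the germ, the finite flow into the basin of the hyperbolic sink $P_D$, and the asymptotic-coordinate map; nonvanishing comes from the exact homogeneous equation $\zeta''+\mathcal H\zeta'+2(p-1)e^{-2m}\sinh\zeta=0$, the Cauchy-data-to-amplitude isomorphism and uniqueness, which contradict $\zeta\to-\infty$ at the singular orbit. One small citation point: Proposition~\ref{prop:uniform-positive-germs} is stated for $\Ric=Dg$, so for the Ricci-flat germ you should invoke the same Eschenburg--Wang argument with Einstein constant $0$, as the paper itself does implicitly.
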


\begin{proof}
Lemma~\ref{lem:uniform-positive-to-cone} puts every normalized
trajectory in the basin of \(P_D\), and
Lemma~\ref{lem:uniform-cone-spectrum} gives its hyperbolicity.  Smooth dependence
of the singular-orbit germ on \(\rho\), followed by smooth dependence of
the flow to a fixed late section and the asymptotic-coordinate map at
\(P_D\), gives the smooth functions \(Q(\rho)\) and \(X(\rho)\).

Because the first two factors have the same dimension, the relative
variable \(q_1=\log(A/B)\) satisfies, along each fixed Ricci-flat
trajectory, a homogeneous second-order linear equation.  Near \(P_D\) its
first-order coefficient matrix is an \(L^1\) perturbation of the constant
matrix with roots
\[
-\alpha_D\pm i\omega_D.
\]
Lemma~\ref{lem:uniform-asymptotic-coordinates} therefore gives an
isomorphism from late Cauchy data to the unique complex amplitude in the
convention
\[
q_1(s)
=
e^{-\alpha_Ds}
\Re\!\left(Q(\rho)e^{i\omega_Ds}\right)
+o(e^{-\alpha_Ds}).
\]
Thus the abstract asymptotic amplitude in the lemma is exactly the response
coefficient \(Q(\rho)\).  If \(Q(\rho)=0\), the Cauchy data
\((q_1,q_{1,s})\) vanish at a late section.  Uniqueness for the homogeneous
linear \(q_1\)-equation along the fixed trajectory gives \(q_1\equiv0\).
This is impossible for a left-singular-orbit trajectory: for every
sufficiently small positive \(\tau\), \(A(\tau)/B(\tau)\) is close to
\(\tau\), and hence is not identically \(1\).  Thus \(Q(\rho)\ne0\) for
every \(\rho>0\).  Smoothness of \(Q\) and compactness give the last
claim.
\end{proof}

\subsection{The two sign anchors}

Put
\[
\kappa=\sqrt{\frac{q-1}{p-1}},
\qquad
q_1=\log\frac AB,
\qquad
\xi=\log\frac{C}{\kappa\sqrt{AB}}.
\]
The small-ratio limiting chain is
\[
P_0\longrightarrow P_{p+q}\longrightarrow P_D.
\]
At the intermediate equilibrium the transverse \(Y_B\)-eigenvalue is
\(1/(p+q)>0\); the other constrained eigenvalues are
\[
-\frac{p+q-1}{p+q},
\qquad
-\frac{p+q-1}{2(p+q)}
\pm
\frac{i}{2(p+q)}
\sqrt{(p+q-1)(9-p-q)}.
\]
Thus there is exactly one unstable direction in all four cases.  In
particular, for \((p,q)=(2,4)\) the complex pair is
\[
-\frac5{12}\pm i\frac{\sqrt{15}}{12}.
\]

The preceding eigenvalue count alone does not yet justify the small-ratio
anchor: one must also check that the physical singular-orbit family meets
the stable manifold of the intermediate saddle transversely and that the
outgoing response coefficient does not vanish.  We record these points
explicitly.

For every normalized inner trajectory,
\[
\frac{Y_B}{Y_C}=\frac{C}{B},
\qquad
\rho=\lim_{\eta\to-\infty}\frac{Y_B}{Y_C}.
\]
At \(P_0\) the two physical unstable modes have the same exponent.
Use their leading coefficients \((a_B,a_C)\) as unstable coordinates.
Translation of normalized time multiplies both coefficients by the same
positive factor, while
\[
\frac{a_B}{a_C}=\rho
\]
for the physical family.  After fixing a smooth local section of
\(W^u(P_0)\), for instance by \(a_C=a_*>0\) with \(a_*\) fixed sufficiently
small, the orbit family is therefore a
\(C^1\) curve extending to the projective boundary direction
\(\rho=0\), with nonzero transverse derivative in the \(a_B\)-direction.
This argument takes place entirely on the unstable manifold and does not
differentiate a degenerate singular-orbit germ at \(\rho=0\).

The invariant face \(Y_B=0\) contains the limiting incoming connection.
At \(P_{p+q}\) its tangent space is the full stable eigenspace, because the
only complementary eigenvalue is the positive transverse eigenvalue
\(1/(p+q)\).  Hence, locally,
\[
W^s(P_{p+q})=\{Y_B=0\}
\]
inside the constraint manifold.  Transport of the nonzero transverse
direction from the section near \(P_0\) to a fixed incoming section near
\(P_{p+q}\) is governed along the limiting orbit by
\[
\dot w=(G_0-X_{B,0})w,
\]
whose fundamental solution never vanishes.  Consequently
\[
Y_{B,\mathrm{in}}(\rho)=c_*\rho+o(\rho),
\qquad c_*>0.
\]
The physical family therefore approaches \(W^s(P_{p+q})\) transversely.
The inclination lemma then gives convergence of the outgoing
intersections to the positive unstable heteroclinic from
\(P_{p+q}\) to \(P_D\).

That outgoing heteroclinic lies in the invariant manifold
\[
C=\kappa A.
\]
Consequently
\[
\xi=\frac12q_1
\]
identically along it.  Let $Q_0$ denote its $q_1$ response coefficient.
Then $Q_0\ne0$.  Indeed, because the first two factors have the same
dimension, $q_1=\log(A/B)$ satisfies a homogeneous second-order linear
equation.  Near $P_D$ its coefficient matrix is an $L^1$ perturbation of
the constant system with roots
\[
-\alpha_D\pm i\omega_D.
\]
The asymptotic-coordinate map from Cauchy data at a fixed late section to
the complex leading coefficient is therefore an isomorphism.  If $Q_0=0$,
uniqueness would force $q_1\equiv0$ on the heteroclinic, contradicting the
fact that it leaves $P_{p+q}$ in the positive $Y_B$ direction.  Hence
$Q_0\ne0$, and the identity $\xi=q_1/2$ gives $X_0=Q_0/2$.

Choose a fixed section in the basin of \(P_D\) met by the outgoing
heteroclinic.  The transverse saddle passage gives convergence of the
section points to the heteroclinic point.  In a fixed sink chart at
\(P_D\), the two anisotropy systems are \(L^1\) perturbations of the
constant system with roots \(-\alpha_D\pm i\omega_D\); the perturbation
and its first initial-data derivatives decay exponentially.  The
variation-of-constants formula therefore makes the asymptotic-amplitude
map continuous (indeed smooth) in the section point.  Consequently
\[
(Q(\rho),X(\rho))\longrightarrow(Q_0,X_0)
\qquad(\rho\downarrow0).
\]
In particular $Q(\rho)$ stays nonzero for all sufficiently small positive
$\rho$, and
\begin{equation}\label{eq:uniform-smallratio-anchor}
\frac{X(\rho)}{Q(\rho)}\longrightarrow\frac12
\qquad(\rho\downarrow0).
\end{equation}
At the balanced value \(\rho=\kappa\), the invariant relation is
\(C=\kappa B\), so
\begin{equation}\label{eq:uniform-balanced-anchor}
X(\kappa)=-\frac12Q(\kappa).
\end{equation}
These are the two sign anchors used in the phase-reduced matching argument.

\subsection{Outer transfer}

The positive-Einstein sine-cone is
\[
a_0=b_0=
\sqrt{\frac{p-1}{D-1}}\sin t,
\qquad
c_0=
\sqrt{\frac{q-1}{D-1}}\sin t.
\]
Both anisotropy channels satisfy
\begin{equation}\label{eq:uniform-jacobi}
Y''+D\cot t\,Y'
+2(D-1)\csc^2t\,Y=0.
\end{equation}
The amplitude convention is compatible with the inner scaling without
any dimension-dependent factor.  Indeed, if
\[
Y_{\rm in}(\tau)
=
\tau^{-\alpha_D}
\Re\!\left(Z\tau^{i\omega_D}\right)
+o(\tau^{-\alpha_D}),
\]
then, with \(t=\eps\tau\),
\[
Y_{\rm in}(t/\eps)
=
\eps^{\alpha_D}
\Re\!\left(
e^{i\omega_D\log(1/\eps)}
Z\,t^{-\alpha_D+i\omega_D}
\right)
+o(\eps^{\alpha_D}t^{-\alpha_D}).
\]
Thus the normalized Frobenius coefficient is exactly
\(e^{i\omega_D\log(1/\eps)}Z\).

Let
\[
\Psi(t)=t^{-\alpha_D+i\omega_D}(1+O(t^2))
\]
be the normalized complex Frobenius solution, and set
\[
L_0=\Psi(\pi/2),\qquad L_1=\Psi'(\pi/2).
\]
Abel's identity gives \(W=C\sin^{-D}t\).  Since
\(2\alpha_D+1=D\), the Frobenius normalization yields
\[
W(\Psi,\overline\Psi)
=
-2i\omega_D t^{-D}(1+O(t^2))
\]
at the singular end, and therefore
\[
W(\Psi,\overline\Psi)
=-2i\omega_D\sin^{-D}t.
\]
hence
\begin{equation}\label{eq:uniform-W-sign}
\Im(L_0\overline{L_1})=-\omega_D<0.
\end{equation}

\begin{proposition}[Uniform response orientation]
\label{prop:uniform-response-orientation}
For every admissible pair \((p,q)\), let
\[
\Phi(\rho)
=
\Im\!\left(L_0Q(\rho)\overline{L_1X(\rho)}\right),
\qquad
\kappa=\sqrt{\frac{q-1}{p-1}}.
\]
Then
\[
\Phi(\rho)<0
\qquad\text{for all sufficiently small }\rho>0,
\]
while
\[
\Phi(\kappa)>0.
\]
In particular, there exists \(0<\rho_-<\kappa\) such that
\[
\Phi(\rho_-)<0<\Phi(\kappa).
\]
\end{proposition}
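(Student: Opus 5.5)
The plan is to copy the representative-case computation from Theorem~\ref{thm:smallrho} and \eqref{eq:Phi-anchor}, now using the two sign anchors \eqref{eq:uniform-smallratio-anchor}--\eqref{eq:uniform-balanced-anchor} and the Wronskian orientation \eqref{eq:uniform-W-sign}. The argument is purely algebraic once three facts are in hand: $Q$ does not vanish, $X/Q$ has the stated limits at the two ends, and $\Im(L_0\overline{L_1})=-\omega_D$. Everything rests on the identity
\[
\Phi(\rho)
=
|Q(\rho)|^2\,
\Im\!\left(L_0\overline{L_1}\;\overline{\left(\frac{X(\rho)}{Q(\rho)}\right)}\right),
\]
which holds whenever $Q(\rho)\neq0$. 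This is guaranteed for every $\rho>0$ by Proposition~\ref{prop:uniform-response-nonzero}.

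\textbf{Balanced value.} At $\rho=\kappa$, the balanced anchor \eqref{eq:uniform-balanced-anchor} gives $X(\kappa)/Q(\kappa)=-\tfrac12$, which is real. The identity therefore becomes
\[
\Phi(\kappa)=-\tfrac12|Q(\kappa)|^2\,\Im(L_0\overline{L_1})=\tfrac{\omega_D}{2}|Q(\kappa)|^2.
\]
This is strictly positive because $\omega_D>0$ for $D<9$ and $Q(\kappa)\ne0$. One point needs care: the relation $C=\kappa B$ must really define an invariant family containing the $\rho=\kappa$ trajectory. This holds because the initial data satisfy $C(0)=\kappa B(0)$ and the cone relation for $C$ uses the same constant $\kappa$. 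Along that family,
\[
\xi=\log\frac{\kappa B}{\kappa\sqrt{AB}}=-\tfrac12 q_1,
\]
and linearity of the amplitude functional in the common convention of Lemma~\ref{lem:uniform-asymptotic-coordinates} then gives the anchor.

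\textbf{Small ratio.} By \eqref{eq:uniform-smallratio-anchor}, $(Q(\rho),X(\rho))\to(Q_0,X_0)$ with $Q_0\ne0$ and $X_0=Q_0/2$. So $|Q(\rho)|^2\to|Q_0|^2>0$, and the imaginary factor in the identity tends to
\[
\tfrac12\Im(L_0\overline{L_1})=-\tfrac{\omega_D}{2}<0.
\]
Hence $\Phi(\rho)\to-\tfrac{\omega_D}{2}|Q_0|^2<0$, and $\Phi(\rho)<0$ for all sufficiently small $\rho>0$. Choosing such a $\rho_-<\kappa$ gives $\Phi(\rho_-)<0<\Phi(\kappa)$.

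\textbf{Main obstacle.} The algebra above is trivial. The real content is the small-ratio convergence $(Q,X)\to(Q_0,X_0)$ in all four dimensions, which requires three things. First, the transverse approach to $W^s(P_{p+q})$ via the projective coordinate $\rho$. Second, the inclination lemma. Third, continuity of the phase-space response map at a section in the basin of $P_D$. All of these were sketched in the preceding subsection, and I would simply invoke them. The one point I would double-check is that the same complex-amplitude convention (same $s=\log\tau$, same exponent $-\alpha_D+i\omega_D$) is used for both channels. That is what makes $X_0=Q_0/2$ and $X(\kappa)=-Q(\kappa)/2$ hold without stray conjugations, and what lets the sign of $\Im(L_0\overline{L_1})$ from \eqref{eq:uniform-W-sign} transfer directly to $\Phi$.
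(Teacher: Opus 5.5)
Your proposal is correct and is essentially the paper's own proof: writing $\Phi=|Q|^2\,\Im\bigl(L_0\overline{L_1}\,\overline{X/Q}\bigr)$ and inserting the two anchors $X/Q\to\tfrac12$ and $X(\kappa)=-\tfrac12Q(\kappa)$, together with $\Im(L_0\overline{L_1})=-\omega_D$ and $Q\neq0$, is exactly what the paper does. The one point to tighten is your side justification that the $\rho=\kappa$ trajectory satisfies $C=\kappa B$: the cone ratio is not the reason; rather, substituting $C=\kappa B$ makes the $B$- and $C$-equations coincide precisely because $(q-1)/\kappa^2=p-1$, and uniqueness of the singular-orbit germ with data $B(0)=1$, $C(0)=\kappa$ then places that solution in the reduced family.
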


\begin{proof}
By the small-ratio response relation
\eqref{eq:uniform-smallratio-anchor},
\[
X(\rho)
=
\left(\frac12+o(1)\right)Q(\rho)
\qquad(\rho\downarrow0).
\]
Since \(Q(\rho)\neq0\) by
Proposition~\ref{prop:uniform-response-nonzero}, it follows from
\eqref{eq:uniform-W-sign} that
\[
\Phi(\rho)
=
\left(\frac12+o(1)\right)
|Q(\rho)|^2\Im(L_0\overline{L_1})
<0
\]
for all sufficiently small positive \(\rho\).

At the balanced value, \eqref{eq:uniform-balanced-anchor} gives
\(X(\kappa)=-Q(\kappa)/2\).  Hence
\[
\Phi(\kappa)
=
-\frac12|Q(\kappa)|^2
\Im(L_0\overline{L_1})
=
\frac{\omega_D}{2}|Q(\kappa)|^2
>0.
\]
This proves the claimed orientation.
\end{proof}

Thus the scalar sign change is now isolated as a geometric response
statement; no dimension-specific special-function computation is needed.

\subsection{Uniform weighted matching estimates}

We record only the dimension-dependent points of the matching argument;
the full weighted proof for the representative case is in
Section~\ref{sec:weighted-matching}.  Put
\[
\delta_\eps=\eps^\beta.
\]
The reconstructed radial translation satisfies
\[
S=\frac D{\mathcal H}-\tau,
\qquad
S'=DG-1.
\]
Because the linear term in \(G-1/D\) vanishes on the constraint,
\[
DG-1=O(\tau^{-2\alpha_D}),
\]
and hence translation of a response mode contributes
\begin{equation}\label{eq:uniform-shift-error}
O(\eps/\delta_\eps).
\end{equation}

For the isotropic background set
\[
m=\frac12(\log a+\log b),
\qquad
H=Dm_u+q\xi_u,
\qquad
h=H-D\cot u.
\]
The exact identities needed below are
\begin{equation}\label{eq:uniform-h-identity}
h_u+\frac{H+D\cot u}{D}h
=
-\frac p2q_{1,u}^2-\frac{2pq}{D}\xi_u^2
\end{equation}
and, with \(K=e^{-2m}\),
\begin{equation}\label{eq:uniform-K-identity}
(\log(K/K_0))_u
=
-\frac2Dh+\frac{2q}{D}\xi_u.
\end{equation}
Since
\[
q_1,\xi=O(\eps^{\alpha_D}u^{-\alpha_D}),
\]
the stricter overlap condition is
\begin{equation}\label{eq:uniform-beta-condition}
\beta\le\frac{\alpha_D}{\alpha_D+2}.
\end{equation}
For the worst value \(\alpha_D=5/2\), this permits \(\beta=1/3\).

The Green kernel of the uniform Jacobi equation has singular weight
\(u^{\alpha_D+1}\).  Consequently the quadratic anisotropy source gives
\begin{equation}\label{eq:uniform-nonlinear-error}
O\!\left((\eps/\delta_\eps)^{\alpha_D}\right),
\end{equation}
while the isotropic coefficient error gives
\begin{equation}\label{eq:uniform-log-error}
O\!\left(\delta_\eps^2(1+|\log\delta_\eps|)\right).
\end{equation}
Together with \eqref{eq:uniform-shift-error},
\begin{equation}\label{eq:uniform-total-error}
E_\eps
=
O\!\left[
\frac{\eps}{\delta_\eps}
+
\delta_\eps^2(1+|\log\delta_\eps|)
+
\left(\frac{\eps}{\delta_\eps}\right)^{\alpha_D}
\right].
\end{equation}
For \(\delta_\eps=\eps^{1/3}\), this is
\[
O\!\left(\eps^{2/3}(1+|\log\eps|)\right)
\]
in all four dimensions.  The point requiring a separate proof is that
the same bound survives one parameter derivative; we give that argument
next.

We next justify the corresponding \(C^1\) estimate, including both moving
endpoints that occur in the matching construction.

\begin{lemma}[Uniform differentiated inner response estimate]
\label{lem:uniform-differentiated-inner}
Let
\[
s_\eps=\log\frac{\delta_\eps}{\eps},
\qquad
\delta_\eps=\eps^\beta,
\qquad
0<\beta<1.
\]
On every compact ratio interval \(I\Subset(0,\infty)\), the normalized
positive-Einstein and Ricci-flat response coordinates satisfy
\[
|\Delta Q_\eps|+|\Delta X_\eps|
+
|\partial_\rho\Delta Q_\eps|+|\partial_\rho\Delta X_\eps|
+
|D_\eps\Delta Q_\eps|+|D_\eps\Delta X_\eps|
\le C_I\delta_\eps^2
\]
at the moving overlap time \(s=s_\eps\).  Here
\(D_\eps=\eps\partial_\eps\) is taken at fixed normalized singular-orbit
ratio.
\end{lemma}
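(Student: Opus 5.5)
The plan is to rerun the proof of Lemma~\ref{lem:innerweighted} with the integers $2,2,3$ replaced by $p,p,q$ and the cone exponents $-3\pm i\sqrt3$ replaced by $-\alpha_D\pm i\omega_D$. The dimension enters only through three facts. First, the exponential stability of $P_D$, given by Lemma~\ref{lem:uniform-cone-spectrum}. Second, the uniform cone entry on compact $\rho$-intervals. Third, the explicit form of the Einstein perturbation $-D\eps^2(A,B,C)$, which in logarithmic inner time is again $\eps^2e^{2s}R(U)$ with $R$ smooth.

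Concretely, I first establish the analogue of Corollary~\ref{cor:chi-uniform-entry} for $P_D$. Lemma~\ref{lem:uniform-positive-to-cone} gives convergence, continuous dependence on $\rho$ plus compactness of $I$ gives a common entry time $s_0$, and Lemma~\ref{lem:uniform-cone-spectrum} with Coppel's roughness theorem gives evolution operators bounded by $Ce^{-\gamma(s-\sigma)}$, uniformly in $\rho\in I$. I then set $V=U_\eps-U_0$ on $[s_0,s_\eps]$. It satisfies
$$\dot V=A_\rho(s)V+\mathcal N_\rho(V,s)+\eps^2e^{2s}R(U_\eps),$$
with $V(s_0)=O(\eps^2)$ from smooth dependence of the germs of Proposition~\ref{prop:uniform-positive-germs} on $\eps$ over the finite pre-entry interval. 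A continuity bootstrap with $M(S)=\sup|V|$ then gives $M(s_\eps)\le C\eps^2e^{2s_\eps}=C\delta_\eps^2$.

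For the amplitudes, I multiply the anisotropy projection by $e^{\alpha_D s}$ and apply the variation-of-constants formula of Lemma~\ref{lem:uniform-asymptotic-coordinates}. I use that the isotropic forcing has anisotropy projection bounded by $C|P_a(U_\eps-U_c)|\le Ce^{-\alpha_D(s-s_0)}$, so the weighted forcing contributes $C\int^{s_\eps}\eps^2e^{2s}\,ds\le C\delta_\eps^2$. This yields the undifferentiated bound.

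For $\partial_\rho$, I differentiate the $V$-equation. The new inhomogeneities are $(\partial_\rho A_\rho)V=O(\delta_\eps^2)$, $\partial_\rho\mathcal N=O(|V|^2)$, and $\eps^2e^{2s}DR\,\partial_\rho U_\eps$, with $\partial_\rho U_0,\partial_\rho U_\eps$ uniformly bounded on the cone neighborhood by the uniform variational estimate. The same Duhamel bound gives $\sup|\partial_\rho V|\le C_I\delta_\eps^2$, and projecting again handles the amplitudes. For $D_\eps$, the Ricci-flat orbit is $\eps$-independent and $D_\eps(\eps^2e^{2s}R)=2\eps^2e^{2s}R+\eps^2e^{2s}DR\,D_\eps U_\eps$, so the same inequality applies.

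The moving endpoint is handled last. We have $D_\eps s_\eps=\beta-1$, a bounded constant. The amplitude difference $\mathcal A_\eps(s)$ has $s$-derivative equal to its integrand at $s_\eps$, which is $O(\eps^2e^{2s_\eps})=O(\delta_\eps^2)$, so the chain-rule term is also $O(\delta_\eps^2)$.

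The main obstacle is the \emph{uniform} boundedness of $\partial_\rho U_\eps$ and $D_\eps U_\eps$ on the whole window $[s_0,s_\eps]$. This window has length $\sim(1-\beta)\log(1/\eps)$, so nonuniform growth would be fatal. I would handle it by bootstrapping jointly on $(V,\partial_\rho V,D_\eps V)$ as a single triangular linear-plus-quadratic system. The diagonal blocks share the stable evolution operator, so the exponential stability absorbs the off-diagonal terms as long as $|V|\le C\delta_\eps^2$ stays small. For $D\le 8$, $\alpha_D\ge5/2>2$, so the weighted $e^{\alpha_Ds}$ renormalization never loses against the $e^{2s}$ forcing; this is the one dimension check required.
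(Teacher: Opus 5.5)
Your proposal is correct and follows essentially the same route as the paper's proof. The shared steps are the Duhamel bootstrap against the $\eps^2e^{2s}$ forcing, the one-anisotropy-factor gain that keeps the renormalized amplitude error at $C\int^{s_\eps}\eps^2e^{2s}\,ds\le C\delta_\eps^2$, the differentiated difference equations for $\partial_\rho$ and $D_\eps$, and the moving-endpoint term controlled through $D_\eps s_\eps=\beta-1$. Your joint bootstrap on $(V,\partial_\rho V,D_\eps V)$ simply makes explicit the uniform bounds that the paper asserts.

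One small correction: the inequality $\alpha_D>2$ plays no role here. Once the anisotropy factor is gained, the weighted forcing is exactly $\eps^2e^{2s}$ for every $\alpha_D>0$, and the only dimension input is the common stability margin at $P_D$ from Lemma~\ref{lem:uniform-cone-spectrum}.
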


\begin{proof}
In logarithmic inner time the scaled positive-Einstein equation has the
dimension-independent perturbative form
\[
\dot U=F_0(U)+\eps^2e^{2s}R_D(U),
\]
where \(R_D\) is smooth on the common cone neighborhood.  By
Lemma~\ref{lem:uniform-cone-spectrum}, in physical logarithmic time the two
constrained anisotropy blocks have exponents
\[
-\alpha_D\pm i\omega_D.
\]
The finite set of four admissible pairs therefore has a common exponential
stability margin after entering a fixed cone neighborhood.  Duhamel's
formula gives
\[
\sup_{s_0\le s\le s_\eps}|U_\eps-U_0|
\le C_I\eps^2e^{2s_\eps}
=C_I\delta_\eps^2.
\]
As in Lemma~\ref{lem:innerweighted}, the isotropic Einstein forcing has
anisotropy projection vanishing on the cone and hence gains one anisotropy
factor.  After multiplication by the inverse anisotropy decay
\(e^{\alpha_Ds}\), its integral is still bounded by
\(C_I\delta_\eps^2\).  Thus
\[
|\Delta Q_\eps|+|\Delta X_\eps|
\le C_I\delta_\eps^2.
\]

A \(\rho\)-derivative satisfies the linearized difference equation.  The
coefficient derivatives and the derivatives of the Ricci-flat reference
orbit are uniformly bounded on \(I\), while every genuinely nonlinear
new term contains either \(U_\eps-U_0\) or the Einstein forcing.  The same
stable Duhamel estimate yields
\[
|\partial_\rho\Delta Q_\eps|
+
|\partial_\rho\Delta X_\eps|
\le C_I\delta_\eps^2.
\]

For \(D_\eps\), at fixed \(s\),
\[
D_\eps\!\left(\eps^2e^{2s}R_D(U_\eps)\right)
=
2\eps^2e^{2s}R_D(U_\eps)
+
\eps^2e^{2s}DR_D(U_\eps)D_\eps U_\eps,
\]
so the fixed-\(s\) differentiated response error is again
\(O_I(\delta_\eps^2)\).  It remains to differentiate the moving endpoint.
Since
\[
D_\eps s_\eps=\beta-1,
\]
we must add
\[
(\beta-1)\,\partial_s\mathcal A_\eps(s_\eps)
\]
for either renormalized response-coordinate difference
\(\mathcal A_\eps\).  The renormalized variation-of-constants equation
shows at the endpoint that
\[
\partial_s\mathcal A_\eps(s_\eps)
=
O_I(\eps^2e^{2s_\eps})+O_I(\delta_\eps^2)
=
O_I(\delta_\eps^2).
\]
Hence the moving-endpoint contribution is also \(O_I(\delta_\eps^2)\).
This proves the lemma.
\end{proof}

For the outer problem let \(\mathscr D\) denote either
\(\partial_\rho\) or \(D_\eps\), with the shifted coordinate \(u\) held
fixed.  Put
\[
h=H-D\cot u.
\]
Differentiating the exact identity
\[
h_u+\frac{H+D\cot u}{D}h
=
-\frac p2q_{1,u}^2-\frac{2pq}{D}\xi_u^2
\]
gives
\[
(\mathscr Dh)_u+\frac{H+D\cot u}{D}\mathscr Dh
=
-\frac hD\mathscr Dh
-pq_{1,u}\mathscr Dq_{1,u}
-\frac{4pq}{D}\xi_u\mathscr D\xi_u.
\]
Thus every new nonlinear source is bilinear: one undifferentiated
anisotropy times one differentiated anisotropy.  The differentiated inner
estimate and the outer bootstrap give
\[
|q_{1,u}|+|\xi_u|
+
|\mathscr Dq_{1,u}|+|\mathscr D\xi_u|
\le
C_I\eps^{\alpha_D}u^{-\alpha_D-1}.
\]
Therefore the differentiated quadratic source has the same weight
\[
O_I(\eps^{2\alpha_D}u^{-2\alpha_D-2})
\]
as the undifferentiated one.  The integrating-factor argument consequently
gives
\[
\sup_{\delta_\eps\le u\le u_0}u|\mathscr Dh(u)|
\le C_I\delta_\eps^2
\]
whenever
\[
\beta\le\frac{\alpha_D}{\alpha_D+1}.
\]

For
\[
z=\log(K/K_0)
\]
we have
\[
(\mathscr Dz)_u
=
-\frac2D\mathscr Dh+\frac{2q}{D}\mathscr D\xi_u.
\]
Its overlap value is \(O_I(\delta_\eps^2)\), and the anisotropic integral
has size
\[
O_I\!\left((\eps/\delta_\eps)^{\alpha_D}\right).
\]
Thus the stronger condition
\[
\beta\le\frac{\alpha_D}{\alpha_D+2}
\]
ensures
\[
\sup_{\delta_\eps\le u\le u_0}|\mathscr Dz(u)|
\le C_I\delta_\eps^2.
\]
For the worst value \(\alpha_D=5/2\), the right-hand side of the overlap
restriction is \(5/9\), so \(\beta=1/3\) has uniform margin.

We now include the moving lower limit in the outer Green formula.  A
normalized quadratic source has integrand bounded by
\[
C_I\eps^{\alpha_D}u^{-\alpha_D-1}
\]
after multiplication by the Green weight and division by the leading
amplitude.  Differentiating
\[
\int_{\delta_\eps}^{u_0}F_\eps(u)\,du
\]
with \(D_\eps\) produces the boundary term
\[
-(D_\eps\delta_\eps)F_\eps(\delta_\eps)
=
-\beta\delta_\eps F_\eps(\delta_\eps),
\]
whose size is
\[
O_I\!\left(
\eps^{\alpha_D}\delta_\eps^{-\alpha_D}
\right)
=
O_I\!\left((\eps/\delta_\eps)^{\alpha_D}\right).
\]
For a normalized background-coefficient source the Green integrand is
\(O_I(\delta_\eps^2u^{-1})\), so its moving-limit term is merely
\(O_I(\delta_\eps^2)\).  Finally, differentiation of the translated
singular mode contributes
\[
O_I(\eps/\delta_\eps),
\]
because
\[
D_\eps\alpha_{\eps,\rho}=O_I(\eps),
\qquad
\partial_\rho\alpha_{\eps,\rho}=O_I(\eps),
\]
and one derivative of \(u^{-\alpha_D+i\omega_D}\) costs \(u^{-1}\).
Thus the moving lower limit and the moving translation introduce no loss
beyond the three already present in the \(C^0\) estimate.

There is one further bookkeeping point for \(D_\eps\): it differentiates
the whole normalized matching vector.  Hence
\[
D_\eps(\eps^{-\alpha_D}q_1)
=
\eps^{-\alpha_D}(D_\eps q_1-\alpha_Dq_1),
\]
and
\[
D_\eps(\eps^{-\alpha_D}\xi_u)
=
\eps^{-\alpha_D}(D_\eps\xi_u-\alpha_D\xi_u).
\]
These \(-\alpha_D\)-terms are exactly the derivative of the leading
normalized response and do not create an additional error.

\begin{proposition}[Uniform weighted \(C^1\) matching estimate]
\label{prop:uniform-C1-matching}
Let \(I\Subset(0,\infty)\) be a compact ratio interval and choose
\[
\delta_\eps=\eps^{1/3}.
\]
At a fixed shifted equatorial section, define the normalized matching
error
\[
E_{\eps,\rho}
=
\eps^{-\alpha_D}
\begin{pmatrix}
q_1\\ \xi_u
\end{pmatrix}
-
\begin{pmatrix}
\Re\!\left(
e^{i\omega_D\log(1/\eps)}L_0Q(\rho)
\right)\\[2mm]
\Re\!\left(
e^{i\omega_D\log(1/\eps)}L_1X(\rho)
\right)
\end{pmatrix}.
\]
Then
\[
\|E_{\eps,\rho}\|
+
\|\partial_\rho E_{\eps,\rho}\|
+
\|D_\eps E_{\eps,\rho}\|
\le
C_I\left[
\frac{\eps}{\delta_\eps}
+
\delta_\eps^2(1+|\log\delta_\eps|)
+
\left(\frac{\eps}{\delta_\eps}\right)^{\alpha_D}
\right].
\]
Consequently,
\[
\|E_{\eps,\rho}\|_{C^1_{\rho,\log\eps}}
=
O_I\!\left(
\eps^{2/3}(1+|\log\eps|)
\right)
=o(1)
\]
uniformly for all four admissible dimension pairs.
\end{proposition}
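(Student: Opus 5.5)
The plan is to repeat the decomposition of Lemma~\ref{lem:normalized-C1-matching} and Lemma~\ref{lem:outerweighted}, now with $D$, $\alpha_D$, $\omega_D$ in place of $7$, $3$, $\sqrt3$. At the shifted equatorial section the normalized vector $\eps^{-\alpha_D}(q_1,\xi_u)$ splits into four contributions, which I estimate one by one.

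\begin{enumerate}[label=(\roman*)]
\item \emph{Leading term.} The leading singular-end mode is transported by the exact Jacobi equation \eqref{eq:uniform-jacobi}. By the compatibility computation of the outer transfer subsection, its Frobenius coefficient is exactly $e^{i\omega_D\log(1/\eps)}Z$ with $Z=Q(\rho)$ or $X(\rho)$. Evaluating $\Psi$ and $\Psi'$ at $u=\pi/2$ gives precisely the subtracted vector.
\item \emph{Incoming response error.} The difference between the positive-Einstein and the Ricci-flat incoming amplitudes at $s_\eps$ is $O_I(\delta_\eps^2)$ by Lemma~\ref{lem:uniform-differentiated-inner}. It is transported linearly to a bounded equatorial error of the same order.
\item \emph{Translation error.} Converting from $\tau$ to $u=\eps(\tau+\sigma(\rho))$ costs the relative error \eqref{eq:uniform-shift-error}, since $(\tau+\sigma)^{-\alpha_D+i\omega_D}=\tau^{-\alpha_D+i\omega_D}(1+O_I(\tau^{-1}))$. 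The estimate $DG-1=O(\tau^{-2\alpha_D})$ makes $\sigma$ a $C^1$ function, as in Lemma~\ref{lem:radial-shift}.
\item \emph{Outer forcing.} I use variation of constants with the Green kernel of weight $u^{\alpha_D+1}$. Two sources appear:
\begin{itemize}
\item The quadratic anisotropy source is $O(\eps^{2\alpha_D}u^{-2\alpha_D-2})$. After dividing by $\eps^{\alpha_D}$ and integrating from $\delta_\eps$, it gives \eqref{eq:uniform-nonlinear-error}.
\item The coefficient errors $u|h|$ and $u^2|K-K_0|$ are $O(\delta_\eps^2)$. This follows from the identities \eqref{eq:uniform-h-identity} and \eqref{eq:uniform-K-identity}, through the integrating-factor bootstrap of Lemma~\ref{lem:isotropic-background}, under \eqref{eq:uniform-beta-condition}. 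Multiplied by a linear anisotropy field, they give a Green integrand $O(\delta_\eps^2u^{-1})$, hence \eqref{eq:uniform-log-error}.
\end{itemize}
\end{enumerate}
Summing the four contributions yields the $C^0$ bound $E_\eps$ of \eqref{eq:uniform-total-error}.

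For the derivatives, I apply $\mathscr D\in\{\partial_\rho,D_\eps\}$ to the entire normalized expression, working at fixed shifted coordinate $u$.
\begin{itemize}
\item The incoming data are handled by Lemma~\ref{lem:uniform-differentiated-inner}.
\item The differentiated background equations computed just before the proposition show that every new source is bilinear, one undifferentiated factor times one differentiated factor. They therefore carry the same weights, so $u|\mathscr Dh|$ and $|\mathscr Dz|$ are $O_I(\delta_\eps^2)$ when $\beta\le\alpha_D/(\alpha_D+2)$.
\item Differentiating the moving lower limit $\delta_\eps$ of the Green integral produces boundary terms of size $(\eps/\delta_\eps)^{\alpha_D}$ or $\delta_\eps^2$.
\item Derivatives of the translation $\alpha_{\eps,\rho}=O_I(\eps)$ cost one factor $u^{-1}$ on the singular mode, hence $O_I(\eps/\delta_\eps)$.
\item The normalization produces the term $-\alpha_D\eps^{-\alpha_D}(q_1,\xi_u)$. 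It is matched exactly by $D_\eps$ of the leading term, because $D_\eps e^{i\omega_D\log(1/\eps)}=-i\omega_D e^{i\omega_D\log(1/\eps)}$ is accounted for by the $\log\eps$-dependence of the amplitude. Only the error part survives.
\end{itemize}

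Finally I substitute $\delta_\eps=\eps^{1/3}$.
\begin{itemize}
\item $\eps/\delta_\eps=\eps^{2/3}$.
\item $\delta_\eps^2(1+|\log\delta_\eps|)=O(\eps^{2/3}(1+|\log\eps|))$.
\item $(\eps/\delta_\eps)^{\alpha_D}=\eps^{2\alpha_D/3}\le\eps^{5/3}$, since $\alpha_D\ge5/2$.
\end{itemize}
The constraint $\beta=1/3\le5/9\le\alpha_D/(\alpha_D+2)$ holds in all four cases, and there are only finitely many pairs, so the constants are uniform.

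The step I expect to be the main obstacle is the differentiated background bound for $z=\log(K/K_0)$. The crude integration of $h$ produces a logarithm, and the anisotropic integral forces the stricter overlap condition. My first attempt would be to bound the log-carrying term by $\delta_\eps^2(1+|\log\delta_\eps|)$ and absorb it into the stated error term rather than remove it, since the proposition already permits that loss.
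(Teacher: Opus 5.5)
Your decomposition is the paper's own. It has the leading Frobenius mode with coefficient $e^{i\omega_D\log(1/\eps)}Z$, the $O_I(\delta_\eps^2)$ incoming response error from Lemma~\ref{lem:uniform-differentiated-inner}, and the $O_I(\eps/\delta_\eps)$ translation error. It adds the quadratic and coefficient sources through the $u^{\alpha_D+1}$ Green kernel, followed by the bilinear differentiated $h,z$ equations, the moving lower limit, and the $-\alpha_D$ normalization bookkeeping. As long as you really use $u|\mathscr Dh|+|\mathscr Dz|=O_I(\delta_\eps^2)$, as you assert in your derivative step, the argument goes through as in the paper.

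The fallback in your last paragraph, however, does not work.

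\emph{Why it fails.} Suppose you only know $|z|+|\mathscr Dz|\le C_I\delta_\eps^2(1+|\log\delta_\eps|)$. Then $u^2|K-K_0|$ and its derivative carry the same factor. The coefficient-error Green integrand becomes $O(\delta_\eps^2(1+|\log\delta_\eps|)u^{-1})$, and integrating over $[\delta_\eps,u_0]$ gives $\delta_\eps^2(1+|\log\delta_\eps|)^2$. The single logarithm allowed in the statement is exactly the one produced by this final $\int u^{-1}\,du$, so it is not available to absorb a second one. You would get $o(1)$, but not the displayed bound.

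\emph{The fix.} The logarithm in $z$ and $\mathscr Dz$ has to be removed, as in Lemma~\ref{lem:isotropic-background}.
\begin{enumerate}[label=(\roman*)]
\item Moving $-\frac hD\mathscr Dh$ to the left, the differentiated $h$-equation has damping $2\cot u+2h/D=(2+O(\delta_\eps^2))u^{-1}+O(u)$.
\item The homogeneous part of $\mathscr Dh$ therefore decays like $(u_\delta/u)^{2-O(\delta_\eps^2)}$ from its overlap size $O(\delta_\eps)$.
\item The bilinear source contributes at most $C u^{-2}\eps^{2\alpha_D}\delta_\eps^{1-2\alpha_D}$.
\item Together, $\int_{u_\delta}^{u_0}|\mathscr Dh|\,du\le C_I\bigl(\delta_\eps^2+(\eps/\delta_\eps)^{2\alpha_D}\bigr)\le C_I\delta_\eps^2$ once $\beta\le\alpha_D/(\alpha_D+1)$.
\item Integrating $(\mathscr Dz)_u=-\frac2D\mathscr Dh+\frac{2q}{D}\mathscr D\xi_u$ from the $O_I(\delta_\eps^2)$ overlap value then gives $|\mathscr Dz|\le C_I\delta_\eps^2$ under $\beta\le\alpha_D/(\alpha_D+2)$.
\end{enumerate}
Only then does the coefficient-error term contribute the single $\delta_\eps^2(1+|\log\delta_\eps|)$ claimed in the proposition.
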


\begin{proof}
Lemma~\ref{lem:uniform-differentiated-inner} controls the incoming
amplitude and its two parameter derivatives at the moving overlap.
The differentiated \(h,z\) estimates above control the two isotropic
coefficients in the outer Jacobi equations with exactly the same
\(O_I(\delta_\eps^2)\) weights as in the \(C^0\) argument.
Variation of constants then gives three contributions:
\[
O_I(\eps/\delta_\eps)
\]
from translated inner-to-outer conversion,
\[
O_I\!\left((\eps/\delta_\eps)^{\alpha_D}\right)
\]
from the quadratic anisotropy forcing, and
\[
O_I\!\left(
\delta_\eps^2(1+|\log\delta_\eps|)
\right)
\]
from the isotropic coefficient error.  The preceding moving-lower-limit
calculation proves that parameter differentiation does not enlarge these
orders, and the normalization identities account for differentiation of
\(\eps^{-\alpha_D}\).

For \(\delta_\eps=\eps^{1/3}\), the first two algebraic powers are
\[
\eps^{2/3},
\qquad
\eps^{2\alpha_D/3}.
\]
Since \(\alpha_D\ge5/2\), the nonlinear term is at worst
\(\eps^{5/3}\), while the background term is
\(O(\eps^{2/3}(1+|\log\eps|))\).  This proves the proposition.
\end{proof}

\begin{lemma}[Uniform replacement by the exact \(H=0\) event]
\label{lem:uniform-event-replacement}
Let \(I\Subset(0,\infty)\) be the compact ratio interval used in the
matching argument and put
\[
\alpha_{\eps,\rho}=\eps\sigma(\rho),
\qquad
u=t+\alpha_{\eps,\rho}.
\]
Let \(u_*(\eps,\rho)\) denote the shifted coordinate of the first exact
zero of \(H\).  Then
\[
u_*(\eps,\rho)-\frac\pi2=o(1)
\]
in \(C^1_{\rho,\log\eps}\), uniformly for \(\rho\in I\).  Moreover,
replacing the fixed section \(u=\pi/2\) by \(u=u_*(\eps,\rho)\) changes
the \(\eps^{-\alpha_D}\)-normalized matching vector
\[
\eps^{-\alpha_D}
\begin{pmatrix}
q_1\\ \xi_u
\end{pmatrix}
\]
by \(o(1)\) in \(C^1_{\rho,\log\eps}\).
\end{lemma}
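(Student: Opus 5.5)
The plan is to repeat the argument of Lemma~\ref{lem:quant-event} and Corollary~\ref{cor:event-replacement} with the integers $2,2,3$ and the constant $7$ replaced by $p,p,q$ and $D$. The proof has three steps: a $C^1$ bound on the mean-curvature error near the equator, location of the exact zero of $H$, and a Taylor comparison of the matching vector at the two sections.

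\textbf{Step 1: $C^1$ closeness of $H$ to the translated background.} In the shifted coordinate the translated background is $H_\alpha=D\cot u$. Its zero is at $u=\pi/2$, that is, at $t_{\rm eq}=\pi/2-\alpha_{\eps,\rho}$, and $H_\alpha'(\pi/2)=-D$. The uniform outer estimates preceding Proposition~\ref{prop:uniform-C1-matching} give
\[
\sup u|h|+\sup u|\mathscr Dh|\le C_I\delta_\eps^2,\qquad h=H-D\cot u,
\]
where $\mathscr D$ is $\partial_\rho$ or $D_\eps$ at fixed $u$. On a fixed neighbourhood $|u-\pi/2|\le d$ the weight $u$ is comparable to $1$, so $h$ and $\mathscr Dh$ are $O_I(\delta_\eps^2)$ there. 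We also need $h_u$. The exact identity \eqref{eq:uniform-h-identity} expresses $h_u$ through $h$ and squares of anisotropy derivatives, and on the equatorial neighbourhood those squares are $O(\eps^{2\alpha_D})$. Hence $h_u=O_I(\delta_\eps^2)$ as well.

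\textbf{Step 2: the exact event and its $C^1$ control.} The radial equation gives $H'=-|L|^2-D\le -D$. Since $H(u)$ is then a $\delta_\eps^2$-perturbation of $D\cot u$ with slope at most $-D$, there is a unique zero $u_*$ with $|u_*-\pi/2|\le C\delta_\eps^2/D$. It is the first zero, because $H$ starts at $+\infty$ at the singular orbit and is strictly decreasing. The implicit-function theorem, applied at fixed shifted coordinate, gives
\[
\mathscr Du_*=-\frac{\mathscr DH(u_*)}{H_u(u_*)}.
\]
The background is $D\cot u$ in the shifted coordinate, and its $\mathscr D$-derivative at fixed $u$ vanishes identically. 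So $\mathscr DH=\mathscr Dh=O_I(\delta_\eps^2)$, while the denominator satisfies $H_u\le -D$. Together with the previous bound this gives $u_*-\pi/2=o(1)$ in $C^1_{\rho,\log\eps}$.

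This is the point where the choice of coordinate needs care. The bookkeeping is cleanest in $u$. Lemma~\ref{lem:quant-event} instead worked in $t$, where the background zero moves by $\alpha_{\eps,\rho}$. The two formulations agree because $\partial_\rho\alpha_{\eps,\rho}$ and $D_\eps\alpha_{\eps,\rho}$ are both $O_I(\eps)$.

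\textbf{Step 3: replacement in the normalized matching vector.} Let $w$ denote either $\eps^{-\alpha_D}q_1$ or $\eps^{-\alpha_D}\xi_u$. We need $w$ and $w_u$ to be $O_I(1)$ on the fixed equatorial neighbourhood, together with their $\mathscr D$-derivatives. For the undifferentiated quantities this follows from Proposition~\ref{prop:uniform-C1-matching} and its proof. Those arguments apply on the whole neighbourhood, not only at $u=\pi/2$, because the Green formula holds at any $u$ there. For $\xi_{uu}$ we use the Jacobi equation itself, which expresses it through lower-order terms. Then
\[
w(u_*)-w(\pi/2)=\int_{\pi/2}^{u_*}w_u=O(|u_*-\pi/2|)=o(1).
\]
Applying $\mathscr D$ gives
\[
\mathscr D\bigl(w(u_*)\bigr)-(\mathscr Dw)(\pi/2)=(\mathscr Dw)(u_*)-(\mathscr Dw)(\pi/2)+w_u(u_*)\,\mathscr Du_*,
\]
and both terms on the right are $o(1)$ by Step 2. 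Here $\mathscr D$ acts on the whole normalized quantity, so the $-\alpha_D$ terms coming from differentiating $\eps^{-\alpha_D}$ are already included in $\mathscr Dw$ being bounded.

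\textbf{Main obstacle.} The delicate point is the uniform $O_I(1)$ bound on $\mathscr D(w_u)$ and $\mathscr D(w_{uu})$ over the whole equatorial neighbourhood, since Proposition~\ref{prop:uniform-C1-matching} is stated only at a single section. I would obtain it by observing that the leading term is $\Re(e^{i\omega_D\log(1/\eps)}\Psi(u)Z)$, which is smooth in $u$ near $\pi/2$. The remainder's $u$-derivatives are controlled by the Green formula evaluated at a variable upper limit, plus the ODE for one further derivative. All these bounds come with the same error orders as before.
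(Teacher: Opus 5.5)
Your proposal is correct and follows essentially the same route as the paper. Both arguments obtain $C^1$ closeness of $H$ to $D\cot u$ from the differentiated $h$-estimates, then use $H'\le -D$ with the quantitative implicit-function argument of Lemma~\ref{lem:quant-event} ($7$ replaced by $D$), and finish with a chain-rule Taylor comparison based on $O_I(\eps^{\alpha_D})$ bounds for $q_1,q_{1,u},\xi_u,\xi_{uu}$ and their $\partial_\rho$, $D_\eps$ derivatives on the equatorial neighbourhood. The only difference is bookkeeping: you differentiate directly in $u$, where the background is parameter-free so that $\mathscr Du_*=-\mathscr Dh(u_*)/H_u(u_*)=O_I(\delta_\eps^2)$, and you state explicitly that these bounds are needed on the whole neighbourhood, which the paper simply takes from the outer bootstrap.
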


\begin{proof}
The differentiated \(h\)-estimates used in
Proposition~\ref{prop:uniform-C1-matching} give, on a fixed neighborhood
of \(u=\pi/2\),
\[
H(t,\eps,\rho)
=
D\cot u+o(1)
\]
in \(C^1_{\rho,\log\eps}\), uniformly on \(I\).  The exact Einstein
identity
\[
H'=-|L|^2-D\le-D
\]
makes \(H\) strictly decreasing, while the translated sine-cone
background has its zero at \(u=\pi/2\) and derivative \(-D\) there.
The quantitative implicit-function argument of
Lemma~\ref{lem:quant-event}, with \(7\) replaced by \(D\), therefore gives
a unique first zero and
\[
u_*-\frac\pi2=o(1)
\]
together with
\[
\partial_\rho\!\left(u_*-\frac\pi2\right)=o(1),
\qquad
D_\eps\!\left(u_*-\frac\pi2\right)=o(1).
\]

On the same fixed equatorial neighborhood, the outer bootstrap and its
two parameter derivatives give
\[
q_1,\ q_{1,u},\ \xi_u,\ \xi_{uu}
=
O_I(\eps^{\alpha_D}),
\]
and the same estimates after applying either
\(\partial_\rho\) or \(D_\eps\).  Taylor's theorem yields
\[
q_1(u_*)-q_1(\pi/2)=o_I(\eps^{\alpha_D}),
\]
\[
\xi_u(u_*)-\xi_u(\pi/2)=o_I(\eps^{\alpha_D}).
\]
For \(\mathscr D\in\{\partial_\rho,D_\eps\}\),
\begin{align*}
\mathscr D\!\left(q_1(u_*)-q_1(\pi/2)\right)
&=
\mathscr Dq_1(u_*)-\mathscr Dq_1(\pi/2)
+q_{1,u}(u_*)\,\mathscr Du_*,\\
\mathscr D\!\left(\xi_u(u_*)-\xi_u(\pi/2)\right)
&=
\mathscr D\xi_u(u_*)-\mathscr D\xi_u(\pi/2)
+\xi_{uu}(u_*)\,\mathscr Du_* .
\end{align*}
The first differences are
\(o_I(\eps^{\alpha_D})\) because
\(u_*-\pi/2=o(1)\) and the differentiated \(u\)-derivatives are
\(O_I(\eps^{\alpha_D})\); the endpoint terms are \(o_I(\eps^{\alpha_D})\), because the first
part of the proof gives
\[
\mathscr Du_*
=
\mathscr D\!\left(u_*-\frac\pi2\right)
=o_I(1).
\]
Division by \(\eps^{\alpha_D}\) proves the claimed \(C^1\) replacement.  For the
\(D_\eps\)-derivative, the derivative of the normalizing factor cancels
between the two section values in the usual identity
\[
D_\eps(\eps^{-\alpha_D}f)
=
\eps^{-\alpha_D}(D_\eps f-\alpha_D f),
\]
so it introduces no additional loss.
\end{proof}

Finally,
\[
H'=-|L|^2-D\le-D,
\]
so the \(H=0\) event remains uniformly transverse.  On that section,
\[
H=p\frac{a'}a+p\frac{b'}b+q\frac{c'}c=0
\]
and
\[
\xi_t
=
\frac{c'}c-\frac12\left(\frac{a'}a+\frac{b'}b\right)
=
\frac{D}{2p}\frac{c'}c.
\]
Hence
\[
q_1=0,\quad\xi_t=0
\]
is exactly equivalent to
\[
a=b,\quad c'=0.
\]
The reflection argument therefore closes the metric on
\[
(S^p*S^p)\times S^q
=
S^{2p+1}\times S^q.
\]
For completeness, the proof of
Theorem~\ref{thm:reflection-compactification} is dimension-independent.
At a matching point with \(a=b>0\), \(c'=0\), and \(H=0\),
\[
0
=
p\frac{a'}a+p\frac{b'}b+q\frac{c'}c
\]
gives \(a'+b'=0\).  The Einstein equations are invariant under
\[
(t,a,b,c)\longmapsto(2t_*-t,b,a,c),
\]
so uniqueness at the regular principal point gives the reflected
continuation.  The left singular germ has the analytic parity form
\[
a(t)=tA(t^2),\qquad b(t)=B(t^2),\qquad c(t)=C(t^2);
\]
reflection gives the same parity form at the right singular orbit with
\(a\) and \(b\) interchanged.  Hence the compactified metric is
real analytic in the standard singular-orbit charts for every admissible
\((p,q)\).

\subsection{Uniform phase labels and conclusion}

Define
\[
\eps_k(\theta)
=
\exp\!\left(
-\frac{k\pi+\theta}{\omega_D}
\right).
\]
Then
\[
e^{i\omega_D\log(1/\eps_k(\theta))}
=(-1)^ke^{i\theta}.
\]
The parity-normalized exact matching map is now scaled by
\(\eps_k^{-\alpha_D}\).  Proposition~\ref{prop:uniform-C1-matching},
followed by Lemma~\ref{lem:uniform-event-replacement}, gives \(C^1\)
convergence at the exact first \(H=0\) event to the same two-response
limit map as before, with \(L_0,L_1,Q,X\) interpreted in the dimensions
\((p,p,q)\).  The first component selects the common phase; the second,
restricted to that phase branch, has opposite signs at a sufficiently
small ratio and at the balanced ratio.  The intermediate value theorem
therefore gives one exact reflection zero for every sufficiently large
phase label.

We have proved the following.

\begin{theorem}[Uniform three-factor existence theorem]
\label{thm:uniform-sequence}
Let \(p,q\ge2\) and \(D=2p+q\le 8\).  For every sufficiently large phase
index \(k\), there is a real-analytic cohomogeneity-one Einstein metric
\(g^{(k)}\) on \(S^{2p+1}\times S^q\) of the form
\eqref{eq:uniform-ansatz}, with small parameter \(\eps^{(k)}\) satisfying
\[
\log\eps^{(k)}
=
-\frac{k\pi}{\omega_D}+O(1).
\]
Consequently \(\eps^{(k)}\to0\).  In particular the construction contains
an infinite sequence of such metrics; after passing to a sufficiently
sparse arithmetic subsequence of phase labels, the scales may be chosen
strictly decreasing.

The theorem applies precisely to
\[
(p,q)=(2,2),(2,3),(2,4),(3,2).
\]
\end{theorem}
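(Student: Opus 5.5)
The plan is to repeat the phase-elimination argument from the proof of Theorem~\ref{thm:sequence} verbatim, with the dimension-dependent inputs supplied by the uniform results of this section. For a fixed admissible pair \((p,q)\), I would first use Proposition~\ref{prop:uniform-response-orientation} to choose \(0<\rho_-<\rho_+=\kappa\) with \(\Phi(\rho_-)<0<\Phi(\rho_+)\), and set \(I=[\rho_-,\rho_+]\). Proposition~\ref{prop:uniform-response-nonzero} gives \(|Q|\ge c>0\) on \(I\). I would then define the continuous phase branch \(\theta_0(\rho)\) by \(e^{i\theta_0}L_0Q=i|L_0Q|\), choose a bounded closed interval \(\Theta\) whose interior contains \(\theta_0(I)\), and set \(\eps_k(\theta)=\exp(-(k\pi+\theta)/\omega_D)\). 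The parity-normalized map is \(\mathcal F_k=(-1)^k\eps_k^{-\alpha_D}(q_1,\xi_t)\) evaluated at the first \(H=0\) event.

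The key analytic step is the convergence \(\mathcal F_k\to G(\theta,\rho)=(\Re(e^{i\theta}L_0Q),\Re(e^{i\theta}L_1X))\) in \(C^1(\Theta\times I)\). Proposition~\ref{prop:uniform-C1-matching} supplies this at the fixed shifted section, and Lemma~\ref{lem:uniform-event-replacement} transfers it to the exact event. Since \(\partial_\theta=-\omega_D^{-1}D_\eps\) along the strip, \(C^1_{\rho,\log\eps}\) control is exactly \(C^1\) control in \((\theta,\rho)\), and the factor \((-1)^k\) absorbs \(e^{i\omega_D\log(1/\eps_k)}=(-1)^ke^{i\theta}\).

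From there the argument closes in four steps.

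\textbf{Phase branch.} Since \(\partial_\theta G_1(\theta_0(\rho),\rho)=-|L_0Q(\rho)|\le -c'<0\), the uniform implicit function theorem gives exact branches \(\theta_k(\rho)\to\theta_0(\rho)\) in \(C^1\), lying in the interior of \(\Theta\) for large \(k\).

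\textbf{Sign change.} Restricting the second component to this branch gives \(h_k(\rho)\to\Phi(\rho)/|L_0Q(\rho)|\) uniformly. Hence \(h_k(\rho_-)<0<h_k(\rho_+)\), and the intermediate value theorem yields a zero \(\rho_k\).

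\textbf{Closing the metric.} On \(H=0\) we have \(\xi_t=\tfrac{D}{2p}\,c'/c\), so \(\mathcal F_k=0\) is equivalent to \(a=b\), \(c'=0\). The dimension-independent reflection argument recorded above then closes the metric real-analytically on \((S^p*S^p)\times S^q\cong S^{2p+1}\times S^q\).

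\textbf{Scales.} Since \(\theta_k(\rho_k)\in\Theta\) is bounded, \(\log\eps^{(k)}=-k\pi/\omega_D+O(1)\). Choosing \(N\) with \(N\pi>|\Theta|\) and \(k_j=k_0+jN\) makes the phase intervals disjoint and ordered, so the scales \(\eps^{(k_j)}\) are strictly decreasing.

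The main obstacle is the \(C^1\) convergence step, specifically the \(\theta\)-derivative. It requires that \(D_\eps\) applied to the whole \(\eps^{-\alpha_D}\)-normalized vector, including the moving overlap endpoint, the moving lower Green limit, the translation \(\eps\sigma(\rho)\), and the moving event \(u_*\), loses nothing beyond \(o(1)\). I would check this first by confirming that the choice \(\beta=1/3\) satisfies \(\beta\le\alpha_D/(\alpha_D+2)\) for \(\alpha_D\in\{5/2,7/2\}\). Given that, the three error terms in Proposition~\ref{prop:uniform-C1-matching} all vanish as \(k\to\infty\), uniformly across the finitely many admissible pairs. Everything else is finite-dimensional and copies Section~\ref{sec:sequence-proof}.
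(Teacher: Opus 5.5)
Your proposal is correct and follows essentially the same route as the paper. The paper takes $I=[\rho_-,\kappa]$ from Proposition~\ref{prop:uniform-response-orientation}, gets exact-event $C^1$ convergence from Proposition~\ref{prop:uniform-C1-matching} and Lemma~\ref{lem:uniform-event-replacement}, and then invokes the abstract Proposition~\ref{prop:response-crossing} with $A=L_0Q$, $B=L_1X$; that proposition packages exactly the phase-branch and intermediate-value steps you write out by hand, and the one input you use without stating it is $L_0\neq0$, which the paper derives from $\Im(L_0\overline{L_1})=-\omega_D\neq0$ and which you need for $\partial_\theta G_1=-|L_0Q|$ to stay bounded away from zero.
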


\begin{proof}
By Proposition~\ref{prop:uniform-response-orientation}, choose
\(0<\rho_-<\kappa\) such that
\[
\Phi(\rho_-)<0<\Phi(\kappa),
\]
and work on the compact interval \(I=[\rho_-,\kappa]\).
Proposition~\ref{prop:uniform-response-nonzero} gives
\(Q\ne0\) on \(I\).  Since
\[
\Im(L_0\overline{L_1})=-\omega_D\ne0,
\]
one has \(L_0\ne0\), so
\(A(\rho)=L_0Q(\rho)\) is nowhere zero.  The same response-orientation
proposition supplies the strict endpoint signs for
\[
\Phi(\rho)=
\Im\!\left(L_0Q(\rho)\overline{L_1X(\rho)}\right).
\]
Proposition~\ref{prop:uniform-C1-matching} and
Lemma~\ref{lem:uniform-event-replacement} give the exact-event \(C^1\)
convergence required by Proposition~\ref{prop:response-crossing}.
Applying that proposition to
\[
A=L_0Q,\qquad B=L_1X
\]
gives an exact reflection zero for every sufficiently large phase index.
The reflection and topology argument above gives the asserted
real-analytic Einstein metric on \(S^{2p+1}\times S^q\).

If the compact phase
interval has length \(|\Theta|\), choose \(N\) with
\(N\pi>|\Theta|\); then the phase intervals associated with
\(k_j=k_0+jN\) are disjoint and ordered, which makes the corresponding
\(\eps^{(k_j)}\) strictly decreasing.
\end{proof}

\begin{lemma}[Intrinsic curvature splitting at the singular orbit]
\label{lem:intrinsic-curvature-splitting}
Fix an admissible pair \((p,q)\) and a compact ratio interval
\(I\Subset(0,\infty)\).  At a point of the left singular orbit write
\[
T_xM=N\oplus B\oplus C,
\qquad
\dim N=p+1,\quad \dim B=p,\quad \dim C=q,
\]
where \(N\) is the \((p+1)\)-dimensional normal space, and
\(B,C\) are the two non-collapsing tangent summands.  Then there are
\(\eps_0,c>0\), uniform for \(\rho\in I\), such that whenever
\(0<\eps<\eps_0\),
\[
\ker\mathcal R_x=B\wedge C,
\]
and
\[
\|\mathcal R_x\omega\|
\ge c\eps^{-2}\|\omega\|
\qquad
\bigl(\omega\perp B\wedge C\bigr).
\]
\end{lemma}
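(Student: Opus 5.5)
Our plan is to compute the curvature operator at a point of the left singular orbit explicitly from the Taylor germ of Proposition~\ref{prop:uniform-positive-germs}, using the standard curvature formulas for multiply warped products $dt^2+\sum f_j^2 g_j$. These formulas show that the curvature operator is diagonal in the orthogonal splitting of $\Lambda^2$ induced by the summands. In the limit $t\downarrow0$ the normal space $N$ is spanned by $\partial_t$ together with the collapsing $S^p$-directions, and smoothness (Lemma~\ref{lem:smooth-polar-product}) gives well-defined limits. We will evaluate each block of $\mathcal R_x$ at $t=0$, in terms of $b_0=\eps$, $c_0=\eps\rho$ and the coefficients $b_2,c_2,a_3$.

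The blocks and their eigenvalues at the singular orbit are as follows.
\begin{enumerate}[label=(\arabic*)]
\item $\Lambda^2N$: the sectional curvatures are $-a''/a$ and $(1-a'^2)/a^2$. Their limits are both $-6a_3$, where $a_3=\tfrac{D}{2(p+1)}\cdot(\dots)$ is determined by the germ recursion. Under the scaling $b_0=\eps$, $c_0=\eps\rho$, the rescaled inner coefficients give $a_3=\eps^{-2}a_3^{\rm inner}(\rho)+O(1)$, with $-a_3^{\rm inner}(\rho)>0$. In the representative case this equals $1/18+1/(6\rho^2)$.
\item $N\wedge B$: the sectional curvatures are $-b''/b\to-2b_2/b_0$ (for the radial plane) and $-a'b'/(ab)\to-2b_2/b_0$ (for the collapsing directions). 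From Proposition~\ref{prop:uniform-positive-germs}, $-2b_2/b_0=\eps^{-2}\bigl(-\tfrac{p-1}{p+1}\bigr)+\tfrac{D}{p+1}$, whose leading part is nonzero.
\item $N\wedge C$: the analogous value is $-2c_2/c_0=\eps^{-2}\bigl(-\tfrac{q-1}{(p+1)\rho^2}\bigr)+O(1)$.
\item $\Lambda^2B$ and $\Lambda^2C$: the values are $(1-b'^2)/b^2\to\eps^{-2}$ and $(1-c'^2)/c^2\to\eps^{-2}\rho^{-2}$.
\item $B\wedge C$: the value is $-b'c'/(bc)$, which vanishes exactly at $t=0$ because $b'(0)=c'(0)=0$.
\end{enumerate}

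The multiply warped ansatz has no mixed terms between distinct pairs of summands. The operator therefore acts on each block $\Lambda^2N$, $N\wedge B$, $N\wedge C$, $\Lambda^2B$, $\Lambda^2C$, $B\wedge C$ by scalars, with one exception. Within $N\wedge B$ the radial and collapsing-direction planes carry equal curvature at $t=0$, but one must still check that the off-diagonal terms coupling $\partial_t\wedge e_b$ with $e_a\wedge e_b$ vanish in the limit. By $O(p+1)$-invariance of the normal disk, the isotropy $H_-$ acts irreducibly on $N$ and trivially commutes, so Schur's lemma forces $\mathcal R$ to be scalar on $N\otimes B$ in the $N$-factor. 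We also use that $B\wedge C$ is orthogonal to the rest. Hence $\ker\mathcal R_x=B\wedge C$ exactly, as long as the other scalars are nonzero.

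The lower bound $\|\mathcal R_x\omega\|\ge c\eps^{-2}\|\omega\|$ on the orthogonal complement of $B\wedge C$ follows from the leading $\eps^{-2}$ coefficients in (1)--(4). Each coefficient is a nonzero continuous function of $\rho$ on the compact interval $I$: the values $(p-1)/(p+1)$, $(q-1)/((p+1)\rho^2)$, $1$ and $\rho^{-2}$ are all positive. The remaining terms are $O(1)$ uniformly on $I$, so they are absorbed once $\eps<\eps_0$.

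The main obstacle is the block $\Lambda^2N$. Here we must extract $a_3$ in general dimension and verify that its leading $\eps^{-2}$ part is strictly nonzero. We would obtain it from the radial constraint at $t=0$, namely $-(p+1)\cdot 6a_3\cdot\tfrac{p}{p+1}-2p\,b_2/b_0-2q\,c_2/c_0=D$ in its correct dimension-dependent form. This gives $a_3$ as a positive combination of $-b_2/b_0$ and $-c_2/c_0$ plus $O(1)$. Since both leading parts are negative multiples of $\eps^{-2}$, the leading part of $-a_3$ is a positive multiple of $\eps^{-2}$, and nonvanishing follows.
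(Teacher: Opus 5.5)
Your proposal is correct and follows the paper's proof essentially step by step. Both arguments use the multiply-warped block decomposition of $\Lambda^2T_xM$, the vanishing on $B\wedge C$ from $b'(0)=c'(0)=0$, the explicit $\eps^{-2}$-leading eigenvalues from the germ coefficients (with $a_3$ extracted from the radial Einstein equation $-6pa_3-2pb_2/b_0-2qc_2/c_0=D$), and uniform invertibility on the compact interval $I$.

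The only deviation is your Schur-lemma step inside $N\wedge B$. It is valid but unnecessary: the coupling between $\partial_t\wedge e_b$ and $e_a\wedge e_b$ vanishes identically for $t>0$, because the shape operator is parallel along the orbits and so the Codazzi term is zero. The paper therefore only needs the observation that the radial and collapsing mixed curvatures have the same limit at $t=0$.
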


\begin{proof}
For \(t>0\), the metric is a multiply warped product with mutually
orthogonal distributions
\[
\langle\partial_t\rangle,\qquad E_A,\qquad E_B,\qquad E_C.
\]
The standard multiply-warped curvature formulas show directly that the
curvature operator has no off-diagonal terms between the elementary wedge
types
\[
\Lambda^2E_A,\ \Lambda^2E_B,\ \Lambda^2E_C,\ 
E_A\wedge E_B,\ E_A\wedge E_C,\ E_B\wedge E_C,
\]
and that the radial wedges \(\partial_t\wedge E_i\) are diagonal as well.
Passing smoothly to the left singular orbit combines
\(\langle\partial_t\rangle\oplus E_A\) into the normal space \(N\).
Thus, independently of any possible low-dimensional equivalence of
isotropy representations, the curvature operator preserves
\[
\Lambda^2N,\quad
\Lambda^2B,\quad
\Lambda^2C,\quad
N\wedge B,\quad
N\wedge C,\quad
B\wedge C.
\]
The smooth singular-orbit expansions
\[
a(t)=t+O(t^3),\qquad
b(t)=\eps+b_2t^2+O(t^4),\qquad
c(t)=\eps\rho+c_2t^2+O(t^4)
\]
give \(b'(0)=c'(0)=0\).  The mixed warped-product curvature formula
therefore gives exactly
\[
\mathcal R_x|_{B\wedge C}=0.
\]
The first Taylor coefficients from
Proposition~\ref{prop:uniform-positive-germs} are
\[
2b_2
=
\frac{p-1-D\eps^2}{(p+1)\eps},
\qquad
2c_2
=
\frac{q-1-D\rho^2\eps^2}{(p+1)\rho\eps}.
\]
Hence the standard warped sectional-curvature formulas give
\[
\lambda_{BB}
=
\left.\frac{1-(b')^2}{b^2}\right|_{t=0}
=\eps^{-2},
\qquad
\lambda_{CC}
=
\left.\frac{1-(c')^2}{c^2}\right|_{t=0}
=\rho^{-2}\eps^{-2},
\]
and
\begin{align*}
\lambda_{NB}
&=
-\left.\frac{b''}{b}\right|_{t=0}
=
-\frac{p-1}{p+1}\eps^{-2}
+\frac{D}{p+1},\\
\lambda_{NC}
&=
-\left.\frac{c''}{c}\right|_{t=0}
=
-\frac{q-1}{(p+1)\rho^2}\eps^{-2}
+\frac{D}{p+1}.
\end{align*}
The same values arise as the limits of the collapsing-angular mixed
curvatures, so these are the eigenvalues on all of \(N\wedge B\) and
\(N\wedge C\).

Write
\[
a(t)=t+a_3t^3+O(t^5).
\]
The radial Einstein equation at \(t=0\) gives
\[
-6pa_3
-p\frac{2b_2}{\eps}
-q\frac{2c_2}{\eps\rho}
=D.
\]
Therefore
\[
-6a_3
=
\frac{p(p-1)+q(q-1)\rho^{-2}}{p(p+1)}
\eps^{-2}
+O_I(1).
\]
Smoothness of the normal disk identifies this with the curvature
eigenvalue on \(\Lambda^2N\).  Thus the five nonzero blocks have the
uniform expansions
\begin{align*}
\lambda_{BB}&=\eps^{-2},\\
\lambda_{CC}&=\rho^{-2}\eps^{-2},\\
\lambda_{NB}
&=-\frac{p-1}{p+1}\eps^{-2}+O_I(1),\\
\lambda_{NC}
&=-\frac{q-1}{(p+1)\rho^2}\eps^{-2}+O_I(1),\\
\lambda_{NN}
&=
\frac{p(p-1)+q(q-1)\rho^{-2}}{p(p+1)}
\eps^{-2}+O_I(1).
\end{align*}
Every displayed leading coefficient is bounded away from zero on
\(I\), since \(p,q\ge2\).  Hence, after decreasing \(\eps_0\), all five
blocks are uniformly invertible with inverse norm \(O_I(\eps^2)\).
This proves both assertions.
\end{proof}

\begin{lemma}[Algebraic recovery from the mixed kernel]
\label{lem:mixed-kernel-recovers-factors}
Let \(T\) be an inner-product space and suppose
\[
T=N\oplus B\oplus C,
\qquad
\dim B,\dim C\ge2,
\]
orthogonally.  If the subspace
\[
K=B\wedge C\subset\Lambda^2T
\]
is given intrinsically, then \(K\) determines \(B\oplus C\) and the
unordered pair \(\{B,C\}\).  Consequently it also determines \(N\) as
their orthogonal complement.  If \(\dim B\ne\dim C\), it determines the
ordered pair \((B,C)\).
\end{lemma}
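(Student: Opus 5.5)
Since \(K=B\wedge C\) is spanned by the wedges \(u\wedge v\) with \(u\in B\), \(v\in C\), the plan is to recover \(B\) and \(C\) from the decomposable elements of \(K\). Identify \(\Lambda^2T\) with skew endomorphisms of \(T\) via the inner product, so that \(u\wedge v\) acts by \(x\mapsto\langle u,x\rangle v-\langle v,x\rangle u\). Every \(\omega\in K\) then maps \(T\) into \(B\oplus C\) and annihilates \(N\). Conversely, fix orthonormal bases \(u_i\) of \(B\) and \(v_j\) of \(C\). The elements \(u_i\wedge v_1\) and \(u_1\wedge v_j\) already have images containing all of \(B\oplus C\). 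Hence the span of the images of all \(\omega\in K\) is exactly \(B\oplus C\). This gives \(B\oplus C\) intrinsically, and \(N=(B\oplus C)^\perp\).

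To separate the two factors, characterize the decomposable elements of \(K\), those of the form \(x\wedge y\) with \(x\wedge y\neq 0\). Write \(\omega=\sum_{ij}M_{ij}\,u_i\wedge v_j\) with a coefficient matrix \(M\). Then \(\omega\wedge\omega\) contains the terms \(u_i\wedge v_j\wedge u_k\wedge v_l\), and these vanish identically exactly when all \(2\times2\) minors of \(M\) vanish, that is, when \(\operatorname{rank}M=1\). Thus the decomposable elements of \(K\) are precisely \(u\wedge v\) with \(u\in B\setminus0\) and \(v\in C\setminus0\). The two-plane of such an element is \(\operatorname{span}(u,v)\), and it meets \(B\) and \(C\) each in a line.

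Let \(\mathcal L\) be the set of lines obtained as the intersection \(\operatorname{span}(u,v)\cap\operatorname{span}(u',v')\) of two distinct decomposable planes, whenever that intersection is one-dimensional. A short check shows that such an intersection is either \(\mathbb R u\) (when \(u=u'\) up to scale) or \(\mathbb R v\). So \(\mathcal L\) is exactly the set of lines lying in \(B\) or in \(C\). Declare two lines of \(\mathcal L\) related if some decomposable plane of \(K\) contains both. A plane \(\operatorname{span}(u,v)\) contains one line from \(B\) and one from \(C\), so related lines always lie in different factors. Because \(\dim B,\dim C\ge2\), any two distinct lines in \(B\) are never related and have a common related line in \(C\). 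The relation therefore defines a complete bipartite graph on \(\mathcal L\) whose two parts are the line sets of \(B\) and \(C\). This bipartition is intrinsic, and taking the span of each part recovers the unordered pair \(\{B,C\}\).

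If \(\dim B\neq\dim C\), the two spans are distinguished by their dimensions, which gives the ordered pair \((B,C)\).

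The main obstacle is the decomposability step, namely checking that \(\omega\wedge\omega=0\) within \(K\) forces \(\operatorname{rank}M\le1\). Expanding \(\omega\wedge\omega\) in the basis \(u_i\wedge u_k\wedge v_j\wedge v_l\) makes its coefficients exactly the \(2\times2\) minors of \(M\), up to a factor of \(2\), which settles it.
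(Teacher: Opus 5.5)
Your proof is correct, but it separates the two factors by a different mechanism than the paper.

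\textbf{Recovering \(B\oplus C\).} The paper takes the support of \(K\), i.e.\ the smallest \(S\) with \(K\subset\Lambda^2S\). This is essentially your span of images.

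\textbf{Splitting the factors.} The paper uses the vector invariant \(d_K(v)=\dim\bigl(K\cap(v\wedge V)\bigr)\) on \(V=B\oplus C\). It equals \(\dim C\) for \(v\in B\setminus\{0\}\), \(\dim B\) for \(v\in C\setminus\{0\}\), and \(1\) for mixed \(v\). Hence \(\{d_K>1\}=(B\cup C)\setminus\{0\}\), and \(B,C\) are its maximal linear pieces. You instead classify all decomposable elements of \(K\) via \(\omega\wedge\omega=0\) and the \(2\times2\) minors of the coefficient matrix. You then read off \(\{B,C\}\) from the bipartite incidence graph of their planes.

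\textbf{How the two relate.} The routes are close: \(d_K(v)>1\) says exactly that \(v\) lies on two distinct decomposable planes of \(K\), which is the vector form of your set \(\mathcal L\). The paper's formulation bypasses your ``main obstacle'' entirely. Every nonzero element of \(v\wedge V\) is automatically decomposable, so no rank or minor computation is needed. Your route gives slightly more: a full description of the decomposable locus of \(K\) as the rank-one elements of \(B\otimes C\), and a transparent argument that every object is built from \(K\) alone.

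\textbf{Where the hypothesis enters.} In your argument, \(\dim B,\dim C\ge2\) is what makes every line of \(B\) (respectively \(C\)) an intersection of two distinct decomposable planes. That needs a second independent direction in \(C\) (respectively \(B\)), and you should say so explicitly rather than leave it inside the ``short check.'' In the paper, the hypothesis is used to separate \(d_K\ge2\) from \(d_K=1\).
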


\begin{proof}
First recover
\[
V:=B\oplus C
\]
as the smallest linear subspace \(S\subset T\) for which
\(K\subset\Lambda^2S\).  Equivalently, \(V\) is the linear span of all
vectors occurring as factors of decomposable bivectors belonging to
\(K\).  Since \(K=B\wedge C\) and both factors are nonzero, this support
is exactly \(B\oplus C\).

Now, for \(v\in V\), set
\[
d_K(v)=\dim\bigl(K\cap(v\wedge V)\bigr).
\]
If \(0\ne v\in B\), then
\[
K\cap(v\wedge V)=v\wedge C,
\qquad d_K(v)=\dim C.
\]
If \(0\ne v\in C\), then \(d_K(v)=\dim B\).  If
\(v=b+c\) with \(b\ne0\) and \(c\ne0\), write
\(w=b'+c'\).  The condition
\[
v\wedge w\in B\wedge C
\]
forces
\[
b\wedge b'=0,\qquad c\wedge c'=0,
\]
so \(b'\) is proportional to \(b\) and \(c'\) to \(c\).  Modulo the
kernel direction \(w\parallel v\), the resulting bivectors span precisely
the one-dimensional space generated by \(b\wedge c\).  Hence
\[
d_K(v)=1.
\]
Because \(\dim B,\dim C\ge2\), the nonzero locus
\[
\{v\in V:d_K(v)>1\}
\]
is exactly \((B\setminus\{0\})\cup(C\setminus\{0\})\).  Its two maximal
linear components are \(B\) and \(C\).  Their dimensions distinguish the
order when unequal, and \(N=V^\perp\).
\end{proof}

\begin{proposition}[Uniform intrinsic distinctness]
\label{prop:uniform-distinctness}
For every pair in Theorem~\ref{thm:uniform-sequence}, the constructed
sequence contains infinitely many isometry classes.
\end{proposition}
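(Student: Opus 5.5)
The plan is to separate the metrics $g^{(k)}$ by an intrinsic curvature invariant that grows like $(\eps^{(k)})^{-2}$, and then use $\eps^{(k)}\to0$ from Theorem~\ref{thm:uniform-sequence}. Since all metrics satisfy the same normalization $\Ric=Dg$, an isometry $g^{(k)}\cong g^{(l)}$ must preserve the sup of the curvature operator norm $\Lambda_k:=\max_M\|\mathcal R_{g^{(k)}}\|$. It therefore suffices to show $\Lambda_k\to\infty$. Then only finitely many members of the sequence can lie in any fixed isometry class, so the sequence contains infinitely many classes.

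For the lower bound I use Lemma~\ref{lem:intrinsic-curvature-splitting}. The parameters satisfy $\rho^{(k)}\in I=[\rho_-,\kappa]$, a fixed compact interval, and $\eps^{(k)}\to0$. For $k$ large, at any point $x$ of the left singular orbit of $g^{(k)}$ the lemma gives $\|\mathcal R_x\omega\|\ge c(\eps^{(k)})^{-2}\|\omega\|$ for $\omega\perp B\wedge C$, with $c$ uniform on $I$. Taking $\omega\in\Lambda^2 N\neq0$ yields $\Lambda_k\ge c(\eps^{(k)})^{-2}\to\infty$. Alternatively, and more crudely, the explicit eigenvalue $\lambda_{BB}=\eps^{-2}$ is a sectional curvature of $g^{(k)}$, so $\max\sec\ge(\eps^{(k)})^{-2}$; this avoids even the uniformity statement.

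The remaining point is that $\Lambda_k$ is finite and isometry invariant. This holds because each $g^{(k)}$ is a smooth metric on a compact manifold by Theorem~\ref{thm:reflection-compactification} and its dimension-independent version. If homothety rather than isometry were the concern, I would instead use the scale-invariant quantity $\Lambda_k/D$: the Einstein constant fixes the scale, so a homothety between two metrics normalized by $\Ric=Dg$ is an isometry. To conclude, pass to a subsequence with $\Lambda_{k_j}$ strictly increasing; these metrics are pairwise non-isometric.

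The only step needing care is confirming that the singular-orbit germ of $g^{(k)}$ really has $b(0)=\eps^{(k)}$ in the sense used by the curvature lemma. Reflection could swap the roles of the two singular orbits, but the left orbit still exists with radii $\eps,\eps\rho$, so the bound at that orbit applies. I do not expect a genuine obstacle, since the mixed-kernel Lemma~\ref{lem:mixed-kernel-recovers-factors} is not needed for mere distinctness and would only enter the later comparison with products and B\"ohm metrics.
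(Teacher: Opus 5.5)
Your proposal is correct and follows the paper's own argument. Planes tangent to the non-collapsing $S^p$-factor at the left singular orbit have sectional curvature $\eps^{-2}$, so $\sup_M|\Rm(g^{(k)})|\ge(\eps^{(k)})^{-2}\to\infty$; the normalization $\Ric=Dg$ makes this an isometry (and homothety) invariant, so a subsequence is pairwise non-isometric. As you anticipated, the paper's extra paragraph on recovering $B$ and $C$ from the curvature kernel is not needed for distinctness itself.
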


\begin{proof}
At the left singular orbit the non-collapsing \(S^p\)-factor has radius
\(\eps\).  Its tangent two-planes therefore have sectional curvature
\[
K_{BB}=\eps^{-2},
\]
and hence
\[
\sup_M|\Rm(g^{(k)})|\ge(\eps^{(k)})^{-2}\longrightarrow\infty.
\]
Since \(\Ric=Dg\) fixes the homothety freedom, \(\sup|\Rm|\) is an
isometry invariant and a subsequence is pairwise non-isometric.

More precisely, Lemma~\ref{lem:intrinsic-curvature-splitting} gives,
uniformly on the compact ratio interval used in the construction,
\[
\ker\mathcal R_x=B\wedge C
\]
for all sufficiently small \(\eps\).  By
Lemma~\ref{lem:mixed-kernel-recovers-factors}, this intrinsically recovers
\(B\) and \(C\) when \(p\ne q\), and their unordered pair when \(p=q\).
The possible interchange in the latter case is discrete and therefore
does not affect the identity-component dimension estimate.
\end{proof}

\begin{proposition}[Uniform non-productness and B\"ohm inequivalence]
\label{prop:uniform-comparison}
For every admissible pair \((p,q)\), all sufficiently small-scale members
of the sequence are not locally nontrivial Riemannian products, even up
to homothety, and are not homothetic to a classical two-summand B\"ohm
metric on \(S^{2p+1}\times S^q\).
\end{proposition}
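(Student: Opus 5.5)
The plan is to use the intrinsic curvature splitting at the left singular orbit, Lemma~\ref{lem:intrinsic-curvature-splitting}, as the invariant that rules out both alternatives. The normalization \(\Ric=Dg\) removes the homothety freedom, so any homothety between two such metrics is an isometry. For all sufficiently small \(\eps\), and uniformly on the compact ratio interval, every point \(x\) of the left singular orbit then has two features. First, \(\ker\mathcal R_x=B\wedge C\). Second, all other curvature eigenvalues have modulus at least \(c\eps^{-2}\).

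\emph{Non-productness.} Suppose \(g^{(k)}\) were locally a nontrivial Riemannian product near \(x\), say \(T_xM=E_1\oplus E_2\) with both \(E_i\neq0\). Then \(E_1\wedge E_2\subset\ker\mathcal R_x\). Since \(\ker\mathcal R_x=B\wedge C\) contains no decomposable bivector \(v\wedge w\) with \(v\in N\), and \(N\neq0\), any \(v\in N\) must lie in one factor, say \(E_1\). Then \(v\wedge E_2\subset B\wedge C\) forces \(E_2=0\), a contradiction. This needs one small algebraic check: if \(v\wedge w\in B\wedge C\), then \(v,w\in B\oplus C\). Support considerations as in Lemma~\ref{lem:mixed-kernel-recovers-factors} give this.

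Globally, an Einstein metric that is homothetic to a Riemannian product is in particular locally a product at every point, so the pointwise argument suffices. Real-analyticity also upgrades local product structure on an open set to the whole manifold, so the statement ``not locally a product'' is the strongest form and is what we prove.

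\emph{B\"ohm inequivalence.} A classical two-summand B\"ohm metric on \(S^{2p+1}\times S^q\) is cohomogeneity one with principal orbit \(S^{2p}\times S^q\) or \(S^p\times S^{p+q}\) type and two warping functions. Along its singular orbits the curvature splits into at most the blocks coming from one normal space and one surviving tangent summand. I would compare a numerical isometry invariant that survives homothety-normalization:
\[
\max_M|\Rm|\ \ge\ (\eps^{(k)})^{-2}\to\infty .
\]
B\"ohm's family on a given manifold is countable, with curvature also tending to infinity, so this alone does not separate the two families. The second invariant is the existence of a point where \(\ker\mathcal R_x\) equals \(B\wedge C\) with three nonzero orthogonal summands \(N,B,C\) of dimensions \(p+1,p,q\), together with the eigenvalue ratios \(\lambda_{BB}:\lambda_{CC}=1:\rho^{-2}\) from Lemma~\ref{lem:intrinsic-curvature-splitting}.

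For a two-summand B\"ohm metric one must show instead the following. At any point where \(|\Rm|\) is comparable to its maximum, the curvature kernel is either trivial or of the form \(B'\wedge C'\) with \(\dim N'+\dim B'+\dim C'\) incompatible with \((p+1,p,q)\). A further alternative is the degenerate case corresponding to the balanced ratio.

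\emph{Main obstacle.} The hardest step is the B\"ohm comparison, because B\"ohm metrics also have curvature blowing up. I would attempt it through the isometry-group dimension. The kernel recovers \(B\) and \(C\) intrinsically (Lemma~\ref{lem:mixed-kernel-recovers-factors}), so the isotropy of the identity component of the isometry group at \(x\) must preserve the splitting \(N\oplus B\oplus C\). This bounds it by \(\mathrm{SO}(p+1)\times\mathrm{SO}(p)\times\mathrm{SO}(q)\). B\"ohm metrics at a corresponding maximal-curvature singular point have isotropy containing \(\mathrm{SO}(2p+1)\) or \(\mathrm{SO}(p+q+1)\) acting irreducibly on a larger summand, which is impossible here.

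To show that the identity component of the isometry group of \(g^{(k)}\) is not larger than \(G\), I would use that curvature-adapted splittings are preserved and that \(\lambda_{BB}\neq\lambda_{CC}\) away from \(\rho=1\), discarding finitely many members as the statement allows.
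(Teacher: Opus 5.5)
Your non-productness argument is correct, and it takes a cleaner route than the paper. The paper compares \(\dim\ker\mathcal R_x=pq\) with \(r(n-r)\ge 2(D-1)\). That comparison needs Einstein positivity to exclude a one-dimensional factor, and for \((p,q)=(2,4)\), where \(pq=D\), the exclusion is essential. You instead use only that \(E_1\wedge E_2\) has support \(T_xM\), while \(B\wedge C\) has support \(B\oplus C\ne T_xM\). The step ``every \(v\in N\) lies in one factor'' needs the identity \(v\wedge e_2=e_1\wedge e_2\) for \(v=e_1+e_2\); alternatively, compare supports directly. Your remark that analyticity propagates local reducibility to the point \(x\) makes explicit something the paper leaves implicit.

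There is a genuine gap in the B\"ohm part. Your final argument compares the isotropy at \(x\) with the isotropy of a B\"ohm metric ``at a corresponding maximal-curvature singular point''. Nothing justifies that a hypothetical isometry \(\phi\) sends \(x\) to a singular orbit of the B\"ohm action:
\begin{itemize}
\item you have not shown that \(|\Rm(g^{(k)})|\) is maximal on the left singular orbit, since nothing places the curvature maximum of the rescaled Ricci-flat model at \(\tau=0\);
\item nothing is established about where a B\"ohm metric attains its maximum;
\item \(\phi\) has no reason to intertwine the two cohomogeneity-one structures.
\end{itemize}
The curvature-kernel alternative for B\"ohm metrics is only stated, not proved. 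Your last paragraph is both unnecessary and unsupported. Bounding \(\operatorname{Isom}_0(g^{(k)})\) by \(G\) is not needed. Nothing in the construction keeps \(\rho^{(k)}\) away from \(1\), and the finitely many members the statement discards are those whose scale is not small, not those with \(\rho^{(k)}\) near \(1\).

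The missing idea is a comparison that needs no correspondence of points. You already have \(\dim\operatorname{Isom}_x(g^{(k)})\le p^2+\tfrac{q(q-1)}2\), from the intrinsically recovered splitting \(N\oplus B\oplus C\) and faithfulness of the isotropy representation. Every orbit has dimension at most \(2p+q+1\), so
\[
\dim\operatorname{Isom}(g^{(k)})\le 2p+q+1+p^2+\tfrac{q(q-1)}2 .
\]
This is smaller than \(\dim(\mathrm{SO}(2p+1)\times\mathrm{SO}(q+1))=p(2p+1)+\tfrac{q(q+1)}2\) by \(p^2-p-1>0\), which is exactly the paper's proof. A homothety does not change the isometry group, so the homothety case is covered as well.

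A pointwise version also works. Every isotropy group of the B\"ohm action contains a conjugate of the principal isotropy \(\mathrm{SO}(2p)\times\mathrm{SO}(q)\). Its dimension exceeds \(p^2+\tfrac{q(q-1)}2\) by \(p^2-p\). Since \(\phi\) conjugates \(\operatorname{Isom}_x(g^{(k)})\) onto the isotropy at \(\phi(x)\) wherever \(\phi(x)\) lies, this gives the same contradiction.
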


\begin{proof}
Let \(\mathcal R\) denote the algebraic curvature operator.
Lemma~\ref{lem:intrinsic-curvature-splitting} gives, at the left singular
orbit and for sufficiently small \(\eps\),
\[
\ker\mathcal R=B\wedge C,\qquad \dim\ker\mathcal R=pq.
\]
Suppose locally the metric were a nontrivial Riemannian product of
dimensions \(r\) and \(n-r\), where
\[
n=\dim M=D+1.
\]
Every mixed bivector would lie in the curvature-operator kernel, so
\[
\dim\ker\mathcal R\ge r(n-r).
\]
Because the metric is Einstein with positive Einstein constant \(D\),
each local product factor would itself have positive Ricci curvature with
the same Einstein constant.  In particular neither factor can be
one-dimensional.  Hence
\[
2\le r\le n-2
\]
and therefore
\[
r(n-r)\ge2(n-2)=2(D-1).
\]
For the four admissible pairs,
\[
(pq,\,2(D-1))
=
(4,10),\ (6,12),\ (8,14),\ (6,14),
\]
respectively.  Thus \(pq<2(D-1)\), a contradiction.  This excludes every
nontrivial local, and hence global, product structure.  Homothetic
rescaling does not change the curvature-operator kernel dimension.

By Lemma~\ref{lem:mixed-kernel-recovers-factors}, the intrinsic kernel
\(B\wedge C\) recovers the two non-collapsing summands \(B,C\), up to a
discrete interchange when \(p=q\); its orthogonal complement then recovers
\(N\).  Any isometry fixing \(x\) preserves \(\mathcal R_x\), hence
preserves this recovered splitting up to that discrete interchange.
Moreover, the derivative representation of the isotropy group at \(x\)
is faithful: an isometry fixing \(x\) with identity derivative is the
identity.  Therefore the identity component of the isotropy embeds in
\[
O(p+1)\times O(p)\times O(q).
\]
Since every isometry orbit has dimension at most
\(\dim M=2p+q+1\), we obtain
\[
\dim\operatorname{Isom}(g)
\le
(2p+q+1)+p^2+\frac{q(q-1)}2.
\]
A classical two-summand B\"ohm metric on the same manifold has the
standard \(SO(2p+1)\times SO(q+1)\) symmetry, of dimension
\[
p(2p+1)+\frac{q(q+1)}2.
\]
The latter exceeds the preceding upper bound by
\[
p^2-p-1>0\qquad(p\ge2).
\]
Thus the metrics cannot be isometric.  After normalizing the Einstein
constant, the same argument excludes homothety.
\end{proof}

\section{An abstract response-crossing principle}
\label{sec:abstract-response-crossing}

The matching construction admits an abstract formulation in terms of two
oscillatory response channels.  The first channel determines the common
phase whenever its complex amplitude is nonzero.  Restriction to this phase
branch reduces the second matching condition to a scalar function of the
remaining parameter.  Its limiting value is, up to the positive factor
\(|A|^{-1}\), the phase-invariant quantity
\[
\Im(A\overline B).
\]
Consequently, an endpoint sign change of this quantity yields an exact
crossing for every sufficiently accurate \(C^1\) approximation of the
limiting response map.

The criterion below does not require the scalar crossing to be simple.  In
particular, no condition \(\Phi'(\rho_*)\ne0\) is imposed.  The
nondegeneracy needed to eliminate the phase variable follows instead from
\(A\ne0\), which gives a uniform nonzero derivative in the phase direction.

\begin{proposition}[Response-crossing criterion]
\label{prop:response-crossing}
Let \(I=[\rho_-,\rho_+]\) be a compact interval, and let
\(A,B:I\to\mathbb C\) be \(C^1\) functions with
\[
A(\rho)\neq0
\qquad (\rho\in I).
\]
Define
\[
\Phi(\rho)
=
\Im\!\left(A(\rho)\overline{B(\rho)}\right),
\]
and suppose
\[
\Phi(\rho_-)<0<\Phi(\rho_+).
\]
Let \(\Theta\subset\mathbb R\) be a compact interval and suppose that
\[
\mathcal F_k:\Theta\times I\longrightarrow\mathbb R^2
\]
are \(C^1\) maps satisfying
\[
\mathcal F_k\longrightarrow
G(\theta,\rho)
=
\begin{pmatrix}
\Re(e^{i\theta}A(\rho))\\
\Re(e^{i\theta}B(\rho))
\end{pmatrix}
\]
in \(C^1(\Theta\times I)\).  Assume that \(\Theta\) contains a continuous
phase branch \(\theta_0:I\to\operatorname{int}\Theta\) satisfying
\[
e^{i\theta_0(\rho)}A(\rho)=i|A(\rho)|.
\]
Then, for every sufficiently large \(k\), the map \(\mathcal F_k\) has a
zero in \(\Theta\times(\rho_-,\rho_+)\).
\end{proposition}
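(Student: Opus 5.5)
The plan is to reproduce, in abstract form, the two-step argument of Theorem~\ref{thm:sequence}: first solve the first component by an implicit-function branch in the phase variable, then apply the intermediate value theorem to the second component restricted to that branch.

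\textbf{Step 1: the limiting phase branch is uniformly nondegenerate.}
Write \(G=(G_1,G_2)\). The defining identity \(e^{i\theta_0}A=i|A|\) gives
\[
G_1(\theta_0(\rho),\rho)=\Re(i|A(\rho)|)=0 .
\]
It also gives
\[
\partial_\theta G_1(\theta_0(\rho),\rho)=\Re\bigl(ie^{i\theta_0}A\bigr)=\Re(-|A|)=-|A(\rho)|\le -c_0<0,
\]
where \(c_0=\min_I|A|>0\) exists by continuity and compactness. Since \(\theta_0(I)\) is a compact subset of \(\operatorname{int}\Theta\), we can fix \(r>0\) with \([\theta_0(\rho)-r,\theta_0(\rho)+r]\subset\Theta\) for all \(\rho\). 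Uniform continuity of \(\partial_\theta G_1\) lets us shrink \(r\) so that \(\partial_\theta G_1\le -c_0/2\) on this tube. The same tube argument gives \(G_1(\theta_0\pm r,\rho)\) of sign \(\mp\), with modulus at least \(c_0r/2\).

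\textbf{Step 2: the exact phase branch.}
By \(C^0\) convergence, for large \(k\) we have \(|(\mathcal F_k)_1-G_1|<c_0r/4\). By \(C^1\) convergence we have \(\partial_\theta(\mathcal F_k)_1\le -c_0/4\) on the tube. So for each \(\rho\), the map \(\theta\mapsto(\mathcal F_k)_1(\theta,\rho)\) is strictly decreasing on \([\theta_0-r,\theta_0+r]\) and changes sign there. This gives a unique zero \(\theta_k(\rho)\) in the tube. The implicit function theorem makes \(\theta_k\) \(C^1\), and in particular continuous, in \(\rho\). For continuity alone, uniqueness together with the strict monotonicity suffices, which avoids any delicate global gluing.

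Next we show \(\theta_k\to\theta_0\) uniformly. At \(\theta_k(\rho)\),
\[
|G_1(\theta_k(\rho),\rho)|\le \|\mathcal F_k-G\|_{C^0}.
\]
By the slope bound \(|\partial_\theta G_1|\ge c_0/2\) this yields
\[
|\theta_k-\theta_0|\le \frac{2\|\mathcal F_k-G\|_{C^0}}{c_0}\to0 .
\]

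\textbf{Step 3: the sign change.}
Set \(h_k(\rho)=(\mathcal F_k)_2(\theta_k(\rho),\rho)\), which is continuous on \(I\). We have \(e^{i\theta_0}=i|A|/A\). Hence
\[
G_2(\theta_0,\rho)=\Re\!\left(\frac{i|A|B}{A}\right)=\Re\!\left(\frac{i\overline A B}{|A|}\right)=-\frac{\Im(\overline A B)}{|A|}=\frac{\Phi(\rho)}{|A(\rho)|}.
\]
Note the sign conventions differ from the text's, but this computation gives a positive multiple of \(\Phi\). The difference splits as
\[
|h_k-G_2(\theta_0,\cdot)|\le \|\mathcal F_k-G\|_{C^0}+\|\partial_\theta G_2\|_\infty\,|\theta_k-\theta_0|\to0 ,
\]
so \(h_k\to \Phi/|A|\) uniformly. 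Then \(h_k(\rho_-)<0<h_k(\rho_+)\) for large \(k\). The intermediate value theorem gives \(\rho_k\in(\rho_-,\rho_+)\) with \(h_k(\rho_k)=0\). At that point \((\mathcal F_k)_1(\theta_k(\rho_k),\rho_k)=0\) holds by construction, so \((\theta_k(\rho_k),\rho_k)\) is the required zero.

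\textbf{Main obstacle.} The only delicate point is making the branch construction uniform in \(\rho\), with a single tube width \(r\) and a single threshold for \(k\). This is handled by the compactness estimates in Step 1. Note that only \(C^1\) control in \(\theta\) is actually needed; \(\rho\)-derivatives of \(\mathcal F_k\) are not used, and no simplicity of the zero of \(\Phi\) is required.
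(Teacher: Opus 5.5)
Your proof is correct and follows the paper's argument essentially step for step. Both build a uniform tube around $\theta_0$ on which $\partial_\theta F_{k,1}$ is uniformly negative, obtain a unique continuous exact branch $\theta_k\to\theta_0$, and apply the intermediate value theorem to $h_k\to\Phi/|A|$. The only cosmetic difference is in how the slope bound and boundary signs are obtained: the paper reads them off the explicit formula $G_1(\theta_0(\rho)+s,\rho)=-|A(\rho)|\sin s$ with $\delta<\pi/2$, whereas you use uniform continuity and the mean value theorem. Your caveat about differing sign conventions is unnecessary, since $i|A|/A=i\overline{A}/|A|$ is exactly the paper's identity.
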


\begin{proof}
Write \(\mathcal F_k=(F_{k,1},F_{k,2})\) and \(G=(G_1,G_2)\).  Since
\(A\) is continuous and nowhere zero on the compact interval \(I\),
\[
m:=\min_{\rho\in I}|A(\rho)|>0.
\]
Since \(\theta_0(I)\) is compact and contained in
\(\operatorname{int}\Theta\), there is \(\delta\in(0,\pi/2)\) such that
\[
[\theta_0(\rho)-\delta,\theta_0(\rho)+\delta]\subset\Theta
\qquad(\rho\in I).
\]
For \(|s|\le\delta\), the normalization of the phase branch gives
\[
G_1(\theta_0(\rho)+s,\rho)
=
\Re\!\left(e^{is}i|A(\rho)|\right)
=-|A(\rho)|\sin s
\]
and hence
\[
\partial_\theta G_1(\theta_0(\rho)+s,\rho)
=-|A(\rho)|\cos s
\le -m\cos\delta<0.
\]
Moreover,
\[
G_1(\theta_0(\rho)-\delta,\rho)>0,
\qquad
G_1(\theta_0(\rho)+\delta,\rho)<0,
\]
uniformly in \(\rho\).

The \(C^1\)-convergence therefore implies that, for all sufficiently large
\(k\), the same two boundary signs hold for \(F_{k,1}\), while
\[
\partial_\theta F_{k,1}
\le -\frac12m\cos\delta<0
\]
throughout this tube.  Thus for each \(\rho\in I\) there is a unique
\[
\theta_k(\rho)\in
(\theta_0(\rho)-\delta,\theta_0(\rho)+\delta)
\]
with
\[
F_{k,1}(\theta_k(\rho),\rho)=0.
\]
By the implicit-function theorem and uniqueness, \(\theta_k\) is \(C^1\).
Furthermore, the mean-value theorem gives
\[
\|\theta_k-\theta_0\|_{C^0(I)}
\le
\frac{2}{m\cos\delta}
\|F_{k,1}-G_1\|_{C^0(\Theta\times I)},
\]
so \(\theta_k\to\theta_0\) uniformly.

Restrict the second matching equation to this exact phase branch and set
\[
h_k(\rho):=F_{k,2}(\theta_k(\rho),\rho).
\]
Then \(h_k\) is continuous and, uniformly on \(I\),
\[
h_k(\rho)\longrightarrow G_2(\theta_0(\rho),\rho).
\]
Since
\[
e^{i\theta_0(\rho)}
=\frac{i\overline{A(\rho)}}{|A(\rho)|},
\]
we obtain
\[
G_2(\theta_0(\rho),\rho)
=
\Re\!\left(
\frac{i\overline{A(\rho)}B(\rho)}{|A(\rho)|}
\right)
=
\frac{\Im(A(\rho)\overline{B(\rho)})}{|A(\rho)|}
=
\frac{\Phi(\rho)}{|A(\rho)|}.
\]
Consequently
\[
h_k\longrightarrow \frac{\Phi}{|A|}
\qquad\text{uniformly on }I.
\]
The strict endpoint signs of \(\Phi\) imply, for every sufficiently large
\(k\),
\[
h_k(\rho_-)<0<h_k(\rho_+).
\]
The intermediate value theorem yields
\(\rho_k\in(\rho_-,\rho_+)\) with \(h_k(\rho_k)=0\).  By the definition of
\(\theta_k\),
\[
F_{k,1}(\theta_k(\rho_k),\rho_k)=0
\]
as well.  Hence
\[
\mathcal F_k(\theta_k(\rho_k),\rho_k)=0,
\]
which proves the assertion.
\end{proof}

\begin{corollary}[Stability of the response crossing]
\label{cor:response-crossing-stability}
Under the hypotheses of Proposition~\ref{prop:response-crossing}, the
crossing persists under sufficiently small \(C^1\) perturbations of the
limiting amplitudes and of the matching maps.  More precisely, there is a
\(C^1\)-neighbourhood \(\mathcal U\) of \((A,B)\) such that, for every
\((\widetilde A,\widetilde B)\in\mathcal U\), the phase branch may be chosen
inside \(\operatorname{int}\Theta\),
\[
\widetilde\Phi(\rho)
:=\Im\!\left(\widetilde A(\rho)
\overline{\widetilde B(\rho)}\right)
\]
has the same strict endpoint signs as \(\Phi\), and every sequence of
\(C^1\) maps \(\widetilde{\mathcal F}_k\) satisfying
\[
\widetilde{\mathcal F}_k\longrightarrow
\widetilde G(\theta,\rho)
=
\begin{pmatrix}
\Re(e^{i\theta}\widetilde A(\rho))\\
\Re(e^{i\theta}\widetilde B(\rho))
\end{pmatrix}
\]
in \(C^1(\Theta\times I)\) has a zero in
\(\Theta\times(\rho_-,\rho_+)\) for all sufficiently large \(k\).
\end{corollary}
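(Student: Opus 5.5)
The plan is to reduce the corollary to Proposition~\ref{prop:response-crossing} itself, applied to the perturbed pair \((\widetilde A,\widetilde B)\). So it suffices to show that, for \((\widetilde A,\widetilde B)\) in a small \(C^1\)-neighbourhood, the hypotheses of that proposition hold with the \emph{same} intervals \(I\) and \(\Theta\). There are three hypotheses to check: \(\widetilde A\) is nowhere zero on \(I\), \(\widetilde\Phi\) has strict opposite endpoint signs, and there is a continuous phase branch \(\widetilde\theta_0:I\to\operatorname{int}\Theta\) with \(e^{i\widetilde\theta_0}\widetilde A=i|\widetilde A|\).

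\textbf{Step 1: nonvanishing and endpoint signs.} Put
\[
m=\min_I|A|>0,\qquad \mu=\min\bigl(|\Phi(\rho_-)|,\Phi(\rho_+)\bigr)>0 .
\]
If \(\|\widetilde A-A\|_{C^0}<m/2\), then \(|\widetilde A|\ge m/2\) on \(I\). Moreover \(\widetilde\Phi-\Phi\) is bounded by
\[
\|\widetilde A-A\|\,\|\widetilde B\|+\|A\|\,\|\widetilde B-B\| ,
\]
so a small enough \(C^0\) neighbourhood gives \(|\widetilde\Phi(\rho_\pm)-\Phi(\rho_\pm)|<\mu\). Hence \(\widetilde\Phi\) keeps the strict endpoint signs. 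Only \(C^0\) closeness is needed here; \(C^1\) regularity of \(\widetilde A,\widetilde B\) is assumed so that the limit map \(\widetilde G\) is \(C^1\), as the proposition requires.

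\textbf{Step 2: the phase branch (the only delicate point).} The condition \(e^{i\theta}\widetilde A=i|\widetilde A|\) fixes \(\theta\) only modulo \(2\pi\). I will therefore select the lift closest to \(\theta_0\):
\[
\widetilde\theta_0(\rho)=\theta_0(\rho)-\arg\bigl(\widetilde A(\rho)/A(\rho)\bigr),
\]
using the principal argument, which is continuous near the value \(1\). Indeed \(|\widetilde A/A-1|\le 2\|\widetilde A-A\|/m\) is small, so the argument is small and depends continuously on \(\rho\). A direct check gives
\[
e^{i\widetilde\theta_0}\widetilde A=e^{i\theta_0}A\cdot\frac{|\widetilde A|}{|A|}=i|\widetilde A| .
\]
Since \(\theta_0(I)\) is a compact subset of \(\operatorname{int}\Theta\), it has positive distance \(d\) from \(\partial\Theta\). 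Shrinking the neighbourhood so that the correction has size less than \(d\) keeps \(\widetilde\theta_0(I)\subset\operatorname{int}\Theta\).

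\textbf{Step 3: conclusion.} With the three hypotheses verified, Proposition~\ref{prop:response-crossing} applies to any sequence \(\widetilde{\mathcal F}_k\to\widetilde G\) in \(C^1(\Theta\times I)\). It yields a zero in \(\Theta\times(\rho_-,\rho_+)\) for all large \(k\), where the threshold on \(k\) may depend on the sequence. For the phrase ``perturbations of the matching maps'', I would also note that any \(\widetilde{\mathcal F}_k\) converging in \(C^1\) to the unperturbed limit \(G\) is the special case \((\widetilde A,\widetilde B)=(A,B)\).
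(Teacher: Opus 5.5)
Your proposal is correct and follows the paper's proof step for step. Both arguments use openness in \(C^0\) of \(\min_I|A|>0\) and of the strict endpoint signs of \(\Phi\), then a continuous phase lift \(\widetilde\theta_0\) close to \(\theta_0\) (which you make explicit as \(\theta_0-\arg(\widetilde A/A)\), where the paper only says to choose the lift close to \(\theta_0\)), then the positive distance of \(\theta_0(I)\) from \(\partial\Theta\), and finally a direct application of Proposition~\ref{prop:response-crossing} to \((\widetilde A,\widetilde B,\widetilde{\mathcal F}_k)\).
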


\begin{proof}
Set
\[
m=\min_I|A|>0,
\qquad
\eta=\min\{-\Phi(\rho_-),\Phi(\rho_+)\}>0.
\]
The conditions \(\min_I|A|>0\) and the two strict endpoint inequalities are
open in the \(C^0\) topology.  Hence, after shrinking a \(C^1\)-neighbourhood
of \((A,B)\), one has
\[
\min_I|\widetilde A|>\frac m2,
\qquad
\widetilde\Phi(\rho_-)<-\frac\eta2,
\qquad
\widetilde\Phi(\rho_+)>\frac\eta2.
\]
Because \(I\) is an interval and \(\widetilde A\) is nowhere zero, its
argument has a continuous lift.  Choosing the lift close to the one defined
by \(\theta_0\) gives a continuous phase branch
\(\widetilde\theta_0\) satisfying
\[
e^{i\widetilde\theta_0(\rho)}\widetilde A(\rho)
=i|\widetilde A(\rho)|.
\]
The compact set \(\theta_0(I)\) has positive distance from
\(\partial\Theta\); therefore, for a sufficiently small perturbation,
\(\widetilde\theta_0(I)\subset\operatorname{int}\Theta\).
Proposition~\ref{prop:response-crossing}, applied to
\((\widetilde A,\widetilde B,\widetilde{\mathcal F}_k)\), now gives the
claim.
\end{proof}

\begin{remark}[Relation with the Einstein matching argument]
\label{rem:response-crossing-application}
In the setting of Theorem~\ref{thm:sequence}, set
\[
A(\rho)=L_0Q(\rho),
\qquad
B(\rho)=L_1X(\rho).
\]
Then the associated phase-invariant response is
\[
\Im\!\left(L_0Q(\rho)\,\overline{L_1X(\rho)}\right)=\Phi(\rho).
\]
The weighted \(C^1\) matching theorem gives the required convergence to the
limiting oscillatory response map, and the nonvanishing of \(Q\) gives the
uniform phase nondegeneracy.  The endpoint sign conditions for \(\Phi\)
therefore imply an exact Einstein match for each sufficiently large phase
label by Proposition~\ref{prop:response-crossing}.
\end{remark}

\bigskip

\noindent
\textsc{Anna Siffert}\\
Universit\"at M\"unster, Mathematisches Institut,
Einsteinstr.\ 62, 48149 M\"unster, Germany\\
\texttt{asiffert@uni-muenster.de}

\end{document}